\newtheorem{theorem}{Theorem}[section]
\newtheorem{definition}[theorem]{Definition}   
\newtheorem{remark}[theorem]{Remark}
\newtheorem{proposition}[theorem]{Proposition}
\newtheorem{lemma}[theorem]{Lemma}
\newtheorem{corollary}[theorem]{Corollary}
\newtheorem{problem}[theorem]{Problem}
\newcommand{\vect}[1]{\boldsymbol{\mathbf{#1}}}
\newcommand{\vertiii}[1]{{\left\vert\kern-0.25ex\left\vert\kern-0.25ex\left\vert #1 
    \right\vert\kern-0.25ex\right\vert\kern-0.25ex\right\vert}}
\DeclarePairedDelimiter\abs{\lvert}{\rvert}%
\newcommand{\dive}{\operatorname{div}}
\newcommand{\ttau}{\vect{\tau}}
\newcommand{\uu}{\vect{u}}
\newcommand{\vv}{\vect{v}}
\newcommand{\ww}{\vect{w}}
\newcommand{\uud}{\vect{u}_{D}}
\newcommand{\uun}{\vect{u}_{N}}
\newcommand{\pd}{p_{D}}
\newcommand{\pn}{p_{N}}
\newcommand{\ff}{\vect{f}}
\newcommand{\ggb}{g_{\Sigma}}
\newcommand{\cv}{\overline{\vv}}
\newcommand{\cw}{\overline{\ww}}
\newcommand{\cvarphi}{\overline{\vect{\varphi}}}
\newcommand{\Mt}{M_{t}}
\newcommand{\Vgamma}{V_{\Gamma}}
\newcommand\ocirc[1]{\ensurestackMath{\stackon[0.5pt]{#1}{\mkern1mu\circ}}}
\newcommand{\oV}{\ocirc{V}}
\newcommand{\oVzero}{\ocirc{V}_{0}}
\newcommand{\oQ}{\ocirc{Q}} 
\newcommand{\olambda}{\ocirc{\lambda}} 
\newcommand{\oVperp}{\ocirc{V}_{\perp}}
\newcommand{\un}{\vect{u}_\text{N}}
\newcommand{\ud}{\vect{u}_\text{D}}
\newcommand{\bphi}{\vect{\varphi}}
\newcommand{\bpsi}{\vect{\psi}}
\newcommand{\cbphi}{\overline{\vect{\varphi}}}
\newcommand{\cbpsi}{\overline{\vect{\psi}}}
\newcommand{\obphi}{\ocirc{\vect{\varphi}}}
\newcommand{\yt}{\vect{y}^{t}}
\newcommand{\cyt}{\overline{\vect{y}}^{t}}
\newcommand{\ut}{\uu^t}
\newcommand{\ur}{\uu_{r}}
\newcommand{\ui}{\uu_{i}}
\newcommand{\pr}{p_{r}}
\newcommand{\pim}{p_{i}}
\newcommand{\vr}{\vv_{r}}
\newcommand{\vi}{\vv_{i}}
\newcommand{\aaa}{a}
\newcommand{\aat}{a^{t}} 
\newcommand{\taat}{\tilde{a}^{t}} 
\newcommand{\LL}{\vect{\mathsf{L}}}
\newcommand{\sfTheta}{\vect{\Theta}}
\newcommand{\HH}{\vect{\mathsf{H}}}
\newcommand{\HHg}{\vect{\mathsf{V}}}
\newcommand{\NN}{\vect{N}}
\newcommand{\VV}{\vect{\theta}}
\newcommand{\Vn}{\theta_n} 
\newcommand{\nn}{\vect{n}}
\newcommand{\dn}[1]{\partial_{\nn}{#1}}   
\newcommand{\dnn}[1]{\partial_{\nn\nn}^2{#1}}
\newcommand{\intO}[1]{\int_{\Omega}{#1}{\, {d} x}}
\newcommand{\intdO}[1]{\int_{\partial\Omega}{#1}{\, {d} \sigma}}
\newcommand{\intOt}[1]{\int_{\Omega_{t}}{#1}{\, {d} x_{t}}} 
\newcommand{\intS}[1]{\int_{\Sigma}{#1}{\, {d} \sigma}} 
\newcommand{\intSt}[1]{\int_{\Sigma_{t}}{#1}{\, {d} \sigma_{t}}}
\newcommand{\intG}[1]{\int_{\Gamma}{#1}{\, {d} \sigma}} 
\begin{document}

\title{Numerical solution to a free boundary problem for the Stokes equation using the coupled complex boundary method in shape optimization settings\footnote{This work is partially supported by JSPS KAKENHI Grant Numbers JP20KK0058, JP18H01135, JP21H04431, and JP20H01823, and JST CREST Grant Number JPMJCR2014.}}
\author{Julius Fergy T. Rabago$^{1}$\ \ and Hirofumi Notsu$^{2}$}

\date{%
	{\footnotesize
	 Faculty of Mathematics and Physics\\%
	 Institute of Science and Engineering\\%
         Kanazawa University, Kanazawa 920-1192, Japan\\\vspace{-2pt}
        \texttt{$^{1}$rabagojft@se.kanazawa-u.ac.jp,\ jfrabago@gmail.com; $^{2}$notsu@se.kanazawa-u.ac.jp}}\\[2ex]
    \today
}

\maketitle

\begin{abstract}
A new reformulation of a free boundary problem for the Stokes equations governing a viscous flow with overdetermined condition on the free boundary is proposed.
The idea of the method is to transform the governing equations to a boundary value problem with a complex Robin boundary condition coupling the two boundary conditions on the free boundary.
The proposed formulation give rise to a new cost functional that apparently has not been exploited yet in the literature, specifically, and at least, in the context of free surface problems. 
The shape derivatives of the cost function constructed by the imaginary part of the solution in the whole domain in order to identify the free boundary is explicitly determined.
Using the computed shape gradient information, a domain variation method from a preconditioned steepest descent algorithm is applied to solve the shape optimization problem.
Numerical results illustrating the applicability of the method is then provided both in two and three spatial dimensions.
For validation and evaluation of the method, the numerical results are compared with the ones obtained via the classical tracking Dirichlet data.
\medskip

\textit{Keywords:}{ coupled complex boundary method, free surface flow, shape optimization, shape derivatives, rearrangement method, and adjoint method}
\end{abstract}

\clearpage
\hrule
\tableofcontents
\bigskip
\medskip
\hrule
\newpage
\section{Introduction}
\label{sec:Introduction}

We are interested in a free boundary problem for fluid flows governed by the Stokes equations which arises in many applications such as in magnetic shaping processes where the shape of the fluid is determined by the Lorentz force.
In such setting, the model is described by the Stokes flow equations and a pressure balance equation on the free boundary neglecting surface tension effects \cite{BouchonPeichlSayehTouzani2017}.
Depending on the application, two model problems can be considered in this context.
First, an interior case where the fluid is confined in a mould and has an internal unknown boundary and, second, an exterior case where a part of the fluid boundary adheres to a solid and the remaining part is free and is in contact with the ambient air.
Here, we pay attention to the second case for $d$-dimensional geometries, $d \in \{2,3\}$.

\textit{Main Problem}. In this work, we are particularly interested in the free surface problem analog of the Bernoulli free boundary problem or Alt-Caffarelli problem (see \cite{FlucherRumpf1997,AltCaffarelli1981}) where the Laplace operator is replaced by the Stokes operator. 
More precisely, we are interested in the problem formulated as follows (see \cite{Kasumba2014} and also \cite{BouchonPeichlSayehTouzani2017}).
Given a simply connected bounded $A \subset \mathbb{R}^{d}$ with boundary $\Gamma:=\partial A$ domain $\Omega$.
The fluid is considered levitating around $A$, which is influenced by a gravity-like force $\ff = (f_{1},f_{2},\ldots,f_{d})^{\top}$, and occupies then the domain $\Omega = B\setminus \overline{A}$, where $B$ is a larger bounded (simply connected) domain with boundary $\Sigma:=\partial B$ containing $\overline{A}$.
The incompressible viscous flow occupying $\Omega$, the velocity field $\uu = (u_{1},u_{2},\ldots,u_{d})^{\top}$, and the pressure $p$ is then supposed to satisfy the following overdetermined system of Stokes equations in non-dimensional form:
\begin{equation}
\label{eq:FSP}
\left\{\arraycolsep=1.4pt\def\arraystretch{1.1}
\begin{array}{rcll}
	- \alpha \Delta \uu + \nabla p	&=& \ff		&\quad\text{in $\Omega$},\\
	\nabla \cdot \uu				&=& \vect{0} 	&\quad\text{in $\Omega$},\\
	\uu	 					&=& \vect{g}	&\quad\text{on $\Gamma$},\\
	-p\nn + \alpha \dn{\uu}		&=& \vect{0}	&\quad\text{on $\Sigma$},\\
	\uu \cdot \nn				&=& 0		&\quad\text{on $\Sigma$},
\end{array}
\right.
\end{equation}
where $\alpha := {Re}^{-1}$, and $Re > 0$ is the Reynolds number, and $\dn{} := \nn \cdot \nabla$ denotes the derivative in the (outward unit) normal vector $\nn$.
Here, the boundary data $\vect{g}$ is a prescribed velocity which in some sense expresses the motion of $\Gamma$ while the boundary condition imposed on the free surface $\Sigma$ indicate zero ambient pressure and negligible surface tension effects\footnote{The zero-surface tension assumption is a typical setup in the literature which not only simplifies the discussion, but also allows one to ignore technical difficulties resulting from higher derivative terms.}.
Additional assumptions concerning on the density of external forces $\ff$ and on $\vect{g}$ will be given in next section.
Meanwhile, the slip boundary condition $\uu \cdot \nn=0$ on $\Sigma$ means that inflow or outflow of the fluid cannot happen on the free boundary while tangential velocities on the said boundary can be non-zero.
Such condition is appropriate for problems that involve free boundaries and situations where the usual no-slip condition $\uu = \vect{0}$ is not valid (e.g., flows past chemically reacting walls see \cite{BeaversJoseph1967,Liakos2001}).
Other realistic situations where no-slip and slip boundary conditions appear naturally are as follows.
The no-slip condition has long been firmly established for moderate pressures and velocities, not only by direct observations (see, e.g., \cite{GiraultRaviart1986} or \cite{Galdi1994}), but also through comparisons between numerical simulations and results from actual experiments of a large class of complex flow problems.
Early experimental investigations, however, showed that low-temperature slip past solid surfaces.
This happens, in particular, for sufficiently large Knudsen numbers where velocity slip occurs at the wall surface.
Such a physical phenomenon has also been observed in hydraulic fracturing and biological fluids \cite{Liakos2001} which are examples of nonlinear fluid flows.
A slip boundary condition also applies to free surfaces in free boundary problems.
A well-known example where the condition arises is the so-called coating problem; see, e.g., \cite{Babuska1963,SaitoScriven1981}.

Clearly, motivated by numerous applications such as ship hydrodynamics \cite{AlessandriniDelhommeau1994} and thin film manufacturing processes \cite{Volkovetal2009}, numerical solutions of flows that are partially bounded by a freely moving boundary are of great practical importance.
In such problems, not only the flow variables, but also the flow domain has to be determined -- hence the word `free boundary' is used for the unknown boundary.
Due to the complexity of simultaneously resolving these unknowns, a numerical solution which can be obtained through different methods \cite{Weietal1996} has to be carried out iteratively \cite{TiihonenKarkkainen1999}.
 
\textit{Known approaches}. In this investigation, we shall resolve the free boundary problem via a novel shape optimization method.
Because free surface problems (FSPs) such as \eqref{eq:FSP} consist of overdetermined boundary conditions on the unknown part of the boundary, they can be reformulated into an equivalent shape optimization problem; see, e.g., \cite{Kasumba2014,BouchonPeichlSayehTouzani2017}.\footnote{The same technique -- but in the context of optimal shape design problems -- of finding the boundary that minimizes a norm of the residual of one of the free surface conditions, subject to the boundary value problem with the remaining free surface conditions imposed, has also been used for potential free surface flows in \cite{VanBrummelenRavenKoren2001,VanBrummelenSegal2003}.}
Such approach can be carried out in several ways.
A typical strategy is to choose one of the boundary conditions on the unknown boundary to obtain a well-posed state equation and then track the remaining boundary data in an appropriate norm -- typically in $L^2$ (see, e.g., \cite{EpplerHarbrecht2009,EpplerHarbrecht2010,HIKKP2009,HKKP2003,IKP2006} for the exterior case of the Bernoulli problem and \cite{Kasumba2014} for free surface problems).
Another one is to utilize the so-called Kohn-Vogelius cost functional which consists of two auxiliary problems each posed with one of the boundary condition on the free boundary (see \cite{BenAbdaetal2013,Bacani2013,EpplerHarbrecht2012a} for the Bernoulli problem and for free surface problems \cite{Kasumba2014,BouchonPeichlSayehTouzani2017}).
For the latter formulation, one considers the minimization problem
\begin{equation}\label{eq:KV_method}
	J_{KV}(\Omega):=\frac12 \intO{\left| \nabla\left(\un - \ud\right) \right|^{2}} \ \to\ \inf,
\end{equation} 
where the state variables $\uun:=\uun(\Omega)$ and $\uud=\uud(\Omega)$ respectively satisfy the following well-posed systems of partial differential equations (PDEs):
\begin{equation}
\label{eq:state_ud}
\left\{\arraycolsep=1.4pt\def\arraystretch{1.1}
\begin{array}{rcll}
	- \alpha \Delta \uud + \nabla \pd	&=& \ff			&\quad\text{in $\Omega$},\\
	\nabla \cdot \uud			&=& \vect{0} 		&\quad\text{in $\Omega$},\\
	\uud	 					&=& \vect{g}		&\quad\text{on $\Gamma$},\\
	\uud \cdot \nn				&=& \vect{0}		&\quad\text{on $\Sigma$},\\
	\alpha \dn{\uud} \cdot \ttau		&=& \vect{0}		&\quad\text{on $\Sigma$},
\end{array}
\right.
\end{equation}
%
%
%
\begin{equation}
\label{eq:state_un}
\left\{\arraycolsep=1.4pt\def\arraystretch{1.1}
\begin{array}{rcll}
	- \alpha \Delta \uun + \nabla \pn	&=& \ff			&\quad\text{in $\Omega$},\\
	\nabla \cdot \uun			&=& \vect{0} 		&\quad\text{in $\Omega$},\\
	\uun	 					&=& \vect{g}		&\quad\text{on $\Gamma$},\\
	-\pn\nn + \dn{\uun}			&=& \vect{0}		&\quad\text{on $\Sigma$}.
\end{array}
\right.
\end{equation}
In \eqref{eq:state_ud}, $\ttau$ denotes the tangential vector to a domain $\Omega$.
Meanwhile, for tracking-boundary-data cost functional approach, the following minimization problems can be considered:
\begin{align}
	J_{D}(\Omega) &:=\frac12 \intS{\left(\un \cdot \nn \right)^{2}} \ \to\ \inf,\label{eq:tracking_Dirichlet_method}\\
	J_{N}(\Omega) &:=\frac12 \intS{\abs{-\pd\nn + \dn{\uud}}^{2}} \ \to\ \inf,\label{eq:tracking_Neumann_method}
\end{align} 
where, of course, $\ud$ and $\un$ satisfy problems \eqref{eq:state_ud} and \eqref{eq:state_un}, respectively.
\begin{remark}\label{eq:equivalence_of_classical_formulations}
	If $(\Omega, (\uu, p))$ is a solution of \eqref{eq:FSP}, then $\uud = \uun = \uu$.
	Hence, $J_{KV}(\Omega) = J_{D}(\Omega) = J_{N}(\Omega) = 0$.
	Conversely, if $J_{i}(\Omega) = 0$, $i\in \{KV,D,N\}$, are defined by \eqref{eq:KV_method}, \eqref{eq:tracking_Dirichlet_method}, and \eqref{eq:tracking_Neumann_method}, respectively, then for $i = KV$ ($J_{KV}(\Omega) = 0$), then we have that $\uud = \uun = \uu$ and $\uud = \uun = \uu$ is a solution of problem \eqref{eq:FSP}.
	For $i = D$ ($J_{D}(\Omega) = 0$), we have that $\uun \cdot \nn = 0$ almost everywhere on $\Sigma$ and $\uu = \uun$ is a solution of problem \eqref{eq:FSP}.
	Similarly, the same holds for $i = N$ ($J_{N}(\Omega) = 0$).
	Therefore, $(\Omega, (\uu, p))$ is a solution of \eqref{eq:FSP} if and only if $J_{i}(\Omega) = 0$, $i\in \{KV,D,N\}$.
\end{remark}
\begin{remark}
	The identity $J_{KV}(\Omega) = 0$ is equivalent to the existence of a constant $\lambda_{0} \in \mathbb{R}$ such that $(\uud,\pd) = (\uud,\pn+\lambda_{0})$ \cite{BouchonPeichlSayehTouzani2017}.
\end{remark}
\begin{remark}
In addition to the abovementioned classical approaches, there is another domain-integral-type penalization that may be considered.
More exactly, one may opt to penalize, instead of $J_{KV}$, by the cost functional 
\[
	J_{L^2}(\Omega) := \frac{1}{2} \intO{\abs{\un - \ud}^2}. 
\]
Compared with $J_{KV}$, however, the shape gradient of the above cost function is more complicated in structure and would be numerically expensive to evaluate.
In fact, it can be shown that in this case one would need to solve five real BVPs: two state equations, two adjoint equations (see \cite{LaurainPrivat2012} for the case of the Bernoulli problem), and one to approximate the mean curvature of the free boundary implicitly.
\end{remark}
In all of the minimization problems presented above, the infimum has always to be taken over all sufficiently smooth domains.
On the other hand, we emphasize that the cost function $J_{N}$ requires more regularity for $\pd$ and $\ud$ to be well defined.
As a consequence, it may be impractical to utilize this functional in numerical experiments where high regularity of the state variables is not guaranteed.
For the feasibility of these approaches, with $\vect{g} \equiv \vect{0}$, we refer the readers to \cite{Kasumba2014}.
In that paper, Kasumba computed explicitly the shape derivatives of the functionals $J_{KV}$, $J_{D}$, and $J_{N}$ via the chain rule approach.
Then, he utilizes the gradient information in a boundary variation method in a preconditioned steepest descent algorithm to solve the given shape optimization problems.

\textit{New strategy and novelty}. In this investigation, we want to offer yet another shape optimization technique to solve \eqref{eq:FSP}.
More exactly, we want to propose here a novel application of the so-called coupled complex boundary method or CCBM for solving stationary free surface problems.
The point of departure of the method is somewhat similar to \cite{Tiihonen1997}, but applies the concept of complex PDEs.
The idea is straightforward: we couple the Dirichlet and Neumann data in a Robin boundary condition in such a way that the Dirichlet data and the Neumann data are the respective real and imaginary parts of the Robin boundary condition.
In effect, the boundary conditions that have to be satisfied on the free boundary are transformed into one condition that needs to be satisfied on the domain.
Consequently, the formulation requires the introduction of a new cost function that has not been studied yet in the literature (see \eqref{eq:cost_function}).
Moreover, the new reformulation seems to be advantageous in the sense that it leads to a volume integral and produces more regular adjoint state than those of boundary-data tracking-type cost functionals.

The coupled complex boundary method was first introduced by Cheng et al. in \cite{Chengetal2014} as a method to solve an inverse source problem (see also \cite{Chengetal2016}) and was then used to deal with a Cauchy problem stably in \cite{Chengetal2016}.
It is then applied to an inverse conductivity problem with one measurement in \cite{Gongetal2017} and also to parameter identification in elliptic problems in \cite{Zhengetal2020}.
In a much recent paper, CCBM was also applied in solving inverse obstacle problems by Afraites in \cite{Afraites2022}.
Moreover, in the context of free boundary problems, the method was first applied by Rabago in \cite{Rabago2022} to solve the exterior case of the Bernoulli problem.
As far as we are concern, there is no work yet dealing with FSPs via CCBM, except in \cite{Ouiassaetal2022} where the authors applied CCBM as a numerical approach to solve an inverse Cauchy Stokes problem.
Therefore, the novelty of this work comes from the application of CCBM in treating free surface problems, especially as a numerical resolution to the free boundary problem \eqref{eq:FSP} to which we found some merit over the classical shape optimization methods of tracking the boundary data using a least-squares misfit cost function.

\textit{Contributions to the literature.} The main contributions and highlights of this study are listed as follows.
\begin{itemize}
	\item As opposed to classical shape optimization approaches for free surface problems \cite{Kasumba2014,BouchonPeichlSayehTouzani2017}, our state problem is a complex PDE system (see equation \eqref{eq:ccbm} in the next section), and to the best our knowledge, the Stokes equation with a Robin condition having complex Robin coefficient has not been considered yet in any of the previous investigations -- at least in the context of shape optimization.
	The problem therefore is new and hence warrants some initial examination.
	Although the approach to establish the well-posedness of the problem follows the same argumentations as in the real case, the discussion of the topic (which we provide here) is not yet available in the existing literature.
	See subsection \ref{subsec:well-posedness_of_state_problem} for the discussion.
	\item As a consequence of the CCBM formulation, a new cost functional for the free surface problem -- which apparently has not been examined yet in the literature -- is first introduced in this work; see equation \eqref{eq:cost_function}. 
	\item The study focuses on the rigorous computation of the first-order shape derivative of the cost functional associated with CCBM (refer to Proposition \ref{prop:the_shape_derivative_of_the_cost}) using only the H\"{o}lder continuity of the state variables (see Lemma \ref{lem:holder_continuity}) -- differing from the classical approach which uses either the material or shape derivatives of the states (see, e.g., \cite{Kasumba2014,BouchonPeichlSayehTouzani2017,Rabago2022}).
	In fact, our approach bypasses the computation of the said derivatives.
	\item The proof of the H\"{o}lder continuity of the state variables proceeds in a slightly different manner from the usual strategy found, for instance, in \cite{BacaniPeichl2013,IKP2006,IKP2008}; see the proof of Lemma \ref{lem:holder_continuity}.
		By applying the same technique used in the aforementioned papers, it seems difficult to eliminate the pressure variables to get a uniform estimate for the difference between the velocity solution of the transformed and the steady case of the Stokes problem.
		To get around this difficulty, we instead use the corresponding expansions of some specific expressions appearing in the computation.
		See the proof of Lemma \ref{lem:holder_continuity}.
	\item As far as we are aware of, previous numerical investigations on the free boundary problem with the Stokes equation (such as that of \cite{Kasumba2014}) only dealt with two dimensional cases.
		In this study, the proposed shape optimization method is also tested in solving a test problem in three dimensions.
		In fact, it will be shown in the numerical part of the paper that the proposed formulation has a smoothing effect -- specially in the case of three dimensions -- in approximating a solution to the overdetermined system of complex partial differential equations.
		See the discussion on subsection \ref{subsubsection:example_in_3d}.
\end{itemize}

The remainder of the paper is organized as follows.
In Section \ref{sec:shape_optimization_formulation}, we will demonstrate how the free surface problem \eqref{eq:FSP} is formulated into a shape optimization problem via the coupled complex boundary method.
The well-posedness of the CCBM formulation is also discussed in this section.  
Meanwhile, we devote Section \ref{sec:Computation_of_the_shape_derivative} in computing the boundary integral expression of the shape gradient of the cost associated with the proposed shape optimization problem.
The section of the paper starts with a brief discussion of some ideas from shape calculus needed in the study, followed by a short list of identities from tangential shape calculus.  
For the main result, which pertains to the shape gradient of the cost, the expression will be characterized rigorously via the rearrangement method in the spirit of \cite{IKP2006,IKP2008}.
In Section \ref{sec:Numerical_Approximation}, the continuous formulation is discretized, and a numerical algorithm based on Sobolev gradient method for solving the discrete shape optimization problem is developed and implemented.
The section therefore is divided into two parts: the discussion of the iterative scheme (subsection \ref{subsec:Numerical_Algorithm}) and the presentation of the numerical experiments carried out in two and three dimensions (subsection \ref{subsec:Numerical_Examples}).
The main content of the paper ends with Section \ref{sec:Conclusions_and_Future_Work} where we issue a short conclusion about the study and a statement of future work.

Additionally, the paper also contains three appendices wherein we provide the proofs of some lemmas (Appendix \ref{appxsec:proofs}), show the computation of some identities used in the investigation (Appendix \ref{appxsubsec:computations}), and demonstrate the derivation of the shape gradient expression via the chain rule approach (Appendix \ref{appxsubsec:shape_derivatives_of_the_cost_via_chain_rule}).
\section{CCBM in shape optimization settings}
\label{sec:shape_optimization_formulation}
We present here the proposed CCBM formulation of \eqref{eq:FSP} and discuss the well-posedness of the state problem.

\subsection{The coupled complex boundary method formulation} 
The coupled complex boundary method suggests to write the boundary conditions on the free boundary as one condition.
This means to consider the complex boundary value problem
\begin{equation}
\label{eq:ccbm}
\left\{\arraycolsep=1.4pt\def\arraystretch{1.1}
\begin{array}{rcll}
	- \alpha \Delta \uu + \nabla p	&=& \ff			&\quad\text{in $\Omega$},\\
	\nabla \cdot \uu				&=& \vect{0} 		&\quad\text{in $\Omega$},\\
	\uu	 					&=& \vect{g}			&\quad\text{on $\Gamma$},\\
	-p\nn + \alpha \dn{\uu}	 + i (\uu \cdot \nn)\nn			&=& \vect{0}		&\quad\text{on $\Sigma$},
\end{array}
\right.
\end{equation}
where $i = \sqrt{-1}$ stands for the unit imaginary number.
Letting $(\uu,p) := (\ur + i \ui, p_{r} + i \pim)$ denote the solution of \eqref{eq:ccbm}, then it can be verified that the real vector-valued functions $\ur$ and $\ui$, and real scalar valued functions $p_{r}$ and $\pim$, respectively satisfy the real PDE systems:
\begin{equation}
\label{eq:ccbm_real_part}
\left\{\arraycolsep=1.4pt\def\arraystretch{1.1}
\begin{array}{rcll}
	- \alpha \Delta \ur + \nabla \pr	&=& \ff			&\quad\text{in $\Omega$},\\
	\nabla \cdot \ur				&=& \vect{0} 		&\quad\text{in $\Omega$},\\
	\ur	 					&=& \vect{g}			&\quad\text{on $\Gamma$},\\
	-\pr\nn + \alpha \dn{\ur}				&=& (\ui \cdot \nn)\nn		&\quad\text{on $\Sigma$},
\end{array}
\right.
\end{equation}
\begin{equation}
\label{eq:ccbm_imaginary_part}
\left\{\arraycolsep=1.4pt\def\arraystretch{1.1}
\begin{array}{rcll}
	- \alpha \Delta \ui + \nabla \pim	&=& \vect{0}			&\quad\text{in $\Omega$},\\
	\nabla \cdot \ui				&=& \vect{0} 		&\quad\text{in $\Omega$},\\
	\ui	 					&=& \vect{0}		&\quad\text{on $\Gamma$},\\
	-\pim\nn + \alpha \dn{\ui}				&=& - (\ur \cdot \nn)\nn		&\quad\text{on $\Sigma$},
\end{array}
\right.
\end{equation}
%
%

From now on, if there is no confusion, we represent the real and imaginary parts of any function (vector-valued or scalar valued) by attaching the function with the subscript $\cdot\, _{r}$ and $\cdot\, _{i}$, respectively.
%
%
%
\begin{remark}
\label{rem:equivalence}
	Observe from the previous systems of real PDEs that if $\ui = \vect{0}$ and $\pim = 0$ in $\Omega$, then we have $\ui = \dn{\ui} = 0$ and $\pim = 0$ on $\Sigma$, and thus $\ur \cdot \nn = 0$ on $\Sigma$.
	From \eqref{eq:ccbm_real_part} and \eqref{eq:ccbm_imaginary_part}, we see that the pair $(\Omega,\ur)$ solves the original free boundary problem \eqref{eq:FSP}. 
	Conversely, if $(\Omega,\ur)$ is the solution to the free boundary problem \eqref{eq:FSP}, then clearly $\ur$ and $\ui$ satisfy \eqref{eq:ccbm_real_part} and \eqref{eq:ccbm_imaginary_part}.
\end{remark}
%
%
%
	We infer from the previous remark that the original free boundary problem \eqref{eq:FSP} can be recast into an equivalent shape optimization problem given as follows.
\begin{problem}
\label{eq:main_problem}
	Given a fixed interior boundary $\Gamma$ and a function $\ff$, find an annular domain $\Omega$, with the exterior boundary denoted by $\Sigma:=\partial \Omega\setminus \Gamma$, and a function $\uu := \uu(\Omega)$ such that $\ui = 0$ in $\Omega$ and $\uu = \ur + i\ui$ solves the PDE system \eqref{eq:ccbm}.
\end{problem}

Unless otherwise specified, we assume throughout the paper that $\Omega$ is a bounded, non-empty, connected subset of $\mathbb{R}^{d}$, and of class (at least) $\mathcal{C}^{1,1}$.
\subsection{Well-posedness of the state problem}\label{subsec:well-posedness_of_state_problem}
We discuss here the well-posedness of the PDE system \eqref{eq:ccbm}.
To this end, for simplicity, we carry out the analysis on the basis of homogenous Dirichlet boundary conditions on fixed boundary $\Gamma$.
Extension to non-homogeneous Dirichlet boundary conditions can be accomplished by standard techniques. 

To proceed, we first introduce some notations.

\textbf{Notations.} Let us start with the notations for some operators and operations.
We recall, for clarity, the following standard operators/operations
\[
	\nabla := \begin{pmatrix} \dfrac{\partial}{\partial x_{1}} \\ \vdots \\ \dfrac{\partial}{\partial x_{d}} \end{pmatrix}
	\quad\text{and}\quad
	\frac{\partial}{\partial \nn} 
	= \nn \cdot \nabla
	= \begin{pmatrix} n_{1} \\ \vdots \\ n_{d} \end{pmatrix} 
		\cdot
		\begin{pmatrix} \dfrac{\partial}{\partial x_{1}} \\ \vdots \\ \dfrac{\partial}{\partial x_{d}} \end{pmatrix}
	= \nn^{\top} \nabla	
	= \sum_{i=1}^{d} n_{i} \dfrac{\partial}{\partial x_{i}},
\]
where $\nn = (n_{1}, \ldots, n_{d})^{\top}$ is the outward unit normal vector.
Also, we denote the inner product\footnote{Another notation for the inner product with different usage will also be introduced shortly in the succeeding discussion.} of two (column) vectors $\vect{a}, \vect{b} \in \mathbb{R}^{d}$ in $\mathbb{R}^{d}$ by $\vect{a} \cdot \vect{b} := \vect{a}^{\top} \vect{b} \equiv \langle \vect{a}, \vect{b} \rangle_{\mathbb{R}^{d}} = \langle \vect{a}, \vect{b} \rangle$, respectively (where the latter notation is used when there is no confusion).

For a vector-valued function $\uu := (u_{1}, u_{2}, \ldots, u_{d})^{\top} : \Omega \to \mathbb{R}^{d}$, the gradient of $\uu$, denoted by $\nabla \uu$, 
a second-order tensor defined as $\nabla \uu = (\nabla \uu)_{ij} := (\partial u_{j}/\partial x_{i})_{i,j = 1, \ldots, d}$, where $(\nabla \uu)_{ij}$ is the entry at the $i$th row and $j$th column.
Meanwhile, the Jacobian of $\uu$, denoted by $D\uu$ (the total derivative of $\uu$), is the transpose of the gradient (i.e., $D\uu = (D\uu)_{ij} = (\partial u_{i}/\partial x_{j})_{i,j=1,\ldots,d} = \nabla^{\top} \uu$). 
Accordingly, we write the $\nn$-directional derivative of $\uu$ as $\partial_{\nn} \uu := (D\uu)\nn$.
Other notations are standard and will only be emphasized for clarity.

The notations for the function spaces used in the paper are as follows.
We denote by $W^{m,p}(\Omega)$ the standard real Sobolev space with the norm $\|\cdot\|_{W^{m,p}(\Omega)}$.
Let $W^{0,p}(\Omega)=L^p(\Omega)$, and particularly, we write $H^m(\Omega)$ to represent the space
\[
	W^{m,2}(\Omega) = \{ u \in L^{2}(\Omega) \mid D^{\beta} u \in L^{2}(\Omega) , \ \text{for $0 \leqslant |\beta| \leqslant m $}\},
\]
where $D^{\beta}$ is the weak (or distributional) partial derivative operator and $\beta$ is a multi-index.
Its corresponding inner product is given by $( \cdot , \cdot)_{m,\Omega}$ and norm 
\[
	\|\cdot\|_{H^m(\Omega)} = \sum_{|\beta| \leqslant m} \intO{ |D^{\beta} u|^{2} }.
\]

For vector-valued functions, we define the Sobolev space $H^m(\Omega)^{d}$ as follows:
\[
	H^m(\Omega)^{d} := \{\uu = (u_{1}, u_{2}, \ldots, u_{d})^{\top} \mid u_{i} \in H^{m}(\Omega), \ \text{for $i = 1, 2, \ldots, d$} \}.
\]
Its associated norm is given by
\[
	\|\uu\|_{H^{m}(\Omega)^{d}} = \left( \sum_{i=1}^{d} \|u_{i}\|^{2}_{H^{m}(\Omega)} \right)^{1/2}.
\]
Similar definition is given when $\Omega$ is replaced by the boundary $\partial\Omega$.

We let $\HH^m(\Omega)^{d}$ be the complex version of $H^m(\Omega)^{d}$ with the inner product $(\!(\cdot, \cdot)\!)_{m,\Omega,d}$ and norm $\vertiii{\, \cdot\, }_{m,\Omega,d}$ defined respectively as follows: 
\[
\text{for all $\uu, \vv \in \HH^m(\Omega)^{d}$}, \quad (\!( \uu, \vv )\!)_{m,\Omega,d} = \sum_{j=1}^{d}(\uu, \cv)_{m,\Omega}\quad\text{and}\quad
\vertiii{\vv}_{m,\Omega,d} = \sqrt{ (\!( \vv,\vv )\!)_{m,\Omega, d}}.
\]
%
%
%
Also, for ease of writing, we will use the following notations in the paper:
\begin{align*}
	X &:= \HH^{1}(\Omega)^{d}, \qquad\quad {\Vgamma} := \HH^{1}_{\Gamma,\vect{0}}(\Omega)^{d}, \qquad\quad  {\oV} := \HH^{1}_{\vect{0}}(\Omega)^{d},\\
	Q &:= \LL^2(\Omega), \qquad\quad \oQ = \left\{ q \in Q \mid \intO{q} = 0\right\},\\
	\oVzero &:= \left\{ \vv \in \oV \mid (\nabla \cdot \vv, q) = 0, \ \forall q \in \oQ \right\} \qquad  (\nabla \cdot \vv, q)  := (\nabla \cdot \vv, q)_{\Omega}  := \intO{\bar{q} \nabla \cdot \vv}\\
			&\ = \left\{ \vv \in \oV \mid (\nabla \cdot \vv, q) = 0, \ \forall q \in Q \right\} \\
			&\ = \left\{ \vv \in \oV \mid \nabla \cdot \vv = 0 \right\}, \\
	\oVperp & := \left\{ \vv \in \oV \mid ( \vv, \ww )_{X} = 0, \ \forall \ww \in \oV \right\}, 
			\qquad (\vv,\ww)_{X} := \intO{ ( \nabla \vv : \nabla \cw + \vv \cdot \cw) }.
\end{align*}
In above, the operation `$:$' stands for the tensor scalar product and is defined as
\[
	\nabla \vv : \nabla \cw = \left( \sum_{i,j=1}^{d} \frac{\partial {\vv}_{j}}{\partial {x}_{i}} \frac{\partial {\cw}_{j}}{\partial {x}_{i}} \right) \in \mathbb{R}.
\]
Because $\oVzero$ is a closed subspace of $\oV$, we have the decomposition $\oV = \oVzero \oplus \oVperp$.
Also, with the above definitions, we sometimes write
\[
	\vertiii{\vv}_{X} = \vertiii{\vv}_{1,\Omega,d} 
	\quad \text{and} \quad
	\vertiii{q}_{Q} = \vertiii{q}_{0,\Omega}, 
\]
and drop $d$ when there is no confusion.

Lastly, throughout the paper, $c$ will denote a generic positive constant which may have a different value at different places.
Also, we occasionaly use the symbol ``$\lesssim$'' which means that if $x \lesssim y$, then we can find some constant $c > 0$ such that $x \leqslant c y$.
Of course, $y \gtrsim x$ is defined as $x \lesssim y$.
Other notations are standard and will only be emphasized for clarity.

Next we quote the following lemma which is essential in our argumentation.
\begin{lemma}\label{lem:for_inf_sup_lemma}
	Let $\Omega$ be a connected set and consider the following maps 
	\begin{align*}
		\mathcal{T} &: \oV \longmapsto \oQ, \qquad \mathcal{T} \bphi := -\nabla \cdot \bphi, \quad (\bphi \in \oV)\\
		\mathcal{T}_{\perp} &: \oVperp \longmapsto \oQ, \qquad \mathcal{T}_{\perp} \bphi := -\nabla \cdot \bphi, \quad (\bphi \in \oVperp).
	\end{align*}
	Then, the following results holds: $\mathcal{T}$ is surjective, $\mathcal{T}_{\perp} \in \mathscr{L}(\oV,\oQ)$, $\mathcal{T}_{\perp}$ is bijective, and $\mathcal{T}_{\perp} \in \operatorname{Isom}(\oVperp,\oQ)$, i.e., there exists $\mathcal{T}_{\perp}^{-1} : \oQ \to \oVperp$, $\mathcal{T}_{\perp}^{-1} \in \mathscr{B}:=\mathscr{L}(\oQ,\oVperp)$. 
\end{lemma}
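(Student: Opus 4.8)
The plan is to reduce the whole statement to a single classical fact — the surjectivity of the divergence from $H^1_0(\Omega)^d$ onto the mean-zero subspace of $L^2(\Omega)$ on a bounded connected domain — and then obtain everything else by elementary Hilbert-space arguments together with the bounded inverse (open mapping) theorem. Since $\oV$ and $\oQ$ are the complexifications of $H^1_0(\Omega)^d$ and $L^2_0(\Omega)$, I would first dispose of the complex structure by splitting each datum into real and imaginary parts: writing $q = q_r + i q_i \in \oQ$, the condition $\intO{q} = 0$ means precisely $\intO{q_r} = \intO{q_i} = 0$, so each part lies in the real mean-zero space.

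For the surjectivity of $\mathcal{T}$ I would invoke the standard Bogovski\u{\i} / inf-sup (LBB) result: on a bounded connected domain of class $\mathcal{C}^{1,1}$, for every real $q \in L^2_0(\Omega)$ there is $\bphi \in H^1_0(\Omega)^d$ with $-\dive \bphi = q$ and $\|\bphi\|_{H^1(\Omega)^d} \le c\,\|q\|_{L^2(\Omega)}$. Applying this separately to $q_r$ and $q_i$ and recombining produces $\bphi \in \oV$ with $\mathcal{T}\bphi = -\dive \bphi = q$, giving surjectivity. I would also record here the divergence-theorem identity $\intO{\dive \bphi} = 0$ for every $\bphi \in \oV$ (homogeneous Dirichlet trace), which guarantees that $\mathcal{T}$, and hence its restriction $\mathcal{T}_\perp$, indeed lands in $\oQ$.

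Next I would treat $\mathcal{T}_\perp$. Boundedness and linearity are immediate from $\vertiii{\dive \bphi}_{0,\Omega} \le c\,\vertiii{\bphi}_{X}$, so $\mathcal{T}_\perp \in \mathscr{L}(\oVperp,\oQ)$ as the restriction of $\mathcal{T}$. Injectivity follows because $\mathcal{T}_\perp \bphi = 0$ forces $\dive \bphi = 0$, i.e. $\bphi \in \oVzero$; but $\bphi \in \oVperp$ and $\oVzero \cap \oVperp = \{0\}$ in the orthogonal decomposition $\oV = \oVzero \oplus \oVperp$, whence $\bphi = 0$. For surjectivity, given $q \in \oQ$ I would take the preimage $\bphi \in \oV$ from the previous step and split it as $\bphi = \bphi_0 + \bphi_\perp$ with $\bphi_0 \in \oVzero$ and $\bphi_\perp \in \oVperp$; since $\dive \bphi_0 = 0$ this gives $\mathcal{T}_\perp \bphi_\perp = -\dive \bphi_\perp = -\dive \bphi = q$. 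Thus $\mathcal{T}_\perp$ is a continuous bijection between the Hilbert spaces $\oVperp$ and $\oQ$ (both closed subspaces of Hilbert spaces, hence complete), and the open mapping theorem yields a bounded inverse $\mathcal{T}_\perp^{-1} \in \mathscr{L}(\oQ,\oVperp)$, i.e. $\mathcal{T}_\perp \in \operatorname{Isom}(\oVperp,\oQ)$.

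The only genuinely nontrivial ingredient — and hence the main obstacle — is the surjectivity of the divergence onto $L^2_0(\Omega)$ (equivalently, the inf-sup condition with a uniform constant); this is where the connectedness and the $\mathcal{C}^{1,1}$ regularity of $\Omega$ enter, and I would cite it rather than reprove the Bogovski\u{\i} construction. Everything after that is bookkeeping: the complexification reduces matters to two real problems, and the isomorphism claim is a direct application of the bounded inverse theorem once continuity and bijectivity are in hand.
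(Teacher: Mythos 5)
Your proof is correct and takes essentially the approach the paper intends: the paper omits the proof entirely, remarking only that it ``follows similar arguments as in the real case'' with a citation to Girault--Raviart (Chap.\ I.2), and your argument --- splitting into real and imaginary parts, invoking the classical surjectivity of the divergence from $H^1_0(\Omega)^d$ onto the mean-zero subspace of $L^2(\Omega)$, then using the decomposition $\oV = \oVzero \oplus \oVperp$ together with the open mapping theorem --- is precisely the standard filling-in of that reference. You also read the paper's (mistyped) definition of $\oVperp$ correctly as the orthogonal complement of $\oVzero$ in $\oV$, which is what the stated decomposition requires.
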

\begin{proof}
	The proof of the lemma follows similar arguments as in the real case (see, e.g.,\cite[Chap. 1.2]{GiraultRaviart1986}), so we omit it.
\end{proof}
Now we exhibit the well-posedness of the complex PDE system \eqref{eq:ccbm}.
On this purpose, we introduce the following forms: 
\begin{equation}\label{eq:forms_for_the_state_problem}
\left\{
	\begin{aligned}
		\aaa({\bphi},{\bpsi}) &= \intO{\alpha \nabla {\bphi} : \nabla {\cbpsi}} + i \intS{({\bphi} \cdot \nn)({\cbpsi} \cdot \nn)}, \quad \forall {\bphi}, {\bpsi} \in {\Vgamma},\\
		b(\bphi,\lambda) &= -\intO{\bar{\lambda} ( \nabla \cdot \bphi) }, \quad \forall {\bphi}\in {\Vgamma}, \ \forall \lambda \in {Q}, \\
		F(\bpsi) &= \intO{\ff \cdot \cbpsi},  \quad \forall {\bpsi}\in {\Vgamma}.
 	\end{aligned}
\right.
\end{equation}
Hence, we can state the weak formulation of \eqref{eq:ccbm} as follows:
find $({\uu},p) \in \Vgamma \times Q$ such that
    \begin{equation}\label{eq:ccbm_weak_form}
    \left\{\arraycolsep=1.4pt\def\arraystretch{1.1}
    \begin{array}{rcll}
    	\aaa({\uu},{\bphi}) + b(\bphi,p)	&=& F(\bphi),		&\quad\forall {\bphi}\in {\Vgamma},\\ 
    				b(\uu,\lambda)	&=& 0,			&\quad\forall \lambda \in {Q}.\\ 
    \end{array}
    \right.
    \end{equation}
Our first proposition is given next.
\begin{proposition}
	For a given $\ff \in L^{2}(\Omega)^{d}$, the weak formulation of \eqref{eq:ccbm} given above admits a unique solution $(\uu,p) \in {\Vgamma} \times Q$ which depends continuously on the data.
	Moreover, we have
	%
	%
	\[
		\vertiii{\uu}_{X} , \ \vertiii{p}_{Q} \lesssim \vertiii{\ff}_{0,\Omega}.
	\] 
\end{proposition}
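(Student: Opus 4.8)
The plan is to recognize \eqref{eq:ccbm_weak_form} as a complex (sesquilinear) saddle-point problem and to verify the hypotheses of the generalized Babuška--Brezzi theorem in its Hilbert-space form. Concretely, I would check four ingredients: (i) continuity of the sesquilinear forms $a$ and $b$ and of the antilinear functional $F$; (ii) coercivity of $a$; (iii) the inf-sup (LBB) stability of $b$; and (iv) the resulting stability estimates. Continuity is routine: Cauchy--Schwarz bounds the volume term of $a$ by $\alpha\vertiii{\bphi}_X\vertiii{\bpsi}_X$, the trace theorem controls the boundary term $\intS{(\bphi\cdot\nn)(\cbpsi\cdot\nn)}$ by $\vertiii{\bphi}_X\vertiii{\bpsi}_X$, while $\abs{b(\bphi,\lambda)}\lesssim\vertiii{\bphi}_X\vertiii{\lambda}_Q$ and $\abs{F(\bpsi)}\le\vertiii{\ff}_{0,\Omega}\vertiii{\bpsi}_X$ follow from Cauchy--Schwarz and the continuity of the divergence.

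For the coercivity, I would take $\bpsi=\bphi$ and compute
\[
a(\bphi,\bphi)=\alpha\intO{\abs{\nabla\bphi}^2}+i\intS{\abs{\bphi\cdot\nn}^2},
\]
so that the real and imaginary parts are both nonnegative. Using $\abs{z}\ge\tfrac{1}{\sqrt2}(\operatorname{Re}z+\operatorname{Im}z)$ for $z$ with nonnegative real and imaginary parts, this yields
\[
\abs{a(\bphi,\bphi)}\ge\frac{1}{\sqrt2}\left(\alpha\intO{\abs{\nabla\bphi}^2}+\intS{\abs{\bphi\cdot\nn}^2}\right)\ge\frac{\alpha}{\sqrt2}\intO{\abs{\nabla\bphi}^2}.
\]
Since every $\bphi\in\Vgamma$ vanishes on $\Gamma$, the Poincar\'e inequality upgrades the right-hand side to $c\,\vertiii{\bphi}_X^2$, giving complex coercivity of $a$ on the whole of $\Vgamma$ (hence, a fortiori, on the kernel of $b$).

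The inf-sup condition for $b$ is exactly where Lemma \ref{lem:for_inf_sup_lemma} enters: the surjectivity of the divergence and the isomorphism $\mathcal{T}_{\perp}\colon\oVperp\to\oQ$ furnish a bounded right inverse and thus a stability constant $\beta>0$ for the zero-mean part of the pressure. One extra point to address is that the trial space here is $\Vgamma$ (free normal trace on $\Sigma$) and the pressure space is the full $Q=\LL^2(\Omega)$ rather than $\oQ$; I would cover the complementary constant direction by testing $b$ against a field $\bphi\in\Vgamma$ with nonzero flux $\intS{\bphi\cdot\nn}\ne0$, which is admissible precisely because functions in $\Vgamma$ need not vanish on $\Sigma$. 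Combining this with the lemma gives the inf-sup condition over $\Vgamma\times Q$. With (i)--(iii) in hand, the complex Brezzi theorem yields a unique $(\uu,p)\in\Vgamma\times Q$ together with the estimates $\vertiii{\uu}_X\lesssim\vertiii{F}_{\ast}$ and $\vertiii{p}_Q\lesssim\vertiii{F}_{\ast}$, which close via $\vertiii{F}_{\ast}\le\vertiii{\ff}_{0,\Omega}$.

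I expect the main obstacle to be the coercivity step rather than any single estimate: the imaginary part of $a$ only controls the normal trace on $\Sigma$ and supplies no interior coercivity, so the argument must extract all the $H^1$ control from the real (viscous) part via Poincar\'e, and the modulus inequality for complex numbers is what makes the two nonnegative pieces cooperate. The secondary delicate point is the LBB stability on the full space $Q=\LL^2(\Omega)$, where one must argue that Lemma \ref{lem:for_inf_sup_lemma} together with the boundary flux through $\Sigma$ accounts for both the zero-mean and the constant parts of the pressure.
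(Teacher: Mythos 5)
Your proposal is correct and follows essentially the same route as the paper: continuity of $a$, $b$, $F$ by Cauchy--Schwarz and trace estimates, coercivity of $a$ from the viscous term plus Poincar\'e (the paper bounds $\Re\,a(\bphi,\bphi)$ directly rather than $\abs{a(\bphi,\bphi)}$, a trivial variant), and the inf-sup condition on $\Vgamma\times Q$ obtained by splitting the pressure into its zero-mean part (handled by Lemma \ref{lem:for_inf_sup_lemma}) and its constant part (handled by a fixed field $\widetilde{\bphi}_{\ast}\in\Vgamma$ with $\intS{\widetilde{\bphi}_{\ast}\cdot\nn}\neq0$), before concluding by the complex Babu\v{s}ka--Brezzi theorem. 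The paper's test function $\bphi=\obphi+t_{0}\lambda_{\ast}\widetilde{\bphi}_{\ast}$ is exactly the construction you sketch for covering the constant pressure direction.
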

%

%
The proof of the proposition is based on standard arguments as in the real case.
Indeed, the validity of the statement follows from the continuity of the sesquilinear form $a(\cdot,\cdot)$ on $X\times X$, its coercivity in $X$, i.e., to show that there exists a constant $c_{a}>0$ such that
\begin{equation}\label{eq:coercivity}
	\Re(a(\bphi,\bphi)) \geqslant c_{a} \vertiii{\bphi}_{X}, \quad \forall \bphi \in {\Vgamma},
\end{equation}
the continuity of $F$ in $X$, and on an inf-sup condition.
For the last condition, it needs to be shown that the bilinear form $b$ satisfies the condition that there is a constant $\beta_{0} > 0$ such that
\begin{equation}
\label{eq:inf_sup_condition}
	\inf_{\substack{{\lambda} \in {Q}\\ {\lambda}\neq0}} \sup_{\substack{\bphi \in {\Vgamma}\\ \bphi \neq \vect{0}}} \frac{b(\bphi,{\lambda})}{\vertiii{\bphi}_{X} \vertiii{{\lambda}}_{Q} } \geqslant \beta_{0}.
\end{equation}

Let us confirm the above results for completeness.
Let $\ff \in L^{2}(\Omega)^{d}$ and $\bphi, \bpsi \in \Vgamma$, ${\lambda} \in {Q} $.

\medskip 
\noindent \textbf{Continuity} We have the following computations
	\begin{align*}
		|a(\bphi,\bpsi)| &\leqslant c \left( \vertiii{\nabla \bphi}_{0,\Omega} \vertiii{\nabla \cbpsi}_{0,\Omega} + \vertiii{\bphi}_{0,\Sigma} \vertiii{\cbpsi}_{0,\Sigma}\right)
			%
			%
			\leqslant c \vertiii{\bphi}_{1,\Omega} \vertiii{\bpsi}_{1,\Omega},\\
		|F(\bphi)| &= \left| \intO{\ff \cdot \cvarphi} \right| 
			\leqslant \vertiii{\ff}_{0,\Omega} \vertiii{\cbphi}_{0,\Omega}
			\leqslant \vertiii{\ff}_{0,\Omega} \vertiii{\bphi}_{1,\Omega},\\
		|b(\bphi,{\lambda})| &= \left| \intO{\overline{{\lambda}} \cdot \nabla \cdot \bphi} \right| 
			\leqslant \vertiii{{\lambda}}_{Q} \vertiii{ \nabla \cdot \bphi }_{0,\Omega}
			\leqslant c \vertiii{{\lambda}}_{Q} \vertiii{ \nabla \bphi }_{0,\Omega}
			\leqslant c \vertiii{{\lambda}}_{Q} \vertiii{ \bphi }_{1,\Omega}.		
	\end{align*}
\textbf{Coercivity} We also have the inequality condition
	\[
		\Re(a(\bphi,\bphi)) = \alpha \vertiii{\nabla \bphi}_{0,\Omega}^{2} \geqslant c_{a} \vertiii{\bphi}_{1, \Omega}^{2},
	\]
	for some constant $c_{a} > 0$, which confirms \eqref{eq:coercivity}.

\medskip	
\noindent \textbf{Inf-Sup condition} Next, we want to show that we can find $\beta_{0} > 0$ such that
	\begin{equation}\label{eq:inf_sup_nts}
	\sup_{\substack{\bphi \in {\Vgamma}\\ \bphi \neq \vect{0}}} \frac{b(\bphi,{\lambda})}{\vertiii{\bphi}_{X} } \geqslant \beta_{0} \vertiii{{\lambda}}_{Q},\quad \forall {\lambda} \in Q.
	\end{equation}
	Let ${\lambda} \in Q$ be fixed arbitrarily.
	We define ${\lambda} = \olambda + {\lambda}_{\ast}$ with $\olambda \in \oQ$ and ${\lambda}_{\ast}:= \dfrac{1}{|\Omega|}\displaystyle\intO{{\lambda}} \in \mathbb{C}$.
	By Lemma \ref{lem:for_inf_sup_lemma}, there exists $\obphi \in \oVperp$ such that $- \nabla \cdot \obphi = \olambda$ ($= \mathcal{T}_{\perp} \obphi$).
	We let $\widetilde{\bphi}_{\ast}$ be a fixed function in $\Vgamma$ such that $\intS{\widetilde{\bphi}_{\ast} \cdot \nn} \neq 0$ and define
	\begin{equation}\label{eq:definition_of_w}
		{\bphi}_{\ast} := \widetilde{\bphi}_{\ast}\left( \intS{\widetilde{\bphi}_{\ast} \cdot \nn}\right)^{-1} \in \Vgamma
		\quad \text{satisfying} \quad \intS{{\bphi}_{\ast} \cdot \nn} = -1.
	\end{equation}
	With the above function, we consider $\bphi = \obphi + t_{0} {\lambda}_{\ast} \widetilde{\bphi}_{\ast} \in \Vgamma$ where $t_{0} > 0$.
	This leads to the following inequality
	\begin{equation}\label{eq:supremum_estimate}
		\sup_{\substack{\bphi \in {\Vgamma}\\ \bphi \neq \vect{0}}} \frac{b(\bphi,{\lambda})}{\vertiii{\bphi}_{X} } 
			\geqslant \frac{b(\obphi + t_{0} {\lambda}_{\ast} \widetilde{\bphi}_{\ast} , \olambda + {\lambda}_{\ast})}{\vertiii{\obphi + t_{0} {\lambda}_{\ast} \widetilde{\bphi}_{\ast} }_{X} } 
				=: \frac{N_{1}}{N_{2}}.
	\end{equation}
	For the numerator $N_{1}$, we have the following computations
	\begin{align*}
		N_{1} 
		&= -(\obphi + t_{0} {\lambda}_{\ast} \widetilde{\bphi}_{\ast} , \olambda + {\lambda}_{\ast})\\
		&= -(\nabla \cdot \obphi, \olambda) 
			- (\nabla \cdot \obphi, {\lambda}_{\ast}) 
			- t_{0} {\lambda}_{\ast} (\nabla \cdot \widetilde{\bphi}_{\ast}, \olambda) 
			- t_{0} {\lambda}_{\ast} (\nabla \cdot \widetilde{\bphi}_{\ast}, {\lambda}_{\ast})\\
		&\stackrel{\langle 1\rangle}{=} \vertiii{\olambda}_{Q}^{2}
			- t_{0} {\lambda}_{\ast} (\nabla \cdot \widetilde{\bphi}_{\ast}, \olambda) 
			- t_{0} {\lambda}_{\ast} (\nabla \cdot \widetilde{\bphi}_{\ast}, {\lambda}_{\ast})\\		
		&\stackrel{\langle 2\rangle}{\geqslant} \vertiii{\olambda}_{Q}^{2}
			- t_{0} |{\lambda}_{\ast}| \vertiii{\widetilde{\bphi}_{\ast}}_{1,\Omega} \vertiii{\olambda}_{Q}
			+ t_{0} |{\lambda}_{\ast}|^{2}\\	
		&\stackrel{\langle 3\rangle}{=} \vertiii{\olambda}_{Q}^{2}
			- c_{0} t_{0} |{\lambda}_{\ast}| \vertiii{\olambda}_{Q}
			+ t_{0} |{\lambda}_{\ast}|^{2}, \qquad (\mathbb{R}^{+} \ni c_{0}=\vertiii{\widetilde{\bphi}_{\ast}}_{1,\Omega})\\		
		&\stackrel{\langle 4\rangle}{\geqslant} \left( 1 - \frac{c_{0} t_{0}}{2\varepsilon_{0}}\right) \vertiii{\olambda}_{Q}^{2}
			+ t_{0} \left( 1 - \frac{\varepsilon_{0} c_{0}}{2}\right) |{\lambda}_{\ast}|^{2}, \qquad (\varepsilon_{0} > 0),\\	
		&\geqslant
			\min\left\{  \left( 1 - \frac{c_{0} t_{0}}{2\varepsilon_{0}}\right), t_{0} \left( 1 - \frac{\varepsilon_{0} c_{0}}{2}\right) \right\} \left( \vertiii{\olambda}_{Q}^{2} +  |{\lambda}_{\ast}|^{2} \right)\\
		&=: c_{1}(c_{0}, t_{0}, \varepsilon) h({\lambda}) =: c_{1} h({\lambda}).
	\end{align*}
	Here, $\langle 1\rangle$ is obtained from the fact that $\obphi \in \oVzero$ and the assumption that $-\nabla \cdot \obphi = \olambda$, $\langle 2\rangle$ follows from Green's theorem, together with \eqref{eq:definition_of_w}, $\langle 3\rangle$ is due to the assumption that $\widetilde{\bphi}_{\ast}$ is fixed, so $\vertiii{\widetilde{\bphi}_{\ast}}_{1,\Omega}$ equates to some constant $c_{0} > 0$, while inequality $\langle 4\rangle$ is obtained because of Peter-Paul inequality\footnote{Here, of course, $\varepsilon_{0} > 0$ is chosen such that $c_{1}>0$. For example, this condition holds if we choose $\varepsilon_{0} = c_{0}^{-1} > 0$ and $t_{0} = \varepsilon c_{0}^{-1} = c_{0}^{-2}>0$.} applied to the product $|{\lambda}_{\ast}| \vertiii{\olambda}_{Q}$.
	
	Let us further estimate below the sum $h({\lambda})$.
	Note that we have the following calculations
	\begin{align*}
		\vertiii{{\lambda}}_{Q}^{2}
			= \vertiii{\olambda + {\lambda}_{\ast}}_{Q}^{2}
			&= \intO{ (\olambda + {\lambda}_{\ast})^{2}}
			= \intO{ \left( |\olambda|^{2} + 2 \olambda {\lambda}_{\ast} + |{\lambda}_{\ast}|^{2} \right)},\\
			&= \vertiii{\olambda}_{Q}^{2} + |{\lambda}_{\ast}|^{2} |\Omega|, \qquad (\olambda \in \oQ, \ {\lambda}_{\ast} \in \mathbb{C})\\
			&\begin{cases} 
                             \ \leqslant & \max\{1,|\Omega|\} h({\lambda}) =: c_{2}^{-1} h({\lambda}) \\ 
                             \ \geqslant &\min\{1,|\Omega|\} h({\lambda}) =: c_{3}^{-1} h({\lambda}).
                           \end{cases}
	\end{align*}
	Thus, there actually exist constants $c_{2}, c_{3} > 0$ such that the following inequalities hold
	\[
		c_{2} \vertiii{{\lambda}}_{Q}^{2} \leqslant h({\lambda}) \leqslant c_{3} \vertiii{{\lambda}}_{Q}^{2}.
	\]
	By this estimate, we get
	\begin{equation}\label{eq:numerator_estimate}
		b(\obphi + t_{0} {\lambda}_{\ast} \widetilde{\bphi}_{\ast} , \olambda + {\lambda}_{\ast}) 
		\geqslant c_{1} c_{2} \vertiii{{\lambda}}_{Q}^{2},
	\end{equation}
	for the numerator in \eqref{eq:supremum_estimate}.
	
	For the denominator $N_{2}$ in \eqref{eq:supremum_estimate}, we have the following estimations
	\begin{align*}
	N_{2}
	&= \vertiii{\obphi}_{X} + t_{0} |{\lambda}_{\ast}| \vertiii{\widetilde{\bphi}_{\ast} }_{X}\\
	&= \vertiii{\mathcal{T}_{\perp}^{-1} \olambda}_{1,\Omega} + t_{0} c_{0} |{\lambda}_{\ast}| \\
	&= \left\|\mathcal{T}_{\perp}^{-1}\right\|_{\mathscr{B}} \vertiii{\olambda}_{Q} + t_{0} c_{0} |{\lambda}_{\ast}| \\
	&\leqslant \max\left\{ \left\|\mathcal{T}_{\perp}^{-1}\right\|_{\mathscr{B}} , t_{0} c_{0} \right\} \left( \vertiii{\olambda}_{Q} +  |{\lambda}_{\ast}| \right) \\
	&\leqslant \sqrt{2}\max\left\{ \left\|\mathcal{T}_{\perp}^{-1}\right\|_{\mathscr{B}} , t_{0} c_{0} \right\} \left( \vertiii{\olambda}_{Q}^{2} +  |{\lambda}_{\ast}|^{2} \right)^{1/2} \\
		&\leqslant \sqrt{2c_{3}}\max\left\{ \left\|\mathcal{T}_{\perp}^{-1}\right\|_{\mathscr{B}} , t_{0} c_{0} \right\} \vertiii{{\lambda}}_{Q}.
	\end{align*}
	In above, we have used the fact that $\mathcal{T}_{\perp}^{-1}$ is a bounded linear operator.
	Meanwhile, $\left\| \cdot \right\|_{\mathscr{B}}$ denotes the operator norm for linear operators in $\mathscr{B} =\mathscr{L}(\oQ,\oVperp)$.
	Combining the above estimate with \eqref{eq:numerator_estimate}, we finally get
	\[
		\sup_{\substack{\bphi \in {\Vgamma}\\ \bphi \neq \vect{0}}} \frac{b(\bphi,{\lambda})}{\vertiii{\bphi}_{X} } 
		\geqslant \frac{N_{1}}{N_{2}} \geqslant \frac{c_{1} c_{2} \vertiii{{\lambda}}_{Q}^{2}}{\sqrt{2c_{3}}\max\left\{ \left\|\mathcal{T}_{\perp}^{-1}\right\|_{\mathscr{B}} , t_{0} c_{0} \right\}\vertiii{{\lambda}}_{Q}} =: \beta_{0} \vertiii{{\lambda}}_{Q}.
	\]
	This proves \eqref{eq:inf_sup_nts}, and thus \eqref{eq:inf_sup_condition}.
\subsection{The proposed shape optimization formulation} 
To solve Problem \ref{eq:main_problem}, we introduce the cost functional
\begin{equation}\label{eq:cost_function}
	J(\Omega) = \frac12\|\ui\|^2_{L^{2}(\Omega)^{d}} + \frac12\|\pim\|^2_{L^2(\Omega)^{d}} = \frac12 \intO{\left( |\ui|^2 + |\pim|^2 \right)},
\end{equation}
where $(\ui,\pim)$ is subject to the state problem \eqref{eq:ccbm}.
Notice that compared to $J_{KV}$, the cost function $J$ only requires the solution of a single complex PDE problem to be solved.
The optimization problem we consider here is the problem of minimizing $J(\Omega)$ over a set of admissible domains $\mathcal{O}_\text{ad}$, where $\mathcal{O}_\text{ad}$ is essentially the set of $\mathcal{C}^{k,1}$, $k\geqslant1$, $k\in \mathbb{N}$, (non-empty) doubly connected domains with (fixed) interior boundary $\Gamma$ and (free) exterior boundary $\Sigma$. 
In other words, we consider the shape optimization problem that reads as follows: find $\Omega$ such that
\begin{equation}\label{eq:shape_optimization_problem}
	J(\Omega) = \min_{\tilde{\Omega} \in \mathcal{O}_\text{ad} } J(\tilde{\Omega}).
\end{equation}
We note that it is actually enough to consider $\Gamma$ to be only Lipschitz regular to derive the shape derivative of the cost function, but for simplicity we also assume it to be $\mathcal{C}^{k,1}$ regular. 
We emphasize that, in this paper, we will not tackle the question of existence of optimal shape solution to \eqref{eq:shape_optimization_problem}.
Instead, we will tacitly assume the existence of solution to the free surface problem \eqref{eq:FSP} and adopt the shape optimization formulation \eqref{eq:shape_optimization_problem} to resolve \eqref{eq:FSP} numerically.

To numerically solve the optimization problem $J(\Omega) \to \inf$, we will apply a shape-gradient-based descent method based on finite element method (FEM).
We will not, however, employ any kind of adaptive mesh refinement in our numerical scheme as opposed to \cite{RabagoAzegami2019a,RabagoAzegami2019b,RabagoAzegami2020}.
In this way, we can further assess the stability of the new method in comparison with the classical least-squares method of tracking the Dirichlet data.
The expression for the shape derivative of the cost will be exhibited in the next section using \textit{shape calculus} \cite{DelfourZolesio2011,HenrotPierre2018,MuratSimon1976,Simon1980,SokolowskiZolesio1992}.
However, as opposed to \cite{Kasumba2014,BouchonPeichlSayehTouzani2017}, our strategy to obtain the shape gradient expression does not require the knowledge of the shape derivative of the state, nor its material derivative.

Throughout the rest of the paper, we shall refer to our proposed shape optimization method simply by CCBM.
\section{Computation of the shape derivative}\label{sec:Computation_of_the_shape_derivative}
The main purpose of this section is to characterize the first-order shape derivative of the cost function $J$ given in Proposition \ref{prop:the_shape_derivative_of_the_cost}. 

\subsection{Some concepts from shape calculus}
Before we derive the shape gradient of the cost function $J$, we briefly review in this section some important concepts and results from shape calculus.

Consider a (convex) bounded hold-all domain $U$ of class $\mathcal{C}^{k,1}$, $k \geqslant 1$, strictly containing $\overline{\Omega}$ (the closure of $\Omega$) and define $T_{t}$ as the \textit{perturbation of the identity} $id$\footnote{Here, $id$ is also used to denote the identity matrix in $d$-dimension. If there is no confusion, this abuse of notation is used throughout the paper.} given by the map
\begin{equation}\label{eq:poi}
	T_{t} = T_{t}({\VV}) = id + t \VV, \qquad (T_{t} : \overline{U} \longmapsto \mathbb{R}^{d}),
\end{equation}
where $\VV := (\theta_{1}, \theta_{2}, \ldots, \theta_{d})^{\top} \in \mathbb{R}^{d}$ is a $t$-independent deformation field belonging to the admissible space
\begin{equation}
\label{eq:space_for_V}
	\vect{\Theta}^{k}:=\{\VV\in \mathcal{C}^{k,1}(\overline{U})^{d} \mid \VV = \vect{0} \ \text{on} \ \Gamma \cup \partial U\},\footnote{Here, and throughout the paper, $\mathcal{C}^{k,1}(\,\cdot\,)^{d} := \mathcal{C}^{k,1}(\,\cdot\,; \mathbb{R}^{d})$. Similarly, $\mathcal{C}^{k,1}(\,\cdot\,)^{d\times d} := \mathcal{C}^{k,1}(\,\cdot\,; \mathbb{R}^{d \times d})$}
\end{equation}
and $k \in \mathbb{N}$, later on specified as what is needed.
Hereinafter, $t$ is assumed sufficiently small such that $T_{t}$ is a diffeomorphism from $\Omega \in \mathcal{C}^{k,1}$ onto its image.\footnote{In fact, $T_{t} : U \longmapsto U$ is a $\mathcal{C}^{k,1}$ diffeomorphism for sufficiently small $t$.}
In other words, we suppose that the reference domain $\Omega$ and its perturbation $\Omega_{t}$ have the same topological structure and regularity under the transformation $T_{t}$.
On this purpose, particularly for $k=1$ (thus, $\VV \in \vect{\Theta}^{1}$), we let $\varepsilon \in \mathbb{R}^{+}$ be sufficiently small such that $[t \mapsto T_{t}] \in \mathcal{C}^{1}(\mathcal{I},\mathcal{C}^{1,1}(\overline{U})^{d})$, where $\mathcal{I} := [0,\varepsilon]$.
Throughout the paper, when not specified, it is always assumed that (at least) $k=1$ so that $T_{t}$ becomes a $\mathcal{C}^{1,1}$ diffeomorphism.
 
By definition, we also have the perturbation $\Sigma_{t}:=T_{t}(\Sigma)$ and $\Gamma_{t}:=T_{t}(\Gamma) \equiv \Gamma$ where the latter identity is due to the fact that $\VV = \vect{0}$ on $\Gamma$.
In addition, of course, $\Omega_{0}=\Omega$ and $\Sigma_{0}=\Sigma$.
Accordingly, the set of all admissible domains $\mathcal{O}_{ad}$ is given as follows
\begin{equation}\label{eq:admissible_domains}
	\mathcal{O}_{ad} = \left\{T_{t}({\VV})(\overline{\Omega}) \subset U \mid \Omega \in \mathcal{C}^{k,1}, k \in \mathbb{N}, t \in \mathcal{I}, \VV \in \vect{\Theta}^{k} \right\}. 
\end{equation}

The functional $J : \mathcal{O}_{ad} \to \mathbb{R}$ has a directional \textit{first-order} \textit{Eulerian derivative} at $\Omega$ in the direction of the field $\VV$ if the limit
\begin{equation}
\label{eq:limit_J}
\lim_{t \searrow0} \frac{J(\Omega_{t}) - J(\Omega)}{t} =: {d}J(\Omega)[\VV]
\end{equation}
exists (see, e.g., \cite[Sec. 4.3.2, Eq. (3.6), p. 172]{DelfourZolesio2011}). 
The functional $J$ is said to be \textit{shape differentiable} at $\Omega$ in the direction of $\VV$ if the map $\VV \mapsto {d}J(\Omega)[\VV]$ is linear and continuous.
In this case, we refer to the map as the \textit{shape gradient} of $J$.

We introduce a few more notations for ease of writing.
We denote by $DT_{t}$ the Jacobian matrix of $T_{t}$ (i.e., $(DT_{t})_{ij} = \partial (T_{t})_{i}/\partial x_{j}$) and write the inverse and inverse transpose of this matrix by $(DT_{t})^{-1}$ and $(DT_{t})^{-\top} := ((DT_{t})^{\top})^{-1}$, respectively.
For convenience, we define
\[
	I_{t} := \det \, DT_{t},\quad
	A_{t} := I_{t}(DT_{t}^{-1})(DT_{t})^{-\top},\quad
	\text{and} \quad B_{t} := I_{t} \abs{\Mt \nn}, \quad \Mt:=(DT_{t})^{-\top}.
\]
For $t \in \mathcal{I}$, $I_{t}$ is positive.
At $t=0$, it is evident that $I_{0} = 1$, $A_{0} = id$ (because $DT_{0}^{-1} = id$ and $(DT_{0})^{-\top} = id$), and $B_{0} = 1$. 
Moreover, on $\mathcal{I}$, the maps $t \mapsto I_t$, $t \mapsto A_t$, and $t \mapsto B_t$ are continuously differentiable.
That is, for $t \in \mathcal{I}$, we have (see, e.g., \cite{IKP2006,IKP2008})
\begin{equation}\label{eq:regular_maps}
\left\{
\begin{aligned}
	[t \mapsto I_t] &\in \mathcal{C}^{1}(\mathcal{I},\mathcal{C}(\overline{\Omega})), \\
	[t \mapsto A_t] &\in \mathcal{C}^{1}(\mathcal{I},\mathcal{C}(\overline{\Omega})^{d \times d}),\\
	[t \mapsto B_t] &\in \mathcal{C}^{1}(\mathcal{I},\mathcal{C}(\Sigma)).
\end{aligned}
\right.
\end{equation}
Furthermore, $[t \mapsto I_t] \in \mathcal{C}^{1}(\mathcal{I},\mathcal{C}^{0,1}(\overline{U}))$ and $[t \mapsto A_t] \in \mathcal{C}(\mathcal{I},\mathcal{C}(\overline{U})^{d \times d})$.

The derivatives of the maps in \eqref{eq:regular_maps} are respectively given as follows:
\begin{equation}\label{eq:limits_of_maps}
\begin{aligned}
	\frac{d}{dt}I_{t} \big\rvert_{t=0}
		&= \lim_{t\to 0} \frac{I_{t} - 1}{t} = \dive \VV,\\
	\quad \frac{d}{dt}A_{t} \big\rvert_{t=0}
		& = \lim_{t\to 0} \frac{A_{t} - {id}}{t}
		= (\dive \VV){id} -  {D} \VV - ({D} \VV)^\top =: A,
	\\\quad\frac{d}{dt}B_{t} \big\rvert_{t=0}
		& =  \lim_{t\to 0} \frac{B_{t} - 1}{t}
		= \dive_{\Sigma} \VV
		= \dive \VV \big\rvert_{\Sigma} - ({D} \VV\nn)\cdot\nn,
\end{aligned}
\end{equation}
where ${\operatorname{div}}_{\Sigma} \VV$ denotes the tangential divergence of the vector $\VV$ on $\Sigma$.
Meanwhile, for simplicity we write $\Vn$ instead of $\VV \cdot \nn$. 

Additionally to the above results, we specifically point out that $(d/dt) DT_{t} \big\rvert_{t=0} = D\VV \in \mathcal{C}^{0,1}(\overline{U})^{d\times d}$ which implies that $[t \mapsto DT_{t}] \in \mathcal{C}^{1}(\mathcal{I},\mathcal{C}^{0,1}(\overline{U})^{d\times d})$.
On the other hand, we have $[t \mapsto T_{t}^{-1}] \in \mathcal{C}(\mathcal{I},\mathcal{C}^{1}(\overline{U})^{d})$ from which we see that $[t \mapsto (DT_{t})^{-\top}] \in \mathcal{C}^{1}(\mathcal{I},\mathcal{C}(\overline{U})^{d\times d})$.

We also assume that for $t \in \mathcal{I}$, we have
\begin{equation}\label{eq:bounds_At_and_Bt}
	0 < \Lambda_{1} \leqslant I_{t} \leqslant \Lambda_{2}
	\qquad \text{and} \qquad 0 < \Lambda_{3}\abs{\xi}^{2} \leqslant A_{t}\xi \cdot \xi \leqslant \Lambda_{4}\abs{\xi}^{2},
\end{equation}
for all $\xi \in \mathbb{R}^{d}$, for some constants $\Lambda_{1}$, $\Lambda_{2}$, $\Lambda_{3}$, and $\Lambda_{4}$ ($\Lambda_{1} < \Lambda_{2}$, $\Lambda_{3} < \Lambda_{4}$).

Furthermore, as we intend to refer any function $\bphi_{t} : \Omega_{t} \to \mathbb{R}^{d}$ to the reference domain using the transformation $T_{t}$, we shall make use of the notation $\bphi^{t}$ which is essentially the composite function $\bphi_{t} \circ T_{t}:\Omega \to \mathbb{R}^{d}$.

Lastly, to close the section we list some properties of the Jacobian of the map $T_{t}$ and state an auxiliary result that will be helpful with our investigation.
Their proofs are provided in Appendix \ref{appxsec:proofs}.

\begin{lemma}\label{lem:properties_of_the_Jacobian}
	Let $\Omega$ and $U$ be nonempty bounded open connected subsets of $\mathbb{R}^{d}$ (defined as before).
	Also, let $T_{t}$ be given by \eqref{eq:poi} and $\VV \in \vect{\Theta}^{k}$, $k \geqslant 1$.
	Then, we have the following results:
	\begin{itemize}
		\item $DT_{t}$ and $DT_{t}^{-1}$ are Lipschitz continuous in $U$.
		\item there exists some constant $C>0$ such that $\abs{DT_{t}^{-1}(x)}_{\infty} < C < \infty$, for all $x \in \overline{U}$.
		\item $\lim_{t \to 0} \dfrac{1}{t}\left( DT_{t} - I\right) = D\VV$ and $\lim_{t \to 0} \dfrac{1}{t}\left[ (DT_{t})^{-1} - I\right] = -D\VV$.
	\end{itemize}
\end{lemma}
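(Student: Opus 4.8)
The plan is to prove the three claimed properties of the Jacobian directly from the affine structure of $T_{t}$ in \eqref{eq:poi}, exploiting the fact that $\VV \in \vect{\Theta}^{k}$ with $k \geqslant 1$ is at least $\mathcal{C}^{1,1}(\overline{U})^{d}$, hence $D\VV \in \mathcal{C}^{0,1}(\overline{U})^{d \times d}$ and is bounded on the compact set $\overline{U}$. The essential observation is that $DT_{t} = id + t\, D\VV$, so all three assertions reduce to elementary matrix calculus and a careful quantitative control of the inverse $DT_{t}^{-1}$ for $t$ in the small interval $\mathcal{I} = [0,\varepsilon]$.

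First I would establish the Lipschitz continuity of $DT_{t}$. Since $DT_{t}(x) = id + t\, D\VV(x)$, for any $x, y \in \overline{U}$ we have $\abs{DT_{t}(x) - DT_{t}(y)} = t\,\abs{D\VV(x) - D\VV(y)} \leqslant t\, L \abs{x-y}$, where $L$ is the Lipschitz constant of $D\VV$ coming from $\VV \in \mathcal{C}^{1,1}(\overline{U})^{d}$; taking $t \leqslant \varepsilon$ gives a uniform Lipschitz bound. For $DT_{t}^{-1}$, I would first choose $\varepsilon$ small enough that $t\,\norm{D\VV}_{\infty} \leqslant \tfrac12$, so that the Neumann series $DT_{t}^{-1} = (id + t\,D\VV)^{-1} = \sum_{k=0}^{\infty} (-t\,D\VV)^{k}$ converges and yields the uniform bound $\abs{DT_{t}^{-1}(x)}_{\infty} \leqslant (1 - t\norm{D\VV}_{\infty})^{-1} \leqslant 2 =: C$, which proves the second bullet. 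Lipschitz continuity of $DT_{t}^{-1}$ then follows from the resolvent-type identity $DT_{t}^{-1}(x) - DT_{t}^{-1}(y) = DT_{t}^{-1}(x)\bigl(DT_{t}(y) - DT_{t}(x)\bigr)DT_{t}^{-1}(y)$, combined with the uniform bound $C$ and the already-established Lipschitz continuity of $DT_{t}$.

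For the third bullet, the forward difference quotient is immediate: $\tfrac{1}{t}(DT_{t} - id) = \tfrac{1}{t}(t\,D\VV) = D\VV$ identically in $t$, so its limit is trivially $D\VV$. For the inverse, I would write $\tfrac{1}{t}\bigl(DT_{t}^{-1} - id\bigr) = \tfrac{1}{t}\bigl((id + t\,D\VV)^{-1} - id\bigr)$ and use the Neumann expansion to get $(id + t\,D\VV)^{-1} - id = -t\,D\VV + t^{2}(D\VV)^{2} - \cdots$, whence $\tfrac{1}{t}(DT_{t}^{-1} - id) = -D\VV + t(D\VV)^{2} - \cdots \to -D\VV$ as $t \to 0$, the remainder being controlled in the sup-norm by $t\,\norm{D\VV}_{\infty}^{2}(1 - t\norm{D\VV}_{\infty})^{-1}$.

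I do not anticipate a genuine obstacle here, as the statement is essentially a lemma collecting routine facts; the only point requiring mild care is the uniformity of all estimates in $t \in \mathcal{I}$ and in $x \in \overline{U}$, which is precisely why the interval $\mathcal{I}$ must be taken small (so that $t\,\norm{D\VV}_{\infty} < 1$) and why compactness of $\overline{U}$ together with $\VV \in \mathcal{C}^{1,1}$ is invoked to make $\norm{D\VV}_{\infty}$ and the Lipschitz constant $L$ finite. The mildly delicate step is the Lipschitz continuity of $DT_{t}^{-1}$, where one must convert the product of three matrix factors into a single Lipschitz estimate; everything else is a direct computation with the explicit affine map.
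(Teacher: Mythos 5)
Your proposal is correct, and on the one point the paper actually argues in detail it coincides with the paper's proof: both establish the uniform bound on the inverse Jacobian by expanding $(id + t\,D\VV)^{-1}$ as a Neumann series after shrinking $\varepsilon$ so that $t\abs{D\VV}_{\infty} < 1$ (the paper takes $C = (1-\varepsilon\abs{D\VV}_{\infty})^{-1}$ where you take $C=2$ via the threshold $t\norm{D\VV}_{\infty}\leqslant\tfrac12$; the difference is immaterial). Where you genuinely go beyond the paper is in the first and third bullets: the paper's proof dismisses these with the remark that they ``basically follow from the fact that $T_{t}$ is (at least) a $\mathcal{C}^{1,1}$ diffeomorphism'' and verifies only the sup-norm bound, whereas you supply explicit arguments --- the identity $DT_{t}(x)-DT_{t}(y)=t\left(D\VV(x)-D\VV(y)\right)$ for the Lipschitz continuity of $DT_{t}$, the resolvent-type identity $DT_{t}^{-1}(x)-DT_{t}^{-1}(y)=DT_{t}^{-1}(x)\left(DT_{t}(y)-DT_{t}(x)\right)DT_{t}^{-1}(y)$ for the Lipschitz continuity of the inverse, and the Neumann tail estimate $t\norm{D\VV}_{\infty}^{2}\,(1-t\norm{D\VV}_{\infty})^{-1}$ for the limit $\tfrac1t\left[(DT_{t})^{-1}-id\right]\to -D\VV$. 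This buys a self-contained proof at the cost of a few lines. One minor notational remark: in its verification of the second bullet the paper reads $DT_{t}^{-1}$ as the Jacobian of the inverse map, $D(T_{t}^{-1})(x) = (DT_{t})^{-1}(T_{t}^{-1}(x))$, i.e., the matrix inverse composed with $T_{t}^{-1}$, while you work with the matrix inverse at the same point (which is also the reading forced by the third bullet); since $T_{t}$ maps $\overline{U}$ onto $\overline{U}$ bijectively, the composition changes neither the sup-norm bound nor your Lipschitz argument (composition with the Lipschitz map $T_{t}^{-1}$ preserves Lipschitz continuity), so nothing is lost either way.
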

\begin{lemma}\label{lem:convergence_of_functions}
		Let $\VV \in \vect{\Theta}^{k}$, $k \geqslant 1$. Then, we have the following
		\begin{enumerate}
			\item[(i)] for any $f \in L^{p}(U)$, $p \geqslant 2$, we have
			\[
				\lim_{t \to 0} \|f \circ T_{t} - f\|_{L^{p}(U)} = 0;
			\]
			\item[(ii)] for any $f \in W^{1,p}(U)$, $p \geqslant 2$, we have
			\[
				\lim_{t \to 0} \|f \circ T_{t} - f\|_{W^{1,p}(U)} = 0.
			\]
			\item[(iii)] for $f \in W^{2,p}(U)$, $p \geqslant 1$, the mapping $t \mapsto f \circ T_{t}$ from $\mathcal{I} \to W^{1,p}(\Omega)$ is differentiable at $t=0$ and the derivative is given by
			\[
				\lim_{t \to 0} \frac{1}{t} \left( f\circ T_{t} - f\right) = Df\VV.
			\]
			\item[(iv)] for $f \in H^{1}(U)$. Then, the map $t \to I_{t} f \circ T_{t}$ from $\mathcal{I}$ to $L^{2}(\Omega)$ is differentiable at $t=0$ and the derivative is given by
			\[
				\lim_{t \to 0} \frac{1}{t} \left( I_{t}f\circ T_{t} - f\right) = \operatorname{div}(f\VV).
			\]
		\end{enumerate}
\end{lemma}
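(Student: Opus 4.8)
The plan is to establish (i) first, since the later parts build on it, and then to treat (iii)--(iv) through a single Taylor-type representation of the difference quotient. For (i), I would begin by recording that composition with $T_t$ is \emph{uniformly bounded} on $L^p(U)$ for small $t$: changing variables $y=T_t(x)$ and using that $I_t=\det DT_t$ is bounded above and below on $\mathcal{I}$ (see \eqref{eq:bounds_At_and_Bt} and Lemma \ref{lem:properties_of_the_Jacobian}) yields $\|g\circ T_t\|_{L^p(U)}\le c\,\|g\|_{L^p(U)}$ with $c$ independent of $t$. The claim then follows by density: given $f\in L^p(U)$ and $\eta>0$, pick $g\in\mathcal{C}^{\infty}_{c}(U)$ with $\|f-g\|_{L^p(U)}<\eta$, split $f\circ T_t-f=(f-g)\circ T_t+(g\circ T_t-g)+(g-f)$, bound the outer two terms by the uniform boundedness and by density, and control the middle term by the uniform continuity of $g$ together with $\|T_t-id\|_{\infty}\le t\|\VV\|_{\infty}\to0$.

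For (ii), I would differentiate via the chain rule, $\nabla(f\circ T_t)=(DT_t)^{\top}\big((\nabla f)\circ T_t\big)$, and pass to the limit: $DT_t\to id$ uniformly on $\overline{U}$ by Lemma \ref{lem:properties_of_the_Jacobian}, while $(\nabla f)\circ T_t\to\nabla f$ in $L^p(U)^d$ by applying (i) componentwise to $\nabla f$. Together with $f\circ T_t\to f$ from (i), this gives convergence in $W^{1,p}(U)$.

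The heart of the argument is the representation
\[\tfrac1t\big(f\circ T_t-f\big)(x)=\int_0^1(\nabla f)\big(x+st\VV(x)\big)\cdot\VV(x)\,ds=\int_0^1\big((\nabla f)\circ T_{st}\big)(x)\cdot\VV(x)\,ds,\]
obtained from the fundamental theorem of calculus along the segment $x\mapsto x+st\VV(x)$, where $T_{st}:=id+st\VV$. For (iii), this representation and (i) applied to $\nabla f$ give $L^p$-convergence of the quotient to $\nabla f\cdot\VV=Df\,\VV$, via dominated convergence in $s$ using the uniform bound from (i); note that this part consumes only $f\in W^{1,p}$. To upgrade to $W^{1,p}(\Omega)$ I would differentiate the representation once more; the resulting terms contain $D^2f\circ T_{st}$ and $(D\VV)^{\top}\big((\nabla f)\circ T_{st}\big)$, which converge in $L^p$ by (i) applied to $D^2f$ and $\nabla f$ (this is precisely where the extra regularity $f\in W^{2,p}$ is used) together with $DT_{st}\to id$. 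This is the step I expect to be the main obstacle: one must justify the uniform-in-$t$ bounds on the difference quotients needed both to run the density argument and to differentiate under the integral sign, and then keep track of the second-derivative terms so that the limit is exactly $\nabla(Df\,\VV)$.

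Finally, for (iv) I would split
\[\tfrac1t\big(I_t\,f\circ T_t-f\big)=\frac{I_t-1}{t}\,(f\circ T_t)+\tfrac1t\big(f\circ T_t-f\big).\]
By \eqref{eq:regular_maps}--\eqref{eq:limits_of_maps}, $\tfrac{I_t-1}{t}\to\dive\VV$ in $\mathcal{C}(\overline{\Omega})$, and since $f\circ T_t\to f$ in $L^2$ by (i), the first term tends to $(\dive\VV)\,f$ in $L^2(\Omega)$; the second term tends to $\nabla f\cdot\VV=Df\,\VV$ in $L^2(\Omega)$ by the $p=2$ case of the quotient limit established along the way, which requires only $f\in H^1$. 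Adding the two limits gives $f\,\dive\VV+\nabla f\cdot\VV=\dive(f\VV)$, as claimed.
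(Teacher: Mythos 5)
Your proposal is correct, and it is in fact more self-contained than the paper's own proof. The paper only writes out parts (ii) and (iv) in full: for (i) it defers to the monograph of Delfour and Zol\'esio, and for (iii) to the proof of Lemma 3.5 in the Ito--Kunisch--Peichl paper cited as \cite{IKP2006}. Your arguments for (ii) (chain rule $\nabla(f\circ T_t)=(DT_t)^{\top}((\nabla f)\circ T_t)$, triangle inequality, $(DT_t)^{\top}\to id$ uniformly, plus (i) applied componentwise to $\nabla f$) and for (iv) (the splitting $\tfrac1t(I_t\,f\circ T_t-f)=\tfrac{I_t-1}{t}\,f\circ T_t+\tfrac1t(f\circ T_t-f)$, then \eqref{eq:limits_of_maps} and the quotient limit) coincide exactly with the paper's. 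Where you differ is that you prove (i) by the standard density argument (uniform $L^p$-boundedness of $g\mapsto g\circ T_t$ via change of variables and the Jacobian bounds, plus uniform continuity of a smooth approximant) and (iii) by the representation $\tfrac1t(f\circ T_t-f)=\int_0^1((\nabla f)\circ T_{st})\cdot\VV\,ds$; these are essentially the arguments contained in the works the paper cites, so the mathematics is the same, but your write-up makes the lemma independent of external references. A genuine bonus of your route: your remark that $L^p$-convergence of the difference quotient needs only $f\in W^{1,p}$ tightens a small looseness in the paper, whose proof of (iv) invokes (iii) even though (iii) is stated under $f\in W^{2,p}(U)$ while (iv) assumes only $f\in H^{1}(U)$; your explicit representation makes this regularity bookkeeping transparent. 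The two steps you should write out carefully are the justification of the FTC representation for non-smooth $f$ (approximate by smooth functions and pass to the limit using the uniform composition bound from (i)) and the interchange of $\nabla$ with $\int_0^1 ds$ when upgrading (iii) to $W^{1,p}(\Omega)$-differentiability; both go through by dominated convergence in $s$ with the uniform-in-$t$ bounds you already establish, so there is no gap.
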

For $f \in L^{p}(U)$, $p \geqslant 1$, the limit $\lim_{t\to0} f \circ T_{t}  = f$ in $L^{p}(\Omega)$,
where $\overline{\Omega} \subset U$, also holds; see \cite[Lem. 3.4]{IKP2006}.

Here we note that similar results to the previous lemma for the case of vector-valued functions also hold.
For instance, for $\bphi = (\varphi_{1},\varphi_{2},\ldots,\varphi_{d})^{\top}\in H^{2}(U)^{d}$, we have
\begin{equation}\label{eq:convergence_of_vector_valued_functions_1}
\begin{aligned}
	\lim_{t \to 0} \|\bphi \circ T_{t} - \bphi\|_{H^{1}(U)^{d}} 
	&= \lim_{t \to 0} \left( \sum_{i=1}^{d} \|\varphi_{i} \circ T_{t} - \varphi_{i} \|_{H^{1}(U)^{d}} \right)^{1/2}\\
	&= \left( \sum_{i=1}^{d} \lim_{t \to 0} \|\varphi_{i} \circ T_{t} - \varphi_{i} \|_{H^{1}(U)^{d}} \right)^{1/2}
	= 0.
\end{aligned}
\end{equation}
Moreover, the map $t \mapsto \bphi \circ T_{t}$ from $\mathcal{I} \to H^{1}(\Omega)^{d}$ is differentiable at $t=0$ and we have
\begin{equation}\label{eq:convergence_of_vector_valued_functions_2}
				\lim_{t \to 0} \frac{1}{t} \left( \bphi\circ T_{t} - \bphi\right) = D\bphi\VV.
\end{equation}
Furthermore, the mapping $t \to I_{t} \bphi \circ T_{t}$ from $\mathcal{I}$ to $L^{2}(\Omega)^{d}$ is differentiable at $t=0$ and the following limit holds
\begin{equation}\label{eq:convergence_of_vector_valued_functions_3}
				\lim_{t \to 0} \frac{1}{t} \left( I_{t}\bphi\circ T_{t} - \bphi\right) = \vect{\nabla} \cdot(\bphi \otimes \VV),
\end{equation}
where
\begin{equation}\label{eq:vector_type_divergence}
	\vect{\nabla} \cdot(\bphi \otimes \VV) = ( \operatorname{div}(\varphi_{1}\VV), \operatorname{div}(\varphi_{2}\VV), \ldots, \operatorname{div}(\varphi_{d}\VV) )^{\top}.
\end{equation}
and $\otimes$ denotes the outer product (i.e., $\bphi \otimes \VV$ gives us the matrix $\bphi \VV^{\top} = (\varphi_{j} \theta_{k})_{jk}$ where $j,k = 1, 2, \ldots, d$).

The proof of the above results follows the same argumentation in validating Lemma \ref{lem:convergence_of_functions} which is provided in Appendix \ref{appxsubsec:proof_of_convergence_of_functions}.
\subsection{Some identities from trangential shape calculus}
\label{subsec:some_identities_from_tangential_shape_calculus}

As we will see in the next section, the expression for the shape gradient of $J$ contains expressions from \textit{tangential shape calculus} (see, e.g., \cite[Chap. 9, Sec. 5]{DelfourZolesio2011} and see also \cite[Sec. 5.4.3, pp. 216--221]{HenrotPierre2018} for a detailed discussion).
So, to facilitate our investigation, we summarize below some notations that will appear in our subsequent discussions.
\begin{definition}[{\cite[Chap. 9, Sec. 5.2, eq. (5.17) -- (5.19), p. 497]{DelfourZolesio2011}}]\label{def:tangential_operators}
	For any domain $\Omega \subset \mathbb{R}^{d}$ with $\mathcal{C}^{1,1}$ smooth boundary $\Gamma:=\partial \Omega$, the following expressions are well-defined:
	\begin{itemize}
	\item the \textit{tangential gradient} of $\psi \in \mathcal{C}^{1}(\Gamma)$ is given by 
	\[
		\nabla_{\Gamma} \psi := \nabla \tilde{\psi} \Big|_{\Gamma} - \frac{\partial \tilde{\psi}}{\partial \nn}\nn \in \mathcal{C}^{0}(\Gamma; \mathbb{R}^{d});
	\]
	\item the \textit{tangential Jacobian matrix} of a vector function $\bphi \in \mathcal{C}^{1}(\Gamma; \mathbb{R}^{d})$ is given by
	\[
		D_{\Gamma} \bphi = D \tilde{\bphi} \Big|_{\Gamma} - D\tilde{\bphi}\nn\otimes\nn \in \mathcal{C}^{0}(\Gamma; \mathbb{R}^{d \times d});
	\]
	\item the \textit{tangential divergence} of a vector function $\bphi \in \mathcal{C}^{1}(\Gamma; \mathbb{R}^{d})$ is given by
	\[
		\nabla_{\Gamma} \cdot \bphi := \dive_{\Gamma} \bphi = \dive \tilde{\bphi} \Big|_{\Gamma} - D\tilde{\bphi}\nn\cdot\nn \in \mathcal{C}^{0}(\Gamma).
	\]	
	\end{itemize}
	In above definitions, $\tilde{\psi}$ and $\tilde{\bphi}$ are any corresponding $\mathcal{C}^{1}$ extensions of $\psi$ and $\bphi$ into a neighborhood of $\Gamma$.
\end{definition}
With the above definitions, the formulas given in the next lemma can easily be shown.
\begin{lemma}[{\cite[Chap. 9, Sec. 5.5, eq. (5.26) -- (5.27), p. 498]{DelfourZolesio2011}}]\label{lem:tangential_formulas}
	Consider a $\mathcal{C}^{1,1}$ domain $\Omega$ with boundary $\Gamma := \partial \Omega$.
	Then, for $\psi \in H^{1}(\Gamma)$ and $\bphi \in \mathcal{C}^{1}(\Gamma; \mathbb{R}^{d})$ the following identities hold:
	\begin{itemize}
		\item \textit{tangential divergence formula} (see also \cite[Lem. 2.63, eq. (2.140), p. 91]{SokolowskiZolesio1992}): \[\dive_{\Gamma} (\psi \bphi) = \nabla_{\Gamma} \psi \cdot \bphi + \psi \dive_{\Gamma} \bphi;\]
		\item \textit{tangential Stokes' formula}: \[\displaystyle \intG{\dive_{\Gamma} \bphi} = \intG{\kappa \bphi \cdot \nn};\]
		\item \textit{tangential Green's formula}: \[\displaystyle \intG{\left( \nabla_{\Gamma} \psi \cdot \bphi + \psi \dive_{\Gamma} \bphi \right)} = \intG{\kappa \psi \bphi \cdot \nn}.\]
	\end{itemize}
	As a consequence of the previous identity, we have
	\[
	\intG{\nabla_{\Gamma} \psi \cdot \bphi} = - \intG{\psi \dive_{\Gamma} \bphi}, \qquad \text{whenever $\bphi \cdot \nn = 0$}. 
	\]
\end{lemma}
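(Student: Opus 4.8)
The plan is to establish the three identities in the order stated and to obtain the final consequence as an immediate specialization. The tangential divergence (product) formula is purely algebraic and follows from Definition \ref{def:tangential_operators}; the tangential Stokes' formula is the genuine geometric input; and the tangential Green's formula is deduced by combining the two. Since the tangential operators depend only on the boundary trace and not on the chosen $\mathcal{C}^{1}$ extension, I would first verify each identity for smooth $\psi$ and then extend to $\psi \in H^{1}(\Gamma)$ by a density argument, using that both sides of every identity are continuous with respect to the $H^{1}(\Gamma)$-norm of $\psi$.

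For the product formula I would fix $\mathcal{C}^{1}$ extensions $\tilde{\psi}$, $\tilde{\bphi}$ and compute directly. The Euclidean product rules give $\dive(\tilde{\psi}\tilde{\bphi}) = \nabla\tilde{\psi}\cdot\tilde{\bphi} + \tilde{\psi}\dive\tilde{\bphi}$ and $D(\tilde{\psi}\tilde{\bphi}) = \tilde{\bphi}\otimes\nabla\tilde{\psi} + \tilde{\psi}D\tilde{\bphi}$, whence $D(\tilde{\psi}\tilde{\bphi})\nn\cdot\nn = (\nabla\tilde{\psi}\cdot\nn)(\tilde{\bphi}\cdot\nn) + \tilde{\psi}(D\tilde{\bphi}\nn\cdot\nn)$. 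Subtracting the normal contribution prescribed by Definition \ref{def:tangential_operators} and regrouping reproduces exactly $\nabla_{\Gamma}\psi\cdot\bphi + \psi\dive_{\Gamma}\bphi$, which is the claim.

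The crux is the tangential Stokes' formula, and this is where I expect the main obstacle. I would decompose $\bphi = \bphi_{T} + (\bphi\cdot\nn)\nn$ into its tangential part $\bphi_{T}$ and normal part. For the tangential part I would invoke the surface divergence theorem on the closed (boundaryless) hypersurface $\Gamma$, which yields $\intG{\dive_{\Gamma}\bphi_{T}} = 0$. For the normal part I would use the product formula together with the geometric identity $\dive_{\Gamma}\nn = \kappa$, with $\kappa$ the additive (mean) curvature of $\Gamma$ appearing in the statement: writing $f := \bphi\cdot\nn$, the product formula gives $\dive_{\Gamma}(f\nn) = \nabla_{\Gamma}f\cdot\nn + f\dive_{\Gamma}\nn = \kappa f$, since $\nabla_{\Gamma}f$ is tangential and hence orthogonal to $\nn$. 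Adding the two contributions and integrating gives $\intG{\dive_{\Gamma}\bphi} = \intG{\kappa(\bphi\cdot\nn)}$. The difficulty lies precisely in the two classical geometric facts just used, namely $\dive_{\Gamma}\nn = \kappa$ and the vanishing of the surface integral of the tangential divergence of a tangent field; the former follows from the definition of the Weingarten map on a $\mathcal{C}^{1,1}$ surface and the latter from the intrinsic divergence theorem on a closed manifold, while everything else is routine algebra.

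Finally, the tangential Green's formula follows by applying the tangential Stokes' formula to the field $\psi\bphi$ and expanding its tangential divergence by the product formula:
\[
	\intG{\left( \nabla_{\Gamma}\psi\cdot\bphi + \psi\dive_{\Gamma}\bphi \right)} = \intG{\dive_{\Gamma}(\psi\bphi)} = \intG{\kappa(\psi\bphi)\cdot\nn} = \intG{\kappa\psi(\bphi\cdot\nn)}.
\]
Specializing to fields with $\bphi\cdot\nn = 0$ makes the right-hand side vanish, yielding the stated consequence $\intG{\nabla_{\Gamma}\psi\cdot\bphi} = -\intG{\psi\dive_{\Gamma}\bphi}$ and completing the plan.
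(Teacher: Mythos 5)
Your proposal is correct, but note that the paper itself gives no proof of this lemma: it is quoted verbatim from Delfour--Zol\'esio (with the remark that, given Definition \ref{def:tangential_operators}, the formulas ``can easily be shown''), so there is no in-paper argument to compare against. What you supply is essentially the standard textbook derivation, and it is executed correctly: the product formula is pure algebra from the extrinsic definitions (your identity $D(\tilde{\psi}\tilde{\bphi})\nn\cdot\nn = (\nabla\tilde{\psi}\cdot\nn)(\tilde{\bphi}\cdot\nn) + \tilde{\psi}(D\tilde{\bphi}\nn\cdot\nn)$ is right, and subtracting it from the Euclidean product rule regroups exactly into $\nabla_{\Gamma}\psi\cdot\bphi + \psi\dive_{\Gamma}\bphi$); the Stokes formula is correctly reduced to the two genuine geometric inputs, namely $\intG{\dive_{\Gamma}\bphi_{T}} = 0$ for tangent fields on the closed hypersurface and $\dive_{\Gamma}\nn = \kappa$ (which, in this paper's convention, is essentially the definition of the additive mean curvature, cf. the formula $\kappa = \dive_{\Sigma}\nn$ used in Section \ref{sec:Numerical_Approximation}); and Green's formula plus the final consequence follow formally. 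The one point worth tightening: on a $\mathcal{C}^{1,1}$ domain the normal $\nn$ is only Lipschitz, so the pieces $\bphi_{T}$ and $(\bphi\cdot\nn)\nn$ of your decomposition are Lipschitz rather than $\mathcal{C}^{1}$, and $\kappa \in L^{\infty}(\Gamma)$ exists only almost everywhere; consequently the product formula must be applied to Lipschitz (equivalently $W^{1,\infty}$) data and all identities read a.e.\ on $\Gamma$. This is harmless --- the paper itself later defines the tangential operators on $\vect{W}^{1,1}(\partial\Omega)$ --- but your density argument in $\psi$ should be accompanied by this observation, since otherwise the intermediate quantities $\dive_{\Gamma}(f\nn)$ and $\dive_{\Gamma}\bphi_{T}$ fall outside the $\mathcal{C}^{1}$ setting in which Definition \ref{def:tangential_operators} is stated.
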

Another version of the tangential Green's formula (whose proof can be found, for instance, in \cite{MuratSimon1976}) is given in the next lemma (see, e.g., \cite[Chap. 2, eq. (2.144), p. 92]{SokolowskiZolesio1992}).
\begin{lemma}\label{lem:tangential_Greens_formula}
	Let $U$ be a bounded domain of class $\mathcal{C}^{1,1}$ and $\Omega \subset U$ with boundary $\Gamma:=\partial \Omega$.
	Consider the functions $\bphi \in \mathcal{C}^{1,1}(\overline{U};\mathbb{R}^{d})$ and $\psi \in W^{2,1}(U)$.
	Then, the following equation holds
	\[
	\intS{\left( \nabla \psi \cdot \bphi + \psi \, \dive_{\Sigma} \bphi \right) }
		=  \intS{ \left( \dn{\psi}  +\psi\, \dive_{\Sigma} \nn \right) {\bphi \cdot \nn}}.
	\]
\end{lemma}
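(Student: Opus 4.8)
The plan is to derive this tangential Green's formula by combining the classical (volume) Green's identity with the decomposition of the full gradient into its tangential and normal components on $\Sigma$. First I would recall the pointwise decomposition $\nabla \psi = \nabla_{\Sigma} \psi + (\dn{\psi})\nn$, valid on $\Sigma$ for $\psi \in W^{2,1}(U)$ (since then $\nabla\psi \in W^{1,1}$ has a well-defined trace), where $\nabla_{\Sigma}\psi$ is the tangential gradient from Definition \ref{def:tangential_operators}. Taking the inner product with $\bphi$ splits the first integrand as
\[
	\nabla \psi \cdot \bphi = \nabla_{\Sigma} \psi \cdot \bphi + (\dn{\psi})(\bphi\cdot\nn).
\]
The term $(\dn{\psi})(\bphi\cdot\nn)$ already matches part of the right-hand side, so the crux is to show that the remaining tangential piece $\nabla_{\Sigma}\psi \cdot \bphi$ integrates against the surface measure in a way that produces the curvature term $\psi\,\dive_{\Sigma}\nn$ together with $\psi\,\dive_{\Sigma}\bphi$.

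The key mechanism is the tangential Green's formula already recorded in Lemma \ref{lem:tangential_formulas}, namely
\[
	\intS{\left( \nabla_{\Sigma} \psi \cdot \bphi + \psi \dive_{\Sigma} \bphi \right)} = \intS{\kappa\, \psi\, (\bphi \cdot \nn)},
\]
where $\kappa = \dive_{\Sigma}\nn$ is the (additive) mean curvature of $\Sigma$. Rearranging this identity to isolate $\intS{\nabla_{\Sigma}\psi\cdot\bphi}$ and substituting into the split of $\intS{\nabla\psi\cdot\bphi}$ obtained above should yield
\[
	\intS{\left(\nabla\psi\cdot\bphi + \psi\,\dive_{\Sigma}\bphi\right)}
	= \intS{\left( (\dn{\psi})(\bphi\cdot\nn) + \kappa\,\psi\,(\bphi\cdot\nn)\right)},
\]
and recognizing $\kappa = \dive_{\Sigma}\nn$ on the right turns this into exactly the claimed
\[
	\intS{\left(\nabla\psi\cdot\bphi + \psi\,\dive_{\Sigma}\bphi\right)}
	= \intS{\left(\dn{\psi} + \psi\,\dive_{\Sigma}\nn\right)(\bphi\cdot\nn)}.
\]
So algebraically the statement reduces almost immediately to Lemma \ref{lem:tangential_formulas} once the gradient is decomposed.

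The main obstacle is therefore not the algebra but the \emph{regularity bookkeeping}: Lemma \ref{lem:tangential_formulas} as stated requires $\psi \in H^1(\Gamma)$ and $\bphi \in \mathcal{C}^1(\Gamma;\mathbb{R}^d)$ on the boundary, whereas here the hypotheses are the stronger bulk conditions $\psi \in W^{2,1}(U)$ and $\bphi \in \mathcal{C}^{1,1}(\overline{U};\mathbb{R}^d)$. I would need to verify that these bulk assumptions furnish traces with enough regularity to legitimately apply the boundary identity: $\bphi \in \mathcal{C}^{1,1}(\overline{U})^d$ restricts to a $\mathcal{C}^1$ (indeed Lipschitz-$\mathcal{C}^1$) tangential field on $\Sigma$, and $\psi \in W^{2,1}(U)$ has trace $\psi|_\Sigma \in W^{1,1}(\Sigma) \hookrightarrow$ a space on which $\nabla_\Sigma\psi$ is defined and $\dn{\psi}$ makes sense as a trace of $\nabla\psi \in W^{1,1}(U)^d$. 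For a fully rigorous proof one typically passes through a density argument, establishing the identity first for smooth $\psi$ and $\bphi$ (where the decomposition and Lemma \ref{lem:tangential_formulas} apply verbatim) and then extending by continuity in the $W^{2,1}\times\mathcal{C}^{1,1}$ topology, using the $\mathcal{C}^{1,1}$ regularity of $\Sigma$ to control the curvature term $\dive_\Sigma\nn \in L^\infty(\Sigma)$. Since the paper cites \cite{MuratSimon1976} and \cite{SokolowskiZolesio1992} for this lemma, I would expect to defer the density/trace technicalities to those references and present only the decomposition-plus-substitution argument in the main text.
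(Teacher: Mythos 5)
Your proposal is correct, but it takes a genuinely different route from the paper: the paper offers no proof of this lemma at all, deferring entirely to the references --- it points to \cite{MuratSimon1976} for the proof and to \cite[Chap.~2, eq.~(2.144), p.~92]{SokolowskiZolesio1992} for the statement. You instead derive the identity internally, by writing $\nabla\psi|_{\Sigma} = \nabla_{\Sigma}\psi + (\dn{\psi})\nn$ (which is immediate from Definition \ref{def:tangential_operators} with $\psi$ itself serving as the extension), pairing with $\bphi$, and then invoking the tangential Green's formula of Lemma \ref{lem:tangential_formulas} to convert $\intS{\left(\nabla_{\Sigma}\psi\cdot\bphi + \psi\,\dive_{\Sigma}\bphi\right)}$ into $\intS{\kappa\,\psi\,(\bphi\cdot\nn)}$, and finally identifying $\kappa = \dive_{\Sigma}\nn$. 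The algebra is sound and the reduction is clean; what your approach buys is that Lemma \ref{lem:tangential_Greens_formula} ceases to be an independent imported fact and becomes a corollary of the paper's already-recorded tangential calculus, so the paper's toolkit is internally closed (modulo Lemma \ref{lem:tangential_formulas}, which is itself only cited from \cite{DelfourZolesio2011}). What the citation route buys is precisely the avoidance of the regularity bookkeeping you correctly flag: Lemma \ref{lem:tangential_formulas} is stated for $\psi \in H^{1}(\Gamma)$, whereas the trace of a $W^{2,1}(U)$ function lies only in $W^{1,1}(\Sigma)$, which does not embed in $H^{1}(\Sigma)$; so a verbatim application fails and your density argument is genuinely needed, not optional. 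Your sketch of that argument is adequate --- smooth functions are dense in $W^{2,1}(U)$, the trace operator $W^{1,1}(U) \to L^{1}(\partial\Omega)$ is bounded on a $\mathcal{C}^{1,1}$ (hence Lipschitz) domain so both $\psi|_{\Sigma}$ and $\nabla\psi|_{\Sigma}$ converge in $L^{1}(\Sigma)$ along an approximating sequence, and $\bphi$, $\dive_{\Sigma}\bphi$, and $\kappa = \dive_{\Sigma}\nn \in L^{\infty}(\Sigma)$ (the last by the $\mathcal{C}^{1,1}$ regularity of $\Sigma$) are bounded multipliers, so every integral in the identity passes to the limit. With that paragraph made explicit, your proof stands on its own.
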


For a smooth enough function $\bphi$, we denote its second normal derivative as $\dnn{\bphi}$.
Let us denote by $\mathcal{M}_{d\times d}$ the space of the matrix of size $d \times d$.
The tangential differential operators will be represented with the subscript $\cdot\, _{\Sigma}$.
Particularly, for $\bphi \in \vect{W}^{1,1}(\partial \Omega)$ and $\vect{M} \in W^{1,1}(\partial \Omega; \mathcal{M}_{d\times d})$, the following operators are defined on $\Gamma:=\partial \Omega$:
\begin{itemize}
	\item \textit{tangential vectorial gradient operator}: \[\nabla_{\Gamma} \bphi := \nabla \bphi - (\nabla \bphi\nn) \otimes \nn;\footnote{Given a vector $\bphi :=(\varphi_{1}, \ldots, \varphi_{d})^{\top}$, we note the relation $(D_{\Gamma}\bphi)^{\top} = (\nabla_{\Gamma} \varphi_{1}, \ldots, \nabla_{\Gamma} \varphi_{d}) = \nabla_{\Gamma} \bphi$, where $\nabla_{\Gamma} \varphi_{i}$, $i=1,\ldots, d$, is a column vector.}\]
	\item \textit{tangential vectorial divergence operator} \cite[Chap. 9, Sec. 5.2, eq. (5.11), p. 496]{DelfourZolesio2011}: \[\dive_{\Gamma} \vect{M} := \dive \vect{M} - (\nabla \vect{M}\nn)\nn;\]
	\item \textit{Laplace-Beltrami operator} \cite[Chap. 9, Sec. 5.3, eq. (5.12), p. 496]{DelfourZolesio2011}: \[\Delta_{\Gamma} \bphi := \dive_{\Gamma}(\nabla_{\Gamma}\bphi).\]
\end{itemize}
In relation to the last definition, the Laplace-Beltrami operator can be decomposed in the following way (see, e.g., \cite[p. 28]{DelfourZolesio2011} or \cite[Prop. 5.4.12, eq. (5.59), p. 220]{HenrotPierre2018})
\begin{equation}
\label{eq:Laplace_Beltrami_operator_definition}
	\Delta \bphi = \Delta_{\Sigma} \bphi + \kappa D\bphi\nn + D^{2}\bphi\nn \cdot \nn.
\end{equation}

In the next section, we will exhibit the computation of the shape gradient of $J$.
\subsection{Computation of the shape gradient}
\label{subsec:shape_derivative_of_the_cost_using_the_Eulerian_derivatives}
We compute the shape derivative of $J$ rigorously via rearrangement method in the spirit of \cite{IKP2006} given that $\ff \in H^{1}(U)^{d}$, the $\Omega$ of class $\mathcal{C}^{1,1}$, and $\VV \in \sfTheta^{1}$.
This approach has the advantage of bypassing the need to calculate the material derivative of the state.
Moreover, instead of the $\mathcal{C}^{1}$ continuity of the map $t \mapsto u^{t}$ around $0$, we only need the H\"{o}lder continuity of the state $u^{t}$ -- the composition $u_{t} \circ T_{t}$ which is defined on $\Omega$. 
Our main result is given in the following proposition.
%
\begin{proposition}
	\label{prop:the_shape_derivative_of_the_cost}
	Let $\ff \in H^{1}(U)^{d}$, $\Omega$ be of class $\mathcal{C}^{1,1}$, and $\VV \in \sfTheta^{1}$.
	Then, $J$ is shape differentiable and its shape derivative is given by ${d}J(\Omega)[\VV] = \intS{{\ggb} \nn \cdot \VV}$ where\footnote{Here $\nabla_{\Gamma}$ is the tangential gradient operator on $\Gamma$. The intrinsic definition of the operator is given, for instance, in \cite[Chap. 5., Sec. 5.1, p. 492]{DelfourZolesio2011}.}
	\begin{equation}\label{eq:shape_gradient}
	\begin{aligned}
		{\ggb} &= \Im\left\{  \vect{B}[\Vn] \cdot \overline{\vv} \right\} + \frac12 \left( |\ui|^2 + |\pim|^2 \right) \Vn.
	\end{aligned}
	\end{equation}
	In above expression of the shape gradient of $J$, $\vect{B}[\Vn]$ is given by
	\[
	\begin{aligned}
	\vect{B}[\Vn] 
		&= \ff \Vn - \nabla_{\Sigma} (p\Vn) + \dive_{\Sigma}{[\alpha (\nabla_{\Sigma} \uu) \Vn]} + i \dive_{\Sigma} (\Vn \uu)\nn
			+ i ({\uu} \cdot \nn) \nabla_{\Sigma}\Vn \\
		&\qquad - \kappa \left[ - p\nn + i ( {\uu} \cdot \nn ) \nn \right] \Vn,
	\end{aligned}
	\]
	and $\kappa$ denotes the mean curvature of the free boundary $\Sigma$, $\uu = \ur + i \ui$ and $p = p_{r} + i \pim$ is the unique pair of solution to \eqref{eq:ccbm}, and the pair of adjoints $(\vv,q)$, where $\vv = \vr + i \vi$ and $q = q_{r} + i q_{i}$, uniquely solves the adjoint system
    \begin{equation}
    \label{eq:adjoint_system}
    \left\{\arraycolsep=1.4pt\def\arraystretch{1.1}
    \begin{array}{rcll}
    	- \alpha \Delta \vv + \nabla q	&=& \ui			&\quad\text{in $\Omega$},\\
    	-\nabla \cdot \vv			&=& \pim 		&\quad\text{in $\Omega$},\\
    	\vv	 					&=& \vect{0}			&\quad\text{on $\Gamma$},\\
    	-q\nn + \alpha \dn{\vv}		 - i (\vv \cdot \nn)\nn	&=& \vect{0}		&\quad\text{on $\Sigma$}.
    \end{array}
    \right.
    \end{equation}	
\end{proposition}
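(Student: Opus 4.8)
The plan is to characterize $dJ(\Omega)[\VV]$ through the rearrangement (function-space parametrization) method of \cite{IKP2006,IKP2008}, which delivers the shape derivative without ever differentiating the state map $t\mapsto\ut$. First I would pull the perturbed cost back to the fixed reference domain $\Omega$: writing $x=T_t(\xi)$ and using $dx_t=I_t\,d\xi$, the functional \eqref{eq:cost_function} on $\Omega_t$ becomes $J(\Omega_t)=\frac12\intO{I_t(|\ui^t|^2+|\pim^t|^2)}$, where $(\ut,p^t):=(\uu_t,p_t)\circ T_t\in\Vgamma\times Q$ is the transported state and $\ui^t,\pim^t$ its imaginary parts. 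Transporting the weak form \eqref{eq:ccbm_weak_form} likewise produces a $t$-parametrized saddle-point problem $a^t(\ut,\bphi)+b^t(\bphi,p^t)=F^t(\bphi)$, $b^t(\ut,\lambda)=0$, whose coefficients are the maps $I_t$, $A_t=I_t(DT_t)^{-1}(DT_t)^{-\top}$, $B_t$, and $\Mt$; these are $\mathcal{C}^1$ in $t$ with derivatives at $t=0$ recorded in \eqref{eq:limits_of_maps}, and at $t=0$ the forms reduce to $a,b,F$ of \eqref{eq:forms_for_the_state_problem}.

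Next I would introduce the Lagrangian $G(t,(\bphi,\pi),(\bpsi,\mu)):=\frac12\intO{I_t(|\Im\bphi|^2+|\Im\pi|^2)}+a^t(\bphi,\bpsi)+b^t(\bpsi,\pi)+\overline{b^t(\bphi,\mu)}-F^t(\bpsi)$. Because the transported state annihilates the last three terms, $G(t,(\ut,p^t),(\bpsi,\mu))=J(\Omega_t)$ for \emph{every} admissible adjoint pair. Choosing $(\bpsi,\mu)=(\vv,q)$ to be the $t=0$ adjoint, I would split
\[
\frac{J(\Omega_t)-J(\Omega)}{t}=\frac{G(t,(\ut,p^t),(\vv,q))-G(t,(\uu,p),(\vv,q))}{t}+\frac{G(t,(\uu,p),(\vv,q))-G(0,(\uu,p),(\vv,q))}{t}.
\]
The second quotient tends to $\partial_t G(0,(\uu,p),(\vv,q))$ by the $\mathcal{C}^1$-dependence of the coefficients. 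For the first quotient I would exploit that $G(t,\cdot,(\vv,q))$ is exactly affine-plus-quadratic in its middle argument: its increment equals $\partial_{(\bphi,\pi)}G(t,(\uu,p),(\vv,q))[(\ut-\uu,\,p^t-p)]$ plus the genuinely quadratic remainder $\frac12\intO{I_t(|\Im(\ut-\uu)|^2+|\Im(p^t-p)|^2)}$. The adjoint system \eqref{eq:adjoint_system} is precisely the Euler--Lagrange condition $\partial_{(\bphi,\pi)}G(0,(\uu,p),(\vv,q))=0$ (the Robin sign flip $+i\mapsto-i$ being the sesquilinear adjoint of the boundary term, and the sources $\ui,\pim$ the derivatives of the cost); hence the linear term reduces to $[\partial_{(\bphi,\pi)}G(t)-\partial_{(\bphi,\pi)}G(0)][(\ut-\uu,p^t-p)]$, of size $O(t)\,\delta$ with $\delta:=\|\ut-\uu\|_X+\|p^t-p\|_Q$, while the remainder is $O(\delta^2)$. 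Invoking the H\"older continuity of the state (Lemma \ref{lem:holder_continuity}), which bounds $\delta$ by $c\,t^{\beta}$ with $\beta>1/2$, both contributions divided by $t$ vanish as $t\searrow0$. This yields $dJ(\Omega)[\VV]=\partial_t G(0,(\uu,p),(\vv,q))$, the decisive point being that no material or shape derivative of the state is required.

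It then remains to evaluate $\partial_t G(0)$ and recast it as a boundary integral. Differentiating each term at $t=0$ via \eqref{eq:limits_of_maps} and the vector analogue \eqref{eq:convergence_of_vector_valued_functions_3} gives a volume expression over $\Omega$ built from $\dive\VV$, the matrix $A=(\dive\VV)id-D\VV-(D\VV)^\top$, the surface divergence $\dive_\Sigma\VV$, and $\vect{\nabla}\cdot(\ff\otimes\VV)$, paired with the state $(\uu,p)$ and the adjoint $(\vv,q)$. Here I would integrate by parts, insert the strong forms of \eqref{eq:ccbm} and \eqref{eq:adjoint_system} to cancel the interior contributions, and use $\VV=\vect{0}$ on $\Gamma\cup\partial U$ so that only $\Sigma$-terms survive. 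Decomposing $\VV|_\Sigma$ into its normal part $\Vn\nn$ and tangential part, and eliminating the tangential part through the tangential Green and Stokes formulas of Lemmas \ref{lem:tangential_formulas} and \ref{lem:tangential_Greens_formula} together with the Laplace--Beltrami decomposition \eqref{eq:Laplace_Beltrami_operator_definition}, collapses the expression to $\intS{\ggb\,\Vn}$ with the stated density: the $\frac12(|\ui|^2+|\pim|^2)\Vn$ contribution arises from the $\dive\VV$ term, and $\vect{B}[\Vn]$ collects the shape-differentiated Robin/Neumann data on $\Sigma$.

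The main obstacle is this last reduction to Hadamard form. The complex Robin boundary term carries the shape derivative of a surface measure (through $B_t$ and $\Mt$), so its linearization mixes $\dive_\Sigma$, $\nabla_\Sigma$, and the mean curvature $\kappa$ in a way that must be reorganized carefully, and the pressure/Lagrange-multiplier terms must be tracked so that the interior residuals cancel exactly against the strong equations. This step relies on the $H^2(\Omega)^d\times H^1(\Omega)$ regularity of both $(\uu,p)$ and $(\vv,q)$, which is why $\ff\in H^1(U)^d$ and $\Omega\in\mathcal{C}^{1,1}$ are assumed; without it the normal derivatives and tangential operators appearing in $\vect{B}[\Vn]$ would not be well defined on $\Sigma$.
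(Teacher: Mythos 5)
Your proposal follows, in essence, the paper's own route: both are the rearrangement method, with the same three pillars --- the H\"older continuity of the transported state (Lemma \ref{lem:holder_continuity}) to kill the terms quadratic in the increment, the adjoint pair of \eqref{eq:adjoint_system_weak_form} to dispose of the terms linear in the increment without ever differentiating the state, and the $\mathcal{C}^1$ dependence of $I_t, A_t, B_t, M_t$ (see \eqref{eq:limits_of_maps}) followed by tangential calculus to reach the Hadamard form. The only real difference is organizational: the paper splits $J(\Omega_t)-J(\Omega)$ by hand into eight pieces and converts the linear pieces $J_4+J_8$ through the adjoint into $\Im\{\Phi^t(\vv)+b(\uu^t-\uu,q)\}$, with $\Phi^t$ of \eqref{eq:big_Phi_sup_t} applied to the \emph{moving} state and the limit taken via $\uu^t\to\uu$, $p^t\to p$; you package the same cancellation as stationarity of a perturbed Lagrangian, so that the surviving term is $\partial_t G(0)$ at the \emph{fixed} state and the increment enters only through an $O(t\delta)+O(\delta^2)$ error. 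That packaging makes the bookkeeping of why the adjoint suffices more transparent, but it computes exactly the same limit.

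There is, however, one step that fails as literally written. Your Lagrangian $G$ is complex-valued while $J$ is real, and the cost density $|\Im\bphi|^2$ is only real-differentiable; demanding $\partial_{(\bphi,\pi)}G(0,(\uu,p),(\vv,q))=0$ for \emph{all} real directions then over-determines the multiplier. Concretely, taking the $\pi$-derivative in a real direction $\delta p$ gives $-\intO{\delta p\,\dive\vv}=0$ for every real $\delta p$, i.e.\ $\dive\vv=0$, which contradicts the adjoint equation $-\dive\vv=\pim$ whenever $\pim\not\equiv 0$; so the adjoint of \eqref{eq:adjoint_system_weak_form} is \emph{not} a stationary point of your $G$. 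The repair is to adjoin the constraint residuals through their imaginary parts with the correct signs (equivalently, to use the identity the paper works with, $J_4(t)+J_8(t)=\Im\left\{ a(\yt,\vv)+b(\vv,p^t-p)+b(\yt,q)\right\}$, which \emph{does} follow from \eqref{eq:adjoint_system_weak_form} after conjugation); with that modification your splitting goes through verbatim. Two further minor points: Lemma \ref{lem:holder_continuity} gives $\vertiii{\uu^{t}-\uu}_{X}+\vertiii{p^{t}-p}_{Q}=o(\sqrt{t})$, not a H\"older bound $c\,t^{\beta}$ with $\beta>1/2$ (your argument only needs the $o(\sqrt t)$ form, so nothing breaks); and what you compress into ``collapse to Hadamard form'' is in fact the bulk of the paper's proof --- testing the state and adjoint with $\nabla\overline{\vv}^{\top}\VV$ and $\nabla\uu^{\top}\VV$ and invoking the identities of Lemmas \ref{lem:useful_identity}, \ref{lem:divergence_expansion}, \ref{lem:useful_equivalence} and \ref{lem:tangential_formulas} --- but your plan names the right tools there, so that part is labor rather than a gap.
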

The proof of the above result relies on several lemma that we first prove below.
To start, we present the weak formulation of \eqref{eq:adjoint_system}.
On this purpose, we introduce the following forms: 
\begin{equation}\label{eq:forms_for_the_adjoint_problems}
\left\{
	\begin{aligned}
		\tilde{\aaa}({\bphi},{\bpsi}) &= \intO{\alpha \nabla {\bphi} : \nabla {\cbpsi}} - i \intS{({\bphi} \cdot \nn)({\cbpsi} \cdot \nn)}, \quad \forall {\bphi}, {\bpsi} \in {\Vgamma},\\
		\tilde{F}(\bpsi) &= \intO{\ui\cdot \cbpsi},  \quad \forall {\bpsi}\in {\Vgamma}.
 	\end{aligned}
\right.
\end{equation}
Hence, we may write the weak formulation of \eqref{eq:adjoint_system} as follows:	
        find $({\vv},q) \in \Vgamma \times Q$ such that
        \begin{equation}\label{eq:adjoint_system_weak_form}
        \left\{\arraycolsep=1.4pt\def\arraystretch{1.1}
        \begin{array}{rcll}
        	\tilde{\aaa}({\vv},{\bpsi}) + b(\bpsi,q)	&=& \tilde{F}(\bpsi),	&\quad\forall {\bpsi}\in {\Vgamma},\\ 
        				b(\vv,\mu)	&=& (\mu, \pim),			&\quad\forall \mu \in {Q}.
        \end{array}
        \right.
        \end{equation}
        \begin{remark}\label{rem:well_posedness_adjoint_problem}
	The well-posedness of the above problem can be verified using similar arguments issued in subsection \ref{subsec:well-posedness_of_state_problem} for the state problem \eqref{eq:ccbm} (provided, of course, \eqref{eq:ccbm_weak_form} admits a unique solution $({\uu},p) \in \Vgamma \times Q$).
	Particularly, one can show that an inf-sup condition (cf. \eqref{eq:inf_sup_condition}) holds for the above variational problem.
	\end{remark}
%
%
%
%
\begin{remark}\label{rem:higher_regularity_of_{t}he_adjoint}
	For $\Omega$ of class $\mathcal{C}^{k+1,1}$, $k$ a non-negative integer, it can be shown that the weak solution $(\uu, p) \in X \times Q$ to the variational problem \eqref{eq:ccbm_weak_form} is also $\HH^{k+2}(\Omega)^{d} \times H^{k+1}(\Omega)$.
	In particular, $\ui \in H^{k+2}(\Omega)^{d}$ and $\pim \in H^{k+1}(\Omega)$.
	Consequently, we find that the weak solution $(\vv,q)$ of problem \eqref{eq:adjoint_system_weak_form} is not only in $\Vgamma \times Q$, but is also an element of $\HH^{k+4}(\Omega)^{d} \times H^{k+3}(\Omega)$. 
\end{remark}
At this juncture, we introduce the following sesquilinear forms $\aat, \taat \in \Vgamma \times \Vgamma \to \mathbb{R}$ and linear forms, $b^{t} : \Vgamma \times Q \to \mathbb{R}$ and $F^{t}, \tilde{F}^{t} : \Vgamma \to \mathbb{R}$ (which are essentially the transformed versions of the forms listed in \eqref{eq:forms_for_the_state_problem} and \eqref{eq:forms_for_the_adjoint_problems}) defined as follows:
\begin{equation}\label{eq:transformed_forms}
\left\{
\begin{aligned}
		\aat({\bphi},{\bpsi}) &= \intO{\alpha A_{t} \nabla {\bphi} : \nabla {\cbpsi}} + i \intS{\frac{B_{t}}{|\Mt\nn|^{2}} ( \Mt^{\top} {\bphi} \cdot \nn)( \Mt^{\top} {\cbpsi} \cdot \nn)},\\
		b^{t}(\bphi,\lambda) &= -\intO{ I_{t} \bar{\lambda} ( \Mt^{\top}: \nabla \bphi) }, \\
		F^{t}(\bphi) &= \intO{I_{t} \ff^{t} \cdot \cbphi}, \qquad \text{where} \ \ff^{t} = \ff_{t} \circ T_{t} \in H^{1}(U)^{d},\\
		\taat({\bphi},{\bpsi}) &= \intO{\alpha A_{t} \nabla {\bphi} : \nabla {\cbpsi}} - i \intS{\frac{B_{t}}{|\Mt\nn|^{2}} ( \Mt^{\top} {\bphi} \cdot \nn)( \Mt^{\top} {\cbpsi} \cdot \nn)},\\
		\tilde{F}^{t}(\bphi) &= \intO{I_{t} \vect{h} \cdot \cbphi}, \qquad \text{where $\vect{h} \in X$ is a given function}.
\end{aligned}
\right.
\end{equation}
The first of the several lemmas that we need is given as follows.
\begin{lemma}\label{lem:boundedness_of_sesquiliner_and_linear_forms}
	For $t \in \mathcal{I}$, the sesquilinear forms $\aat$ and $\taat$ defined on $X \times X$ are bounded and coercive on $X \times X$.
	In addition, the linear forms $F^{t}(\bphi)$ and $\tilde{F}^{t}(\bphi)$ are also bounded.
	Moreover, the bilinear form $b$ satisfies the condition that there is a constant $\beta_{1} > 0$ such that
	\begin{equation}
	\label{eq:inf_sup_condition_for_transformed_problem}
		\inf_{\substack{{\lambda} \in {Q}\\ {\lambda}\neq0}} \sup_{\substack{\bphi \in {\Vgamma}\\ \bphi \neq \vect{0}}} \frac{b^{t}(\bphi,{\lambda})}	{\vertiii{\bphi}_{X} \vertiii{{\lambda}}_{Q} } \geqslant \beta_{1}.
	\end{equation}

\end{lemma}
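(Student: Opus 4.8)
The plan is to carry the $t=0$ arguments from subsection~\ref{subsec:well-posedness_of_state_problem} over to the transformed forms, now tracking the $t$-dependence only through the coefficients $I_t$, $A_t$, $B_t$ and $\Mt$. The first step is to record the uniform control of these coefficients: since $[t\mapsto I_t]$, $[t\mapsto A_t]$ and $[t\mapsto B_t]$ are continuous on the compact interval $\mathcal{I}$ by \eqref{eq:regular_maps} (and $[t\mapsto\Mt]$ by the remarks following it), all of $\|I_t\|_{C(\overline{\Omega})}$, $\|A_t\|_{C(\overline{\Omega})^{d\times d}}$, $\|B_t\|_{C(\Sigma)}$, $\|\Mt\|_{C(\Sigma)^{d\times d}}$ and $\||\Mt\nn|^{-1}\|_{C(\Sigma)}$ are bounded above, while $I_t$ and $|\Mt\nn|$ are bounded below by positive constants, all uniformly in $t$. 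Together with the ellipticity \eqref{eq:bounds_At_and_Bt}, these are the only facts about the coefficients that the rest of the proof needs.

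With these bounds, boundedness of $\aat$ and $\taat$ on $X\times X$ follows exactly as in the continuity computation for $\aaa$: Cauchy--Schwarz on the volume integral (using $\|A_t\|_{\infty}$) and the trace theorem on the surface integral (using the bound on $B_t|\Mt\nn|^{-2}$ together with $\|\Mt\|_{\infty}$), with constants independent of $t$; the sign in front of $i$ is immaterial for the modulus, so $\taat$ is treated identically. For coercivity I note that, setting $\bpsi=\bphi$, the surface integrand equals $\tfrac{B_t}{|\Mt\nn|^{2}}\,|\Mt^{\top}\bphi\cdot\nn|^{2}\ge 0$, so the boundary term is purely imaginary and drops out under $\Re$. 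Hence $\Re(\aat(\bphi,\bphi)) = \alpha\int_{\Omega}A_t\nabla\bphi:\nabla\cbphi \ge \alpha\Lambda_3\,\vertiii{\nabla\bphi}_{0,\Omega}^{2}$, and the Poincar\'e inequality, available because $\bphi\in\Vgamma$ vanishes on $\Gamma$, upgrades this to $\ge c_a\vertiii{\bphi}_{X}^{2}$ with $c_a>0$ independent of $t$, mirroring \eqref{eq:coercivity}; the same holds for $\taat$. Boundedness of $F^{t}$ and $\tilde F^{t}$ is then immediate from Cauchy--Schwarz and $\|I_t\|_{\infty}$, using that $\ff^{t}\in L^{2}(\Omega)^{d}$ is uniformly bounded (by Lemma~\ref{lem:convergence_of_functions}) and $\vect{h}\in X$.

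The substantive point is the inf-sup bound \eqref{eq:inf_sup_condition_for_transformed_problem}, and my preferred route is to recognize $b^{t}$ as the pullback of the original form. Using $\nabla\bphi=(D\bphi)^{\top}$ and $\Mt=(DT_t)^{-\top}$ one checks the pointwise identity $\Mt^{\top}:\nabla\bphi = \Mt:D\bphi = (\nabla_{x_t}\cdot\bphi_t)\circ T_t$, where $\bphi_t:=\bphi\circ T_t^{-1}$; combined with $dx_t=I_t\,dx$ this yields $b^{t}(\bphi,\lambda)=b_{\Omega_t}(\bphi_t,\lambda_t)$, the analogue of $b$ on the deformed domain $\Omega_t$. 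Since $\bphi\mapsto\bphi_t$ and $\lambda\mapsto\lambda_t$ are isomorphisms $\Vgamma\to\Vgamma(\Omega_t)$ and $Q\to Q(\Omega_t)$ whose norms and inverse norms are bounded uniformly in $t$ (by Lemma~\ref{lem:properties_of_the_Jacobian}), the inf-sup for $b^{t}$ transfers from the inf-sup for $b$ on $\Omega_t$, which holds by Lemma~\ref{lem:for_inf_sup_lemma} applied on the connected $\mathcal{C}^{1,1}$ domain $\Omega_t$.

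The hard part is the \emph{uniformity} of $\beta_1$ in $t$: a priori the inf-sup constant $\beta(\Omega_t)$ could degenerate as $\Omega_t$ varies. I would settle this in one of two equivalent ways. Either invoke continuity in $t$ together with compactness of $\mathcal{I}$ and the positivity $\beta(\Omega_0)=\beta_0>0$ to conclude $\inf_{t\in\mathcal{I}}\beta(\Omega_t)>0$, shrinking $\varepsilon$ if necessary; or, working directly on the reference configuration, repeat the explicit construction of the $t=0$ proof with $\mathcal{T}_{\perp}$ replaced by the perturbed operator $\mathcal{T}_{\perp}^{t}\bphi:=-I_t(\Mt^{\top}:\nabla\bphi)$, observing that $\|\mathcal{T}_{\perp}^{t}-\mathcal{T}_{\perp}\|\to 0$ as $t\to 0$ (from $I_t\to 1$ and $\Mt\to id$ in \eqref{eq:limits_of_maps} and Lemma~\ref{lem:properties_of_the_Jacobian}), so that $\mathcal{T}_{\perp}^{t}$ stays boundedly invertible by a Neumann-series argument and the numerator/denominator estimates of \eqref{eq:supremum_estimate}--\eqref{eq:numerator_estimate} go through verbatim with $t$-uniform constants. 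Either argument delivers the desired $\beta_1>0$ and completes the proof.
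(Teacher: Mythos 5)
Your proof is correct and is, in substance, the paper's own: the paper disposes of this lemma with a two-sentence sketch instructing the reader to repeat the arguments of subsection \ref{subsec:well-posedness_of_state_problem} while invoking the regularity \eqref{eq:regular_maps} and the uniform bounds \eqref{eq:bounds_At_and_Bt} on $I_{t}$, $A_{t}$, $B_{t}$, and that is exactly what you flesh out for the boundedness of $\aat$, $\taat$, $F^{t}$, $\tilde{F}^{t}$, the coercivity (your observation that the surface term is purely imaginary for $\bpsi=\bphi$ is the right reduction), and the inf-sup condition.

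One caution on the inf-sup part: of the two ``equivalent'' routes you offer for the $t$-uniformity of $\beta_{1}$, only the second is self-contained. Continuity of $t \mapsto \beta(\Omega_{t})$ is established nowhere in the paper and is itself precisely the kind of statement that requires proof, so invoking it together with compactness of $\mathcal{I}$ begs the question. The perturbed-operator route --- replacing $\mathcal{T}_{\perp}$ by $\mathcal{T}_{\perp}^{t}\bphi := -I_{t}(\Mt^{\top} : \nabla \bphi)$, noting that $\|\mathcal{T}_{\perp}^{t} - \mathcal{T}_{\perp}\| \to 0$ as $t \to 0$ (by \eqref{eq:limits_of_maps} and Lemma \ref{lem:properties_of_the_Jacobian}) so that $\mathcal{T}_{\perp}^{t}$ stays boundedly invertible with $t$-uniformly bounded inverse via a Neumann series, and re-running the explicit estimates \eqref{eq:supremum_estimate}--\eqref{eq:numerator_estimate} with the uniform coefficient bounds --- is sound and is the argument the paper intends; it should be presented as \emph{the} proof rather than as an optional alternative.
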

\begin{proof}
	The proof is similar to the arguments used in the discussion issued in subsection \ref{subsec:well-posedness_of_state_problem}.
	This time, however, one has to take into account the properties of $A_{t}$ and $B_{t}$ given in \eqref{eq:regular_maps} and the bounds given in \eqref{eq:bounds_At_and_Bt} to prove the given results.
\end{proof}
The next lemma is concerned about the well-posedness of the transported perturbed version of \eqref{eq:ccbm_weak_form}.
\begin{lemma}
	\label{lem:transported_problem}
	The pair $(\uu^{t},p^{t}) = (\ur^{t} + i \ui^{t},\pr^{t} + i \pim^{t})$ uniquely solves in $\Vgamma \times Q$ the variational problem
        \begin{equation}\label{eq:transformed_ccbm_weak_form}
        \left\{\arraycolsep=1.4pt\def\arraystretch{1.1}
        \begin{array}{rcll}
        	\aat({\uu^{t}},{\bphi}) + b^{t}(\bphi,p^{t})	&=& F^{t}(\bphi),		&\quad\forall {\bphi}\in {\Vgamma},\\ 
        					b^{t}(\uu^{t},\lambda)	&=& 0,			&\quad\forall \lambda \in {Q}.
        \end{array}
        \right.
        \end{equation}
\end{lemma}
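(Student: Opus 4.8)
The plan is to read \eqref{eq:transformed_ccbm_weak_form} as an abstract mixed (saddle-point) variational problem on the product space $\Vgamma \times Q$, namely to find $(\uu^{t},p^{t})$ with
\[
\aat(\uu^{t},\bphi) + b^{t}(\bphi,p^{t}) = F^{t}(\bphi) \quad (\forall \bphi \in \Vgamma), \qquad b^{t}(\uu^{t},\lambda) = 0 \quad (\forall \lambda \in Q),
\]
and then to invoke the generalized Lax--Milgram theorem for such problems (the Babu\v{s}ka--Brezzi theorem) in its sesquilinear-form version. This is precisely the transported analogue of the well-posedness argument already carried out for the untransformed system \eqref{eq:ccbm_weak_form}; the only difference is that the coefficients $A_{t}$, $B_{t}$, and $I_{t}$ now sit inside the forms, and their uniform bounds \eqref{eq:bounds_At_and_Bt} (together with the regularity \eqref{eq:regular_maps}) ensure that all the constants entering below may be taken independent of $t \in \mathcal{I}$.

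Concretely, I would verify the four hypotheses of the abstract theorem, all of which are supplied by Lemma~\ref{lem:boundedness_of_sesquiliner_and_linear_forms}: (i) boundedness of the sesquilinear form $\aat$ on $X \times X$; (ii) boundedness of the bilinear form $b^{t}$ on $X \times Q$; (iii) coercivity of $\aat$ on $X$, obtained through its real part, since $\Re\,\aat(\bphi,\bphi) = \alpha \intO{A_{t}\nabla\bphi : \nabla\cbphi} \geqslant \alpha \Lambda_{3}\vertiii{\nabla\bphi}_{0,\Omega}^{2} \geqslant c\,\vertiii{\bphi}_{X}^{2}$, the last step using the Poincar\'e inequality valid on $\Vgamma$ because $\bphi = \vect{0}$ on $\Gamma$; and (iv) the inf--sup condition \eqref{eq:inf_sup_condition_for_transformed_problem} for $b^{t}$. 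Since $\aat$ is coercive on all of $X$, it is in particular coercive on the kernel $\ker b^{t} = \{\bphi \in \Vgamma \mid b^{t}(\bphi,\lambda) = 0 \ \forall \lambda \in Q\}$, which is the coercivity actually demanded by the theorem. Continuity of $F^{t}$ on $X$ is likewise recorded in Lemma~\ref{lem:boundedness_of_sesquiliner_and_linear_forms}. With (i)--(iv) in hand, the abstract saddle-point theory yields a unique pair $(\uu^{t},p^{t}) \in \Vgamma \times Q$ solving \eqref{eq:transformed_ccbm_weak_form}, along with the a~priori bound $\vertiii{\uu^{t}}_{X}, \vertiii{p^{t}}_{Q} \lesssim \vertiii{\ff^{t}}_{0,\Omega}$; the decomposition $(\uu^{t},p^{t}) = (\ur^{t}+i\ui^{t},\pr^{t}+i\pim^{t})$ is then merely the splitting of this unique solution into its real and imaginary parts.

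The step I expect to require the most care is the complex (sesquilinear) nature of the problem: the classical Babu\v{s}ka--Brezzi theorem is usually stated for real bilinear forms, whereas here $\aat$ and $b^{t}$ are sesquilinear. I would handle this either by regarding $\Vgamma$ and $Q$ as real Hilbert spaces of twice the dimension and rewriting the forms accordingly, or --- more cleanly --- by appealing to the complex version of the saddle-point theorem in which the coercivity hypothesis is posed on the real part $\Re\,\aat(\bphi,\bphi)$, exactly as in the complex Lax--Milgram lemma already used implicitly for \eqref{eq:coercivity}. Everything else is a direct transcription of the well-posedness proof for the state problem, so beyond confirming that the sesquilinear machinery goes through unchanged, the argument is routine.
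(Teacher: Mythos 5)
Your saddle-point argument establishes only half of the lemma, and not the half that carries the real content. In this paper $\uu^{t}$ and $p^{t}$ are not free names for ``the solution of \eqref{eq:transformed_ccbm_weak_form}'': by the notational convention fixed before the lemma, $\bphi^{t} := \bphi_{t}\circ T_{t}$, so the pair $(\uu^{t},p^{t}) = (\uu_{t}\circ T_{t},\, p_{t}\circ T_{t})$ is the pull-back to $\Omega$ of the solution $(\uu_{t},p_{t})$ of the CCBM state problem \eqref{eq:ccbm} posed on the perturbed domain $\Omega_{t}$. The lemma therefore asserts two things: that this specific pull-back pair satisfies \eqref{eq:transformed_ccbm_weak_form}, and that it is the only solution. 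The paper's proof is almost entirely devoted to the first assertion: one writes the weak form of \eqref{eq:ccbm} on $\Omega_{t}$, notes that $\bphi\mapsto\bphi\circ T_{t}^{-1}$ is a bijection between $\Vgamma$ and $\HH^{1}_{\Gamma,\vect{0}}(\Omega_{t})^{d}$, and transports every integral back to $\Omega$ using $dx_{t}=I_{t}\,dx$, $d\sigma_{t}=B_{t}\,d\sigma$, $(\nabla\bphi_{t})\circ T_{t}=\Mt\nabla\bphi^{t}$, $(\nabla\cdot\bphi_{t})\circ T_{t}=(DT_{t})^{-1}:\nabla\bphi^{t}$, and $\nn_{t}\circ T_{t}=\Mt\nn/\abs{\Mt\nn}$; collecting terms produces exactly the forms $\aat$, $b^{t}$, $F^{t}$ of \eqref{eq:transformed_forms}. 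Your proposal contains none of this computation, so it never connects the transported problem to the problem on $\Omega_{t}$ at all; abstract existence gives you \emph{some} pair, with no reason to coincide with the pull-back.

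This is not a cosmetic omission, because everything the lemma is used for downstream depends on that identification: for instance, Lemma \ref{lem:holder_continuity} and the proof of Proposition \ref{prop:the_shape_derivative_of_the_cost} rewrite $J(\Omega_{t}) = \frac12\intOt{\left(\abs{\uu_{it}}^{2}+\abs{p_{it}}^{2}\right)}$ as $\frac12\intO{I_{t}\left(\abs{\ui^{t}}^{2}+\abs{\pim^{t}}^{2}\right)}$, which is valid only because $\ui^{t}=\uu_{it}\circ T_{t}$, not because $\uu^{t}$ solves a well-posed variational problem. Your well-posedness argument itself (the sesquilinear Babu\v{s}ka--Brezzi theorem, coercivity of $\Re\,\aat$ via \eqref{eq:bounds_At_and_Bt} and Poincar\'e's inequality on $\Vgamma$, and the inf-sup condition \eqref{eq:inf_sup_condition_for_transformed_problem} supplied by Lemma \ref{lem:boundedness_of_sesquiliner_and_linear_forms}) is correct, and it is essentially what the paper invokes, in one sentence, to dispose of uniqueness. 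To repair the proof, keep that part as the uniqueness step and add the change-of-variables derivation showing that the pull-back pair is a solution; uniqueness then forces the abstract solution and the pull-back to coincide, which is the statement actually claimed.
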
	
\begin{proof}
	The functions ${\uu_{t}} \in \HH^{1}(\Omega_{t})^{d}$ and ${p_{t}} \in \LL^{2}(\Omega_{t})$ solve the variational equation
	\[
        \left\{
        	\begin{aligned}
        		\intOt{\alpha \nabla {\uu_{t}} : \nabla {\cbphi}} + i \intSt{({\uu_{t}} \cdot \nn_{t}) ({\cbphi} \cdot \nn_{t})} &- \intOt{p_{t} ( \nabla \cdot \cbphi) }\\ 
			&= \intOt{\ff_{t} \cdot \cbphi}, \quad \forall {\bphi} \in {\HH_{\Gamma, \vect{0}}^{1}(\Omega_{t})}^{d},\\
		- \intOt{\bar{\lambda} ( \nabla \cdot \uu_{t}) } &= 0, \quad \forall \lambda \in {\LL^{2}(\Omega_{t})}.
         	\end{aligned}
        \right.
	\]
	By change of variables (cf. \cite[pp. 482--484]{DelfourZolesio2011}), together with the identities
	\begin{align*}
		\uu^{t} &= {\uu_{t}} \circ T_{t},\qquad \text{where}\ \uu_{t} = {\uu_{rt}} + i {\uu_{it}},\\
		(\nabla \bphi_{t}) \circ T_{t} & 
			=  DT_{t}^{-\top} \nabla \bphi^{t},\qquad \text{where}\ \bphi_{t} \in \HH^{1}(\Omega_{t}), \ \bphi^{t} \in \HH^{1}(\Omega),\\
		(\nabla \cdot \bphi_{t}) \circ T_{t} & 
			= (DT_{t})^{-1} : \nabla \bphi^{t},
	\end{align*}
	and the transformation \cite[Thm. 4.4, p. 488]{DelfourZolesio2011}
	\[
	(\uu \cdot \nn)_{t} = (\uu_{t} \cdot \nn_{t}) \circ T_{t} = \uu^{t} \cdot \nn^{t} = \uu^{t} \cdot \frac{\Mt\nn}{\abs{\Mt\nn}},
	\]
	we get
	\[
        \left\{
        	\begin{aligned}
	  \intO{\alpha A_{t} \nabla {\uu^{t}} : \nabla {\cbphi}} + \intS{\frac{B_{t}}{|\Mt\nn|^{2}} ({\uu^{t}} \cdot \Mt \nn)({\cbphi} \cdot \Mt \nn)} &- \intO{ I_{t} p^{t} ( \Mt^{\top} : \nabla \cbphi) }	\\		&= \intO{I_{t} \ff^{t} \cdot \cbphi}, \quad \forall {\bphi}\in {\Vgamma},\\
	-\intO{ I_{t} \bar{\lambda} ( \Mt^{\top} : \nabla {\uu^{t}}) } &= 0, \quad \forall \lambda \in {Q}.
         	\end{aligned}
        \right.
	\]
        In light of the notations listed in \eqref{eq:transformed_forms}, we recover \eqref{eq:transformed_ccbm_weak_form}.
	
	The rest of the proof is similar to the arguments used in subsection \ref{subsec:well-posedness_of_state_problem} combined with the properties of $A_{t}$ and $B_{t}$ issued in \eqref{eq:regular_maps} and \eqref{eq:bounds_At_and_Bt}, and together with Lemma \ref{lem:boundedness_of_sesquiliner_and_linear_forms}.
	Concerning uniqueness of solution, the proof is also standard, so we omit it.
	This proves the lemma.
\end{proof}
\begin{lemma}\label{lem:boundedness_of_the_transformed_state}
	For $t \in \mathcal{I}$, the solutions $(\uu^{t},p^{t})$ of \eqref{eq:transformed_ccbm_weak_form} are uniformly bounded in $X \times Q$.
	More precisely, for all $t \in \mathcal{I}$, there exists a constant $c>0$ independent of $t$ such that
	\[
		\vertiii{\uu^{t}}_{X}, \
		\vertiii{p^{t}}_{Q} \lesssim \vertiii{\ff}_{1,U}.
	\] 
\end{lemma}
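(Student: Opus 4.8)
The plan is to read off the two estimates directly from the saddle-point structure of \eqref{eq:transformed_ccbm_weak_form}, using the uniform-in-$t$ coercivity, boundedness, and inf--sup properties already secured in Lemma \ref{lem:boundedness_of_sesquiliner_and_linear_forms}, so that no constant appearing below depends on $t \in \mathcal{I}$. First I would bound the velocity. Testing the first equation of \eqref{eq:transformed_ccbm_weak_form} with $\bphi = \uu^{t}$ and testing the second equation with $\lambda = p^{t}$ to annihilate the coupling term (so that $b^{t}(\uu^{t}, p^{t}) = 0$), one is left with $\aat(\uu^{t}, \uu^{t}) = F^{t}(\uu^{t})$. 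Taking real parts and invoking the uniform coercivity of $\aat$ then yields $c_{a} \vertiii{\uu^{t}}_{X}^{2} \leqslant \abs{F^{t}(\uu^{t})}$.

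Next I would estimate the right-hand side. Since $F^{t}(\uu^{t}) = \intO{I_{t} \ff^{t} \cdot \overline{\uu^{t}}}$, the bounds $0 < \Lambda_{1} \leqslant I_{t} \leqslant \Lambda_{2}$ from \eqref{eq:bounds_At_and_Bt}, Cauchy--Schwarz, and a change of variables give $\abs{F^{t}(\uu^{t})} \lesssim \vertiii{\ff^{t}}_{0,\Omega} \vertiii{\uu^{t}}_{X}$, where $\vertiii{\ff^{t}}_{0,\Omega} = \vertiii{\ff \circ T_{t}}_{0,\Omega} \lesssim \vertiii{\ff}_{0,U} \leqslant \vertiii{\ff}_{1,U}$ uniformly in $t$; this last uniformity follows either from Lemma \ref{lem:convergence_of_functions}(i) or, more directly, from the lower bound on $I_{t}$ via change of variables. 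Combining the two displays and dividing by $\vertiii{\uu^{t}}_{X}$ produces $\vertiii{\uu^{t}}_{X} \lesssim \vertiii{\ff}_{1,U}$.

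For the pressure, I would lean on the $t$-uniform inf--sup condition \eqref{eq:inf_sup_condition_for_transformed_problem}. Rearranging the first equation of \eqref{eq:transformed_ccbm_weak_form} gives $b^{t}(\bphi, p^{t}) = F^{t}(\bphi) - \aat(\uu^{t}, \bphi)$ for every $\bphi \in \Vgamma$, so by the uniform boundedness of $\aat$ and of $F^{t}$ one has $\abs{b^{t}(\bphi, p^{t})} \lesssim \left( \vertiii{\ff}_{1,U} + \vertiii{\uu^{t}}_{X} \right) \vertiii{\bphi}_{X}$. Dividing by $\vertiii{\bphi}_{X}$, passing to the supremum over $\bphi \neq \vect{0}$, and applying \eqref{eq:inf_sup_condition_for_transformed_problem} gives $\beta_{1} \vertiii{p^{t}}_{Q} \lesssim \vertiii{\ff}_{1,U} + \vertiii{\uu^{t}}_{X}$; inserting the velocity bound just obtained closes the estimate $\vertiii{p^{t}}_{Q} \lesssim \vertiii{\ff}_{1,U}$.

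The only genuine point to watch — and effectively the whole content of the lemma — is that every constant ($c_{a}$, the continuity constants of $\aat$ and $F^{t}$, and $\beta_{1}$) be independent of $t$. This is not an extra obstacle here, since it is precisely what Lemma \ref{lem:boundedness_of_sesquiliner_and_linear_forms} provides, together with the uniform bounds \eqref{eq:bounds_At_and_Bt} on $I_{t}$, $A_{t}$, $B_{t}$. Finally, I would remark that the identical argument, applied to $\taat$ and $\tilde{F}^{t}$, yields the analogous uniform bound for the transported adjoint state that is needed later.
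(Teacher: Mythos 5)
Your proposal is correct and follows essentially the same route as the paper's proof: testing with $\bphi = \uu^{t}$ and $\lambda = p^{t}$ to reduce to $\aat(\uu^{t},\uu^{t}) = F^{t}(\uu^{t})$, using uniform coercivity plus a change of variables with the bounds \eqref{eq:bounds_At_and_Bt} on $I_{t}$ for the velocity, and then the $t$-uniform inf--sup condition \eqref{eq:inf_sup_condition_for_transformed_problem} applied to $b^{t}(\bphi,p^{t}) = F^{t}(\bphi) - \aat(\uu^{t},\bphi)$ for the pressure. The only cosmetic difference is that the paper estimates $\|I_{t}\ff^{t}\|_{0,\Omega}$ directly (using the upper bound $\Lambda_{2}$) rather than $\vertiii{\ff^{t}}_{0,\Omega}$, which is an equivalent bookkeeping choice.
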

\begin{proof}
	For the (uniform) boundedness of $\uu^{t}$ in $X$ for $t \in \mathcal{I}$, the result is established by taking $\bphi = \uu^{t} \in \Vgamma$ in \eqref{eq:transformed_ccbm_weak_form},
	and then applying the properties of $A_{t}$ and $B_{t}$ given in \eqref{eq:regular_maps} and \eqref{eq:bounds_At_and_Bt}, 
	and the coercivity of $\aat$ on $X \times X$ (Lemma \ref{lem:boundedness_of_sesquiliner_and_linear_forms}).
	Indeed, from the first equation in \eqref{eq:transformed_ccbm_weak_form} together with the second equation $b^{t}(\uu^{t},\lambda)	 = 0$ with $\lambda = p^{t} \in Q$, we have
	\[
        	\aat({\uu^{t}},{\uu^{t}}) + b^{t}({\uu^{t}},p^{t}) = F^{t}({\uu^{t}}) 
	\qquad \Longleftrightarrow \qquad
	 	\aat({\uu^{t}},{\uu^{t}}) = F^{t}({\uu^{t}}).
	\]
	This yields the following inequality
	\[
	c \vertiii{\uu^{t}}_{X}^{2}
		\leqslant \Re\{\aat(\uu^{t},\uu^{t})\}
		\leqslant \abs{\aat(\uu^{t},\uu^{t})}
		\leqslant 
		\left| \intO{I_{t} \ff^{t} \cdot \overline{\uu}^{t}} \right|
		\leqslant \left\| I_{t} \ff^{t} \right\|_{0,\Omega} \vertiii{\overline{\uu}^{t}}_{0,\Omega},
	\]
	where the constant $c>0$ appearing on the leftmost side of the inequality is the coercivity constant of $\aat$ whose existence is ensured by Lemma \ref{lem:boundedness_of_sesquiliner_and_linear_forms}.
	To further get an estimate for the rightmost side of the above inequality, we note of the following calculation
	\begin{equation}\label{eq:bound_for_ft}
		\left\| I_{t} \ff^{t} \right\|_{0,{\Omega}}^{2}
		= \intO{(I_{t} \circ T_{t}^{-1}) \circ T_{t}(x) \ff^{2} \circ T_{t}(x) I_{t}}
		= \intOt{ I_{t} \circ T_{t}^{-1} \ff^{2} }
		\leqslant \Lambda_{2} \|\ff\|^{2}_{0,U},
	\end{equation}
	where in the last inequality we used the bound for $I_{t}$ given in \eqref{eq:bounds_At_and_Bt}.
	Hence, we have
	\[
		\vertiii{\uu^{t}}_{X}^{2}
		\leqslant c^{-1}\sqrt{\Lambda_{2}} \|\ff\|_{0,U} \vertiii{\overline{\uu}^{t}}_{0,\Omega} 
		\leqslant c^{-1}\sqrt{\Lambda_{2}} \|\ff\|_{0,U} \vertiii{\uu^{t}}_{X}. 
	\]
	This gives us a priori estimate
	\begin{equation}\label{eq:bound_for_ut}
		\vertiii{\uu^{t}}_{X} \leqslant c \|\ff\|_{0,U},
	\end{equation}
	for some constant $c>0$, which shows that $\uu^{t}$ is bounded in $X$ for $t \in \mathcal{I}$.
	
	For the boundedness of $p^{t}$ in $Q$ for $t \in \mathcal{I}$, we make use of \eqref{eq:inf_sup_condition_for_transformed_problem}, which is equivalent to the following
	\[
		\sup_{\substack{\bphi \in {\Vgamma}\\ \bphi \neq \vect{0}}} \frac{b^{t}(\bphi,{\lambda})}{\vertiii{\bphi}_{X} } \geqslant \beta_{1} \vertiii{{\lambda}}_{Q},\quad \forall {\lambda} \in Q.
	\]
	We let $\lambda = p^{t} \in Q$.
	Then, from \eqref{lem:transported_problem}, the bounds in \eqref{eq:bounds_At_and_Bt}, and equation \eqref{eq:bound_for_ft}, we have the following calculation
	\begin{align*}
		\beta_{1} \vertiii{{p^{t}}}_{Q}
			&\leqslant \sup_{\substack{\bphi \in {\Vgamma}\\ \bphi \neq \vect{0}}} \frac{b^{t}(\bphi,{p^{t}})}{\vertiii{\bphi}_{X} }
			=\sup_{\substack{\bphi \in {\Vgamma}\\ \bphi \neq \vect{0}}} \vertiii{\bphi}_{X}^{-1}
				\Bigg\{ F^{t}(\bphi) - \aat({\uu^{t}},{\bphi}) \Bigg\}\\
			&\leqslant 
			\sup_{\substack{\bphi \in {\Vgamma}\\ \bphi \neq \vect{0}}} \vertiii{\bphi}_{X}^{-1}
				\Bigg\{ 
				 	\intO{I_{t} \ff^{t} \cdot \cbphi} 
					-  \intO{\alpha A_{t} \nabla {\uu^{t}} : \nabla {\cbphi}} \\
			&\qquad\qquad\qquad\quad		
					- \intS{\frac{I_{t}}{|\Mt\nn|} ({\uu^{t}} \cdot \Mt \nn)({\cbphi} \cdot \Mt \nn)} 
			 	\Bigg\}\\
			&\leqslant 
			c \sup_{\substack{\bphi \in {\Vgamma}\\ \bphi \neq \vect{0}}} \vertiii{\bphi}_{X}^{-1}
				\Bigg\{ 
				 	 \| \ff \|_{0,U} \vertiii{\cbphi}_{0,\Omega} 
					+  \vertiii{\nabla {\uu^{t}}}_{0,\Omega} \vertiii{\nabla {\cbphi}}_{0,\Omega}
					+ \vertiii{{\uu^{t}}}_{0,\Sigma} \vertiii{{\cbphi}}_{0,\Sigma} 
			 	\Bigg\}\\
			&\leqslant 
			c \left(  \| \ff \|_{0,U} + \vertiii{\uu^{t}}_{X}  \right).								
	\end{align*}
	Therefore, using the estimate for $\vertiii{\uu^{t}}_{X}$ given in \eqref{eq:bound_for_ut}, we arrive at
	\[
		\vertiii{{p^{t}}}_{Q} \leqslant c \| \ff \|_{0,U},
	\]
	for some constant $c>0$.
	The desired inequalities then follow from the fact that $\ff \in H^{1}(U)^{d}$.
	This proves the lemma.
\end{proof}
To complete our preparations, we next prove the following lemma concerning the H\"{o}lder continuity of the state variables $\uu^{t}$ and $p^{t}$ with respect to $t$.
\begin{lemma}\label{lem:holder_continuity}
	Let $(\uu,p) \in \Vgamma \times Q$ be the solution of \eqref{eq:ccbm_weak_form}.
	Then, the following limit holds
	\[
		\lim_{t\to0} \frac{1}{\sqrt{t}} \left( \vertiii{\uu^{t} - \uu}_{X} + \vertiii{p^{t} - p}_{Q} \right) = 0,
	\] 
	where $({\uu^{t}}, p^{t}) \in \Vgamma \times Q$ solves \eqref{eq:transformed_ccbm_weak_form}, for $t \in \mathcal{I}$.
\end{lemma}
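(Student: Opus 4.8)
The plan is to set up a saddle-point residual system for the pair of differences $\ww:=\uu^{t}-\uu$ and $s:=p^{t}-p$ and then exploit the uniform stability of the transported forms, so as to never eliminate the pressure. Subtracting \eqref{eq:ccbm_weak_form} from \eqref{eq:transformed_ccbm_weak_form} and splitting each transported form via $\aat(\uu^{t},\bphi)-\aaa(\uu,\bphi)=\aat(\ww,\bphi)+(\aat-\aaa)(\uu,\bphi)$ (and likewise for $b^{t}$), I would obtain, for every $\bphi\in\Vgamma$ and $\lambda\in Q$,
\begin{equation*}
\aat(\ww,\bphi)+b^{t}(\bphi,s)=R_{1}(\bphi),\qquad b^{t}(\ww,\lambda)=R_{2}(\lambda),
\end{equation*}
where the residual functionals $R_{1}\in X'$ and $R_{2}\in Q'$ are
\begin{equation*}
R_{1}(\bphi)=(F^{t}-F)(\bphi)-(\aat-\aaa)(\uu,\bphi)-(b^{t}-b)(\bphi,p),\qquad R_{2}(\lambda)=-(b^{t}-b)(\uu,\lambda).
\end{equation*}
Here I use that $\uu^{t}$ is solenoidal with respect to $b^{t}$ while $\uu$ is solenoidal with respect to $b$; crucially these are \emph{different} constraints, so $\ww$ lies in neither kernel, and the term $R_{2}$ records precisely this mismatch.

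Next I would invoke the uniform coercivity of $\aat$ on $X$ and the uniform inf--sup condition \eqref{eq:inf_sup_condition_for_transformed_problem} for $b^{t}$, both furnished by Lemma \ref{lem:boundedness_of_sesquiliner_and_linear_forms}. Testing the first equation with $\bphi=\ww$ and the second with $\lambda=s$, coercivity gives
\begin{equation*}
c\,\vertiii{\ww}_{X}^{2}\leqslant\Re\aat(\ww,\ww)=\Re\{R_{1}(\ww)-R_{2}(s)\}\leqslant\|R_{1}\|_{X'}\vertiii{\ww}_{X}+\|R_{2}\|_{Q'}\vertiii{s}_{Q},
\end{equation*}
while the inf--sup condition applied to $s$, together with the uniform continuity of $\aat$, yields $\vertiii{s}_{Q}\lesssim\|R_{1}\|_{X'}+\vertiii{\ww}_{X}$. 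Combining the two and absorbing the mixed terms by Young's inequality produces the uniform stability estimate
\begin{equation*}
\vertiii{\uu^{t}-\uu}_{X}+\vertiii{p^{t}-p}_{Q}\lesssim\|R_{1}\|_{X'}+\|R_{2}\|_{Q'},
\end{equation*}
with constants independent of $t\in\mathcal{I}$ (the uniform bounds on $(\uu,p)$ and on $(\uu^{t},p^{t})$ from Lemma \ref{lem:boundedness_of_the_transformed_state} keeping every constant finite).

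It then remains to show $\|R_{1}\|_{X'}+\|R_{2}\|_{Q'}=o(\sqrt{t})$ (in fact $O(t)$). Since $[t\mapsto I_{t}]$, $[t\mapsto A_{t}]$, $[t\mapsto B_{t}]$ are $\mathcal{C}^{1}$ in $t$ and reduce to $1$, $id$, $1$ at $t=0$ (see \eqref{eq:regular_maps}--\eqref{eq:limits_of_maps} and Lemma \ref{lem:properties_of_the_Jacobian}), the coefficient combinations $A_{t}-id$ and $I_{t}\Mt^{\top}-id$ are $O(t)$ in $\mathcal{C}(\overline{\Omega})$, and the boundary coefficient $B_{t}|\Mt\nn|^{-2}$ together with the factors $\Mt^{\top}$ forms a $\mathcal{C}^{1}$-in-$t$ family reducing to the identity, hence deviates from the untransformed integrand by $O(t)$ in $\mathcal{C}(\Sigma)$. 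Feeding these expansions into $(\aat-\aaa)(\uu,\cdot)$, $(b^{t}-b)(\cdot,p)$ and $(b^{t}-b)(\uu,\cdot)$, and controlling boundary traces by $\vertiii{\,\cdot\,}_{X}$, bounds each contribution by $O(t)\,(\vertiii{\uu}_{X}+\vertiii{p}_{Q})$. For $(F^{t}-F)(\bphi)=\intO{(I_{t}\ff^{t}-\ff)\cdot\cbphi}$ I would use $\ff\in H^{1}(U)^{d}$ and Lemma \ref{lem:convergence_of_functions}(iv), in its vector-valued form \eqref{eq:convergence_of_vector_valued_functions_3}, to get $\|I_{t}\ff^{t}-\ff\|_{0,\Omega}=O(t)$. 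Dividing the stability estimate by $\sqrt{t}$ and letting $t\to0$ then closes the argument.

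The hard part, as flagged in the introduction, will be the pressure. Because $\ww$ is solenoidal with respect to neither $b$ nor $b^{t}$, one cannot restrict to a divergence-free test space to cancel $s$, so the pressure difference must be carried throughout and stabilised via the inf--sup condition rather than eliminated. Simultaneously, the transformed boundary integrand of $\aat$ mixes the several $t$-dependent factors $B_{t}$, $|\Mt\nn|^{-2}$ and $\Mt^{\top}$, which is why the clean difference-quotient trick of the IKP-type proofs does not apply directly and the explicit \emph{expansions} of these specific expressions, combined with trace estimates on $\Sigma$, become essential to extract the required rate.
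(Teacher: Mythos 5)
Your proposal is correct, and it reaches the conclusion by a mirror-image of the paper's decomposition rather than the same one. The paper subtracts the two weak forms so that the \emph{untransformed} forms stay on the left, i.e.\ it works with $\aaa(\uu^{t}-\uu,\bphi)+b(\bphi,p^{t}-p)=\Phi^{t}(\bphi)$ as in \eqref{eq:difference_equation}, so its residual \eqref{eq:big_Phi_sup_t} carries the \emph{transported} solution $(\uu^{t},p^{t})$; this forces the paper to invoke the uniform boundedness of $(\uu^{t},p^{t})$ (Lemma \ref{lem:boundedness_of_the_transformed_state}) before it can let $t\to0$, to use the inf-sup constant $\beta_{0}$ of the untransformed $b$, and to neutralize the coupling term $\intO{r^{t}\nabla\cdot\cyt}$ through the separate identity \eqref{eq:identity_for_rtyt} extracted from the two divergence constraints. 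You instead keep the \emph{transformed} forms $\aat$, $b^{t}$ on the left, so your residuals $R_{1}$, $R_{2}$ involve only the fixed solution $(\uu,p)$; the coupling term is dispatched by the mirror operation $b^{t}(\ww,s)=-(b^{t}-b)(\uu,s)=R_{2}(s)$, and the stability constants you need (uniform coercivity and continuity of $\aat$, uniform inf-sup $\beta_{1}$ of $b^{t}$ in \eqref{eq:inf_sup_condition_for_transformed_problem}) are exactly what Lemma \ref{lem:boundedness_of_sesquiliner_and_linear_forms} provides. Two consequences are worth noting. First, your citation of Lemma \ref{lem:boundedness_of_the_transformed_state} is actually superfluous: since your right-hand sides contain only $(\uu,p)$, no a priori control of $(\uu^{t},p^{t})$ is required, which slightly shortens the chain of prerequisites. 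Second, your argument yields the rate $\vertiii{\uu^{t}-\uu}_{X}+\vertiii{p^{t}-p}_{Q}=O(t)$ in a single pass, which is stronger than the stated $o(\sqrt{t})$ and removes the paper's need for a two-step structure (qualitative convergence first, rate second). Both routes ultimately consume the same analytic ingredients — the expansions $I_{t}=1+t\operatorname{div}\VV+O(t^{2})$, $\Mt^{\top}=id-tD\VV+O(t^{2})$, $|A_{t}-id|_{\infty}=O(t)$, the $O(t)$ deviation of the boundary coefficients on $\Sigma$ combined with trace estimates, and the vector-valued Lemma \ref{lem:convergence_of_functions}(iv) (i.e.\ \eqref{eq:convergence_of_vector_valued_functions_3}) for $\|I_{t}\ff^{t}-\ff\|_{0,\Omega}=O(t)$ — so neither is more demanding on the data; yours is closer to standard saddle-point perturbation theory, while the paper's keeps the limiting problem's forms in view, which is convenient for the subsequent limit computations in the proof of Proposition \ref{prop:the_shape_derivative_of_the_cost}.
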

\begin{proof}
	Let us first note that the assumption that $\VV \in \vect{\Theta}^{1}$ implies that
	\[
		\|\VV\|_{\mathcal{C}^{1,1}(\overline{U})^{d}} = |\VV|_{\infty} + |D\VV|_{\infty} + \sup_{x \neq y} \frac{|D\VV(x) - D\VV(y)|_{\infty}}{|x-y|} < \infty\footnote{Here and throughout the paper, we use $|\cdot|_{\infty}$ to denote the $L^{\infty}(\overline{U})$ norm for simplicity.}
	\]
	which implies further that $ |\VV|_{\infty}$ and $|D\VV|_{\infty}$ are both finite.
	
	Hereafter, we proceed in two steps:
	\begin{description}
		\item[\underline{\textnormal{Step 1}.}] We first show that $\lim_{t \to 0} \uu^{t} = \uu$ in $X$ and $\lim_{t \to 0} p^{t} = p$ in $Q$. 
		\item[\underline{\textnormal{Step 2}.}] Then, we validate our claim that $\lim_{t\to0} \dfrac{1}{\sqrt{t}} \left( \vertiii{\uu^{t} - \uu}_{X} + \vertiii{p^{t} - p}_{Q} \right) = 0$.
	\end{description}

	\underline{\textnormal{Step 1}.} We consider the difference between the variational equations \eqref{eq:transformed_ccbm_weak_form} and \eqref{eq:ccbm_weak_form} and define $\yt := \uu^{t} - \uu$ and $r^{t} := p^{t} - p$.
	By making $\varepsilon > 0$ smaller if necessary, it can be shown that the following expansions hold for sufficiently small $t \in \mathcal{I}$,
	\begin{equation*}\label{eq:approximations}
	\left\{
	\begin{aligned}
		I_{t} &= 1 + t \operatorname{div} \VV + t^{2} \tilde{\rho}(t,\VV), \quad \text{where} \ \tilde{\rho} \in \mathcal{C}(\mathbb{R},\mathcal{C}^{0,1}(U)),\\
		\Mt^{\top} &= (DT_{t})^{-1} = (id + t D\VV)^{-1} = \sum_{j=0}^{\infty} (-1)^{j} t^{j} (D\VV)^{j}, \quad \text{where}\ 0 \leqslant |t| \leqslant \varepsilon < |\lambda_{max}|^{-1},
	\end{aligned}
	\right.
	\end{equation*}
	where $\lambda_{max}$ is the dominant eigenvalue (i.e., the eigenvalue with the largest modulus) of $D\VV$.
	Below we use the above expansions and specifically write $\Mt^{\top}$ as follows
	\begin{equation}\label{eq:expansion_of_Mt}
		\Mt^{\top} = (DT_{t})^{-1} = (id + t D\VV)^{-1} = id - t D\VV + O{(t^{2})} id.
	\end{equation}
	Henceforth, we denote
	\[
		\rho(t):=t^{2} \tilde{\rho}(t,\VV),
		\quad R(t) := O{(t^{2})} id,
		\quad \rho_{1}(t):=t \tilde{\rho}(t,\VV),
		\quad\text{and}\quad R_{1}(t) := O{(t)} id.\footnote{In some occasions, the remainder $\rho(t)$ and $R(t)$ may have a different structure for its exact expression. Nevertheless, these expressions are always of order $O(t^{2})$. The same is true for $\rho_{1}(t)$ and $R_{1}(t)$. We abuse the use of these notations since the exact expressions are not actually of interest in our argumentations.}
	\]
	
	Now, let us consider the variational equation
	\[
		b^{t}(\uu^{t},\lambda) - b(\uu,\lambda) 
		= -\intO{ I_{t} \bar{\lambda} ( \Mt^{\top} : \nabla {\uu^{t}}) } - \left( - \intO{\overline{\lambda} \nabla \cdot \uu } \right)
		= 0, \quad \forall \lambda \in Q.
	\]
	Applying \eqref{eq:expansion_of_Mt} to the above equation, we get
	\begin{equation}\label{eq:divergence_equal_zero_computation}
	\begin{aligned}
	&b^{t}(\uu^{t},\lambda) - b(\uu,\lambda) \\
		&\quad = -\intO{ (1 + t \nabla \cdot \VV + \rho(t)) \bar{\lambda} [ (id - t D\VV + R(t)) : \nabla {\uu^{t}}] } - \left( - \intO{\overline{\lambda} \nabla \cdot \uu } \right)\\
		&\quad = - \intO{\overline{\lambda} \nabla \cdot (\uu^{t} - \uu) } 
			- \intO{ t (\nabla \cdot \VV + \rho(t)) \overline{\lambda} ( \Mt^{\top} : \nabla {\uu^{t}})}
			\\ &\quad \qquad  - \intO{ \overline{\lambda} [ (-t D\VV + R(t) ): \nabla \uu^{t} ] }\\
		&\quad = 0, \quad \forall \lambda \in Q.
	\end{aligned}
	\end{equation}
	Taking $\lambda = r^{t} = p^{t} - p \in Q$, we get
	\begin{equation}\label{eq:identity_for_rtyt} 
	\begin{aligned}
	 \intO{r^{t} \nabla \cdot \overline{\vect{y}}^{t} }  
	 	&= - \intO{ t (\nabla \cdot \VV + \rho(t)) r^{t} ( \Mt^{\top} : \nabla \overline{\uu}^{t})}\\
		&\qquad	- \intO{ r^{t} [ (-t D\VV + R(t) ): \nabla \overline{\uu}^{t} ] }.	
	\end{aligned}
	\end{equation}
	%
	
	%
	%
	On the other hand, we also have the equation
	\[
		\aat(\uu^{t},\bphi) + b^{t}(\bphi,p^{t}) - \aaa(\uu,\bphi) - b(\bphi,p) = F^{t}(\bphi) - F(\bphi),  \qquad \forall {\bphi} \in \Vgamma,
	\]
	which is equivalent to
	\begin{equation}\label{eq:difference_equation}
		\aaa(\uu^{t} - \uu, \bphi) + b(\bphi,p^{t} - p) = \Phi^{t}(\bphi),\qquad \forall {\bphi} \in \Vgamma,
	\end{equation}
	where
	\begin{equation}\label{eq:big_Phi_sup_t}
	\begin{aligned} 
		\Phi^{t}(\bphi) &= - \intO{ \alpha  (A_{t} - {id}) \nabla {\uu^{t}} : \nabla {\cbphi}}\\
		&\qquad - i \intS{ \left( \frac{B_{t}}{|\Mt\nn|^{2}}  - 1 \right) ( \Mt^{\top} {\uu^{t}} \cdot \nn ) ( \Mt^{\top} {\cbphi} \cdot \nn) } \\
		&\qquad - i \intS{ [ (\Mt^{\top} - id) {\uu^{t}} \cdot \nn ] (\Mt^{\top} {\cbphi} \cdot \nn) } 
					- i \intS{ ( \Mt^{\top} {\uu^{t}} \cdot \nn ) [ ( \Mt^{\top} - id) {\cbphi} \cdot \nn] } \\
		&\qquad + \intO{ (I_{t} -1) p^{t} ( \Mt^{\top} : \nabla \cbphi) } 
					+ \intO{ p^{t} [ (\Mt^{\top} - id) : \nabla \cbphi ] } \\
		&\qquad + \intO{ ( I_{t} \ff^{t} - \ff) \cdot \cbphi}, \qquad (\bphi \in \Vgamma),
	\end{aligned}
	\end{equation}
	and $a: \Vgamma \times \Vgamma \to \mathbb{R}$ and $b: \Vgamma \times Q \to \mathbb{R}$ are respectively the sesquilinear and linear forms given in \eqref{eq:forms_for_the_state_problem}.

	By choosing $\bphi = \yt \in \Vgamma$ in \eqref{eq:difference_equation} and utilizing identity \eqref{eq:identity_for_rtyt}, it follows that
	\begin{align*}
	c_{a}\vertiii{\yt}_{X}^{2}
		& \ \leqslant \Re\{\aaa(\yt,\yt)\}\\
		& \ \leqslant \abs{\aaa(\yt,\yt)} \\
		&\ = \left|{\intO{\alpha \nabla {\yt} : \nabla {\cyt}} + i \intS{({\yt} \cdot \nn)({\cyt} \cdot \nn)}}\right| \\
		&\ = \Bigg| \Phi^{t}(\yt) + \intO{r^{t} \nabla \cdot \cyt} \Bigg| \\
		%
		%
		&\ = \Bigg| - \intO{ \alpha (A_{t} - {id}) \nabla {\uu^{t}} : \nabla {\cyt}}\\
		&\qquad \ - i \intS{ \left( \frac{B_{t}}{|\Mt\nn|^{2}}  - 1 \right) ( \Mt^{\top} {\uu^{t}} \cdot \nn ) ( \Mt^{\top} {\cyt} \cdot \nn) } \\
		&\qquad \ - i \intS{ [ (\Mt^{\top} - id) {\uu^{t}} \cdot \nn ] ( \Mt^{\top} {\cyt} \cdot \nn) } \\
		&\qquad \	- i \intS{ ( \Mt^{\top} {\uu^{t}} \cdot \nn ) [ ( \Mt^{\top} - id) {\cyt} \cdot \nn] } \\
		&\qquad \ + \intO{ (I_{t} -1) p^{t} ( \Mt^{\top} : \nabla \cyt) } 
					+ \intO{ p^{t} [ (\Mt^{\top} - id) : \nabla \cyt ] } \\		
		&\qquad \ + \intO{ (I_{t} \ff^{t} - \ff) \cdot \cyt}\\
		&\qquad \ - t \intO{ (\nabla \cdot \VV + \rho_{1}(t)) r^{t} ( \Mt^{\top} : \nabla \overline{\uu}^{t})}
			- t \intO{ r^{t} [ (- D\VV + R_{1}(t) ): \nabla \overline{\uu}^{t} ] } \Bigg| \\		
		&\ \leqslant |\alpha| \left| A_{t} - {id} \right|_{\infty} \vertiii{\nabla {\uu^{t}}}_{0,\Omega} \vertiii{\nabla {\cyt}}_{0,\Omega}\\
		&\quad \ + \left| \frac{B_{t}}{|\Mt\nn|^{2}}  - 1 \right|_{\infty} \abs{ \Mt^{\top} }_{\infty}^{2}  \vertiii {\uu^{t}}_{0,\Sigma} \vertiii{ {\cyt} }_{0,\Sigma} \\
		&\quad \ + 2 \left| \Mt^{\top} - id \right|_{\infty} \abs{ \Mt^{\top} }_{\infty} \vertiii {\uu^{t}}_{0,\Sigma} \vertiii{ {\cyt} }_{0,\Sigma} \\ 
		&\quad \ + \left( \left| I_{t} - 1 \right|_{\infty} \abs{ \Mt^{\top} }_{\infty} + \left| \Mt^{\top} - id \right|_{\infty} \right) \vertiii{p^{t}}_{0,\Omega} \vertiii{ \nabla {\cyt} }_{0,\Omega} \\		
		&\quad \ + \|I_{t} \ff^{t} - \ff\|_{0,\Omega} \vertiii{ \cyt }_{0,\Omega}\\
		%
		%
		%
		&\ \quad + |t| \left( \abs{\nabla \cdot \VV}_{\infty} + \abs{\rho_{1}(t)}_{\infty} \right) \abs{ \Mt^{\top} }_{\infty}
			\vertiii{r^{t}}_{0,\Omega} \vertiii{ \nabla \overline{\uu}^{t} }_{0,\Omega}\\
		&\ \quad + |t| \left( \abs{D\VV}_{\infty} + \abs{R_{1}{(t)}}_{\infty} \right) 
			\vertiii{r^{t}}_{0,\Omega} \vertiii{ \nabla \overline{\uu}^{t} }_{0,\Omega}.			
	\end{align*}
	Therefore, we have the estimate
	\begin{equation}\label{eq:first_estimate}
		c_{a}\vertiii{\yt}_{X}^{2}
		\leqslant {m}_{t} \vertiii{ \yt }_{X} + t {\Xi}^{t} \vertiii{r^{t}}_{Q},	
	\end{equation}
	where
	\begin{equation}\label{eq:coefficients}
	\left\{
	\begin{aligned} 
		{m}_{t} &:= \left( |\alpha| \left| A_{t} - {id} \right|_{\infty} + \left| \frac{B_{t}}{|\Mt\nn|^{2}}  - 1 \right|_{\infty} \abs{ \Mt^{\top} }_{\infty}^{2} + 2 \left| \Mt^{\top} - id \right|_{\infty} \abs{ \Mt^{\top} }_{\infty} \right) \vertiii {\uu^{t}}_{X}\\
		&\qquad + \left( \left| I_{t} - 1 \right|_{\infty} \abs{ \Mt^{\top} }_{\infty} + \left| \Mt^{\top} - id \right|_{\infty} \right) \vertiii{p^{t}}_{Q}
			+ \|I_{t} \ff^{t} - \ff\|_{0,\Omega},\\
		{\Xi}^{t} &:=  \left[ \left( \abs{\nabla \cdot \VV}_{\infty} + \abs{\rho_{1}(t)}_{\infty} \right) \abs{ \Mt^{\top} }_{\infty} + \abs{D\VV}_{\infty} + \abs{R_{1}{(t)}}_{\infty} \right] \vertiii{ {\uu}^{t} }_{X}.	
	\end{aligned}
	\right.
	\end{equation}

	Now, let us take $\lambda = r^{t} = p^{t}-p \in Q$ in the inf-sup condition \eqref{eq:inf_sup_nts} and consider equation \eqref{eq:difference_equation}.
	We have
	\begin{equation}\label{eq:inf_sup_estimate_inequality}
	\begin{aligned}
		\beta_{0} \vertiii{{r^{t}}}_{Q} & \leqslant
		\sup_{\substack{\bphi \in {\Vgamma}\\ \bphi \neq \vect{0}}} \frac{b(\bphi,{r^{t}})}{\vertiii{\bphi}_{X} } \\
		&= \sup_{\substack{\bphi \in {\Vgamma}\\ \bphi \neq \vect{0}}} \vertiii{\bphi}_{X}^{-1} 
			\Big\{ \Phi^{t}(\bphi) - \aaa(\yt, \bphi) \Big\}\\
		&= \sup_{\substack{\bphi \in {\Vgamma}\\ \bphi \neq \vect{0}}} \vertiii{\bphi}_{X}^{-1} 
			\Bigg\{ |\alpha| \left| A_{t} - {id} \right|_{\infty} \vertiii{\nabla {\uu^{t}}}_{0,\Omega} \vertiii{\nabla {\cbphi}}_{0,\Omega}\\
		&\qquad\qquad\qquad\qquad + \left| \frac{B_{t}}{|\Mt\nn|^{2}}  - 1 \right|_{\infty} \abs{ \Mt^{\top} }_{\infty}^{2}  \vertiii {\uu^{t}}_{0,\Sigma} \vertiii{ {\cbphi} }_{0,\Sigma} \\
		&\qquad\qquad\qquad\qquad + 2 \left| \Mt^{\top} - id \right|_{\infty} \abs{ \Mt^{\top} }_{\infty} \vertiii {\uu^{t}}_{0,\Sigma} \vertiii{ {\cbphi} }_{0,\Sigma} \\ 
		&\qquad\qquad\qquad \qquad + \left| I_{t} -1 \right|_{\infty} \abs{ \Mt^{\top} }_{\infty} \vertiii{p^{t}}_{0,\Omega} \vertiii{ \nabla {\cbphi} }_{0,\Omega} \\
		&\qquad\qquad\qquad \qquad + \left| \Mt^{\top} - id \right|_{\infty}  \vertiii{p^{t}}_{0,\Omega} \vertiii{ \nabla {\cbphi} }_{0,\Omega} \\
		&\qquad\qquad\qquad \qquad + \left\| I_{t} \ff^{t} -\ff \right\|_{0,\Omega} \vertiii{ {\cbphi} }_{0,\Omega}\\
		&\qquad\qquad\qquad \qquad + |\alpha| \vertiii{\nabla \yt }_{0,\Omega} \vertiii{\nabla {\cbphi}}_{0,\Omega} + \vertiii{ \yt }_{0,\Sigma} \vertiii{ {\cbphi}}_{0,\Sigma} \Bigg\}\\
		&\leqslant m_{t} + \max\{ |\alpha|, 1\} \vertiii{ \yt }_{X}. 
	\end{aligned}
	\end{equation}
        Therefore, we have the inequality
        \begin{equation}\label{eq:estimate_for_pt_final}
       	 	\vertiii{{r^{t}}}_{Q} \leqslant \beta_{0}^{-1} \left( {m}_{t} + \max\{ |\alpha|, 1\} \vertiii{ \yt }_{X} \right).
        \end{equation}
	Going back \eqref{eq:first_estimate} and utilizing the above estimate, we get 
	\begin{equation}\label{eq:second_estimate_eliminating_pt}
	\begin{aligned}
		c_{a}\vertiii{\yt}_{X}^{2}
		&\leqslant {m}_{t} \vertiii{ \yt }_{X} + t {\Xi}^{t} \left[ \beta_{0}^{-1} \left( {m}_{t} + \max\{ |\alpha|, 1\} \vertiii{ \yt }_{X} \right) \right]\\
		&= t {\Xi}^{t} \beta_{0}^{-1} {m}_{t}  + \left(  {m}_{t}  + t {\Xi}^{t} \beta_{0}^{-1} \max\{ |\alpha|, 1\} \right) \vertiii{ \yt }_{X}.
	\end{aligned}
	\end{equation}	
	We apply Peter-Paul inequality to $\left(  {m}_{t}  + t {\Xi}^{t} \beta_{0}^{-1} \max\{ |\alpha|, 1\} \right) \vertiii{ \yt }_{X}$ to obtain the following estimate
	\begin{align*}
		\left(  {m}_{t}  + t {\Xi}^{t} \beta_{0}^{-1} \max\{ |\alpha|, 1\} \right) \vertiii{ \yt }_{X}
			& \leqslant \frac{\left(  {m}_{t}  + t {\Xi}^{t} \beta_{0}^{-1} \max\{ |\alpha|, 1\} \right)^{2}}{2\varepsilon_{1}} + \frac{\varepsilon_{1}}{2} \vertiii{ \yt }_{X}^{2}.
	\end{align*}
	for some constant $\varepsilon_{1} > 0$.
	We choose (and fixed) $\varepsilon_{1}$ such that $\bar{c}:=c(c_{a},\varepsilon_{1}) := 2c_{a} - \varepsilon_{1} > 0$ so that, from our first estimate \eqref{eq:second_estimate_eliminating_pt}, we have
	\begin{equation}\label{eq:final_estimate_for_yt_bounded_above}
	\vertiii{\yt}_{X}
		\leqslant \bar{c}^{-\frac{1}{2}} \left(  t {\Xi}^{t} \beta_{0}^{-1} {m}_{t} + \frac{\left(  {m}_{t}  + t {\Xi}^{t} \beta_{0}^{-1} \max\{ |\alpha|, 1\} \right)^{2}}{2\varepsilon_{1}} \right)^{1/2}.	
	\end{equation}
	We note that, at $t=0$, $I_{t} |\Mt\nn|^{-1} = (1) (|id \nn|^{-1}) = 1$.
	Thus, in view of \eqref{eq:coefficients} together with Lemma \ref{lem:boundedness_of_the_transformed_state}, \eqref{eq:convergence_of_vector_valued_functions_1} and the vector-valued version of Lemma \ref{lem:convergence_of_functions} (refer to \eqref{eq:convergence_of_vector_valued_functions_3}), we see that ${m}_{t} \to 0$ as $t \to 0$.
	Moreover, it is not hard to see that ${\Xi}^{t}$ is (uniformly) bounded for all $t \in \mathcal{I}$ because of Lemma \ref{lem:boundedness_of_the_transformed_state}.
	Therefore, we can conclude -- by Lebesgue's dominated convergence theorem -- that
	\begin{equation}\label{eq:limit_of_ut}
	\lim_{t \to 0} \vertiii{\yt}_{X} = 0
		\qquad \Longleftrightarrow \qquad
		\lim_{t \to 0} \uu^{t} = \uu \quad \text{in $X$}.
	\end{equation}

	Similarly, based from the above discussion and from \eqref{eq:inf_sup_estimate_inequality}, we know that the terms on the right-hand side of the said inequality vanish as $t \to 0$.
	Thus, we also have the limit
	\begin{equation}\label{eq:limit_of_pt}
	\lim_{t \to 0} \vertiii{r^{t}}_{Q} = 0
		\qquad \Longleftrightarrow \qquad	
		\lim_{t \to 0} p^{t} = p \quad \text{in $Q$}.
	\end{equation}

	\underline{\textnormal{Step 2}.}	 Before we proceed to the last part of the proof, we note that, for sufficiently small $t > 0$, $\frac{1}{t}\yt \in \Vgamma$ and $\frac{1}{t}r^{t} \in Q$.
	In addition, we recall that the derivatives of $I_{t}$ and $\Mt$ with respect to $t$ exists in $L^{\infty}(\Omega)$ and $L^{\infty}(\Omega)^{d\times d}$, respectively.

	Now, to finish the proof, we go back to the computations in the previous step (referring particularly to \eqref{eq:final_estimate_for_yt_bounded_above}), to obtain, after dividing by $t > 0$, 
	\begin{align*}
		\frac{1}{t}\vertiii{\yt}_{X}^{2}
		\leqslant \bar{c}^{-1} \left( {\Xi}^{t} \beta_{0}^{-1} {m}_{t} + \frac{1}{2\varepsilon_{1}} \left(  \frac{1}{\sqrt{t}}{m}_{t}  + \sqrt{t} {\Xi}^{t} \beta_{0}^{-1} \max\{ |\alpha|, 1\} \right)^{2} \right).	
	\end{align*}
	Observe that, we have
	\begin{align*}
	\frac{1}{\sqrt{t}}{m}_{t} 
	&= \sqrt{t}\left( |\alpha| \left| \frac{A_{t} - {id}}{t} \right|_{\infty} + \left| \frac{1}{t}  \left( \frac{B_{t}}{|\Mt\nn|^{2}}  - 1 \right) \right|_{\infty} \abs{ \Mt^{\top} }_{\infty}^{2} \right) \vertiii {\uu^{t}}_{X}\\
	&\qquad + \sqrt{t}\left( 2 \left| \frac{\Mt^{\top} - id}{t} \right|_{\infty} \abs{ \Mt^{\top} }_{\infty} \right) \vertiii {\uu^{t}}_{X}\\
		&\qquad + \sqrt{t}\left( \left| \frac{I_{t} - 1}{t} \right|_{\infty} \abs{ \Mt^{\top} }_{\infty} + \left| \frac{\Mt^{\top} - id}{t} \right|_{\infty} \right) \vertiii{p^{t}}_{Q} + \sqrt{t} \left\|\frac{I_{t} \ff^{t} - \ff}{t} \right\|_{0,\Omega}.
	\end{align*}
	Thus, using \eqref{eq:regular_maps}, \eqref{eq:limits_of_maps}, \eqref{eq:convergence_of_vector_valued_functions_3}, Lemma \ref{lem:boundedness_of_the_transformed_state}, \eqref{eq:limit_of_ut}, and \eqref{eq:limit_of_pt}, we deduce that the following limit holds
	\begin{equation}\label{eq:limit_ratio_yt_over_t}
		\lim_{t \to 0 }\frac{1}{t}\vertiii{\uu^{t} - \uu}_{X}^{2} = 0
		\qquad \Longleftrightarrow \qquad
		\lim_{t \to 0 }\frac{1}{\sqrt{t}}\vertiii{\uu^{t} - \uu}_{X} = 0.
	\end{equation}

	Similarly, from estimate \eqref{eq:estimate_for_pt_final}, we know that (after dividing by $\sqrt{t} > 0$) the following inequality holds
	\begin{align*}
		\frac{1}{\sqrt{t}}\vertiii{{r^{t}}}_{Q} \leqslant \beta_{0}^{-1} \left[ \frac{1}{\sqrt{t}} {m}_{t} + \max\{ |\alpha|, 1\} \left( \frac{1}{\sqrt{t}} \vertiii{ \yt }_{X} \right) \right].
	\end{align*}
	Again, applying \eqref{eq:regular_maps}, \eqref{eq:limits_of_maps}, \eqref{eq:convergence_of_vector_valued_functions_3}, and Lemma \ref{lem:boundedness_of_the_transformed_state}, but now combined with \eqref{eq:limit_ratio_yt_over_t}, we infer that
	\begin{equation}\label{eq:limit_ratio_rt_over_t}
		\lim_{t \to 0 }\frac{1}{t}\vertiii{p^{t} - p}_{Q}^{2} = 0
		\qquad \Longleftrightarrow \qquad
		\lim_{t \to 0 }\frac{1}{\sqrt{t}}\vertiii{p^{t} - p}_{Q} = 0.
	\end{equation}	

		In above argumentations of taking the limit (equations \eqref{eq:limit_ratio_yt_over_t} and \eqref{eq:limit_ratio_rt_over_t}), it is understood that we are taking a sequence $\{t_{n}\}$ such that $\lim_{n \to \infty} t_{n} = 0$ and we have exploited the fact that the following sequences
	\[
		\left\{ \frac{I_{t_{n}}  -1}{t_{n}} \right\},
		\quad \left\{ \frac{M_{t_{n}}^{\top}  -id}{t_{n}} \right\},
		\quad \left\{ \frac{A_{t_{n}}  -id}{t_{n}} \right\},
		\quad \text{and}
		\ \ \left\{ \frac{1}{t_{n}}\left( \frac{B_{t_{n}}}{|M_{t_{n}}^{\top} \nn|}  - 1 \right)\right\},
	\]
	are (uniformly) bounded in $L^{\infty}$ (because of \eqref{eq:regular_maps}, \eqref{eq:limits_of_maps}, and \eqref{eq:bounds_At_and_Bt}).
	In addition, Lemma \ref{lem:boundedness_of_the_transformed_state} was employed above in the manner that, for the sequence $\{t_{n}\}$ (passing to a subsequence if necessary), the solutions $(\uu^{t_{n}},p^{t_{n}})$ of \eqref{eq:transformed_ccbm_weak_form} are uniformly bounded in $X \times Q$.
	Note that these results allow us to interchange the limit and the supremum in obtaining, particularly, equation \eqref{eq:limit_ratio_rt_over_t}.
	Finally, combining \eqref{eq:limit_ratio_yt_over_t} and \eqref{eq:limit_ratio_rt_over_t} concludes the lemma.
\end{proof} 
We are now in the position to prove Proposition \ref{prop:the_shape_derivative_of_the_cost} without using the shape derivative of $\uu$ and $p$.
Before we dealt with the proof, we make a few preparations by proving the following four lemmas which primarily contain several identities.
These results will serve useful in deriving the boundary expression for the shape gradient in accordance with Hadamard-Zol\'{e}sio structure theorem \cite[Chap. 9, Sec. 3.4, Thm. 3.6, p. 479]{DelfourZolesio2011}.
\begin{lemma}\label{lem:divergence_identity_limit}
	The following equation is satisfied by the state solution $\uu \in X$ of \eqref{eq:ccbm_weak_form}:
	\[
		\lim_{t \to 0} b(\ut - \uu, \lambda) 
		= - \lim_{t \to 0} \intO{\overline{\lambda} \nabla \cdot \left(\frac{\uu^{t} - \uu}{t}\right)}
		= - \intO{\overline{\lambda} (D\VV : \nabla \uu)}, \quad \forall \lambda \in {Q}.
	\]
\end{lemma}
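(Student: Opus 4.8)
The plan is to read the required limit directly off the divergence constraint in the form it was already rewritten inside the proof of Lemma~\ref{lem:holder_continuity}, so that no material or shape derivative of $\uu$ is ever invoked. First I would take \eqref{eq:divergence_equal_zero_computation}, which is identically zero, and rearrange it into
\[
b(\uu^{t}-\uu,\lambda) = \intO{t(\dive\VV + \rho(t))\overline{\lambda}(\Mt^{\top}:\nabla\uu^{t})} + \intO{\overline{\lambda}[(-tD\VV + R(t)):\nabla\uu^{t}]},
\]
valid for every $\lambda\in Q$, where I used that $-\intO{\overline{\lambda}\,\dive(\uu^{t}-\uu)} = b(\uu^{t}-\uu,\lambda)$ by the definition of $b$ in \eqref{eq:forms_for_the_state_problem}. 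Dividing by $t>0$ then identifies $\tfrac{1}{t}b(\uu^{t}-\uu,\lambda)$ with $-\intO{\overline{\lambda}\,\dive\!\left(\frac{\uu^{t}-\uu}{t}\right)}$, which is the first equality in the statement, and reduces the lemma to computing the limit as $t\to0$ of the two volume integrals on the right after division by $t$.

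For the passage to the limit I would use three facts established earlier: the strong convergence $\uu^{t}\to\uu$ in $X$ from \eqref{eq:limit_of_ut} (so that $\vertiii{\nabla\uu^{t}-\nabla\uu}_{0,\Omega}\to0$), the uniform bound on $\vertiii{\uu^{t}}_{X}$ from Lemma~\ref{lem:boundedness_of_the_transformed_state}, and the expansion \eqref{eq:expansion_of_Mt}, which gives $\Mt^{\top}\to id$, $\rho(t)\to0$, and $t^{-1}R(t)=O(t)\to0$ in $L^{\infty}$. Writing $\Mt^{\top}:\nabla\uu^{t}-\dive\uu = (\Mt^{\top}-id):\nabla\uu^{t} + \dive(\uu^{t}-\uu)$ and estimating by Cauchy--Schwarz, with $\dive\VV\in L^{\infty}$ and $\overline{\lambda}\in L^{2}$, the first integral divided by $t$ converges to $\intO{(\dive\VV)\overline{\lambda}\,\dive\uu}$; since the state constraint $b(\uu,\lambda)=0$ forces $\dive\uu=0$, this contribution vanishes. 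The same estimate applied to the second integral yields $\intO{\overline{\lambda}[(-D\VV):\nabla\uu]} = -\intO{\overline{\lambda}(D\VV:\nabla\uu)}$, which is exactly the asserted value.

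The only genuinely delicate point is the interchange of limit and integral, because $\nabla\uu^{t}$ converges to $\nabla\uu$ merely in $L^{2}$ while the coefficients $\Mt^{\top}$, $\dive\VV$ and the remainders converge in $L^{\infty}$. The structure that makes this harmless is that each integrand factors as an $L^{2}$ function times an $L^{\infty}$ function times an $L^{2}$ function; splitting each difference into one piece carrying an $L^{\infty}$-small factor (handled by the uniform $X$-bound on $\uu^{t}$) and one piece carrying $\nabla(\uu^{t}-\uu)$ (handled by \eqref{eq:limit_of_ut}), and bounding both by Cauchy--Schwarz, controls everything. I therefore expect the bookkeeping of these two splittings, rather than any conceptual subtlety, to be the bulk of the work.
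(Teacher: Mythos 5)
Your proposal is correct and follows essentially the same route as the paper: both start from \eqref{eq:divergence_equal_zero_computation}, divide by $t$, and pass to the limit using the strong convergence $\uu^{t}\to\uu$ in $X$, the uniform bound of Lemma \ref{lem:boundedness_of_the_transformed_state}, the $L^{\infty}$ convergence of $\Mt^{\top}$, $\rho(t)/t$, $R(t)/t$, and finally $\dive\uu=0$ to kill the first term. Your explicit splitting argument for the limit--integral interchange simply spells out what the paper compresses into its reference to the closing paragraph of the proof of Lemma \ref{lem:holder_continuity}.
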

\begin{proof}
	Let $(\uu^{t},p^{t}) \in \Vgamma \times Q$ be as in Lemma \ref{lem:transported_problem}.
	We recall equation \eqref{eq:divergence_equal_zero_computation} and divide both sides by $t > 0$, to obtain the following equation
	\begin{equation}\label{eq:estimate_for_divergence_equation}
	\begin{aligned}
		- \intO{\overline{\lambda} \nabla \cdot \left(\frac{\uu^{t} - \uu}{t}\right)} 
		&= \intO{ \left(\nabla \cdot \VV + \frac{\rho(t)}{t} \right) \overline{\lambda} ( \Mt^{\top} : \nabla {\uu^{t}})}\\
		&\qquad + \intO{ \overline{\lambda} \left[ \left(-D\VV + \frac{R(t)}{t} \right): \nabla \uu^{t} \right]}, \quad \forall \lambda \in {Q}.
	\end{aligned}
	\end{equation} 
	Using the properties of $\rho(t)$ and $R(t)$, we get -- by letting $t$ goes to zero in \eqref{eq:estimate_for_divergence_equation} and in view of the last paragraph in the proof of the previous lemma -- the following limit
	\[
			- \lim_{t \to 0} \intO{\overline{\lambda} \nabla \cdot \left(\frac{\uu^{t} - \uu}{t}\right)}
			= \intO{(\nabla \cdot \VV) \overline{\lambda} (\nabla \cdot \uu)} - \intO{\overline{\lambda} (D\VV : \nabla \uu)}, \quad \forall \lambda \in {Q}.
	\]
	Because $\dive \uu = 0$ in $\Omega$, we arrive at the desired equation, ending the proof of the lemma.
\end{proof}
We next prove three identities which prove to be useful in simplifying the expression for the shape gradient.
\begin{lemma}\label{lem:useful_identity}
	Let $\VV \in \sfTheta^{1}$ and $\bphi$ be sufficiently smooth vector such that $\dive \bphi = 0$ in $\Omega \in \mathcal{C}^{1,1}$.
	Then, we have the following identity
	\begin{equation}\label{eq:useful_identity}
		\langle D\VV \bphi - D\bphi \VV, \nn \rangle
			= \langle \bphi , \nabla_{\Sigma} \Vn \rangle  + \langle (D\VV \nn \otimes \nn) \bphi, \nn \rangle - \langle \VV , \nabla_{\Sigma}(\bphi \cdot \nn) \rangle + \Vn \dive_{\Sigma}\bphi.
	\end{equation}
\end{lemma}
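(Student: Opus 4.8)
The plan is to reduce the identity to elementary index computations by fixing a smooth extension of the unit normal $\nn$ into a tubular neighborhood of $\Sigma$ and then decomposing every full gradient into its tangential and normal parts. Concretely, I would take $\nn = \nabla b$ with $b$ the oriented distance function to $\Sigma$; this extension satisfies $|\nn| \equiv 1$ near $\Sigma$, so that $D\nn$ is symmetric and $(D\nn)\nn = \vect{0} = \nn^{\top}D\nn$. Since the tangential operators $\nabla_{\Sigma}$ and $\dive_{\Sigma}$ are intrinsic (they depend only on the boundary traces), the claimed identity is independent of this choice, and the extension merely serves as a computational device.

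First I would compute the full gradients of the scalar fields $\Vn = \VV\cdot\nn$ and $\bphi\cdot\nn$. A direct differentiation, together with the symmetry of $D\nn$ and the convention $(D\uu)_{ij}=\partial u_i/\partial x_j$, gives $\nabla(\VV\cdot\nn) = (D\VV)^{\top}\nn + (D\nn)\VV$ and $\nabla(\bphi\cdot\nn) = (D\bphi)^{\top}\nn + (D\nn)\bphi$. Projecting onto the tangent space via $\nabla_{\Sigma}\psi = \nabla\psi - (\dn{\psi})\nn$, I would then form the combination $\langle\bphi,\nabla_{\Sigma}\Vn\rangle - \langle\VV,\nabla_{\Sigma}(\bphi\cdot\nn)\rangle$ appearing on the right-hand side. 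The leading terms $\langle D\VV\bphi,\nn\rangle$ and $-\langle D\bphi\VV,\nn\rangle$ are produced immediately, while the two cross terms $\bphi^{\top}(D\nn)\VV$ and $-\VV^{\top}(D\nn)\bphi$ cancel by the symmetry of $D\nn$, leaving only the normal-derivative remainders $-(\dn{\Vn})(\bphi\cdot\nn)$ and $+(\dn{(\bphi\cdot\nn)})(\VV\cdot\nn)$.

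Next I would dispose of these two remainders together with the remaining right-hand-side terms $\langle(D\VV\,\nn\otimes\nn)\bphi,\nn\rangle = (\bphi\cdot\nn)(\nn^{\top}D\VV\nn)$ and $\Vn\,\dive_{\Sigma}\bphi$. Using $\nn^{\top}D\nn = \vect{0}$, a short calculation yields $\dn{\Vn} = \nn^{\top}D\VV\nn$, so the two $(\bphi\cdot\nn)$-terms cancel exactly. For the $(\VV\cdot\nn)$-terms I would invoke the hypothesis $\dive\bphi = 0$: the decomposition $\dive\bphi = \dive_{\Sigma}\bphi + \langle D\bphi\nn,\nn\rangle$ coming from Definition \ref{def:tangential_operators} forces $\dive_{\Sigma}\bphi = -\langle D\bphi\nn,\nn\rangle$, while $\nn^{\top}D\nn=\vect{0}$ gives $\dn{(\bphi\cdot\nn)} = \langle D\bphi\nn,\nn\rangle$; hence $\dn{(\bphi\cdot\nn)} + \dive_{\Sigma}\bphi = 0$ and these terms cancel as well. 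Collecting what survives leaves precisely $\langle D\VV\bphi - D\bphi\VV,\nn\rangle$, which is the asserted identity.

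The main obstacle is purely bookkeeping: one must track the transposes in $D\VV$, $D\bphi$, and $D\nn$ consistently and repeatedly exploit the normalization identities $(D\nn)\nn = \nn^{\top}D\nn = \vect{0}$ arising from $|\nn|=1$. It is worth emphasizing that the only place where the divergence-free hypothesis is genuinely used is in the cancellation of the $\Vn\,\dive_{\Sigma}\bphi$ term; every other step is an algebraic identity valid for arbitrary smooth fields $\VV$ and $\bphi$.
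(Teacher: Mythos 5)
Your proof is correct, and it takes a recognizably different route from the paper's. The paper works intrinsically on $\Sigma$: it splits $D\VV = D_{\Sigma}\VV + D\VV\nn\otimes\nn$, rewrites $\langle D\bphi\VV,\nn\rangle = \langle \VV, D\bphi^{\top}\nn\rangle$, and then invokes the product-rule identity $\nabla_{\Sigma}(\bphi\cdot\nn) = D_{\Sigma}\bphi^{\top}\nn + D_{\Sigma}\nn\,\bphi$ (together with the symmetry of the Weingarten map $D_{\Sigma}\nn$) to massage the left-hand side into the right-hand side, finishing with $\dive_{\Sigma}\bphi = -\langle D\bphi\nn,\nn\rangle$ from the divergence-free hypothesis. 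You instead fix the canonical extension $\nn = \nabla b$, work entirely with full gradients, and let two elementary facts do all the cancelling: the symmetry of $D\nn = D^{2}b$ (which kills the cross terms $\bphi^{\top}(D\nn)\VV - \VV^{\top}(D\nn)\bphi$) and $(D\nn)\nn = \vect{0}$ (which gives $\dn{\Vn} = \nn^{\top}D\VV\nn$ and $\dn{(\bphi\cdot\nn)} = \langle D\bphi\nn,\nn\rangle$); the hypothesis $\dive\bphi = 0$ then enters in exactly the same place as in the paper. What your route buys is self-containedness and cleaner sign bookkeeping — you never need the tangential-Jacobian product rule as a black box, and indeed the paper's own displayed chain of equalities contains two compensating sign slips in precisely the step where that rule and the symmetry of $D_{\Sigma}\nn$ interact, slips which your full-gradient organization avoids. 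What the paper's route buys is that it never commits to a particular extension and stays within the intrinsic tangential calculus already set up in its Definition \ref{def:tangential_operators}. One small point worth making explicit in your write-up: since $\Omega$ is only $\mathcal{C}^{1,1}$, the oriented distance function is $\mathcal{C}^{1,1}$ near $\Sigma$, so $D\nn = D^{2}b$ exists only a.e.\ and in $L^{\infty}$; your pointwise identities therefore hold a.e.\ on $\Sigma$, which is the same level of rigor at which the paper's proof (and the lemma's subsequent use) operates.
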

\begin{proof}
	To obtain the identity, we recall the definition of some of the tangential operators in Definition \ref{def:tangential_operators} and consider some identities (on $\Sigma$) given as follows:
	\begin{align*}
		D\bphi &= D_{\Sigma} \bphi + D\bphi \nn \otimes \nn;\\
		\dive_{\Sigma} \bphi &= \dive \bphi - D\bphi \nn \cdot \nn = \dive \bphi - \dn{\bphi} \cdot \nn;\\
		\nabla_{\Sigma}(\bphi \cdot \nn) &= D_{\Sigma}\bphi^{\top} \nn + D_{\Sigma}\nn^{\top} \bphi
		= D_{\Sigma}\bphi^{\top} \nn + D_{\Sigma}\nn \bphi.
	\end{align*}
	We have the following computations:
	\begin{align*}
		&\langle D\VV \bphi - D\bphi \VV, \nn \rangle\\
		&\qquad= \langle D_{\Sigma} \VV\bphi, \nn \rangle + \langle (D\VV \nn \otimes \nn) \bphi, \nn \rangle - \langle D\bphi \VV, \nn \rangle\\
		&\qquad= \langle D_{\Sigma} \VV\bphi, \nn \rangle + \langle (D\VV \nn \otimes \nn) \bphi, \nn \rangle - \langle \VV, D\bphi^{\top}\nn \rangle\\
		&\qquad= \langle D_{\Sigma} \VV\bphi, \nn \rangle + \langle (D\VV \nn \otimes \nn) \bphi, \nn \rangle - \langle \VV, D_{\Sigma}\bphi^{\top}\nn \rangle
			- \langle \VV, [\nn (D\bphi \nn)^{\top}]\nn \rangle \\
		&\qquad= \langle D_{\Sigma} \VV\bphi, \nn \rangle + \langle (D\VV \nn \otimes \nn) \bphi, \nn \rangle 
			- \langle \VV, \nabla_{\Sigma}(\bphi \cdot \nn) \rangle
			- \langle \VV, D_{\Sigma}\nn\bphi \rangle
			- \langle D\bphi \nn, \nn \rangle \Vn \\
		&\qquad= \langle \bphi, D_{\Sigma} \VV^{\top}\nn +  D_{\Sigma}\nn\VV \rangle + \langle (D\VV \nn \otimes \nn) \bphi, \nn \rangle 
			- \langle \VV, \nabla_{\Sigma}(\bphi \cdot \nn) \rangle
			- \langle D\bphi \nn, \nn \rangle \Vn \\					
		&\qquad= \langle \bphi, \nabla_{\Sigma} \Vn \rangle + \langle (D\VV \nn \otimes \nn) \bphi, \nn \rangle 
			- \langle \VV, \nabla_{\Sigma}(\bphi \cdot \nn) \rangle
			+ \Vn \dive_{\Sigma} \bphi.							
	\end{align*}
	This proves the lemma.
\end{proof}
Note that, by using tangential divergence formula, we can further write \eqref{eq:useful_identity} into
	\begin{equation}\label{eq:useful_identity_second_formula}
		\langle D\VV \bphi - D\bphi \VV, \nn \rangle
			= \dive_{\Sigma}(\Vn \bphi)  + \langle (D\VV \nn \otimes \nn) \bphi, \nn \rangle - \langle \VV , \nabla_{\Sigma}(\bphi \cdot \nn) \rangle.
	\end{equation}
\begin{lemma}\label{lem:divergence_expansion}
	Let $\VV = (\theta_{1}, \ldots, \theta_{d})^{\top} \in \sfTheta^{1}$ and $\bphi = (\varphi_{1}, \ldots, \varphi_{d})^{\top}$ be a sufficiently smooth vector in $\Omega$.
	Then, the following identity holds
	\begin{equation}\label{eq:divergence_expansion}
		\dive (\nabla\bphi^{\top} \VV) = (\VV \cdot \nabla) (\nabla \cdot \bphi) + D\VV : \nabla \bphi
			= (\VV \cdot \nabla) (\nabla \cdot \bphi) + \nabla\bphi^{\top} : D\VV^{\top}.
	\end{equation}
\end{lemma}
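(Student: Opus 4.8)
The plan is to establish \eqref{eq:divergence_expansion} by a direct computation in Cartesian components, the only inputs being the product rule, Schwarz's theorem on the symmetry of second derivatives, and the index conventions fixed in the Notations paragraph. Consistently with those conventions, $\nabla\bphi^{\top} = (\nabla\bphi)^{\top} = D\bphi$ is the matrix with $(i,j)$-entry $\partial\varphi_{i}/\partial x_{j}$, so that $\left(\nabla\bphi^{\top}\VV\right)_{i} = \sum_{j=1}^{d} \frac{\partial\varphi_{i}}{\partial x_{j}}\theta_{j} = (\VV\cdot\nabla)\varphi_{i}$; this is the vector field whose divergence must be computed.

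First I would expand the divergence with the product rule,
\[
	\dive\left(\nabla\bphi^{\top}\VV\right)
	= \sum_{i=1}^{d}\frac{\partial}{\partial x_{i}}\left(\sum_{j=1}^{d}\frac{\partial\varphi_{i}}{\partial x_{j}}\theta_{j}\right)
	= \sum_{i,j=1}^{d}\frac{\partial^{2}\varphi_{i}}{\partial x_{i}\,\partial x_{j}}\theta_{j}
		+ \sum_{i,j=1}^{d}\frac{\partial\varphi_{i}}{\partial x_{j}}\frac{\partial\theta_{j}}{\partial x_{i}}.
\]
In the first sum I would interchange the order of differentiation (valid since $\bphi$ is sufficiently smooth) and reorganize,
\[
	\sum_{i,j=1}^{d}\frac{\partial^{2}\varphi_{i}}{\partial x_{j}\,\partial x_{i}}\theta_{j}
	= \sum_{j=1}^{d}\theta_{j}\frac{\partial}{\partial x_{j}}\left(\sum_{i=1}^{d}\frac{\partial\varphi_{i}}{\partial x_{i}}\right)
	= (\VV\cdot\nabla)(\nabla\cdot\bphi),
\]
while in the second sum, recalling $(\nabla\bphi)_{ij} = \partial\varphi_{j}/\partial x_{i}$ and $(D\VV)_{ij} = \partial\theta_{i}/\partial x_{j}$, a relabeling $i\leftrightarrow j$ gives
\[
	\sum_{i,j=1}^{d}\frac{\partial\varphi_{i}}{\partial x_{j}}\frac{\partial\theta_{j}}{\partial x_{i}}
	= \sum_{i,j=1}^{d}(D\VV)_{ij}(\nabla\bphi)_{ij}
	= D\VV : \nabla\bphi.
\]
Adding the two contributions yields the first equality in \eqref{eq:divergence_expansion}.

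For the second equality I would simply invoke the elementary identity $A:B = A^{\top}:B^{\top}$, valid for any pair of $d\times d$ matrices since both sides equal $\sum_{i,j} A_{ij}B_{ij}$, together with the symmetry $A:B = B:A$ of the tensor scalar product. Taking $A = D\VV$ and $B = \nabla\bphi$ then gives $D\VV:\nabla\bphi = D\VV^{\top}:\nabla\bphi^{\top} = \nabla\bphi^{\top}:D\VV^{\top}$, which completes the proof. The computation is entirely routine; the only points demanding care are the consistent bookkeeping of the transpose conventions distinguishing $\nabla(\cdot)$ from $D(\cdot)$, and the appeal to the equality of mixed partials, which is precisely why the hypothesis that $\bphi$ be sufficiently smooth (e.g.\ $\mathcal{C}^{2}$, or $H^{2}$ so that the weak mixed derivatives agree) is imposed.
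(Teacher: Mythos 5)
Your proof is correct and follows essentially the same route as the paper's: expand $\dive(\nabla\bphi^{\top}\VV)$ componentwise with the product rule, identify the second-derivative sum as $(\VV\cdot\nabla)(\nabla\cdot\bphi)$ via equality of mixed partials, and recognize the remaining sum as $D\VV:\nabla\bphi$. The only difference is cosmetic: you also justify the second equality $D\VV:\nabla\bphi = \nabla\bphi^{\top}:D\VV^{\top}$ explicitly via $A:B = A^{\top}:B^{\top}$, a step the paper leaves implicit.
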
	
\begin{proof}
	Let us first note that
	\[
		\nabla\bphi^{\top} \VV = \sum_{j=1}^{d} \sum_{i=1}^{d} \frac{\partial \varphi_{j}}{\partial x_{i}} \theta_{i} \vect{e}_{j}.
	\]
	Therefore, direct computation of the divergence of $\nabla\bphi^{\top} \VV$ gives us
	\begin{align*}
		\dive(\nabla\bphi^{\top} \VV)
			&= \sum_{j=1}^{d} \frac{\partial}{\partial x_{j}} \sum_{i=1}^{d} \frac{\partial \varphi_{j}}{\partial x_{i}} \theta_{i}
			= \sum_{j=1}^{d} \sum_{i=1}^{d} \frac{\partial^{2} \varphi_{j}}{\partial x_{i}\partial x_{j}} \theta_{i}
				+ \sum_{j=1}^{d} \sum_{i=1}^{d} \frac{\partial \varphi_{j}}{\partial x_{i}} \frac{\partial \theta_{i}}{\partial x_{j}}\\
			%
			&= 	\left( \sum_{i=1}^{d} \theta_{i} \frac{\partial}{\partial x_{i}} \right)
				\cdot \left(\sum_{j=1}^{d} \frac{\partial \varphi_{j}}{\partial x_{j}} \right) 	
				+ D\VV : \nabla \bphi\\
			&= 	(\VV \cdot \nabla) (\nabla \cdot \bphi)+ D\VV : \nabla \bphi,
	\end{align*}
	proving the identity.
\end{proof}
\begin{lemma}\label{lem:useful_equivalence}
	Let $\VV = (\theta_{1}, \ldots, \theta_{d})^{\top} \in \sfTheta^{1}$ and $\bphi = (\varphi_{1}, \ldots, \varphi_{d})^{\top}$ be a given vector.
	Then, the following equation holds
	\begin{equation}\label{eq:useful_equivalence}
		\nn \cdot (D\VV \nn \otimes \nn)\bphi - (D\VV\nn \cdot \nn)(\bphi \cdot \nn) = 0. 
	\end{equation}
\end{lemma}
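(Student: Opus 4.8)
The plan is to prove \eqref{eq:useful_equivalence} by a direct pointwise algebraic manipulation, since the statement involves no differentiation and no geometry beyond the Euclidean inner product on $\mathbb{R}^{d}$. The only point requiring care is the reading of the symbol $D\VV\nn \otimes \nn$, which — consistent with its usage in Definition \ref{def:tangential_operators} — I interpret as the rank-one matrix $(D\VV\nn) \otimes \nn = (D\VV\nn)\,\nn^{\top}$, where $D\VV\nn$ is a column vector and $\nn^{\top}$ a row vector. First I would apply this matrix to $\bphi$, using $\nn^{\top}\bphi = \bphi \cdot \nn$, to get
\[
    (D\VV\nn \otimes \nn)\bphi = (D\VV\nn)(\nn^{\top}\bphi) = (\bphi \cdot \nn)\,(D\VV\nn),
\]
so that the scalar $\bphi \cdot \nn$ may simply be factored out of the vector.

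Next I would contract the resulting vector with $\nn$. Since $\bphi \cdot \nn$ is a scalar it commutes with the inner product, and I obtain
\[
    \nn \cdot (D\VV\nn \otimes \nn)\bphi = (\bphi \cdot \nn)\,\bigl(\nn \cdot D\VV\nn\bigr) = (D\VV\nn \cdot \nn)(\bphi \cdot \nn).
\]
Subtracting the second term appearing in \eqref{eq:useful_equivalence} then gives exactly zero, which establishes the lemma. I do not expect any genuine obstacle: the whole argument reduces to the correct bookkeeping of the outer-product convention together with the commutativity of scalar multiplication with the Euclidean inner product, and no regularity of $\VV$ or $\bphi$ beyond the pointwise existence of the expressions is needed.
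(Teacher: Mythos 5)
Your proof is correct and follows essentially the same route as the paper: a direct algebraic verification exploiting the rank-one structure of the outer product, so that $(D\VV\nn \otimes \nn)\bphi = (\bphi \cdot \nn)\,D\VV\nn$ and contraction with $\nn$ yields exactly $(D\VV\nn \cdot \nn)(\bphi \cdot \nn)$. The only difference is presentational — you work coordinate-free with the identity $(\vect{a}\otimes\vect{b})\vect{c} = (\vect{b}\cdot\vect{c})\,\vect{a}$, while the paper carries out the same cancellation in component/index notation.
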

\begin{proof}
	We first note of the following identities:
	\[
		D\VV\nn \otimes \nn = \sum_{j,k =1}^{d} n_{j} \sum_{i=1}^{d} \theta_{ki} n_{i} \vect{e}_{k} \vect{e}_{j}^{\top}
		\quad \text{and}\quad
		(D\VV\nn \otimes \nn)\bphi = \sum_{j,k =1}^{d} \varphi_{j} n_{j} \sum_{i=1}^{d} \theta_{ki} n_{i} \vect{e}_{k} \vect{e}_{j}^{\top}.
	\]
	Therefore, we have the following computations
	\begin{align*}
		\nn \cdot (D\VV\nn \otimes \nn)\bphi
			= \sum_{k,j =1}^{d} n_{k} (\varphi_{j} n_{j}) \sum_{i=1}^{d} \theta_{ki} n_{i}
			&= \left( \sum_{k =1}^{d} n_{k} \sum_{i=1}^{d} \theta_{ki} n_{i} \right) \left( \sum_{k =1}^{d} \varphi_{j} n_{j} \right)\\
			&= (D\VV \nn \cdot \nn )(\bphi \cdot \nn),
	\end{align*}
	which proves the equation.
\end{proof}
We now provide the proof of Proposition \ref{prop:the_shape_derivative_of_the_cost}.
\begin{proof}[Proof of Proposition \ref{prop:the_shape_derivative_of_the_cost}]
	The proof essentially proceeds in two parts.
	First, we evaluate the limit $\lim_{t\to0} \frac{1}{t}\left( J(\Omega_{t}) - J(\Omega) \right)$.
	Then, using the regularity of the domain as well as the state and adjoint variables (to be introduced below), we characterized the boundary integral expression for the computed limit.
	
	We begin by applying the following domain transformation formula (see, e.g., \cite[p. 77]{SokolowskiZolesio1992} or \cite[Chap. 9, Sec. 4.1, eq. (4.2), p. 482]{DelfourZolesio2011}):
	\[
		\intOt{\varphi_{t}} = \intO{\varphi_{t} \circ T_{t} I_{t}} = \intO{\varphi^{t} I_{t}},
	\]
	which holds for $\varphi_{t} \in L^{1}(\Omega_{t})$
combined with the identity $\eta^{2} - \zeta^{2} = (\eta - \zeta)^{2} + \zeta (\eta - \zeta)$ to obtain the following sequence of calculations:
\begin{align*}
	J(\Omega_{t}) - J(\Omega) 
	&= \frac12 \intOt{\left( |\uu_{it}|^2 + |p_{it}|^2 \right)} - \frac12 \intO{\left( |\ui|^2 + |\pim|^2 \right)}\\
	&= \frac12 \intO{\left( I_{t} |\uu_{i}^{t}|^2 - |\ui|^2 \right)} + \frac12 \intO{\left( |p_{i}^{t}|^2 + |\pim|^2 \right)}\\
	&= \frac{1}{2} \intO{ (I_{t} - 1) (\abs{\uu_{i}^{t}}^{2} - \abs{\uu_{i}}^{2}) }	
		+ \frac{1}{2} \intO{ (I_{t} - 1) \abs{\uu_{i}}^{2} }	\\
	&\qquad	+ \frac{1}{2} \intO{ (\abs{\uu_{i}^{t} - \uu_{i}}^{2}) }	
		+ \intO{ (\uu_{i}^{t} - \uu_{i}) \cdot \uu_{i} }	\\
	&\qquad + \frac{1}{2} \intO{ (I_{t} - 1) (\abs{\pim^{t}}^{2} - \abs{\pim}^{2}) }	
		+ \frac{1}{2} \intO{ (I_{t} - 1) \abs{\pim}^{2} }	\\
	&\qquad	+ \frac{1}{2} \intO{ (\abs{\pim^{t} - \pim}^{2}) }	
		+ \intO{ (\pim^{t} - \pim) \pim }\\
	%
	%
	&=: \sum_{i=1}^{8}{J}_{i}(t).		
\end{align*}
	In view of \eqref{eq:limits_of_maps}$_{1}$ and using Lemma \ref{lem:holder_continuity}, we infer that
	\[
		\dot{J}_{1}(0) = \dot{J}_{3}(0) = \dot{J}_{5}(0) = \dot{J}_{7}(0).
	\]
	Moreover, \eqref{eq:limits_of_maps}$_{1}$ also reveals that
	\[
		\dot{J}_{2}(0) + \dot{J}_{6}(0) 
			= \frac{1}{2}  \intO{\operatorname{div}\VV ( \abs{\uu_{i}}^{2} + \abs{\pim}^{2}) }.
	\]
	At this point, we put into use the expansion of the divergence of the product of a scalar function $\psi$ and a vector field $\bphi$ given by 
	\begin{equation}\label{eq:identity_expansion}
		\dive(\psi \bphi) = \psi \dive \bphi + \bphi \cdot \nabla \psi,
	\end{equation}
	and then apply Green's theorem to obtain the following equivalent form of the previous computed expression:
	\begin{equation}\label{eq:distributed_gradient_first_part}
	\begin{aligned}
		\dot{J}_{2}(0) + \dot{J}_{6}(0) 
			&= - \frac{1}{2}  \intO{ \VV \cdot \nabla ( \abs{\uu_{i}}^{2} + \abs{\pim}^{2}) }
				+ \frac{1}{2}  \intO{\operatorname{div}[\VV ( \abs{\uu_{i}}^{2} + \abs{\pim}^{2})] }\\
			&= - \intO{ \VV \cdot ( \nabla \uu_{i} \uu_{i} + \pim \nabla \pim) }
				+ \frac{1}{2}  \intS{ ( \abs{\uu_{i}}^{2} + \abs{\pim}^{2}) \Vn }\\
			&= - \intO{ (\uu_{i} \cdot \nabla \uu_{i}^{\top} \VV + \pim \VV \cdot \nabla \pim) }
				+ \frac{1}{2}  \intS{ ( \abs{\uu_{i}}^{2} + \abs{\pim}^{2}) \Vn }.								
	\end{aligned}
	\end{equation}
	The computations of the remaining two expressions $\dot{J}_{4}(0)$ and $\dot{J}_{8}(0)$ need much more work.
	This require the utilization of the adjoint system \eqref{eq:adjoint_system_weak_form}.
	Indeed, since $\yt = \uu^{t} - \uu \in \Vgamma$ and $r^{t} = p^{t} - p \in Q$, we can write 
	\begin{align*}
		&{J}_{4}(t) + {J}_{8}(t)\\
		&\qquad = \Im\left\{ \intO{\alpha \nabla \overline{\vv} \cdot \nabla \yt } + i \intS{(\overline{\vv} \cdot \nn) (\yt \cdot \nn) }
			- \intO{ \overline{q} \operatorname{div} \yt}
			- \intO{ r^{t} \operatorname{div} \vv} \right\}\\
		&\qquad \equiv \Im\left\{ a(\uu^{t} - \uu, \vv) + b(\vv, p_{t} - p) + b(\uu^{t} - \uu, q) \right\}.
	\end{align*}
	Therefore, in light of \eqref{eq:difference_equation} with $\bphi = \vv \in \Vgamma$, we get
	\[
		{J}_{4}(t) + {J}_{8}(t) = \Im\{ \Phi^{t}(\vv) + b(\uu^{t} - \uu, q) \},
	\]
	where $\Phi^{t}(\vv)$ is given by \eqref{eq:big_Phi_sup_t} with $\bphi = \vv$.
	Using \eqref{eq:limits_of_maps}, \eqref{eq:convergence_of_vector_valued_functions_3}, 
	and Lemma \ref{lem:divergence_identity_limit}, we obtain the following limit
\begin{equation}\label{eq:distributed_gradient}
\begin{aligned}
		\dot{J}_{4}(0) + \dot{J}_{8}(0)
		&= \lim_{t \to 0} \frac{1}{t}\left[ \Im\{ \Phi^{t}(\vv) + b(\uu^{t} - \uu, q) \} \right]\\
		&= \Im\Big\{ - \intO{ \alpha A \nabla {\uu} : \nabla {\overline{\vv}}}\\
		&\quad\qquad - i \intS{ (\dive \VV + D\VV \nn \cdot \nn) ( {\uu} \cdot \nn ) ( {\overline{\vv}} \cdot \nn) } \\
		&\quad\qquad - i \intS{ [ (-D\VV) {\uu} \cdot \nn ] ( {\overline{\vv}} \cdot \nn) } 
					- i \intS{ ( {\uu} \cdot \nn ) [ ( -D\VV ) {\overline{\vv}} \cdot \nn] } \\
		%
		%
		&\quad\qquad + \intO{ (\nabla \cdot \VV) p ( \nabla \cdot \overline{\vv}) } 
					+ \intO{ p [ (-D\VV) : \nabla \overline{\vv} ] } \\ 
		&\quad\qquad + \intO{ [ \vect{\nabla} \cdot (\ff \otimes \VV) ] \cdot \overline{\vv} } 
					- \intO{\overline{q} (D\VV : \nabla \uu)} \Big\}\\
		&=: \Im\left\{ \sum_{i=1}^{8} K_{i} \right\},
	\end{aligned}
\end{equation}
	where $\vect{\nabla} \cdot (\ff \otimes \VV)$ is evaluated as in \eqref{eq:vector_type_divergence}.
	%
	%
	%
Before we go further with our computation, we gather in the next few lines some identities that will be useful in our calculations.
We put into use the weak formulation of the state and the adjoint state problem given in \eqref{eq:ccbm_weak_form} and \eqref{eq:adjoint_system_weak_form}, respectively.
In these variational equations -- since $\VV \in \sfTheta$ and $\uu, \vv \in \HH^{2}(\Omega)^{d}$ -- we can respectively take $\bphi = \nabla \vv^{\top} \VV \in \Vgamma$ and $\bpsi = \nabla \uu^{\top} \VV \in \Vgamma$, apply integration by parts (twice), and then use the boundary conditions on $\Sigma$ to obtain the following equations:
\begin{equation} \label{eq:state_tested}
\begin{aligned} 
	&-\intO{\alpha \Delta \uu \cdot \nabla \overline{\vv}^{\top} \VV } + \intS{\alpha \dn{\uu} \cdot \nabla \overline{\vv}^{\top} \VV}
		+ \intO{\nabla p \cdot \nabla \overline{\vv}^{\top} \VV }- \intS{p \nn \cdot \nabla \overline{\vv}^{\top} \VV}\\	
	&\quad  = -\intO{\alpha \Delta \uu \cdot \nabla \overline{\vv}^{\top} \VV } + \intS{\alpha \dn{\uu} \cdot \nabla \overline{\vv}^{\top} \VV}
		- \intO{ p \dive ( \nabla \overline{\vv}^{\top} \VV ) } \\
	&\quad = \intO{\ff \cdot \nabla \overline{\vv}^{\top} \VV }
			- i \intS{ (\uu \cdot \nn)\nn \cdot \nabla \overline{\vv}^{\top} \VV }
\end{aligned}
\end{equation}
and
\begin{equation}
\begin{aligned} 	\label{eq:adjoint_tested}
	&-\intO{\alpha \Delta \overline{\vv} \cdot \nabla \uu^{\top} \VV} + \intS{\alpha \dn{\overline{\vv}} \cdot \nabla \uu^{\top} \VV}
		+ \intO{\nabla \overline{q} \cdot \nabla \uu^{\top} \VV }- \intS{\overline{q} \nn \cdot \nabla \uu^{\top} \VV}\\
	&\quad  = -\intO{\alpha \Delta \overline{\vv} \cdot \nabla \uu^{\top} \VV} + \intS{\alpha \dn{\overline{\vv}} \cdot \nabla \uu^{\top} \VV}
		- \intO{ \overline{q} \dive ( \nabla \uu^{\top} \VV ) } \\	
			& \quad = \intO{\uu_{i} \cdot \nabla \uu^{\top} \VV} - i \intS{ (\overline{\vv} \cdot \nn)\nn \cdot \nabla \uu^{\top} \VV}.
\end{aligned}
\end{equation}
Now, let us simplify or expand some of the integrals $K_{i}$ above.
\begin{itemize}	
	\item Rewriting $K_{7}$. For the last integral, we note of the following expansion of the integrand:
	\begin{align*}
		[ \vect{\nabla} \cdot (\ff \otimes \VV) ] \cdot \overline{\vv}
			& = [ \vect{\nabla} \cdot (\ff \VV^{\top}) ] \cdot \overline{\vv}\\
			&= \sum_{j=1}^{d} \left( \VV \cdot \nabla f_{j} + f_{j} \dive \VV \right) \bar{v}_{j}\\
			&= \sum_{j=1}^{d} \VV \cdot \nabla f_{j} \bar{v}_{j} + \left( \sum_{j=1}^{d} f_{j} \bar{v}_{j} \right) \dive \VV\\ 
			&= \sum_{j=1}^{d} \sum_{k=1}^{d} \frac{\partial f_{j}}{\partial x_{k}} \theta_{k} \bar{v}_{j} + \left( \ff \cdot \overline{\vv} \right) \dive \VV\\ 
			&= \sum_{i=1}^{d} \sum_{k=1}^{d} \frac{\partial f_{j}}{\partial x_{k}} \theta_{k} \vect{e}_{i} \cdot \sum_{l=1}^{d} \bar{v}_{l} \vect{e}_{l} + \left( \ff \cdot \overline{\vv} \right) \dive \VV\\ 
			&= \nabla \ff^{\top} \VV \cdot \overline{\vv} + \left( \ff \cdot \overline{\vv} \right) \dive \VV.
	\end{align*}
	At this juncture, we utilize the identity \eqref{eq:identity_expansion} to get the following equation
	\begin{align*}
		\intO{ \left[ (\ff \cdot \overline{\vv}) \dive \VV + \ff \cdot \nabla \overline{\vv}^{\top} \VV + \nabla \ff^{\top} \VV \cdot \overline{\vv} \right] } 
			&= \intO{ \left( \ff \cdot \overline{\vv} \dive \VV + \VV \cdot \nabla (\ff \cdot \overline{\vv})\right) }\\
			&= \intO{ \dive \left[ (\ff \cdot \overline{\vv}) \VV \right] }.
	\end{align*}
	Thus, by the previous computation together with the divergence theorem, we may write $K_{7}$ as follows
	\begin{equation}\label{eq:K7}
		K_{7} = \intO{ [ \vect{\nabla} \cdot (\ff \otimes \VV) ] \cdot \overline{\vv} } 
			= \intS{ (\ff \cdot \overline{\vv}) \Vn } - \intO{ \ff \cdot \nabla \overline{\vv}^{\top} \VV }
			=: K_{71} + K_{72}.
	\end{equation}
	\item Rewriting $K_{5}$. Using the definition of the tangential divergence of a vector function (see Definition \ref{def:tangential_operators}) and based from the discussion given in \textit{Step 1} of the proof of Proposition \ref{prop:shape_derivative_with_sufficient_regularity_assumptions} issued in Appendix \ref{appxsubsec:shape_derivatives_of_the_cost_via_chain_rule} (refer to equation \eqref{eq:p_divergence}), we have
	\begin{align*}
		&\intO{ (\nabla \cdot \VV) p (\nabla \cdot \overline{\vv})} \\
			&\qquad = \intO{ \{ \dive[p(\nabla \cdot \overline{\vv})\VV] - (\VV \cdot \nabla)[p(\nabla \cdot \overline{\vv})] \}}\\
			&\qquad = \intS{ p(\nabla \cdot \overline{\vv})\Vn} - \intO{ (\VV \cdot \nabla)[p(\nabla \cdot \overline{\vv})]}\\
			&\qquad = \intS{ [p \dive_{\Sigma} \overline{\vv} + \dn{\overline{\vv}} \cdot (p\nn)] \Vn} - \intO{(\VV \cdot \nabla)[p(\nabla \cdot \overline{\vv})]}\\
			&\qquad= \intS{ [\kappa p\nn \Vn - \nabla_{\Sigma} (p\Vn)] \cdot \overline{\vv}} 
				+ \intS{ \dn{\overline{\vv} \cdot p\nn \Vn }}
				- \intO{(\VV \cdot \nabla)[p(\nabla \cdot \overline{\vv})]}\\
			&\qquad= \intS{ [\kappa p\nn \Vn - \nabla_{\Sigma} (p\Vn)] \cdot \overline{\vv}} 
				+  \intS{ \dn{\overline{\vv} \cdot p\nn \Vn }}\\
			&\qquad\qquad	\qquad
				- \intO{(\nabla \cdot \overline{\vv})\VV \cdot \nabla p}
				- \intO{p (\VV \cdot \nabla)(\nabla \cdot \overline{\vv})}\\
			&\qquad= \intS{ [\kappa p\nn \Vn - \nabla_{\Sigma} (p\Vn)] \cdot \overline{\vv}} 
				+ \intS{ \dn{\overline{\vv} \cdot p\nn \Vn }}\\
			&\qquad\qquad	\qquad
				+ \intO{(\VV \cdot \nabla p ) \pim}
				- \intO{p (\VV \cdot \nabla)(\nabla \cdot \overline{\vv})}\\				
			&\qquad= K_{51} + K_{52} + K_{53} + K_{54}.	
	\end{align*}
	In the last equality above (i.e., for the expression $K_{53}$), we applied the identity
	\[
		-\intO{(\VV \cdot \nabla p) (\nabla \cdot \overline{\vv})} = \intO{(\VV \cdot \nabla p)\pim},
	\] 
	which was obtained by taking $\mu = \VV \cdot \nabla p \in Q$ in the second equation of \eqref{eq:adjoint_system_weak_form}.
	\item Rewriting $K_{2}$. Using the identity $\dive_{\Sigma} \VV = \dive \VV - D\VV \nn \cdot \nn$ on $\Sigma$ and the tangential formula, we can expand $K_{2}$ as follows:
	\begin{align*}
		K_{2} &= - i \intS{ (\dive \VV + D\VV \nn \cdot \nn) ( {\uu} \cdot \nn ) ( {\overline{\vv}} \cdot \nn) }\\
			&= - i \intS{ \dive_{\Sigma} \VV ( {\uu} \cdot \nn ) ( {\overline{\vv}} \cdot \nn) }
				- 2 i \intS{ D\VV \nn \cdot \nn ( {\uu} \cdot \nn ) ( {\overline{\vv}} \cdot \nn) } \\ 
			&= - i \intS{ \Vn \kappa ( {\uu} \cdot \nn ) \nn \cdot {\overline{\vv}} } + \intS{\nabla_{\Sigma} [( {\uu} \cdot \nn ) ( {\overline{\vv}} \cdot \nn)] \cdot \VV}\\
			&\qquad	- 2 i \intS{ D\VV \nn \cdot \nn ( {\uu} \cdot \nn ) ( {\overline{\vv}} \cdot \nn) } \\ 	
			&=: K_{21} + K_{22} + K_{23}.			
	\end{align*}
	\item Rewriting $K_{1}$. For the first integral, we make use of the following formula:
\begin{equation}\label{eq:expansion}
\begin{aligned}
	&- \intO{A\nabla \varphi \cdot \nabla \overline{\psi} }\\
	&\qquad = - \intO{ (\Delta \varphi) \VV \cdot \nabla \overline{\psi} }
		- \intO{ (\Delta \overline{\psi}) \VV \cdot \nabla \varphi }
		+ \intS{\dn{\varphi}(\VV \cdot \nabla \overline{\psi})} \\
	&\qquad  \quad \qquad
		 + \intS{\dn{\overline{\psi}}(\VV \cdot \nabla \varphi)}
		- \intS{(\nabla \overline{\psi} \cdot \nabla \varphi)\Vn},
\end{aligned}
\end{equation}
which holds for all functions $\varphi$, $\psi \in \HHg(\Omega) \cap \HH^2(\Omega)$ and $\VV \in \sfTheta^{1}$ (see, e.g., \cite{DelfourZolesio2011} or the proof of Lemma 32 in \cite{BacaniPeichl2013}).
	Hence, with reference to equation \eqref{eq:state_tested} and \eqref{eq:adjoint_tested}, we get the following computations
	\begin{align*}
		&- \intO{ \alpha A \nabla {\uu} : \nabla {\overline{\vv}}} = \sum_{j=1}^{d} \intO{ \alpha A \nabla u_{j} \cdot \nabla \overline{v}_{j} }\\
	&\qquad = \sum_{j=1}^{d} \Bigg\{ \intO{ (- \alpha \Delta {u}_{j}) \VV \cdot \nabla \overline{{v}}_{j} }
		- \intO{ (\alpha \Delta \overline{{v}}_{j}) \VV \cdot \nabla {u}_{j} }
		+ \intS{\alpha\dn{{u}_{j}}(\VV \cdot \nabla \overline{{v}}_{j})} \\
	&\qquad  \quad \qquad
		 + \intS{\alpha\dn{\overline{{v}}_{j}}(\VV \cdot \nabla {u}_{j})}
		- \intS{\alpha(\nabla \overline{{v}}_{j} \cdot \nabla {u}_{j})\Vn} \Bigg\}\\
	%
	%
	%
	%
	%
	%
	%
	&\qquad =  \intO{\ff \cdot \nabla \overline{\vv}^{\top} \VV} - i \intS{(\uu \cdot \nn)\nn \cdot \nabla \overline{\vv}^{\top} \VV} \\
	&\qquad  \quad \qquad 
		+ \intO{\uu_{i} \cdot \nabla \uu^{\top} \VV} - i \intS{(\overline{\vv} \cdot \nn)\nn \cdot \nabla \uu^{\top} \VV}\\
	&\qquad  \quad \qquad
		- \intS{\alpha(\nabla_{\Sigma} \uu : \nabla_{\Sigma} \overline{\vv} )\Vn}
		- \intS{\alpha \dn{\uu} \cdot \dn{\overline{\vv}} \Vn} \\
	&\qquad  \quad \qquad		
		+ \intO{ p \dive ( \nabla \overline{\vv}^{\top} \VV ) } 
		+ \intO{ \overline{q} \dive ( \nabla \uu^{\top} \VV ) } \\						
	& \qquad =: K_{11} + K_{12} + K_{13} + K_{14} + K_{15} + K_{16} + K_{17} + K_{18}.	
	\end{align*}
	Notice here that $K_{11}$ will cancel out with $K_{72}$.
	Moreover, in light of Lemma \ref{lem:divergence_expansion}, the sum $K_{6} + K_{54} + K_{17}$ actually equates to zero.
	Similarly, using equation \eqref{eq:divergence_expansion} from the said lemma, we know that 
	\[
		K_{18} = \intO{ \overline{q} \dive ( \nabla \uu^{\top} \VV ) } = \intO{\overline{q} \left[ (\VV \cdot \nabla) (\nabla \cdot \uu) + D\VV : \nabla \uu \right]}.
	\]
	However, since $\dive \uu = 0$ in $\Omega$, then we actually have $K_{18} = \intO{ \overline{q} D\VV : \nabla \uu }$.
	This integral, in addition, also vanishes when combined with $K_{8}$. 
\end{itemize}
In above, we have obtained some expression where in we can utilize Lemma \ref{lem:useful_identity}.
Looking at the sum $K_{3} + K_{14}$, we can apply the identity \eqref{eq:useful_identity_second_formula} to get the following equation
\[
\begin{aligned}
	K_{3} + K_{14} 
	&= i \intS{(\overline{\vv} \cdot \nn) \nn \cdot D\VV \uu} - i\intS{(\overline{\vv} \cdot \nn)\nn \cdot \nabla \uu^{\top}\VV}\\
	&= i \intS{ (\overline{\vv} \cdot \nn) \dive_{\Sigma}(\Vn \uu) } 
		+ i \intS{ (\overline{\vv} \cdot \nn)\nn \cdot (D\VV \nn \otimes \nn)\uu }\\
	&\qquad	- i \intS{(\overline{\vv} \cdot \nn) \VV \cdot \nabla_{\Sigma}(\uu \cdot \nn)}\\
	&=: J_{1} + J_{2} + J_{3}.
\end{aligned}
\]
Meanwhile, for the sum $K_{4} + K_{12}$, we have
\[
\begin{aligned}
	K_{4} + K_{12} 
	&= i \intS{(\uu \cdot \nn) \nn \cdot D\VV \overline{\vv}} - i\intS{(\uu \cdot \nn)\nn \cdot \nabla \overline{\vv}^{\top}\VV}\\
	&= i \intS{ (\uu \cdot \nn) \nabla_{\Sigma}\Vn \cdot \overline{\vv} } 
		+ i \intS{ (\uu \cdot \nn)\nn \cdot (D\VV \nn \otimes \nn)\overline{\vv} }\\
	&\qquad \qquad \qquad 
		- i \intS{(\uu \cdot \nn) \VV \cdot \nabla_{\Sigma}(\overline{\vv} \cdot \nn)}
		- i \intS{ (\uu \cdot \nn)\nn \cdot \dn{\overline{\vv}} \Vn }\\
	&=: H_{1} + H_{2} + H_{3} + H_{4}.	
\end{aligned}
\]
Combining this with the sum $K_{16} + K_{52}$ and noting that $- p \nn + \alpha \dn{\uu} + i (\uu \cdot \nn)\nn = 0$ on $\Sigma$, we further get
\[
\begin{aligned}
	K_{4} + K_{12} + K_{16} + K_{52}
	& = H_{1} + H_{2} + H_{3} - \intS{ [ - p \nn + \alpha \dn{\uu} + i (\uu \cdot \nn)\nn ] \cdot \dn{\overline{\vv}} \Vn }\\
	& = H_{1} + H_{2} + H_{3}.	
\end{aligned}
\]
At this point, we underline the observation that the sum $K_{22} + J_{3} + H_{3}$ vanishes.
Moreover, employing Lemma \ref{lem:useful_equivalence}, we also observe that
\begin{align*}
	&i \intS{ (\overline{\vv} \cdot \nn)\nn \cdot (D\VV \nn \otimes \nn)\uu }
	+ i \intS{ (\uu \cdot \nn)\nn \cdot (D\VV \nn \otimes \nn)\overline{\vv} } \\
	&\qquad\qquad - 2 i \intS{ D\VV \nn \cdot \nn ( {\uu} \cdot \nn ) ( {\overline{\vv}} \cdot \nn) } = 0,
\end{align*}
i.e., the sum $J_{2} + H_{2} + K_{23}$ also disappears.

Finally, summarizing our computations -- and after applying the tangential Green's formula (see Lemma \ref{lem:tangential_formulas}) and using the fact that $\nabla_{\Sigma} \uu \nn = \vect{0}$ (see Lemma \ref{lem:tangential_times_normal_vector}) -- we get the following equivalent expression for the sum $\dot{J}_{2}(0) + \dot{J}_{6}(0) + \dot{J}_{4}(0) + \dot{J}_{8}(0)$ (recall equations \eqref{eq:distributed_gradient} and \eqref{eq:distributed_gradient}):
\[
\begin{aligned}
		&\dot{J}_{2}(0) + \dot{J}_{6}(0) + \dot{J}_{4}(0) + \dot{J}_{8}(0)\\
		&\quad = - \intO{ [\uu_{i} \cdot \nabla \uu_{i}^{\top} \VV + \pim (\VV \cdot \nabla \pim) ] }
				+ \frac{1}{2}  \intS{ ( \abs{\uu_{i}}^{2} + \abs{\pim}^{2}) \Vn } \\
		&\qquad	+  \Im\Bigg\{ \intO{\uu_{i} \cdot \nabla \uu^{\top} \VV} + \intO{ \pim (\VV \cdot \nabla p )}\\
		&\qquad\quad	+ \intO{ \Big[ \ff \Vn - \nabla_{\Sigma} (p\Vn) + \dive_{\Sigma}{[\alpha (\nabla_{\Sigma} \uu) \Vn]} + i \dive_{\Sigma} (\Vn \uu)\nn
		+ i ({\uu} \cdot \nn) \nabla_{\Sigma}\Vn \Big] \cdot \overline{\vv} }
		\Bigg\}\\
		&\quad = \intS{ \left[ \Im\left\{  \vect{B}[\Vn] \cdot \overline{\vv} \right\} + \frac12 \left( |\ui|^2 + |\pim|^2 \right) \Vn \right] },
	\end{aligned}
\]
as desired.
This ends the proof of Proposition \ref{prop:the_shape_derivative_of_the_cost}.
\end{proof}
\begin{remark}
	The computed expression \eqref{eq:shape_gradient} for the shape gradient of $J$ obtained via rearrangement method clearly agrees with the structure of the same derivative derived through the classical chain rule approach issued in Appendix \ref{appxsubsec:shape_derivatives_of_the_cost_via_chain_rule}.
	In fact, after some manipulations, the expression can equivalently be expressed as
	\[
	\vect{B}[\Vn] = \left[ \alpha\nabla \uu + ({\uu} \cdot \nn - p) id \right] \nabla_{\Sigma}\Vn
			- \left[ - \dn{p} \nn + \dnn{\uu} + i {(\dn{\uu} \cdot \nn) \nn} \right] \Vn -i \left({\uu} \cdot \nabla_{\Sigma}\Vn \right) \nn;
	\]
	see \eqref{eq:shape_derivative_boundary_condition} in Appendix \ref{appxsubsec:shape_derivatives_of_the_cost_via_chain_rule}.
\end{remark}
The following results can be drawn easily from \eqref{eq:shape_gradient}, \eqref{eq:adjoint_system}, and Remark \ref{rem:equivalence}.
\begin{corollary}[Necessary optimality condition]\label{cor:necessary_condition}
	Let the domain $\Omega^\ast$ be such that the state $(\uu, p)=(\uu(\Omega^\ast),p(\Omega^\ast))$, where $\uu = \ur + i \ui$ and $p = p_{r} + i \pim$, satisfies the free surface problem \eqref{eq:FSP}, i.e., we have
	\[
		-p\nn + \alpha \dn{\uu}= \vect{0}	
		\quad\text{and} \quad
		\uu \cdot \nn = 0 \quad \text{on $\Sigma^\ast$},
	\]
	or equivalently,
	\begin{equation}
	\label{eq:imaginary_is_zero}
		\ui = \vect{0} \quad \text{and} \quad \pim = 0 \quad \text{on $\Omega^\ast$},
	\end{equation}
	with $(\uu, p)$ satisfying \eqref{eq:ccbm}.
	Then, the domain $\Omega^\ast$ is stationary for the shape problem \eqref{eq:shape_optimization_problem} (the minimization problem $J(\Omega) = \frac12 \intO{\left( |\ui|^2 + |\pim|^2 \right)} \to \inf$, where $(\ui,\pim)$ is subject to the state problem \eqref{eq:ccbm}).
	That is, it fulfills the necessary optimality condition
	\begin{equation}
	\label{eq:optimality_condition}
		{d}J(\Omega^\ast)[\VV] = 0, \quad \text{for all $\VV \in \sfTheta^{1}$}.
	\end{equation}
\end{corollary}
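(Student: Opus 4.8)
The plan is to read the conclusion directly off the explicit shape gradient obtained in Proposition \ref{prop:the_shape_derivative_of_the_cost}. Since that result gives ${d}J(\Omega)[\VV] = \intS{\ggb\, \nn \cdot \VV}$, it suffices to show that the scalar density $\ggb$ vanishes identically on the free boundary of $\Omega^\ast$; the integral then collapses to zero for every admissible field $\VV \in \sfTheta^{1}$. The entire argument thus reduces to evaluating the two summands of $\ggb$ in \eqref{eq:shape_gradient} at $\Omega^\ast$, using the hypothesis recast through Remark \ref{rem:equivalence} together with the structure of the adjoint system \eqref{eq:adjoint_system}. No material or shape derivative of the state is required.

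First I would translate the hypothesis. By Remark \ref{rem:equivalence}, assuming $(\uu,p)$ solves the free surface problem \eqref{eq:FSP} is equivalent to $\ui = \vect{0}$ and $\pim = 0$ throughout $\Omega^\ast$, i.e.\ \eqref{eq:imaginary_is_zero}, with $(\uu,p)$ still satisfying \eqref{eq:ccbm}. The key observation is then that $\ui$ and $\pim$ are precisely the data appearing on the right-hand side of the adjoint system \eqref{eq:adjoint_system}. Hence at $\Omega^\ast$ the adjoint pair $(\vv,q)$ solves a fully homogeneous problem: the Stokes operator is driven by $\ui = \vect{0}$, the incompressibility defect $-\nabla\cdot\vv$ equals $\pim = 0$, the Dirichlet trace on $\Gamma$ is zero, and the complex Robin condition on $\Sigma^\ast$ is homogeneous. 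By the well-posedness and, in particular, the uniqueness of the adjoint problem recorded in Remark \ref{rem:well_posedness_adjoint_problem}, the only solution is $\vv = \vect{0}$ and $q = 0$.

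Finally I would substitute these vanishings into \eqref{eq:shape_gradient}. The coupling term $\Im\{\vect{B}[\Vn] \cdot \overline{\vv}\}$ vanishes because $\vv = \vect{0}$, irrespective of the (perfectly regular) factor $\vect{B}[\Vn]$; and the weighting term $\frac12(|\ui|^2 + |\pim|^2)\Vn$ vanishes on $\Sigma^\ast$ since, by \eqref{eq:imaginary_is_zero}, both $\ui$ and $\pim$ are zero in $\Omega^\ast$ and hence on its free boundary. Thus $\ggb \equiv 0$ on $\Sigma^\ast$, and therefore ${d}J(\Omega^\ast)[\VV] = \int_{\Sigma^\ast} \ggb\, \nn \cdot \VV \, d\sigma = 0$ for all $\VV \in \sfTheta^{1}$, which is exactly the optimality condition \eqref{eq:optimality_condition}. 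The only point requiring a little care — and it is mild — is the passage from homogeneous adjoint data to $\vv = \vect{0}$ and $q = 0$, which rests entirely on the uniqueness guaranteed by the inf-sup argument invoked in Remark \ref{rem:well_posedness_adjoint_problem}; everything else is immediate from the explicit gradient formula.
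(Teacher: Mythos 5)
Your proof is correct and follows the same skeleton as the paper's: translate the hypothesis (via Remark \ref{rem:equivalence}) into $\ui = \vect{0}$, $\pim = 0$ in $\Omega^\ast$, note that these are precisely the data of the adjoint system \eqref{eq:adjoint_system}, and conclude that the density $\ggb$ in \eqref{eq:shape_gradient} vanishes on $\Sigma^\ast$. The one place you genuinely differ -- and in fact improve on the paper -- is the intermediate claim about the adjoint. The paper's proof asserts only that the \emph{imaginary} parts vanish, $\vi = \vect{0}$ and $\qim = 0$, and then jumps directly to $\ggb = 0$. Taken literally this leaves a small gap: with $\vi = \vect{0}$ but $\vr$ not yet known to vanish, one has $\Im\left\{ \vect{B}[\Vn] \cdot \overline{\vv} \right\} = \Im\left\{ \vect{B}[\Vn] \right\} \cdot \vr$, and since under the hypothesis ($\uu$, $p$ real and $\uu \cdot \nn = 0$ on $\Sigma^\ast$) one computes $\Im\left\{ \vect{B}[\Vn] \right\} = \dive_{\Sigma}(\Vn \uu)\nn$, the conclusion still requires $\vr \cdot \nn = 0$ on $\Sigma^\ast$; this does follow from the imaginary part of the adjoint Robin condition (with $\qim = 0$ and $\dn{\vi} = \vect{0}$ it reads $(\vr \cdot \nn)\nn = \vect{0}$), but the paper never says so. Your version sidesteps the issue entirely: since the whole complex adjoint problem becomes homogeneous, the uniqueness recorded in Remark \ref{rem:well_posedness_adjoint_problem} forces $\vv = \vect{0}$ and $q = 0$ identically, so the coupling term $\Im\left\{ \vect{B}[\Vn] \cdot \overline{\vv} \right\}$ dies for trivial reasons, independently of any structure of $\vect{B}[\Vn]$. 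This is the tighter and more economical argument, and it is the one I would recommend keeping.
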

\begin{proof}
	By the assumption that $\ui = \vect{0}$ and $\pim = 0$ on $\Omega^\ast$, one obtains -- in view of \eqref{eq:adjoint_system} -- that $\vect{v}_{i} = \vect{0}$ and $q_{i}=0$ on $\Omega^\ast$.
	Thus, it follows that $g_{\Sigma} = 0$ on $\Sigma^\ast$ which therefore gives us the conclusion ${d}J(\Omega^\ast)[\VV] = 0$, for any $\VV \in \sfTheta^{1}$.
\end{proof}
In connection with the previous conclusion, we point out that the solutions of the necessary condition \eqref{eq:optimality_condition} might exist such that the states does not satisfy equation \eqref{eq:imaginary_is_zero} (refer, for example to Figure \ref{fig:figure2} and Figure \ref{fig:figure3} in terms of numerics).
Nonetheless, only in the case of exact matching of boundary data a stationary domain $\Omega^\ast$ is a global minimum because $J(\Omega^\ast)=0$.
%
\section{Numerical approximation}
\label{sec:Numerical_Approximation}
The numerical resolution to our proposed shape optimization method to \eqref{eq:FSP} is realized through a Sobolev gradient-based method.
We implement the idea of the said approach via finite element method, along the lines of the author's previous work, see \cite{RabagoAzegami2020}.
Accordingly, this section is divided into two main subsections.
In the first part, we provide some details of the algorithm and then in the second part we give a concrete test example of the problem.

\subsection{Numerical algorithm}
\label{subsec:Numerical_Algorithm}
For completeness, we give below the important details of our algorithm.

\textit{Choice of descent direction.} 
As alluded above, the problem will be solved via finite element method. 
To this end, we make use of the Riesz representation of the shape gradient \eqref{eq:shape_gradient}.
The reason for this is that, the gradient given by $\ggb$ is only supported on the free boundary $\Sigma$, and if we use directly $\ggb\nn$ as the descent vector $\VV$, the perturbation may produce some unwanted oscillations on the free boundary which, of course, we do not want to happen.
So, we apply an extension-regularization technique by taking the descent direction $\VV$ as the solution in $\vect{H}^1_{\Gamma, \vect{0}}(\Omega)$ to the variational problem
\begin{equation}\label{eq:H1_gradient_method}
	a(\VV,\vect{\varphi})= - \intS{\ggb\nn \cdot \vect{\varphi}}, \quad  \text{for all $\vect{\varphi} \in \vect{H}^1_{\Gamma, \vect{0}}(\Omega)$},
\end{equation}
where $a$ is the ${\vect{H}^1(\Omega)}(:=H^1(\Omega;\mathbb{R}^d)$)-inner product in $d$-dimension, $d \in \{2, 3\}$.
In this manner, the \textit{Sobolev gradient} $\VV$ \cite{Neuberger1997} becomes a smoothed preconditioned extension of $-\ggb\nn$ over the entire domain $\Omega$.
Further discussion on discrete gradient flows for various shape optimization problems can be found in \cite{Doganetal2007}.

\textit{The main algorithm.}
To compute the $k$th domain $\Omega^{k}$, a typical procedure is given as follows:
\begin{description}
\setlength{\itemsep}{0.5pt}
	\item[1. \it{Initilization}] Choose an initial guess for $\Sigma^{0}$ (this gives us $\Omega^{0}$).  
	\item[2. \it{Iteration}] For $k = 0, 1, 2, \ldots$, do the following
		\begin{enumerate}
			\item[2.1] solve the state and adjoint state systems on the current domain $\Omega^{k}$;
			\item[2.2] choose $t^{k}>0$, and compute the Sobolev gradient $\VV^{k}$ on $\Omega^{k}$;
			\item[2.3] update the current domain by setting $\Omega_{k+1} = (\operatorname{id}+t^{k}\VV^{k})\Omega^{k}$. 
		\end{enumerate}
	\item[3. \it{Stop Test}] Repeat the \textit{Iteration} part until convergence.
\end{description}

\begin{remark}[Step-size computation]\label{rem:step_size}
In Step 2.2, the step size $t^{k}$ is computed via a backtracking line search procedure using the formula $t^{k} = \mu J(\Omega^{k})/|\VV^{k}|^2_{\mathbf{H}^1(\Omega^{k})}$ at each iteration, where $\mu > 0$ is a given real number.
This procedure is based on an Armijo-Goldstein-like condition for shape optimization method proposed in \cite[p. 281]{RabagoAzegami2020}.
Here, however, the step size parameter $\mu$ is not limited to the interval $(0,1)$ in contrast to \cite{RabagoAzegami2020}.
We do this to maximize the initial choice of the length of the descent vector and simply rely on the backtracking procedure to get a suitable step size.
\end{remark}

\begin{remark}[Evaluating the mean curvature] In Step 2.2, the mean curvature $\kappa$ of $\Sigma$ is evaluated as the divergence of some vector $\NN$, where $\NN \in {\vect{H}^1(\Omega)}$ is an extension of $\nn$ satisfying the equation $c_\text{N}  \intO{\nabla \vect{N}:\nabla \vect{\varphi}} + \intS{ \vect{N}\cdot \vect{\varphi} } = \intS{ \nn \cdot \vect{\varphi}}$, for all $\vect{\varphi} \in {\vect{H}^1(\Omega)}$, for some small real number $c_\text{N} >0$\footnote{This follows the same idea in computing the Riesz representation of the shape gradient.}.
We point that it is a requirement that $\NN$ is a \textit{unitary} extension of $\nn$ for the formula $\kappa = \dive_{\Sigma} \nn = \dive \NN - (\nabla \NN \nn)\nn = \dive \NN$ (on $\Sigma$) to hold; see, e.g., \cite[Lem. 2.14, p. 92]{SokolowskiZolesio1992}.
So, the said extension of the (outward) unit normal vector to $\Sigma$ may not be accurate.
However, the approximation $\nabla \cdot \NN$ of $\kappa$ is enough to get a good descent direction for the optimization procedure.
Moreover, such strategy easily applies to three dimensional cases.
A more accurate numerical computation of the mean curvature is of course possible, but here we are satisfied with our results using this approximation technique for $\kappa$.
In our numerical experiments, we take $c_\text{N} = 10^{-8}$.
\end{remark}

\begin{remark}[Stopping condition] \label{rem:stopping_condition}
We terminate the algorithm after it reached a maximum number of iterations.
\end{remark}

\begin{remark}[Incorporating the shape Hessian information in the numerical procedure]\label{rem:Newton_method}
The convergence behavior of our scheme can of course be improved by incorporating the shape Hessian information in the numerical procedure.
The drawback, however, of a second-order method is that, typically, it demands additional computational burden and time to carry out the calculation, especially when the Hessian is complicated \cite{NovruziRoche2000,Simon1989}. 
Here, we will not employ a second-order method to numerically solve the optimization problem as the first-order method already provides an excellent approximation of the optimal solution.
\end{remark}

\subsection{Numerical examples}\label{subsec:Numerical_Examples}
Let us now illustrate the feasibility and applicability of the coupled complex boundary method in the context of shape optimization for solving concrete examples of the free surface problem \eqref{eq:FSP}.
We first test our method in two dimensions, and carry out a comparison with the least-squares method of tracking the Dirichlet data (see Problem  \eqref{eq:tracking_Dirichlet_method}).
Also, since most of the previous studies in free surface problems (see \cite{Kasumba2014}) only dealt with problems in two dimensions, we also put our attention on testing the proposed method to three dimensional cases. 
\begin{remark}[Details of the computational setup and environment]\label{rem:details_or_algorithm}
The numerical simulations conducted here are all implemented in the programming software \textsc{FreeFem++}, see \cite{Hecht2012}.
Every variational problems involved in the procedure (except those that correspond to \eqref{eq:ccbm} and \eqref{eq:adjoint_system}) are solved using $\mathbb{P}_{1}$ finite element discretization. 
Moreover, all mesh deformations are carried out without any kind of adaptive mesh refinement as opposed to what has been usually done in earlier works, see \cite{RabagoAzegami2020}.
In this way, we can assess the stability of CCBM in comparison with the classical approach of tracking the Dirichlet data in least-squares sense.
The computations are all performed on a MacBook Pro with Apple M1 chip computer having 16GB RAM processors.
\end{remark}
\begin{remark}
We note that the advantage of employing a Sobolev-gradient method is that we can avoid the generation of a new triangulation of the domain at every iterative step. 
This is achieve by moving not only the boundary nodes, but also the internal nodes of the mesh triangulation at every iteration.
Doing so only requires one to generate an initial profile for the computational mesh. 
To carry out such approach, one simply need to solve the discretized version of \eqref{eq:H1_gradient_method} and then move the domain in the direction of the resulting vector field scaled with the pseudo-time step size $t^{k}$.
That is, we find ${{\VV_{h}^{k}}} \in \mathbb{P}_{1}(\Omega_{h}^{k})^{d}$ which satisfies the following system of equations
\[
	- \Delta {\VV_{h}^{k}} + {\VV_{h}^{k}}  		=  \vect{0} \ \ \text{in $\Omega_{h}^{k} $},\qquad
		{\VV_{h}^{k}} 				=  \vect{0} \ \ \text{on $\Gamma^h$},\qquad
 		{\nabla{\VV_{h}^{k}}} \cdot {\nn_{h}^{k}}	= -\ggb^{k} {\nn_{h}^{k}} \ \ \text{on $\Sigma_{h}^{k}$},
\]
where we suppose a polygonal domain $\overline{\Omega_{h}^{k}}$ and its triangulation $\mathcal{T}_h(\overline{\Omega_{h}^{k}}) = \{ K^{k}_{l} \} ^{N_e}_{l=1}$ ($K^{k}_{l}$ is a closed triangle for $d=2$, or a closed tetrahedron for $d=3$) are given, and $\mathbb{P}_1(\Omega_{h}^{k})^{d}$ denotes the $\mathbb{R}^d$-valued piecewise linear function space on $\mathcal{T}_h(\overline{\Omega_{h}^{k}})$.
Accordingly, we update the domain or, equivalently, move the nodes of the mesh by defining $\Omega_{k+1}^h$ and $\mathcal{T}_h(\overline{\Omega_{k+1}^h}) = \{ K^{k+1}_{l} \} ^{N_e}_{l=1}$ respectively as $\overline{\Omega^{k+1}_h}:=\left \{ x + t^k {\VV_{h}^{k}}(x) \ \middle\vert \ x \in \overline{\Omega_{h}^{k}} \right\}$ and $K^{k+1}_{l} := \left\{ x + t^k {\VV_{h}^{k}}(x) \ \middle\vert \ x \in K^{k}_l \right\}$, for all $k = 0,1,\ldots$. 
\end{remark}
We are now ready to give our first numerical example.
\subsubsection{Example in two dimensions}
In this example, we consider a gravity-like force $\ff = (-10x,-10y)$ that keeps the fluid on top of the circular domain of radius $0.4$.
The fluid flowing in the domain is triggered by an initial velocity.
As the fluid comes to rest, a free surface position that is concentric with the circular domain, is attained.
On the fixed boundary $\Gamma$, a homogenous Dirichlet boundary conditions is imposed (i.e., $\vect{g} = \vect{0}$), and we set the value of $\alpha$ to $0.01$. 
Meanwhile, we set the initial location (or initial guess) of the fluid free-boundary $\Sigma^{0}$ as follows
\[
	\Sigma^{0} := \left\{ (x,y) \in \mathbb{R}^{2} \mid \frac{x^2}{1} + \frac{y^{2}}{1.1^{2}} = 1\right\}.
\]

We discretize the computational domain with triangular elements generated by the (built-in) bi-dimensional anisotropic mesh generator of \textsc{FreeFEM++} \cite{Hecht2012}.
Here, the exterior (free) boundary is discretized with $70$ nodal points while the interior (fixed) boundary is discretized with $30$ discretization points.
The Stokes equations \eqref{eq:ccbm} (as well as the adjoint system \eqref{eq:adjoint_system}) are then discretized using the Galerkin finite-element method.
We use Taylor-Hood elements ($\mathbb{P}_{2}$-$\mathbb{P}_{1}$ finite elements) for the approximation of the velocity and pressure.
As a result, we obtain a set of linear algebraic equations that may be written in matrix form as $\vect{K} \bar{\vect{u}} = \vect{F}$
where $\vect{K}$ is the global system matrix, $\bar{\vect{u}}$ is the global vector of unknowns (velocities and pressures),
and $\vect{F}$ is a vector that includes the effects of body forces and boundary conditions.
This linear system is solved using the default \textsc{LU}-solver of \textsc{FreeFEM++}.

The results of the experiment are shown in Figure \ref{fig:figure1} which depict a cross comparison of the computed shapes (top figure), histories of cost values (left upper plot), and the histories of $H^{1}$ gradient norms (left lower plot).
Observe that the solution attained by the proposed approach CCBM coincides with the solution of the classical least-squares approach TD (tracking-Dirichlet).
Moreover, the convergence behavior of the two methods are actually comparable as evident in the plots of the histories of cost values and $H^{1}$ gradient norms.
At this point, we do not see much advantage of using the proposed method.
Nonetheless, the advantage we claim will be more apparent when it is applied in the case of three dimension.

Meanwhile, we also plotted the initial and final imaginary parts of the Stokes velocity and pressure profiles in Figure \ref{fig:figure2} as well as the corresponding values for the adjoint solutions in Figure \ref{fig:figure3}. 
Clearly, the imaginary parts reduced to very small magnitudes as it obtains an approximation shape solution.
For the computed Stokes and adjoint pressure at the approximate optimal solution, the (maximum) magnitudes are or order $10^{-5}$ and $10^{-7}$, respectively.
This numerically confirms Remark \ref{rem:equivalence} and the optimality condition issued in Corollary \ref{cor:necessary_condition}.
%
%
\begin{figure}[htp!]
\centering
\begin{multicols}{2}
\resizebox{1.0\linewidth}{!}{\includegraphics{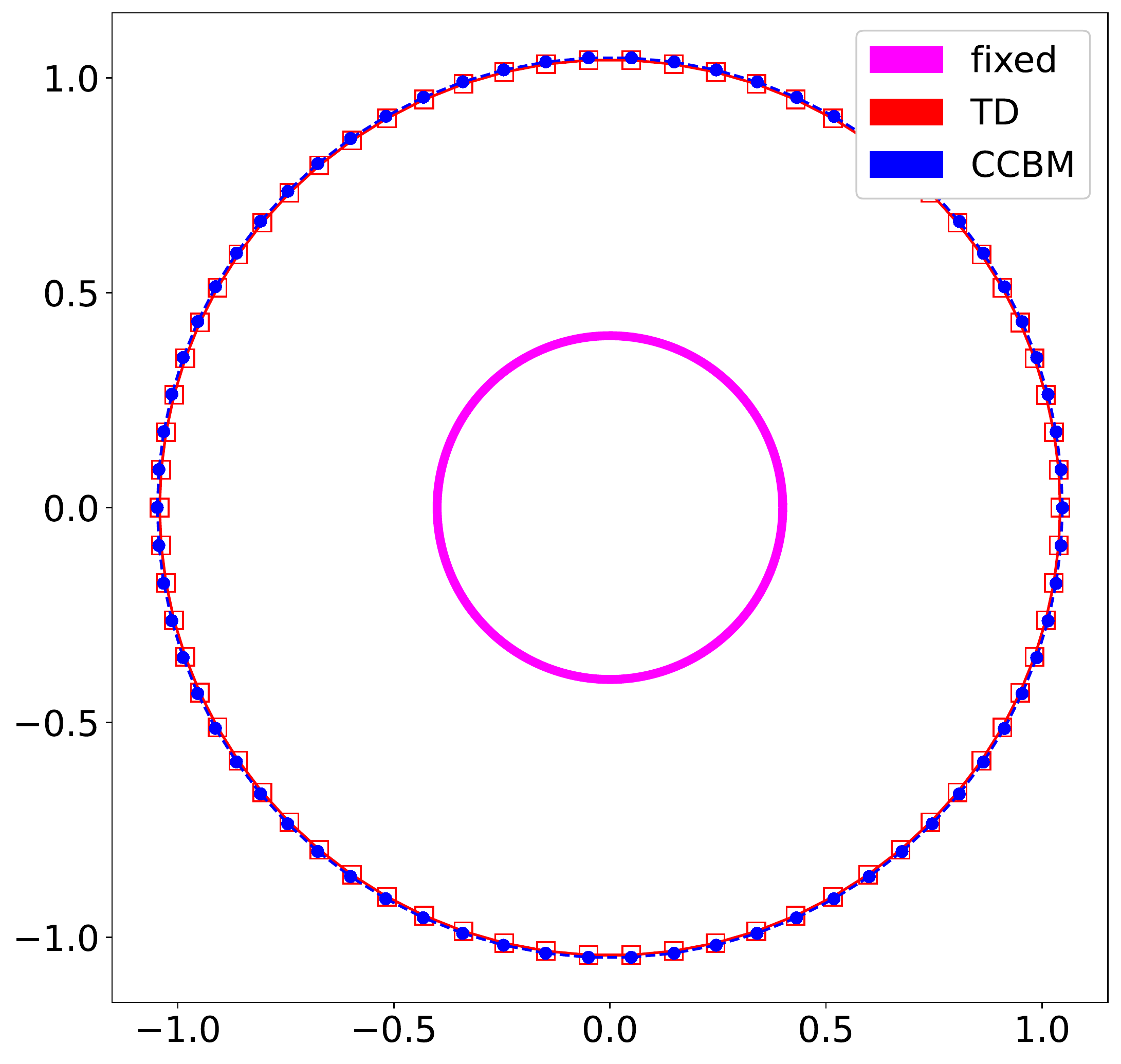}} \\
\resizebox{0.9\linewidth}{!}{\includegraphics{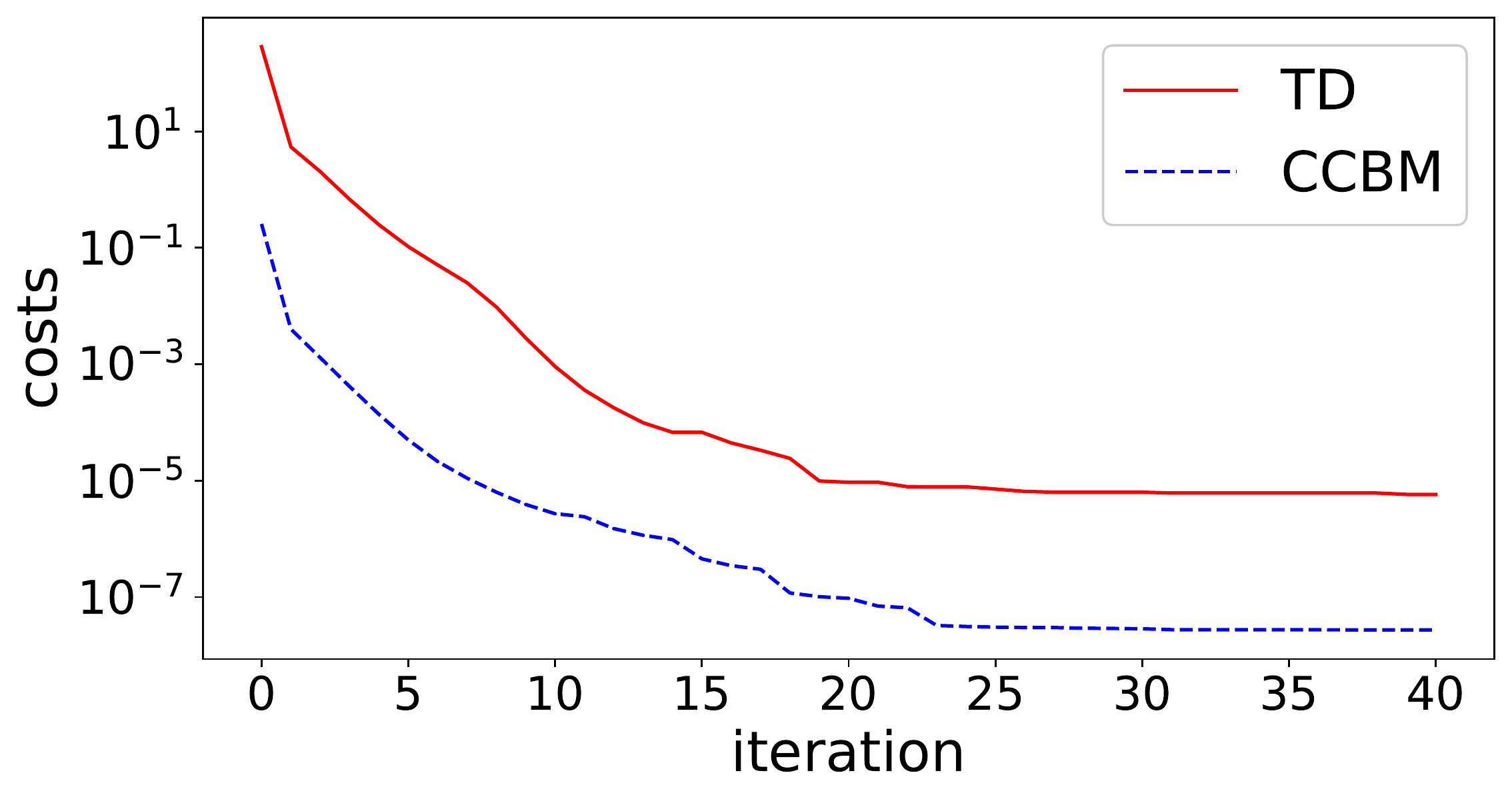}} \\[0.2em]
\resizebox{0.9\linewidth}{!}{\includegraphics{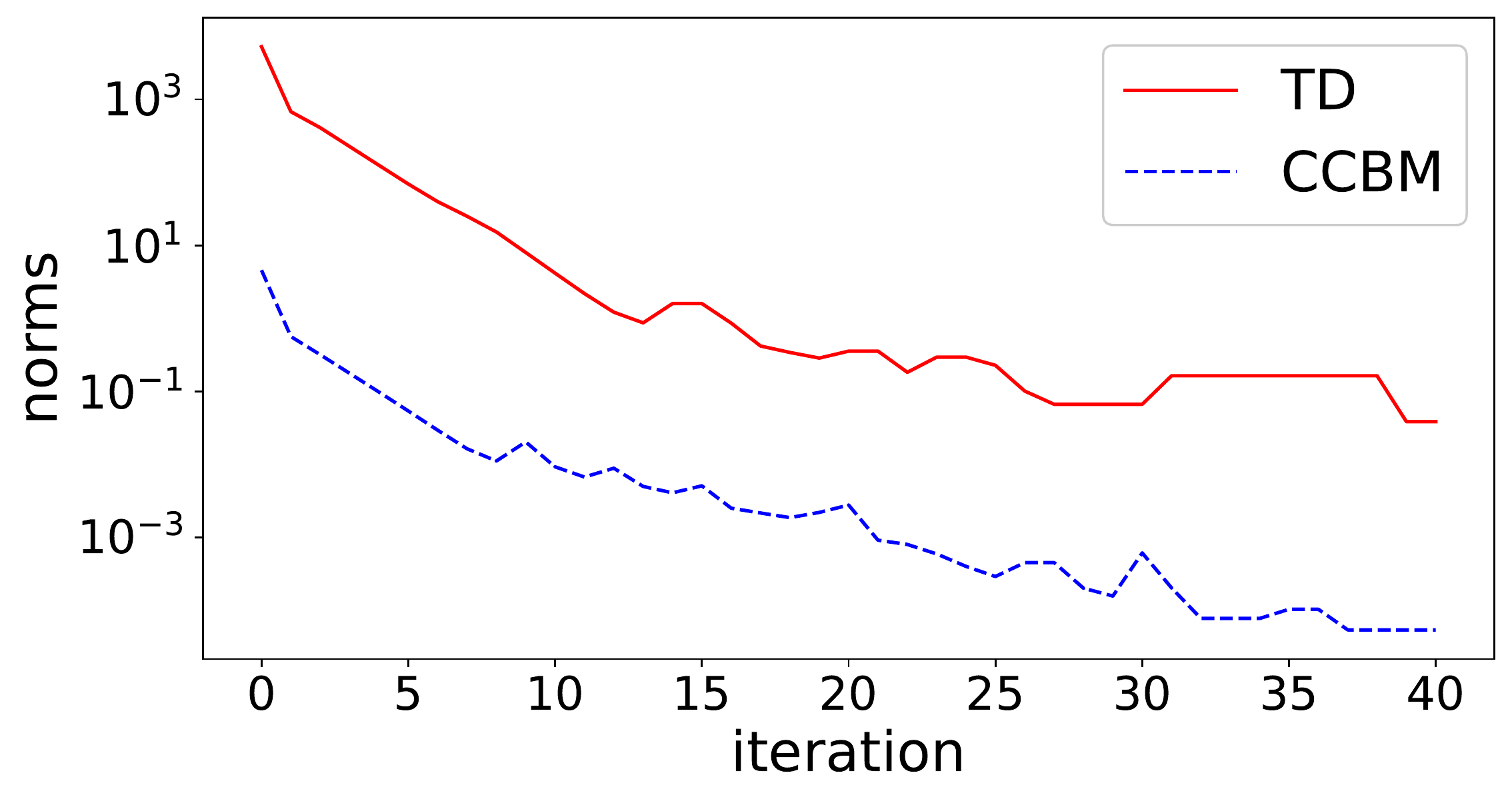}} 
\end{multicols}
\caption{Cross comparison of computed shapes (left figure), histories of cost values (left upper plot), and $H^{1}$ gradient norms (left lower plot)}
\label{fig:figure1}
\end{figure}
%
%
\begin{figure}[htp!]
\centering
\resizebox{0.24\linewidth}{!}{\includegraphics{./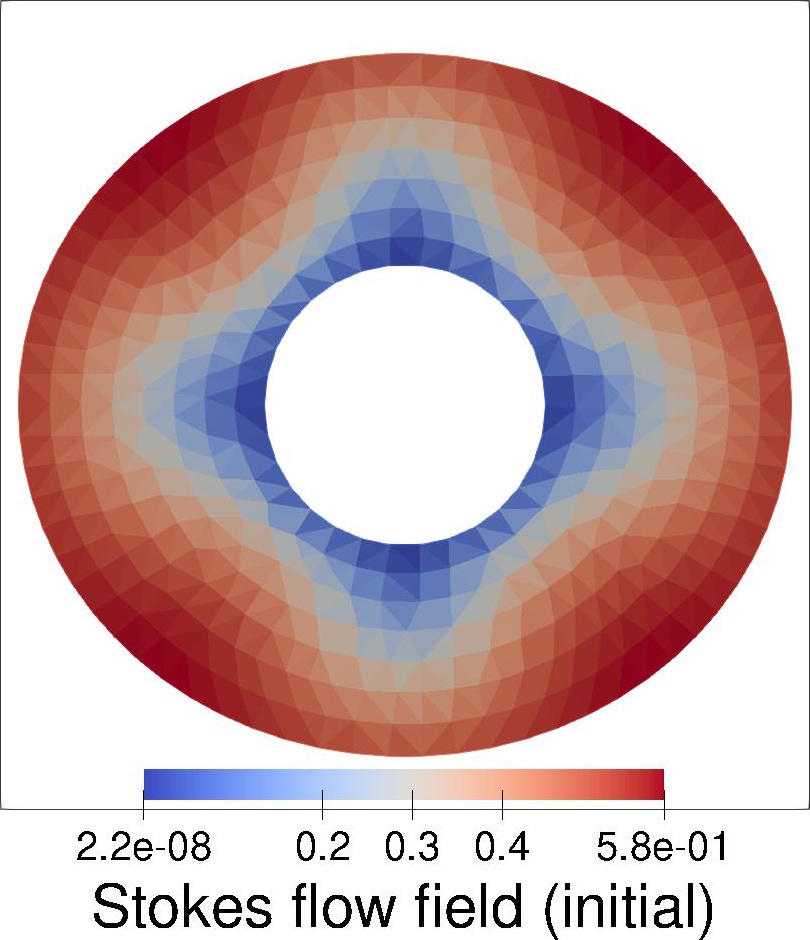}} \hfill
\resizebox{0.24\linewidth}{!}{\includegraphics{./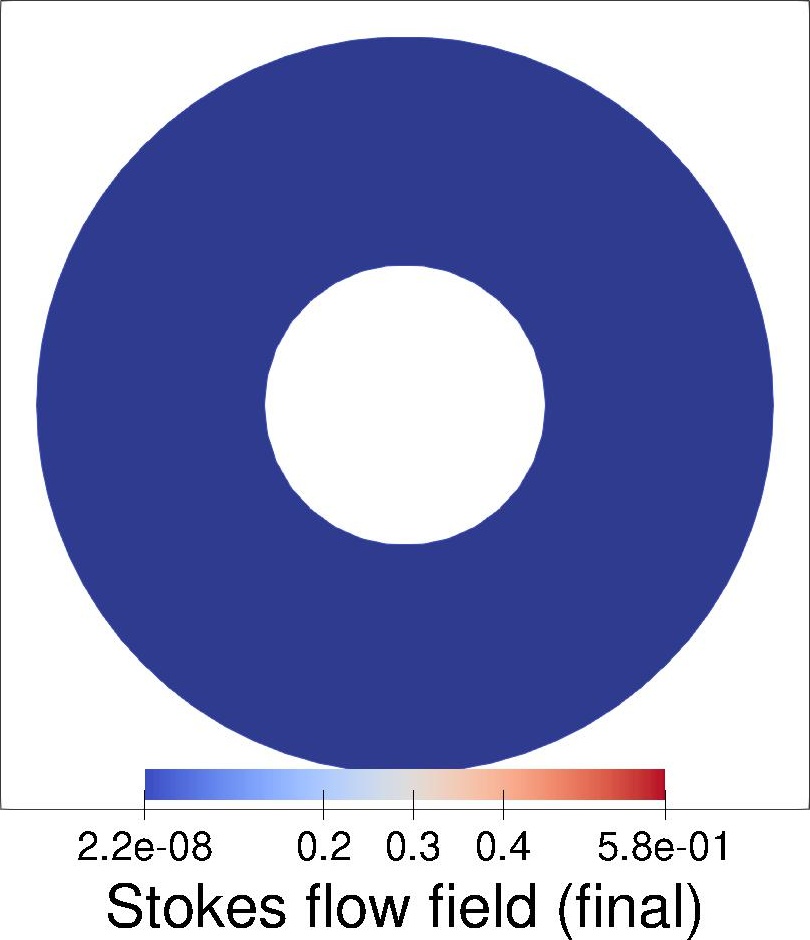}} \hfill
\resizebox{0.24\linewidth}{!}{\includegraphics{./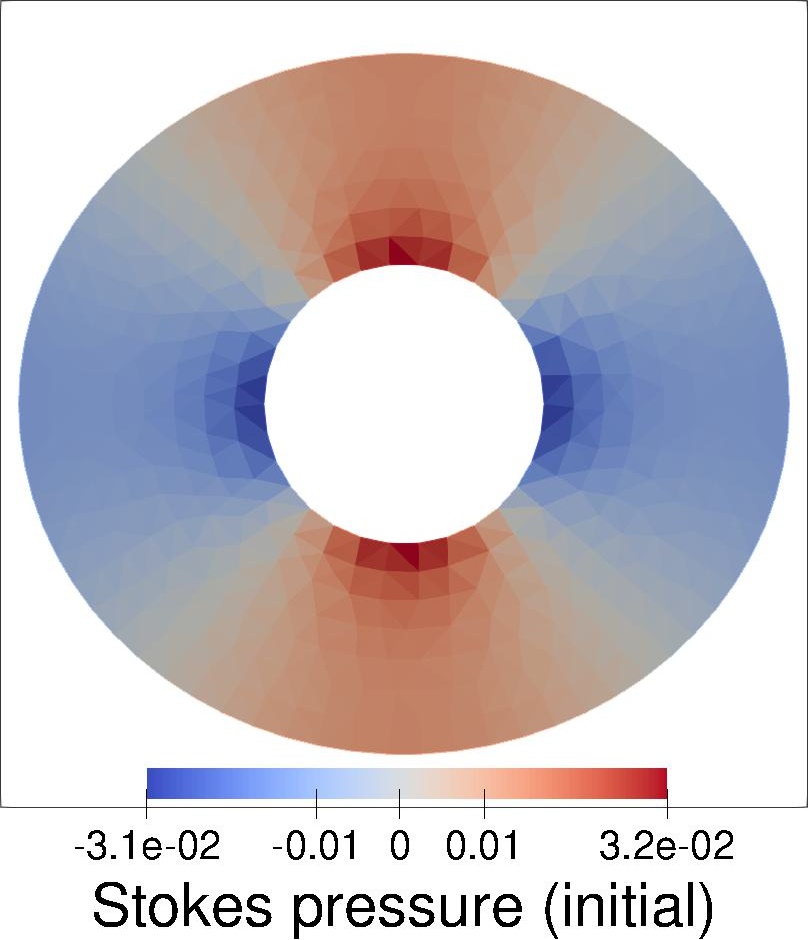}} \hfill
\resizebox{0.24\linewidth}{!}{\includegraphics{./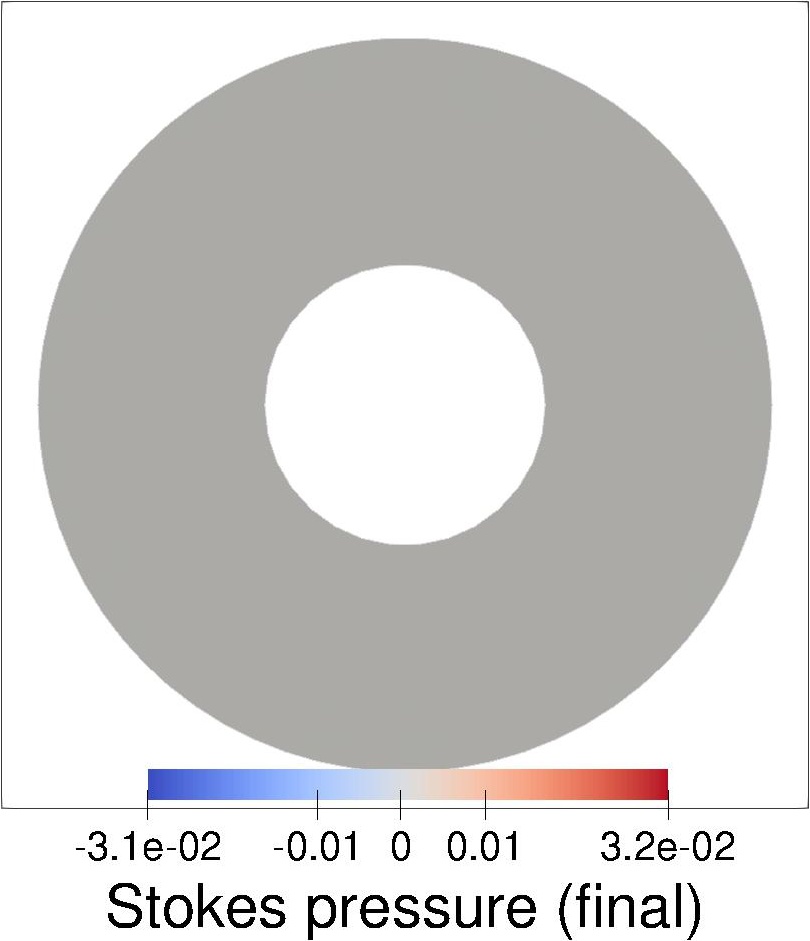}}
\caption{Imaginary parts of the initial and final flow field and pressure of the Stokes system}
\label{fig:figure2}
\end{figure}
%
%
\begin{figure}[htp!]
\centering
\resizebox{0.24\linewidth}{!}{\includegraphics{./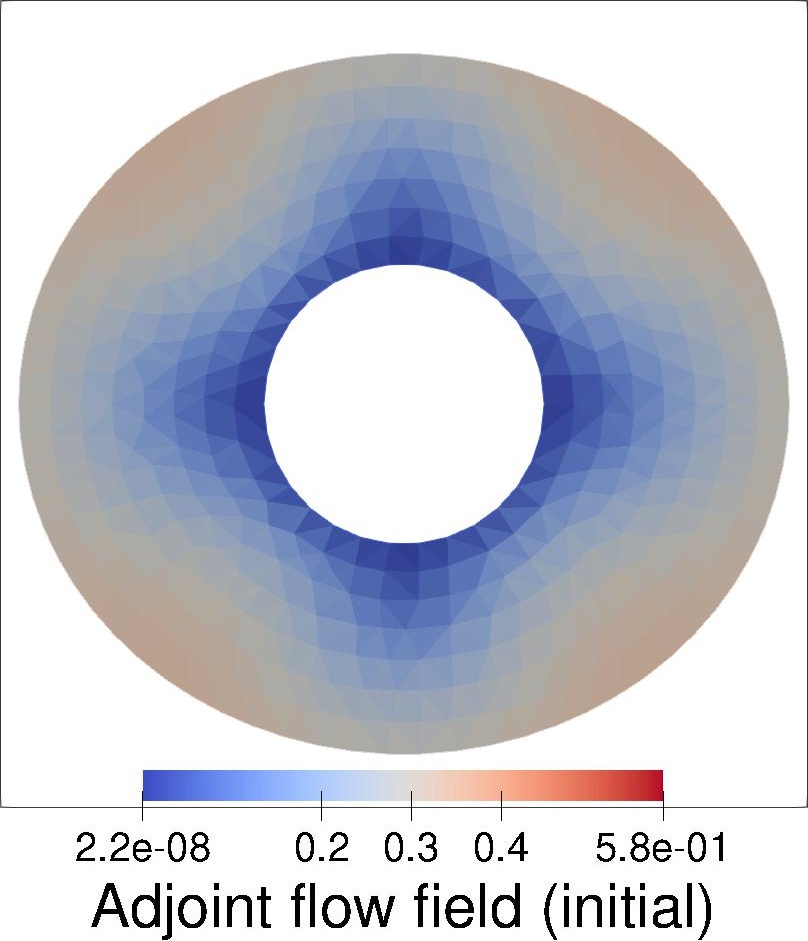}} \hfill
\resizebox{0.24\linewidth}{!}{\includegraphics{./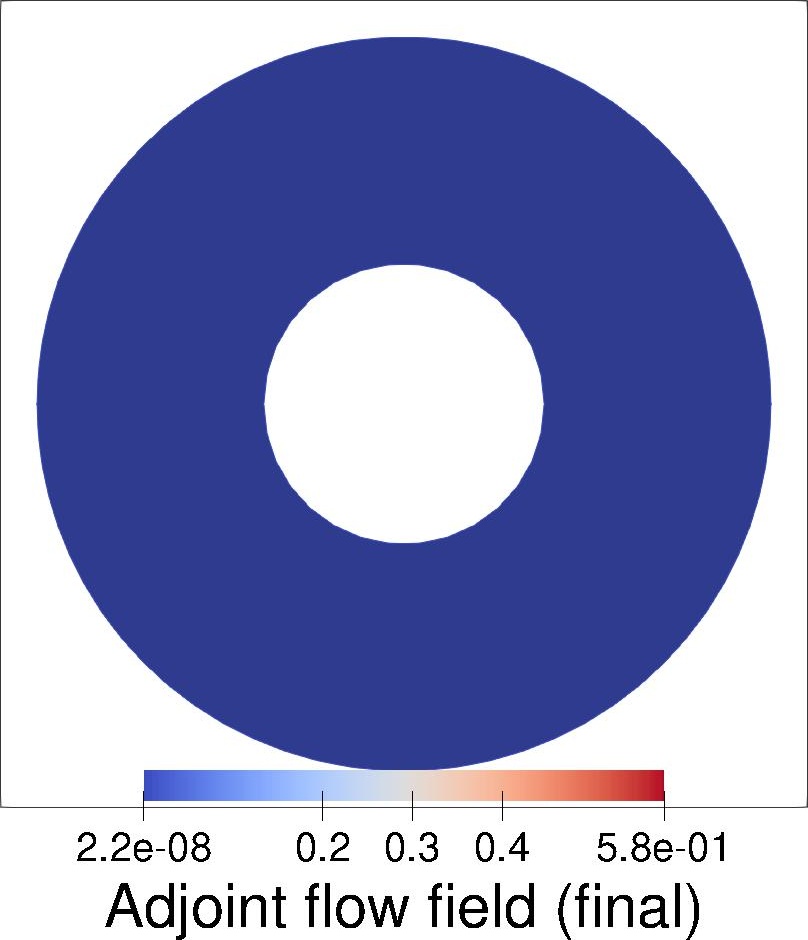}} \hfill
\resizebox{0.24\linewidth}{!}{\includegraphics{./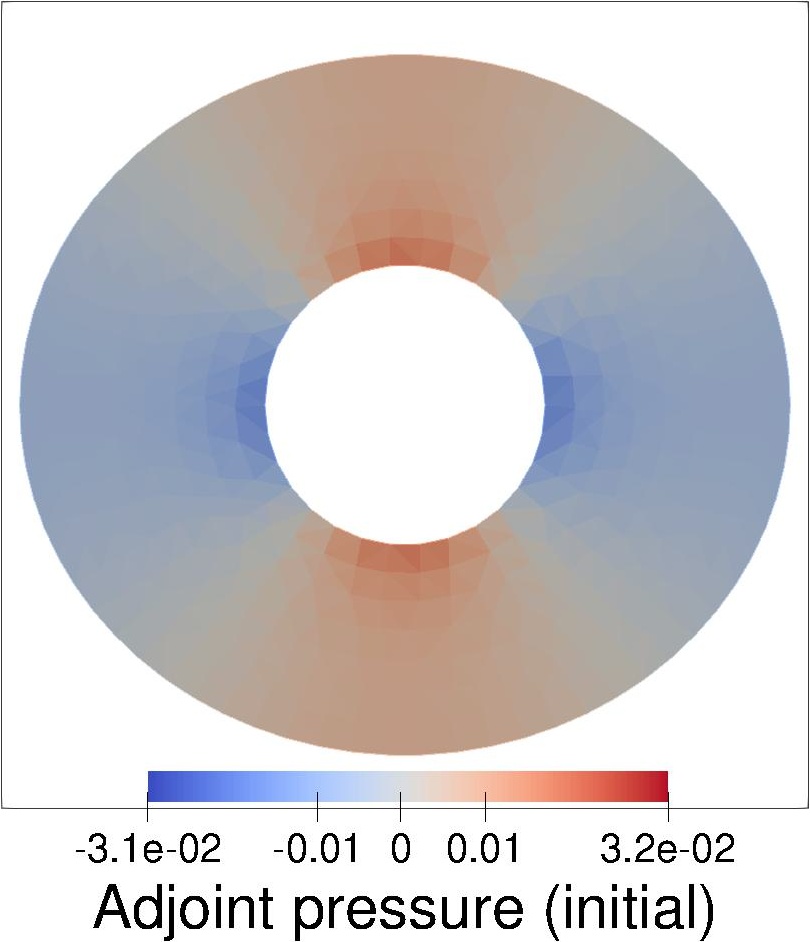}} \hfill
\resizebox{0.24\linewidth}{!}{\includegraphics{./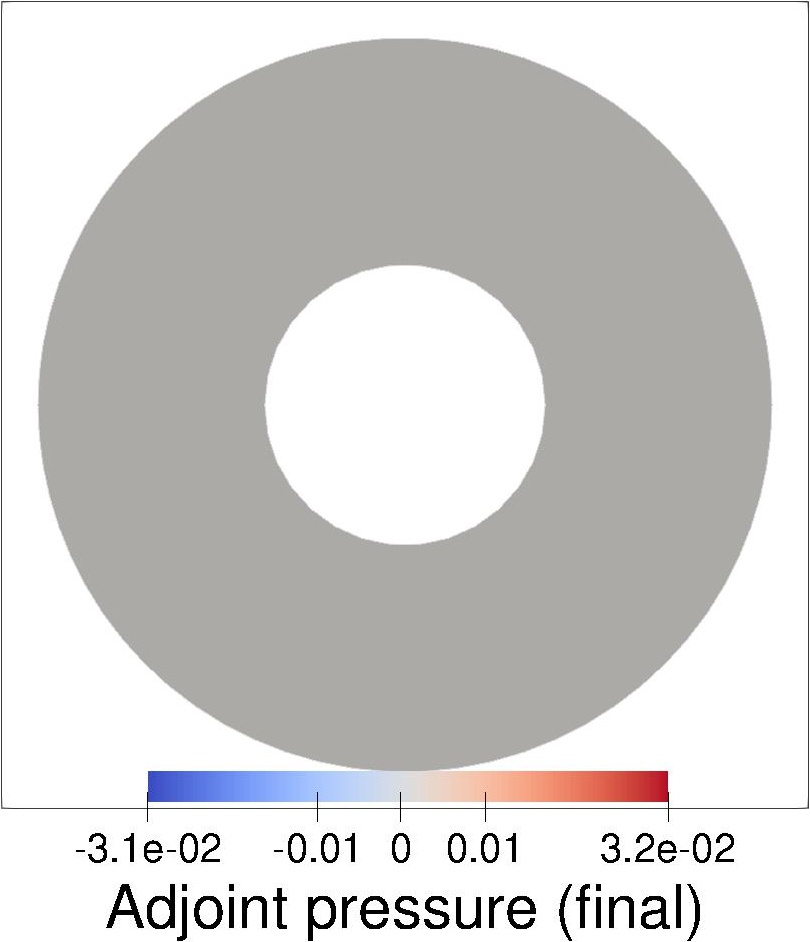}}
\caption{Imaginary parts of the initial and final flow field and pressure of the adjoint system}
\label{fig:figure3}
\end{figure}
\subsubsection{Example in three dimensions}\label{subsubsection:example_in_3d}
Now we consider a test case in three spatial dimensions.
The assumptions are similar to the previous example.
That is, we consider a gravity-like force $\ff = (-10x,-10y,-10z) \in \mathbb{R}^{3}$.
This force is assumed to keep the fluid to surround an object that is spherical in shape having radius equal to $0.4$.
Again, the fluid flowing in the domain is triggered by an initial velocity.
The value of $\alpha$ is again set to $0.01$ and this time we take the initial guess to be the boundary of a three-petal flower like shape that is given exactly in the figure shown in Figure \ref{fig:figure4a}.

We again look at the situations where we have coarse and fine mesh for the computational domain.
For the latter (initial) setup, the tetrahedrons have maximum and minimum mesh width of $h_{\max} \approx 0.665$ and $h_{\min} \approx 0.047$, respectively.
Meanwhile, for the test experiment with fine mesh we take $h_{\max} \approx 0.487$ and $h_{\min} \approx 0.047$.
The rest of the computational setup is similar to the case of two dimensions.

For coarse mesh, the computational results are shown in Figure \ref{fig:figure4} -- Figure \ref{fig:figure6d}.
It is evident from Figure \ref{fig:figure4} that the sequence of shape approximations due to CCBM is different from the classical approach TD.
Nevertheless, as we observed in Figure \ref{fig:figure5}, the computed optimal shapes obtained from the two methods almost coincide.
Meanwhile, Figure \ref{fig:figure6} illustrates the mesh profile of the computed optimal shape for each method which also show a close similarity between the two results.
The next two figures, Figure \ref{fig:figure6a} -- Figure \ref{fig:figure6b},\footnote{For the final pressure profile of the Stokes and the adjoint solutions, the maximum magnitude is found to be of order $10^{-3}$ and $10^{-4}$, respectively.} depict the initial and final imaginary parts of the Stokes' and adjoint's flow field and pressure profiles (magnitude) for the coarse-mesh experiment.
These numerical results -- similar to the observation made for two dimensional case -- corroborate the statement issued in Remark \ref{rem:equivalence} and the conclusion drawn in Corollary \ref{cor:necessary_condition}.

On the other hand, the results for a finer computational mesh are shown in Figure \ref{fig:figure7} -- Figure \ref{fig:figure9}.
In Figure \ref{fig:figure7} we notice a well-behaved (in the sense that the shape approximations are smooth) evolution of the initial domain to the optimal domain due to CCBM as opposed to the case of TD.
Observe from the latter (left plots in Figure \ref{fig:figure7}), that there are obvious dents in the computed optimal shape which we do not expect to appear.
Moreover, it becomes more evident as we made a cross comparison between the computed optimal shapes obtained from the two methods in Figure \ref{fig:figure8}.
For the (initial and) final imaginary parts of the Stokes' and adjoint flow field and pressure profiles (magnitude) for the case of finer mesh, the maximum magnitude is found to be of smaller values than that of the coarse mesh,  as expected. 

Additionally, it is obvious from Figure \ref{fig:figure9} that the optimal shape obtained via TD is less rounded that the one achieve using CCBM.
These results clearly show the merit of employing CCBM which we actually expect since it utilizes (naturally as a consequence of the formulation) a volume-integral.

Finally, in Figure \ref{fig:figure10}, we plot the graph of the histories of costs and gradient norms for the two methods.
It seems that, for coarse mesh (see left plot in Figure \ref{fig:figure10}), CCBM converges faster to a stationary point compared to TD.
Meanwhile, for finer mesh, it appears that graph corresponding to the histories of cost and gradient norm values for TD stops abruptly (as seen in the right plot of Figure \ref{fig:figure10}).
This is primarily due to the instability occurring in the computational mesh which was observed in previous figures.
Based from these results, it seems that CCBM is more robust compared to TD (as expected).
%
%
\begin{figure}[htp!]
\centering
\resizebox{0.32\linewidth}{!}{\includegraphics{./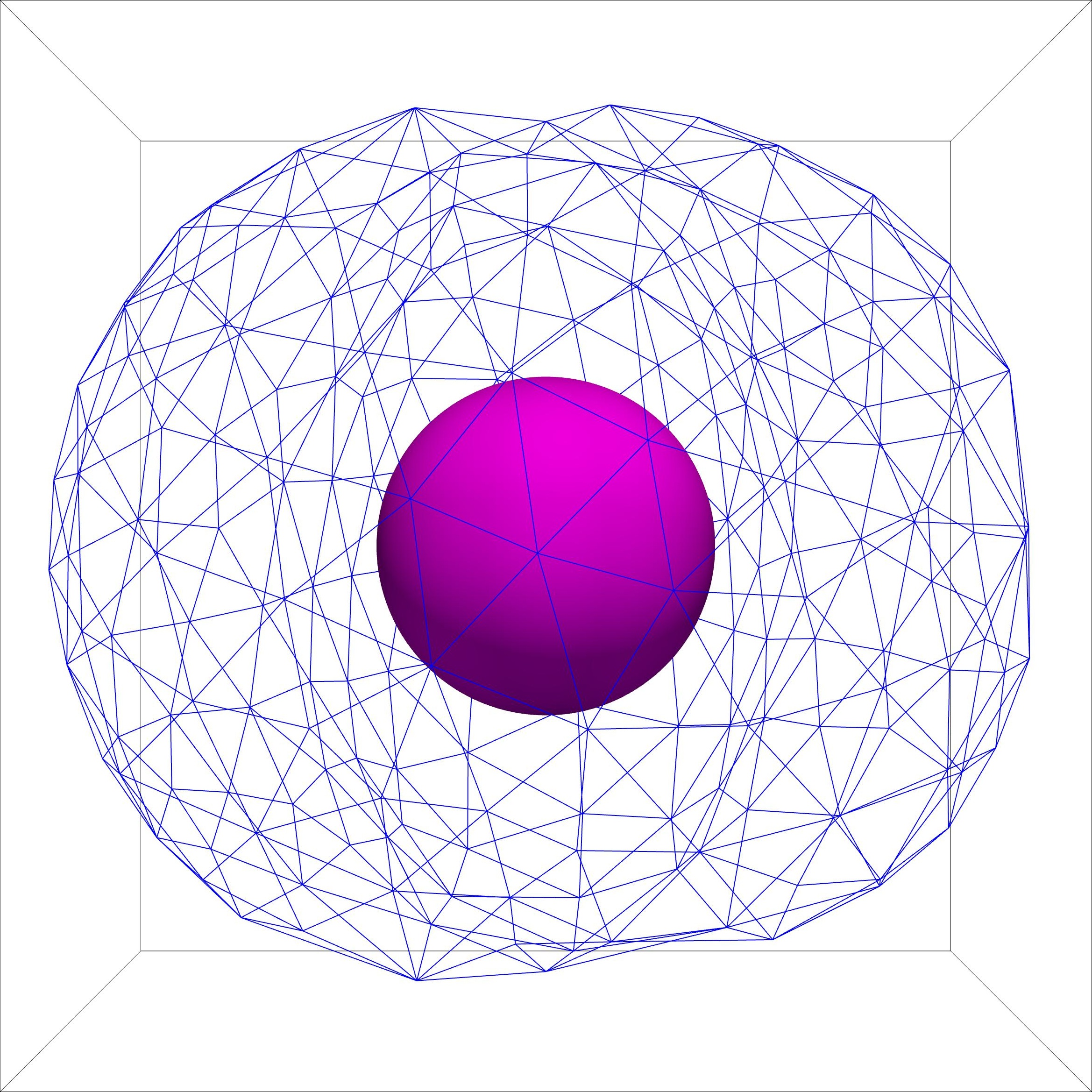}} \hfill
\resizebox{0.32\linewidth}{!}{\includegraphics{./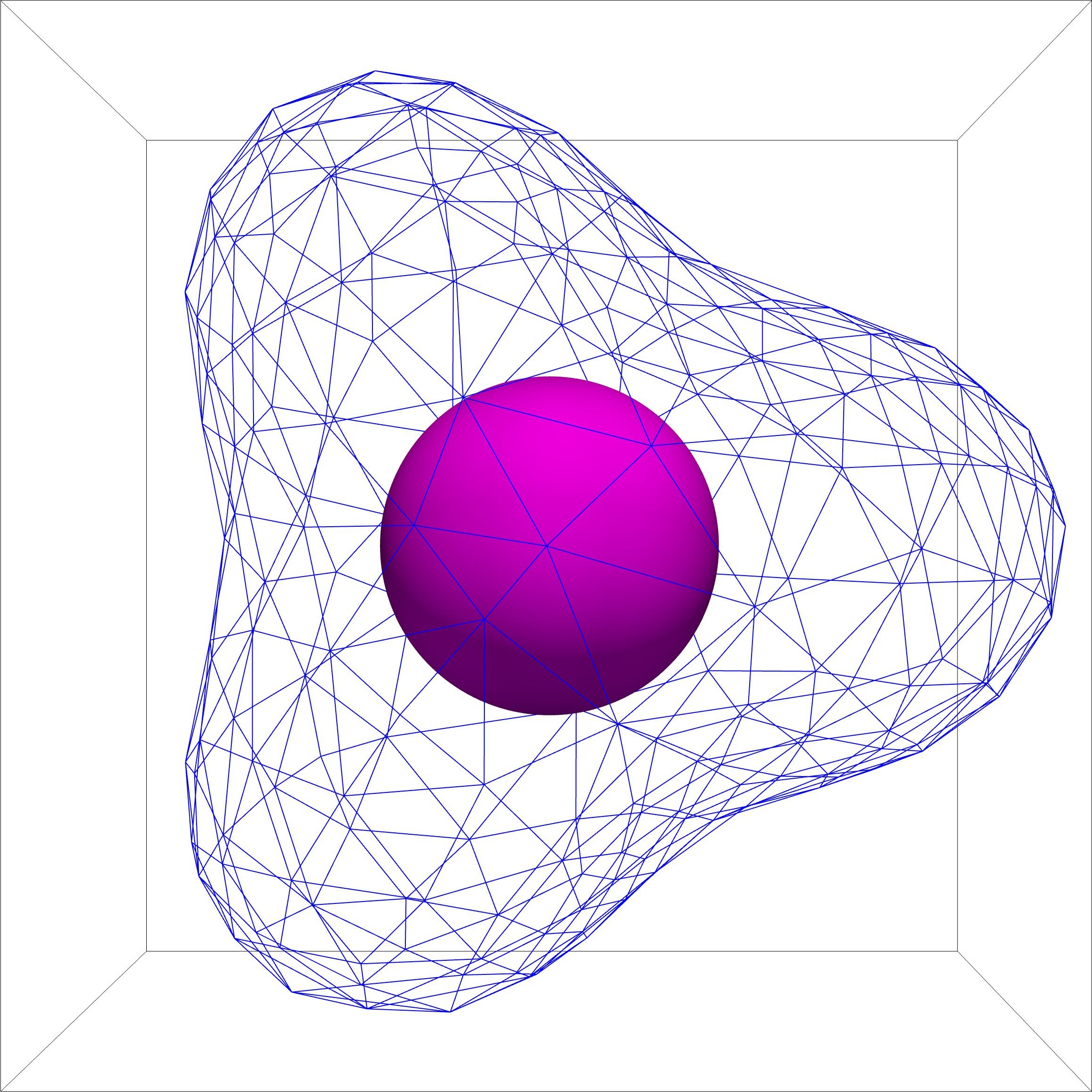}} \hfill
\resizebox{0.32\linewidth}{!}{\includegraphics{./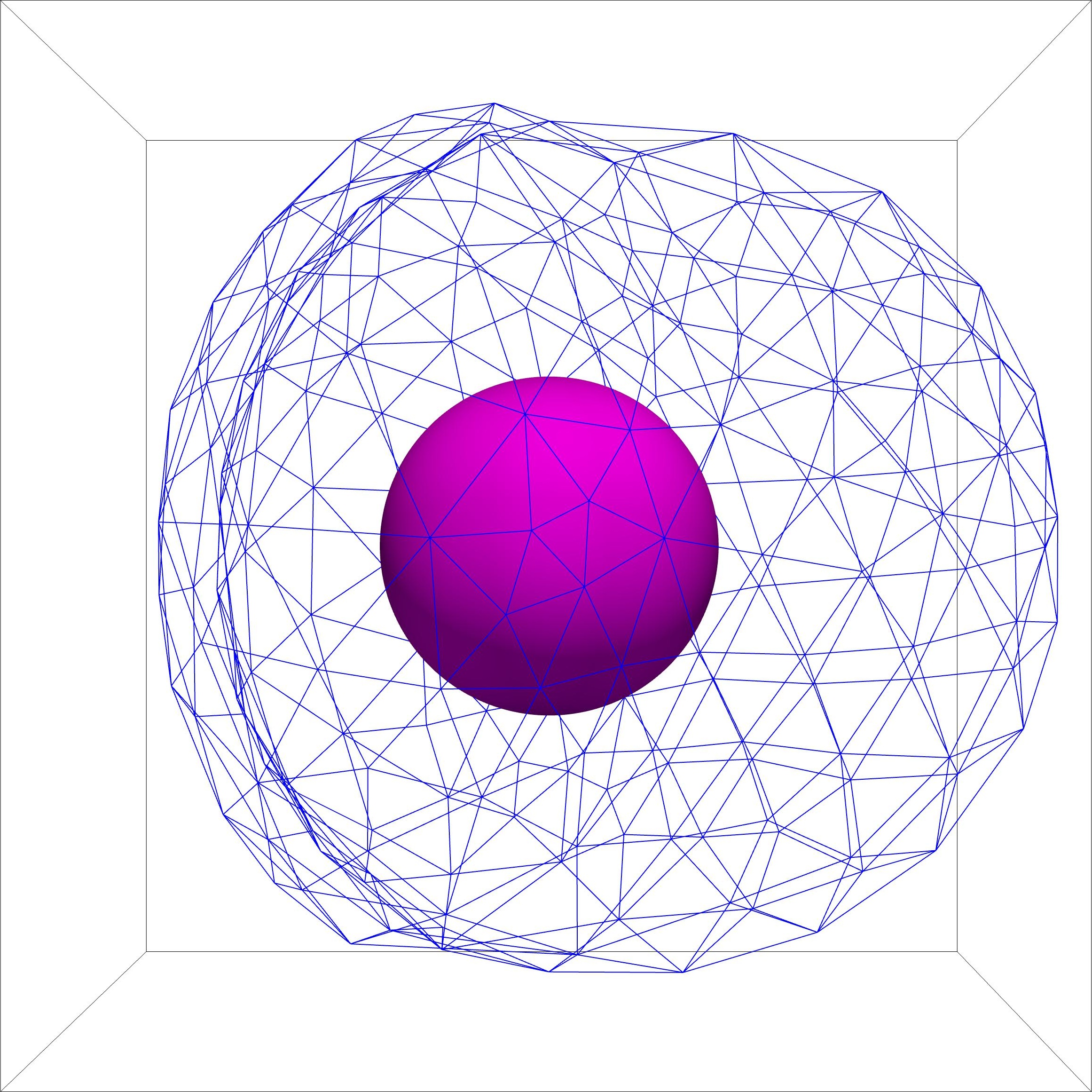}}
\caption{Mesh profile (coarse mesh) of the initial guess viewed on different positions}
\label{fig:figure4a}
\end{figure}
\begin{figure}[htp!]
\centering
\resizebox{0.32\linewidth}{!}{\includegraphics{./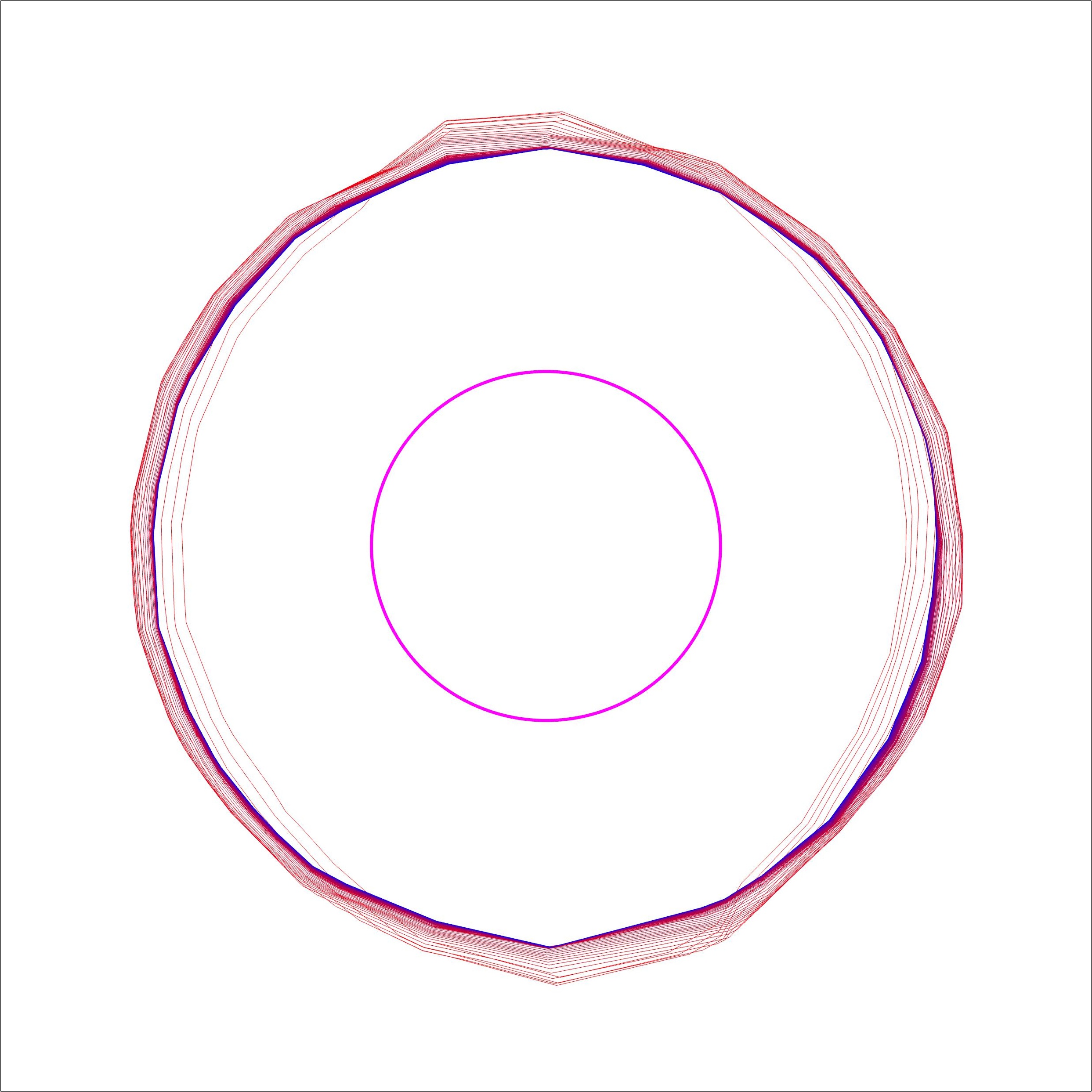}} \hfill
\resizebox{0.32\linewidth}{!}{\includegraphics{./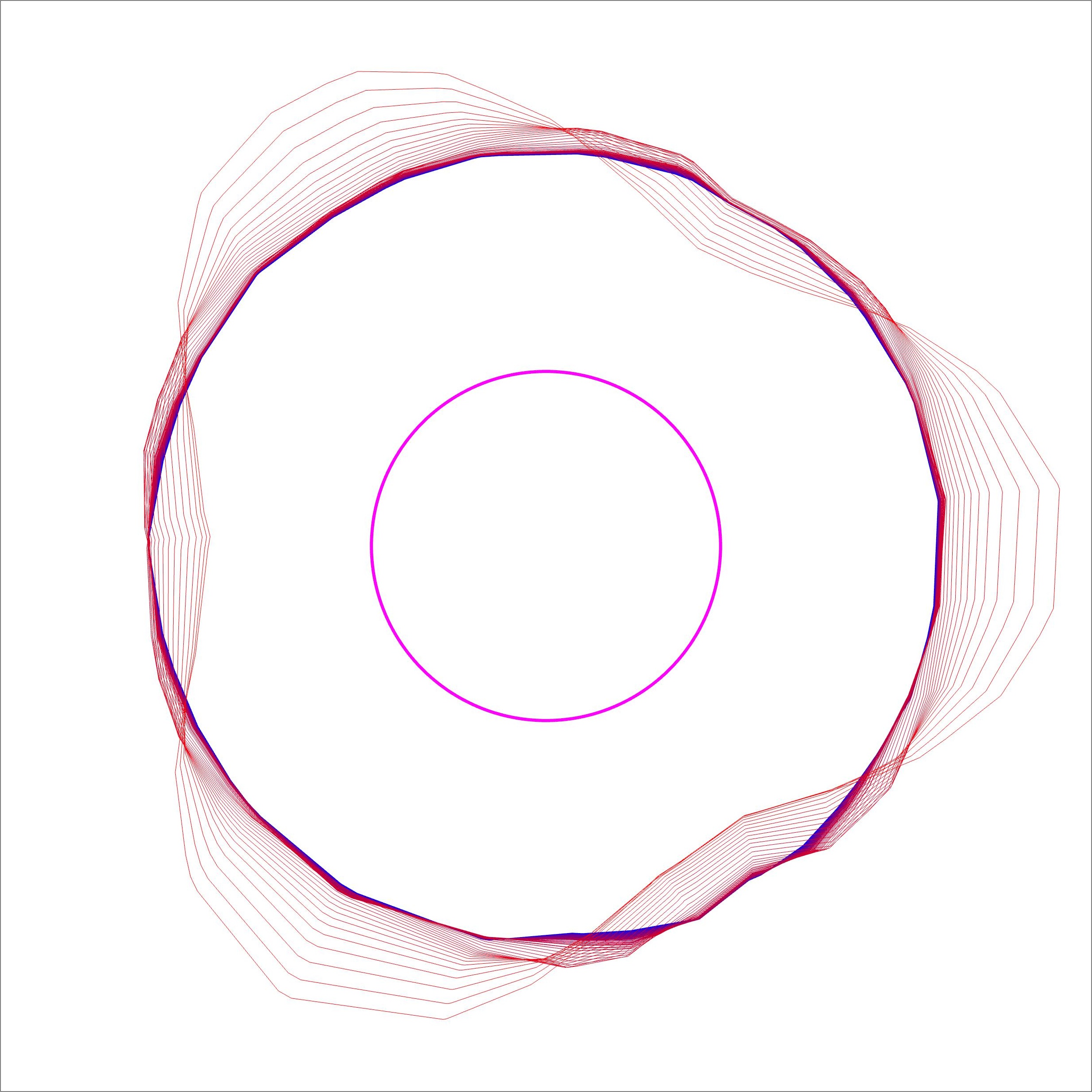}} \hfill
\resizebox{0.32\linewidth}{!}{\includegraphics{./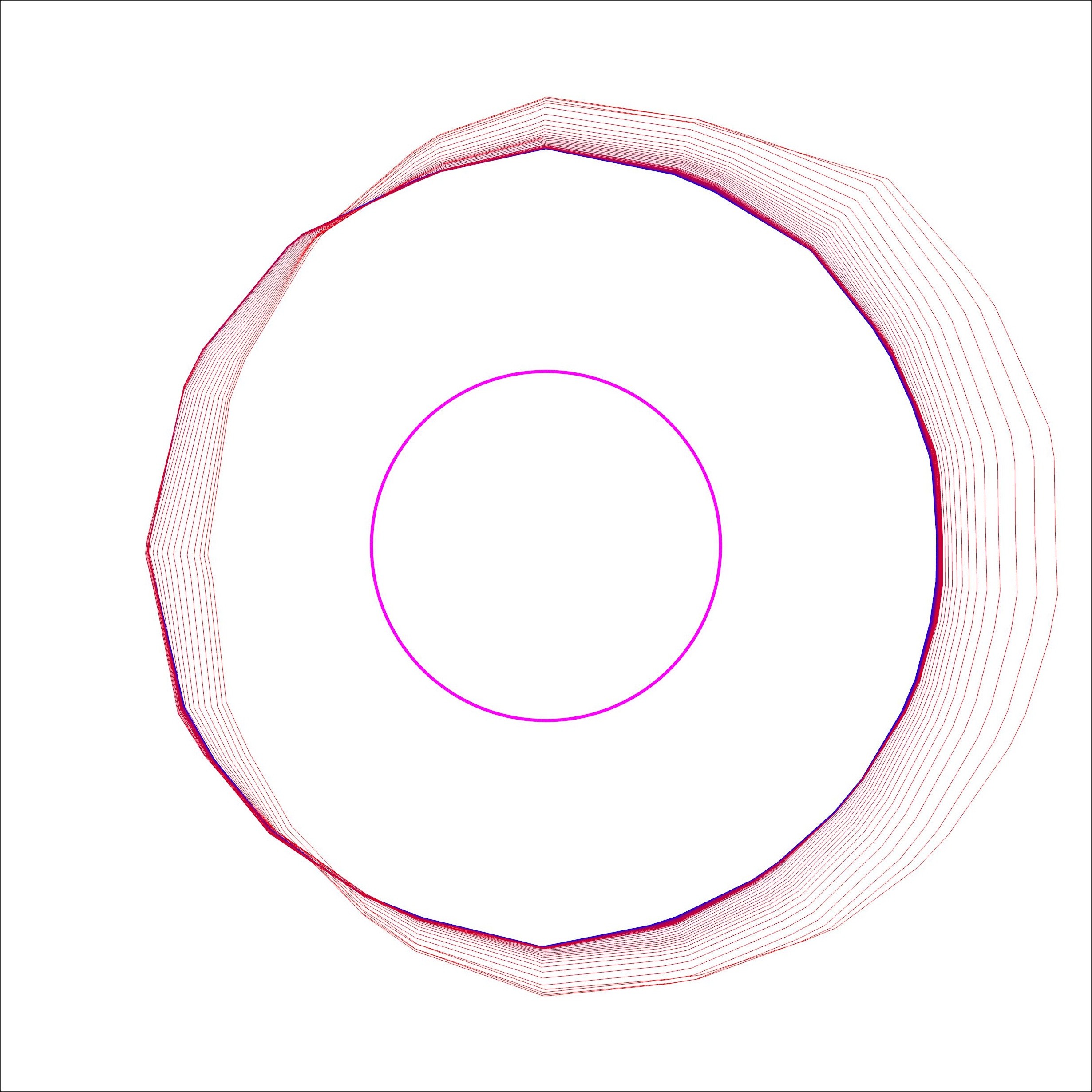}} \\[0.5em] 
\resizebox{0.32\linewidth}{!}{\includegraphics{./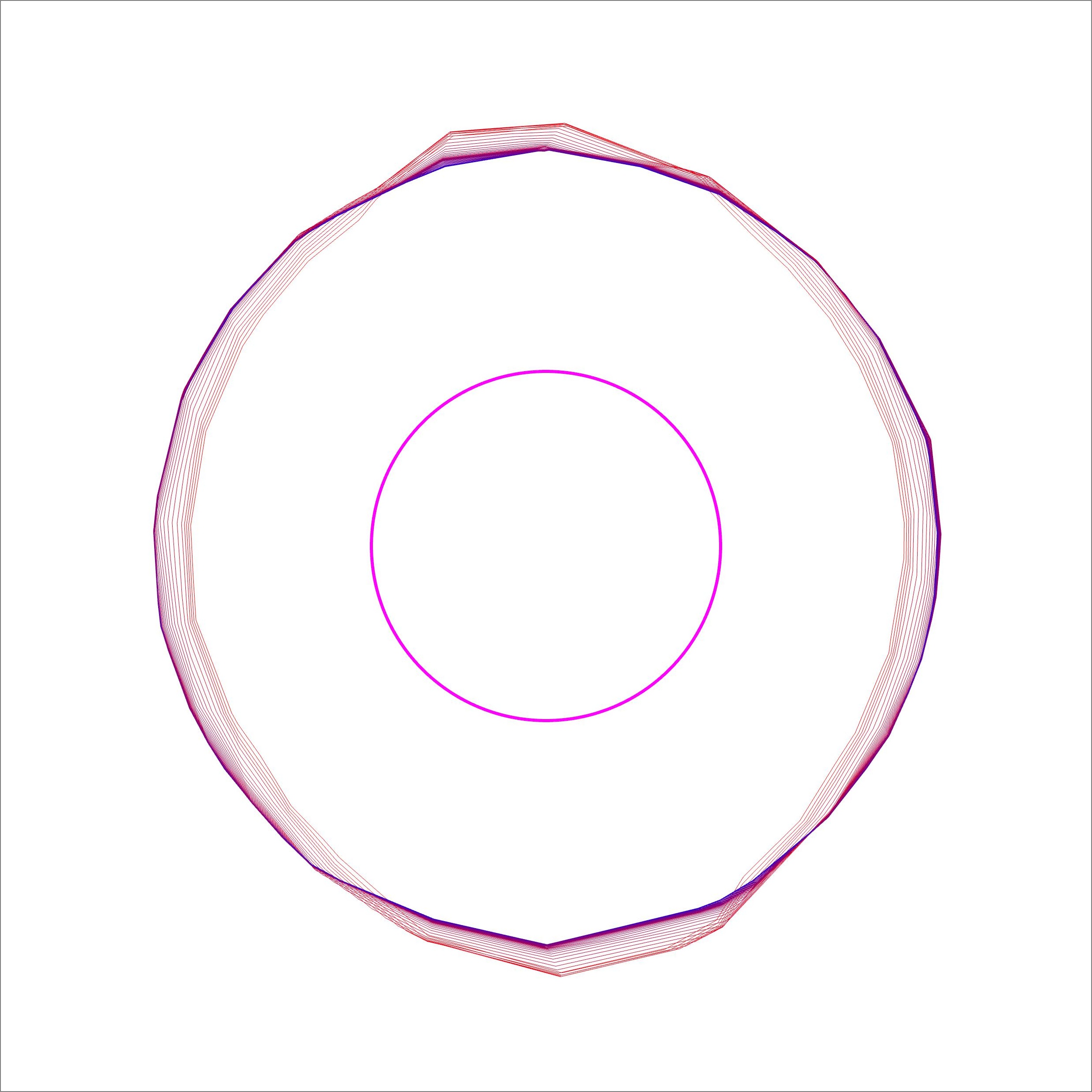}} \hfill
\resizebox{0.32\linewidth}{!}{\includegraphics{./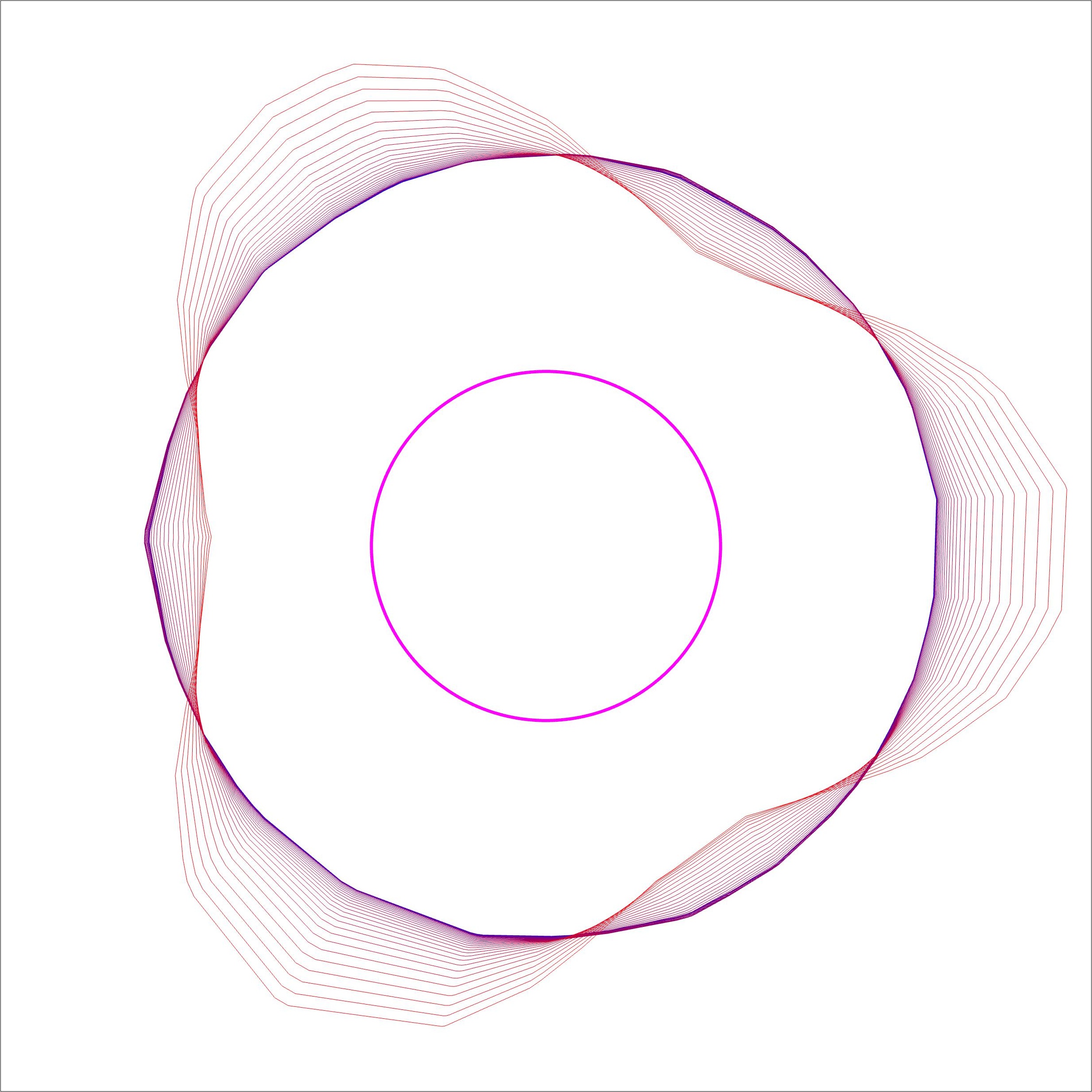}} \hfill
\resizebox{0.32\linewidth}{!}{\includegraphics{./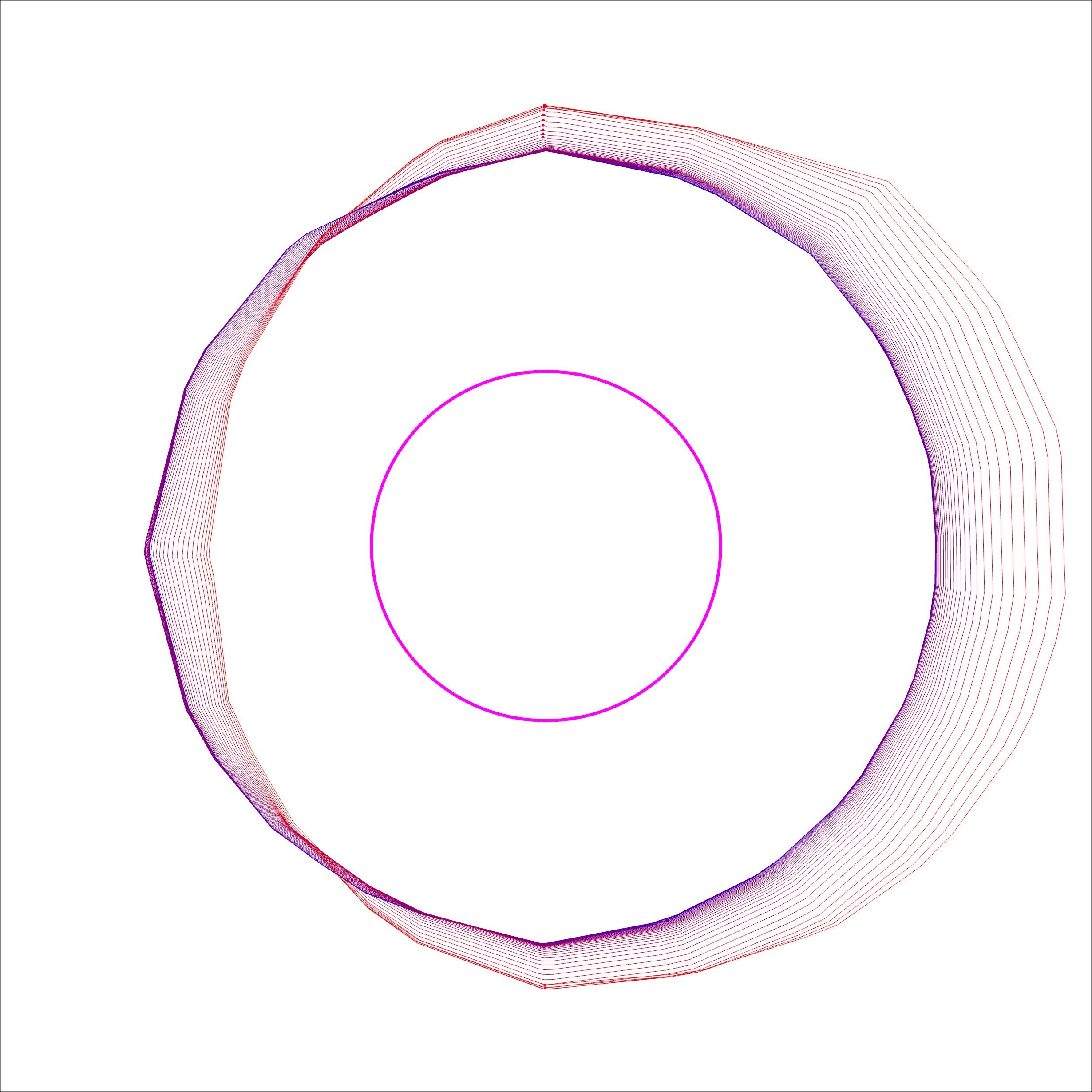}}
\caption{Shape histories of the free boundary computed using TD (upper plots) and CCBM (lower plots) with finer mesh viewed on the plane $xz$ (leftmost column), $yx$ (middle column), and $yz$ (rightmost column)}
\label{fig:figure4}
\end{figure}
%
%
%
%
\begin{figure}[htp!]
\centering
\resizebox{0.32\linewidth}{!}{\includegraphics{./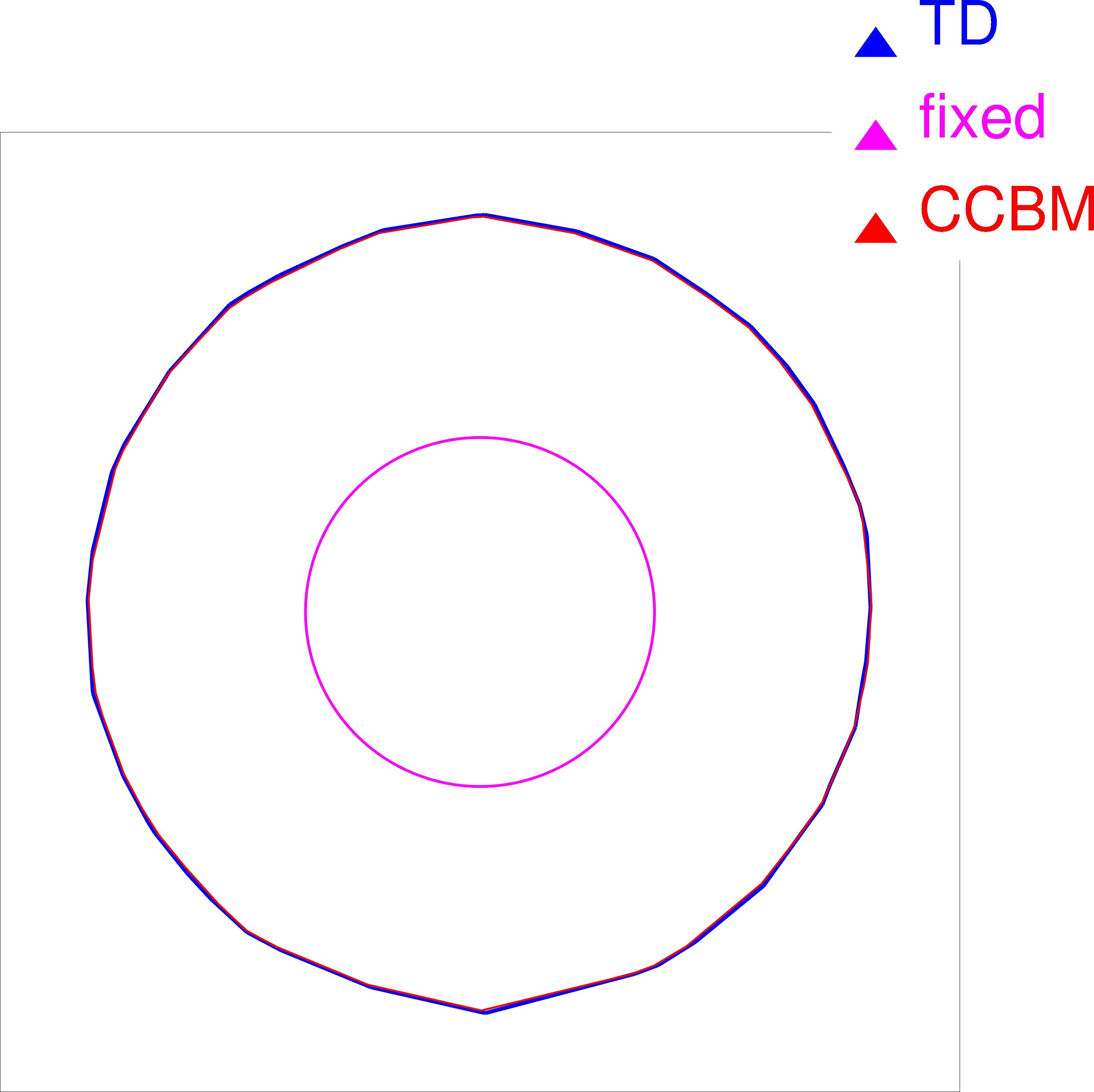}} \hfill
\resizebox{0.32\linewidth}{!}{\includegraphics{./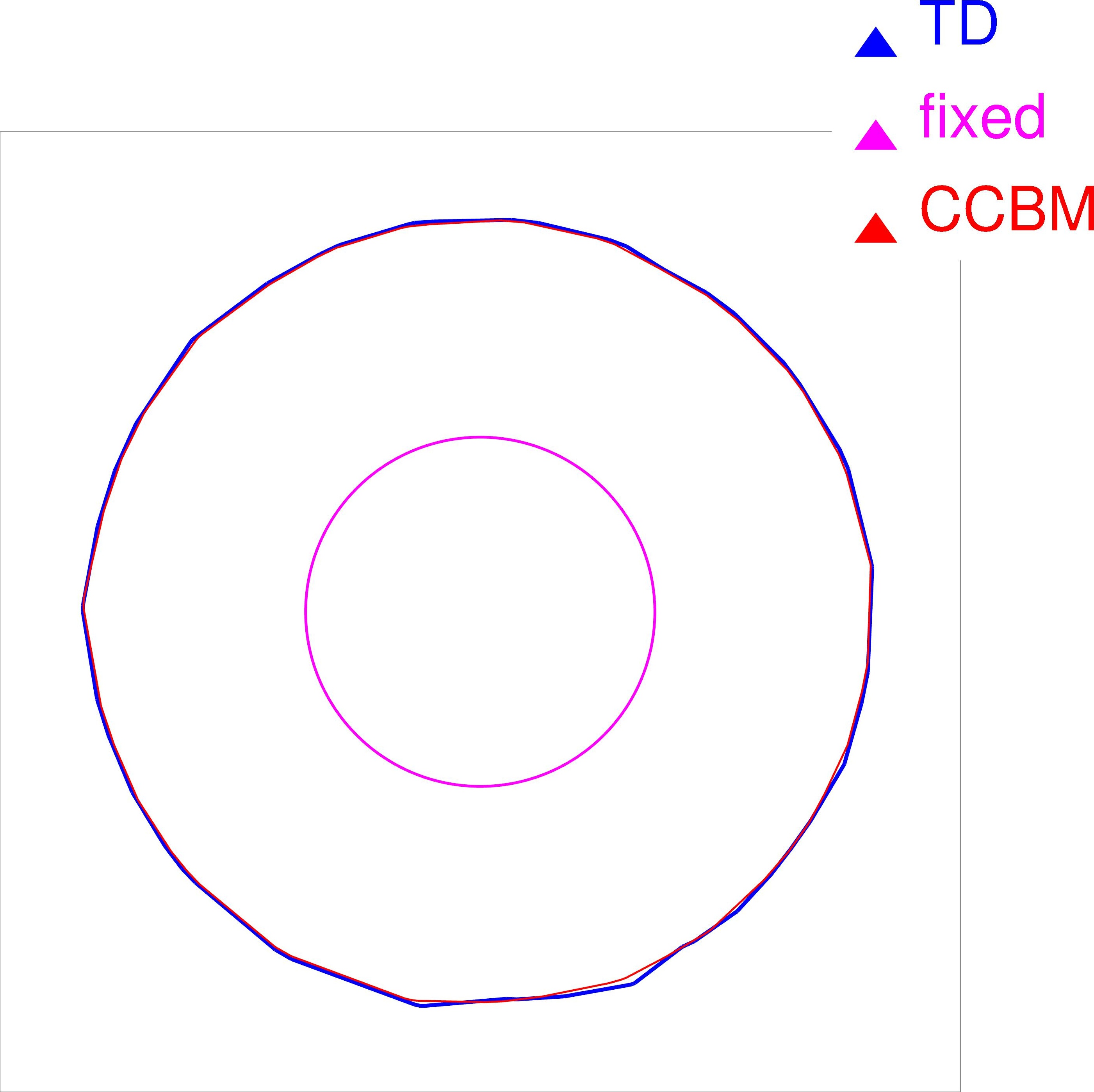}} \hfill
\resizebox{0.32\linewidth}{!}{\includegraphics{./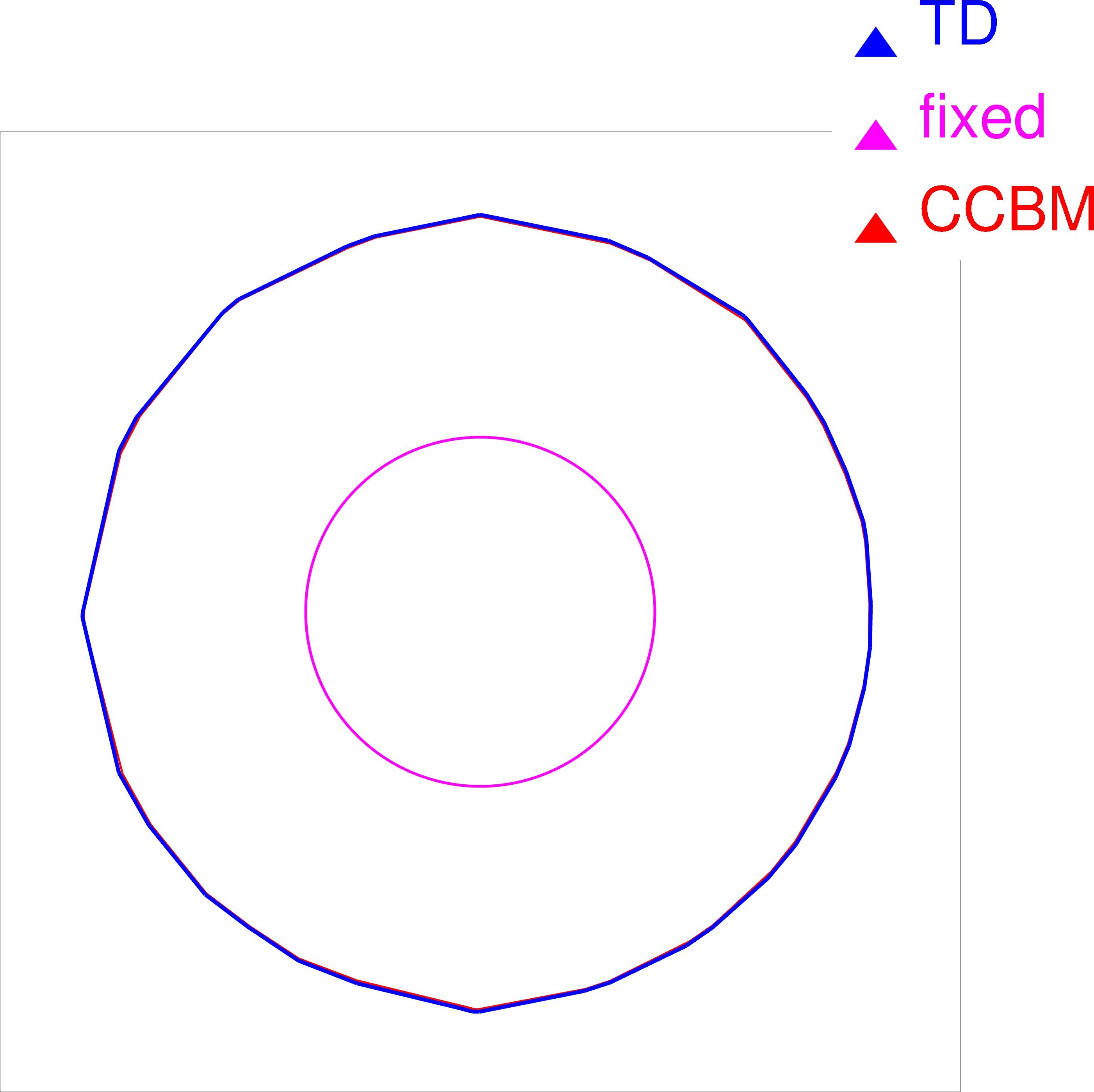}} \hfill
\caption{Cross comparison of computed shapes (case of coarser meshes) viewed on the plane $xz$ (leftmost plot), $yx$ (middle plot), and $yz$ (rightmost plot)}
\label{fig:figure5}
\end{figure}
%
%
%
%
\begin{figure}[htp!]
\centering
\resizebox{0.45\linewidth}{!}{\includegraphics{./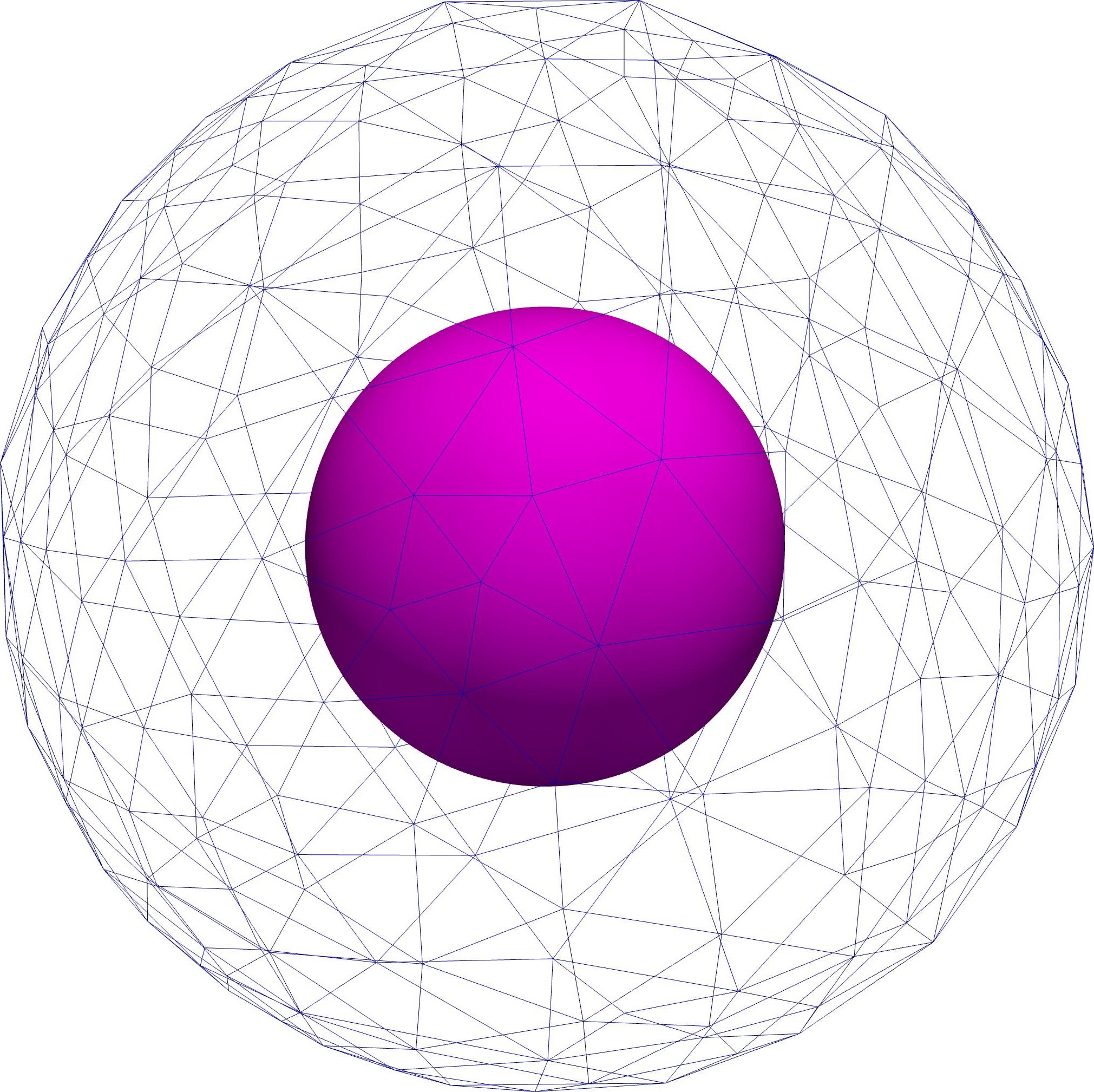}} \hfill
\resizebox{0.45\linewidth}{!}{\includegraphics{./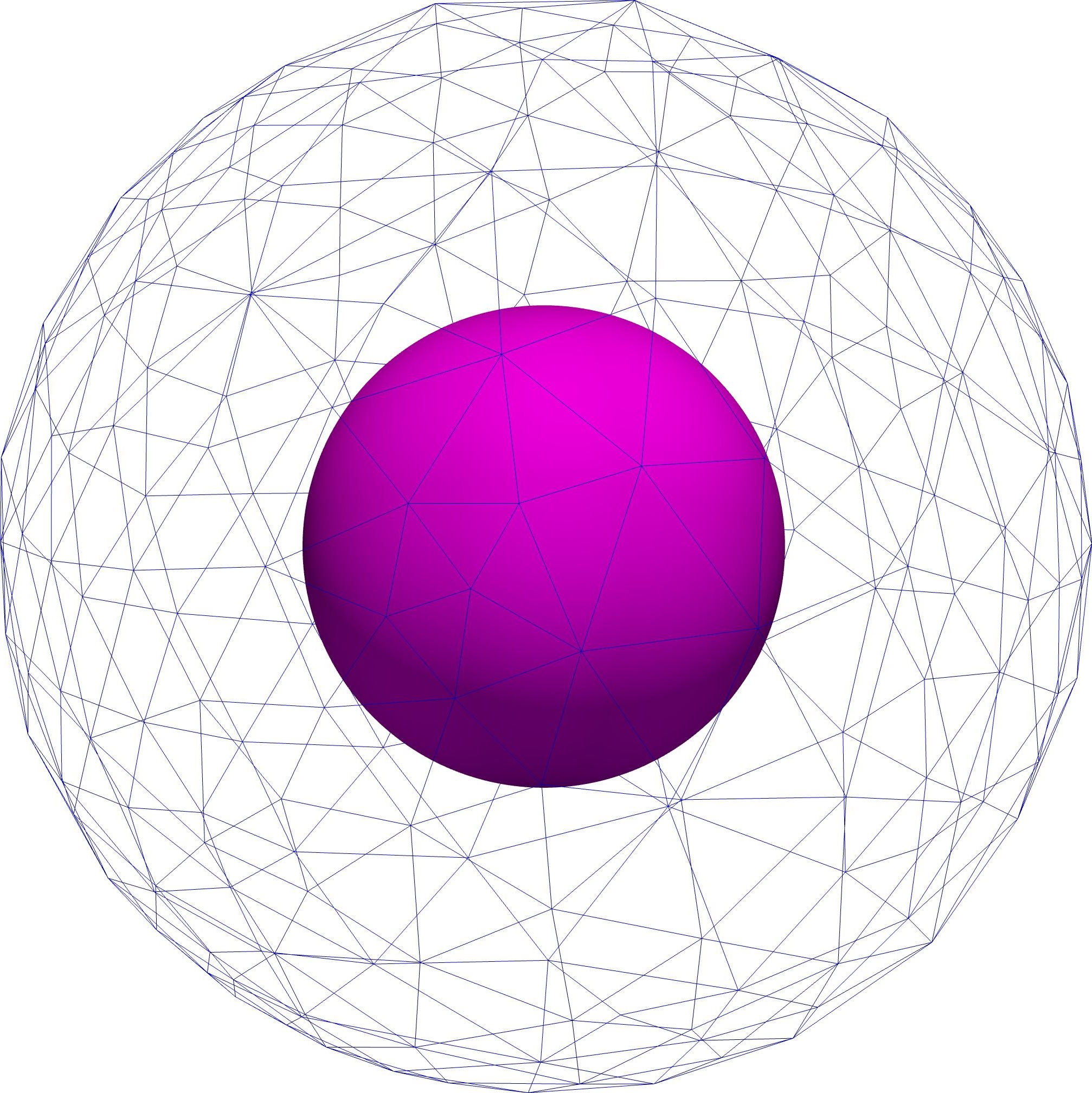}}
\caption{Mesh profile of computed shapes (TD: left plot, CCBM: right plot) with coarse mesh}
\label{fig:figure6}
\end{figure}
%
%
\begin{figure}[htp!]
\centering
\resizebox{0.32\linewidth}{!}{\includegraphics{./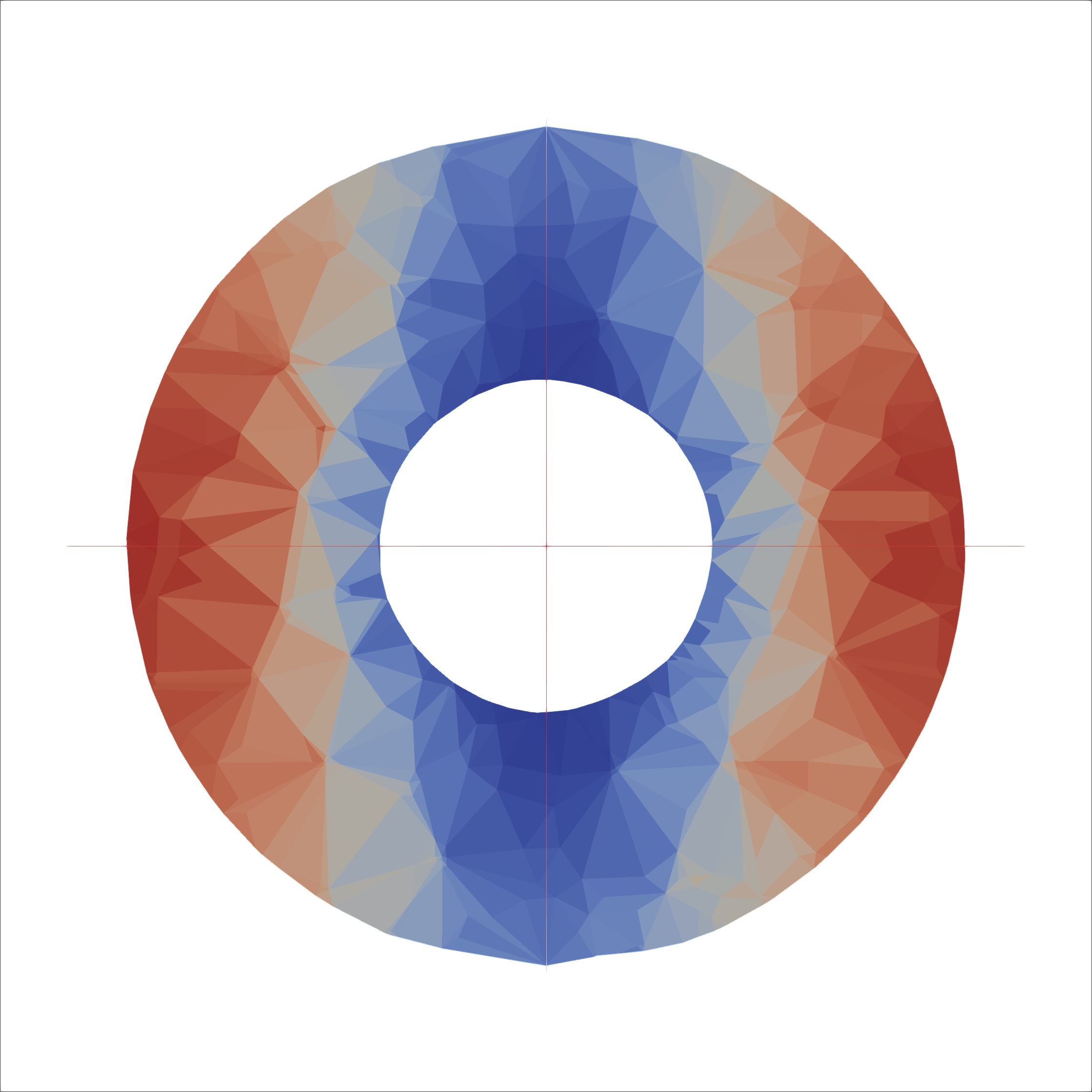}}
\resizebox{0.32\linewidth}{!}{\includegraphics{./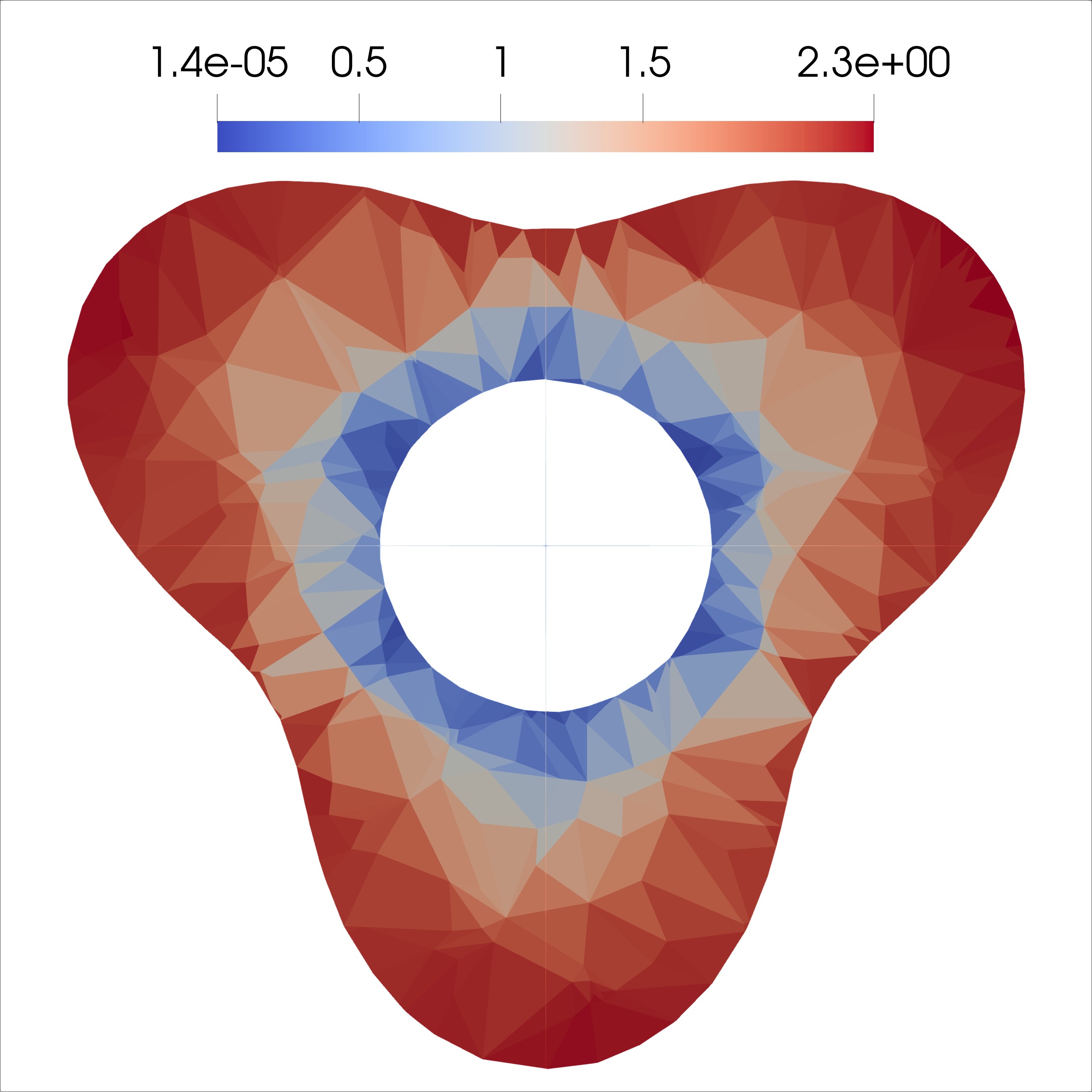}}
\resizebox{0.32\linewidth}{!}{\includegraphics{./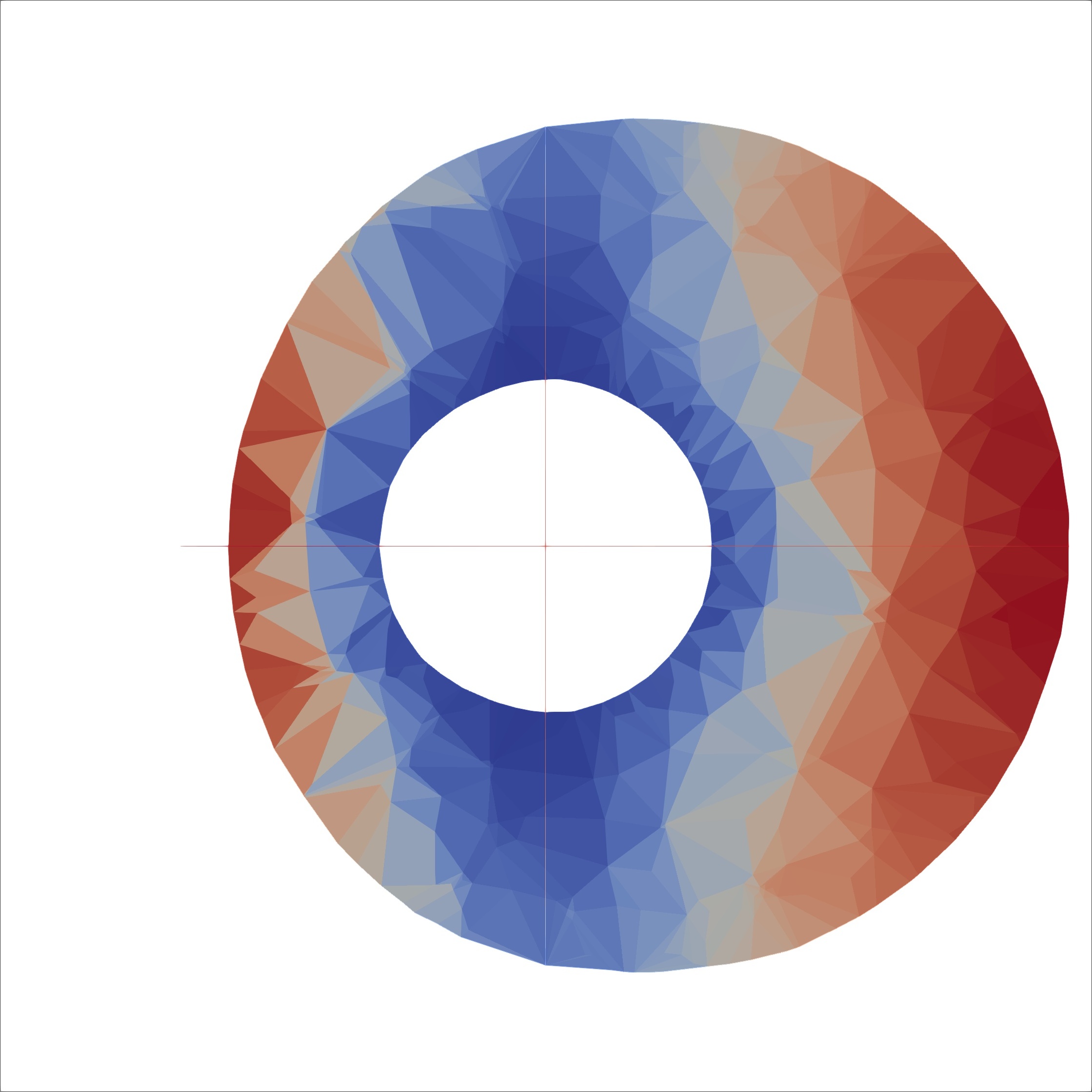}}\\[0.5em]
\resizebox{0.32\linewidth}{!}{\includegraphics{./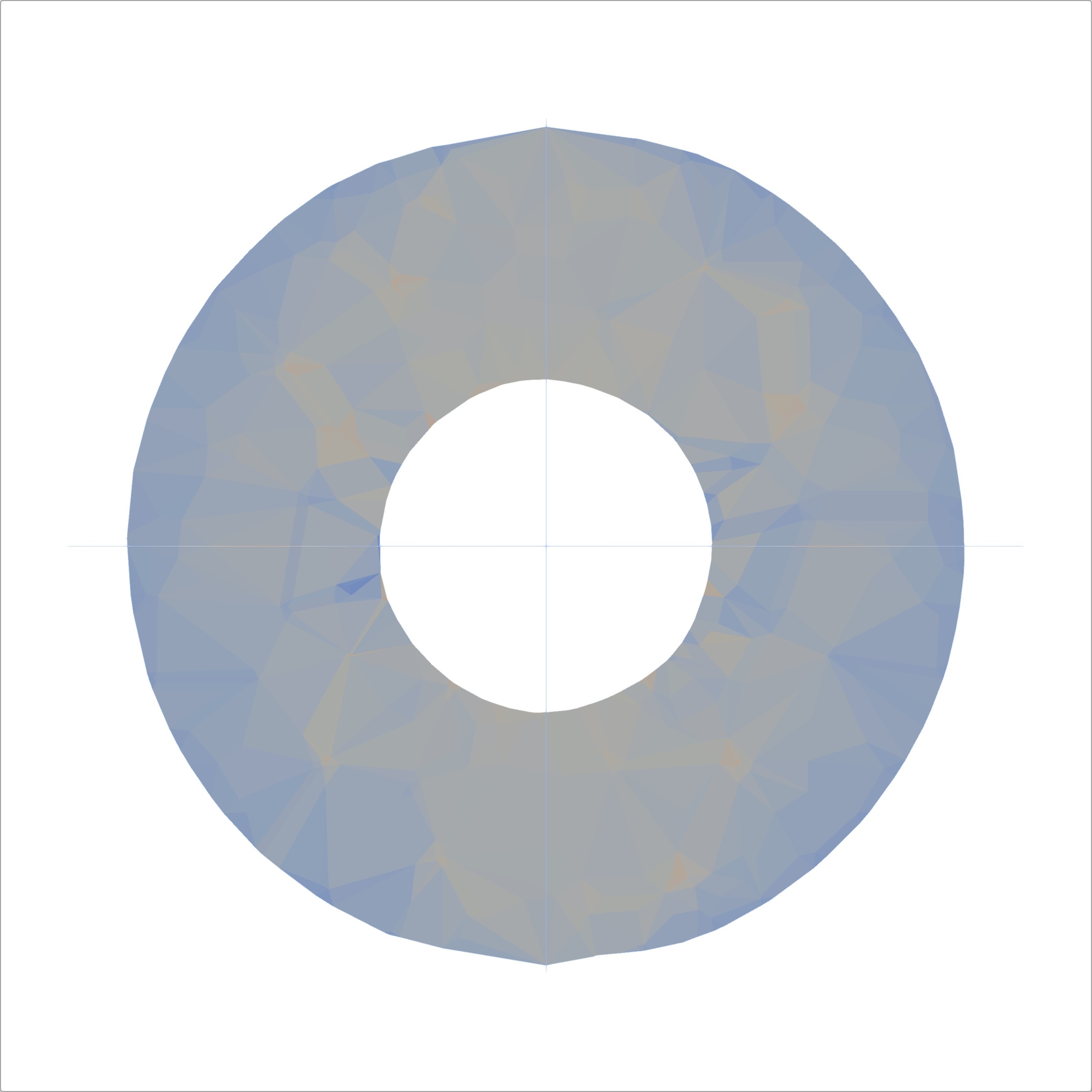}}
\resizebox{0.32\linewidth}{!}{\includegraphics{./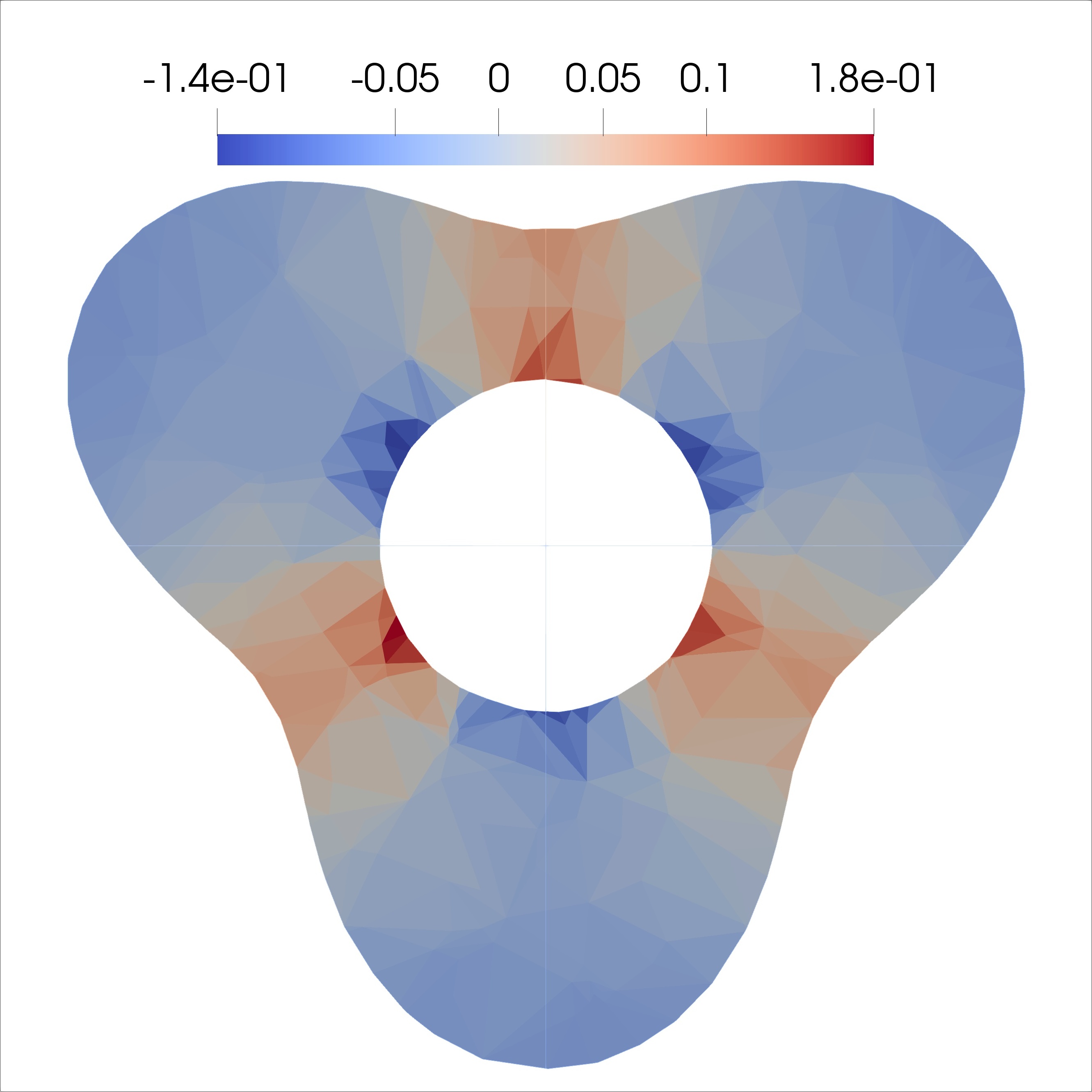}}
\resizebox{0.32\linewidth}{!}{\includegraphics{./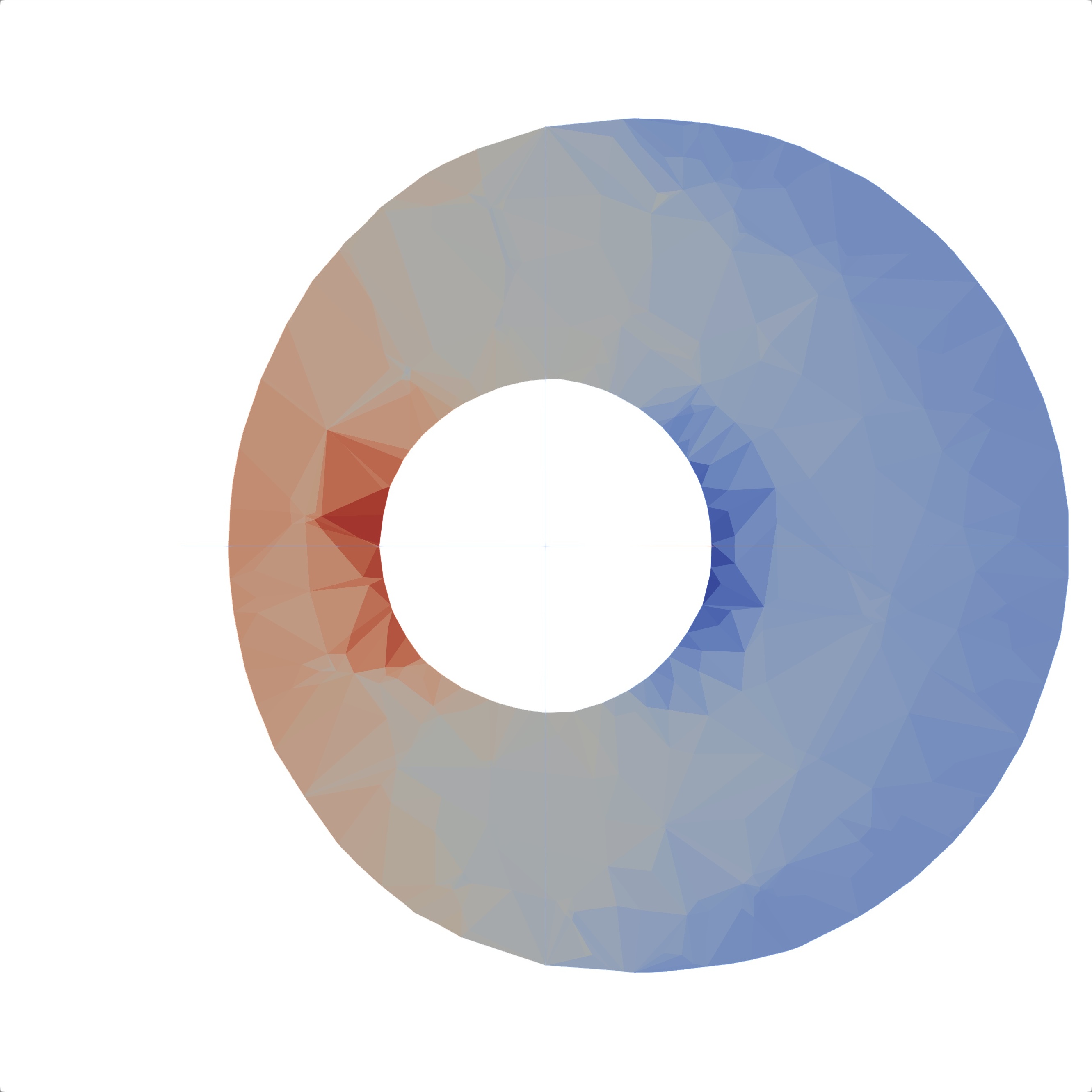}}\\[0.5em]
\resizebox{0.32\linewidth}{!}{\includegraphics{./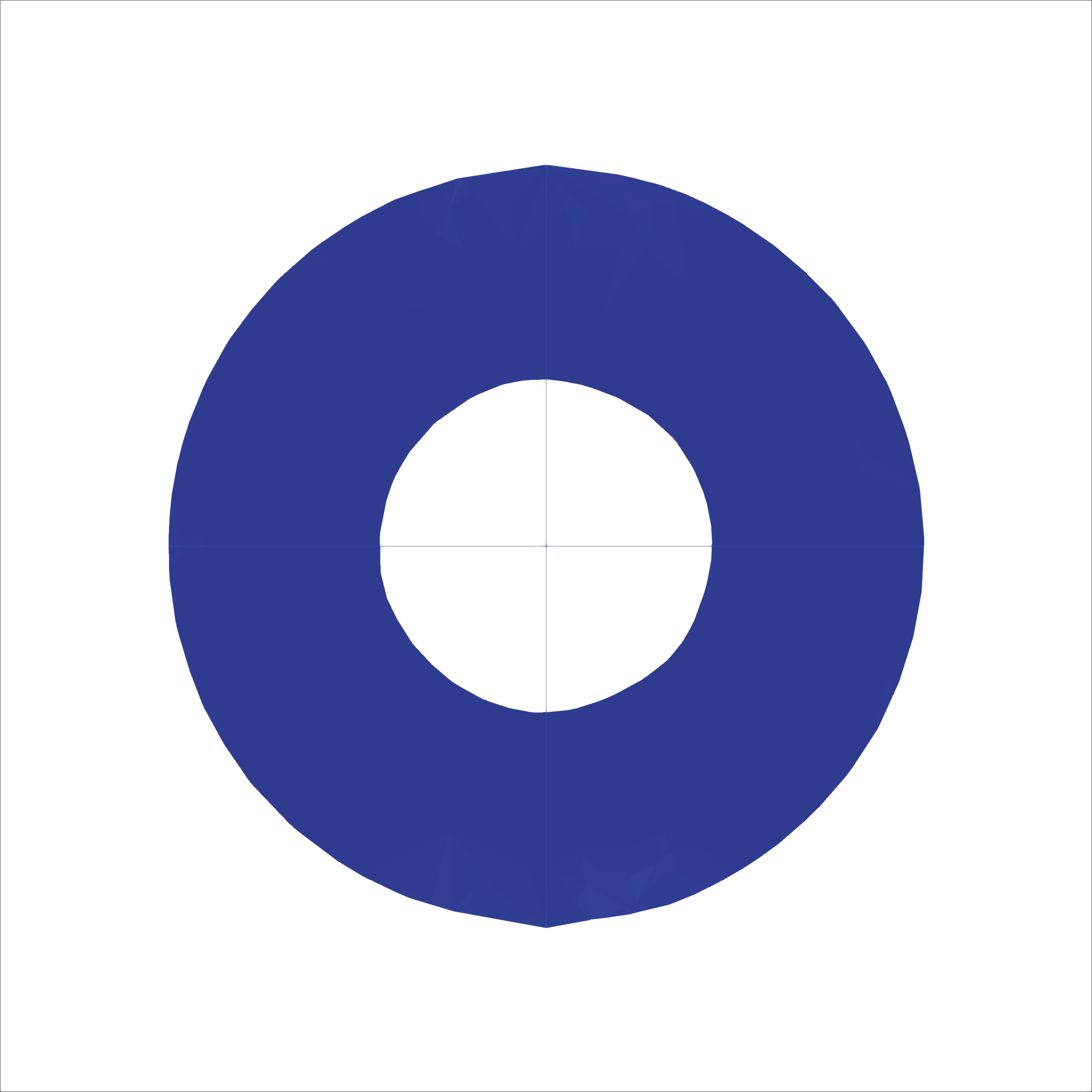}}
\resizebox{0.32\linewidth}{!}{\includegraphics{./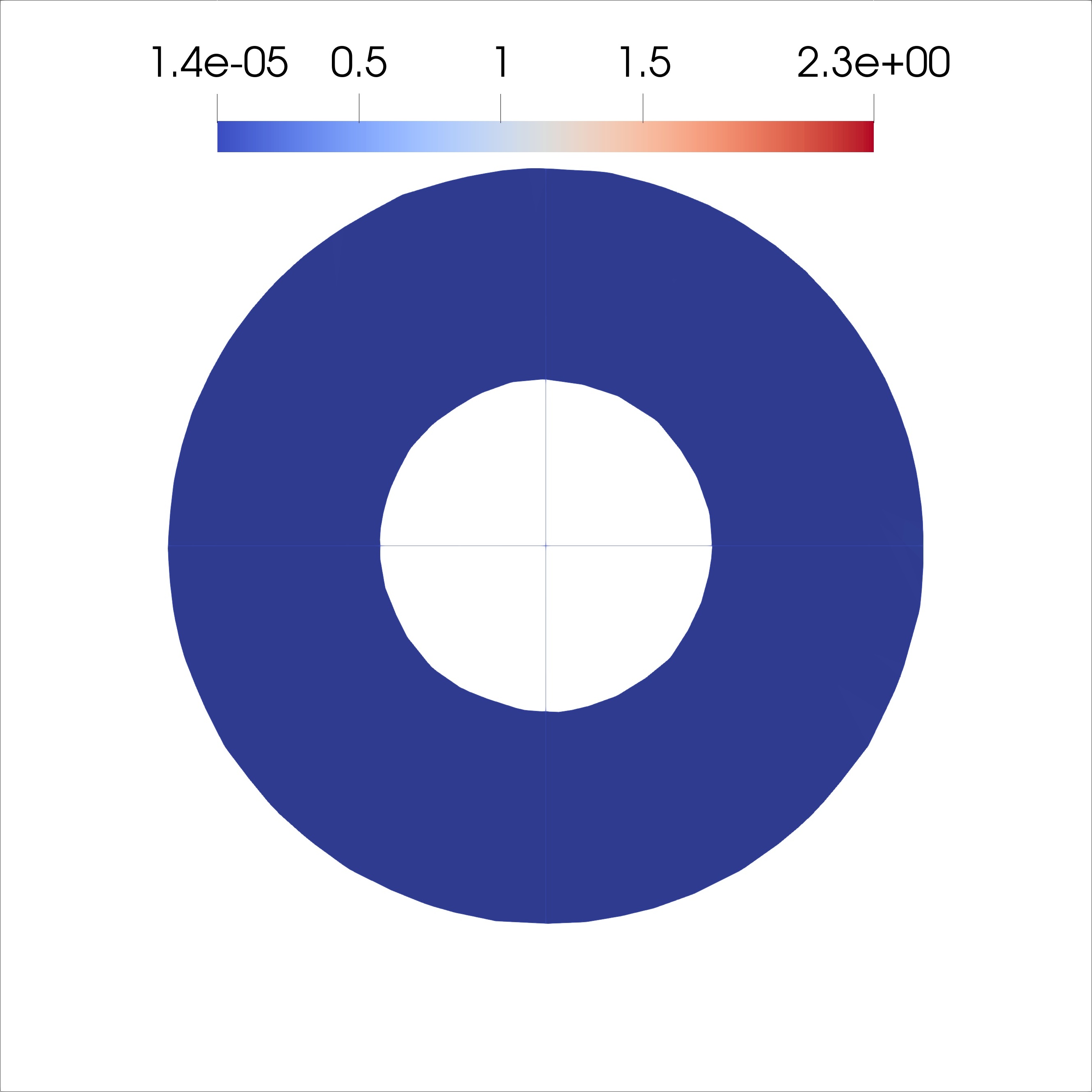}}
\resizebox{0.32\linewidth}{!}{\includegraphics{./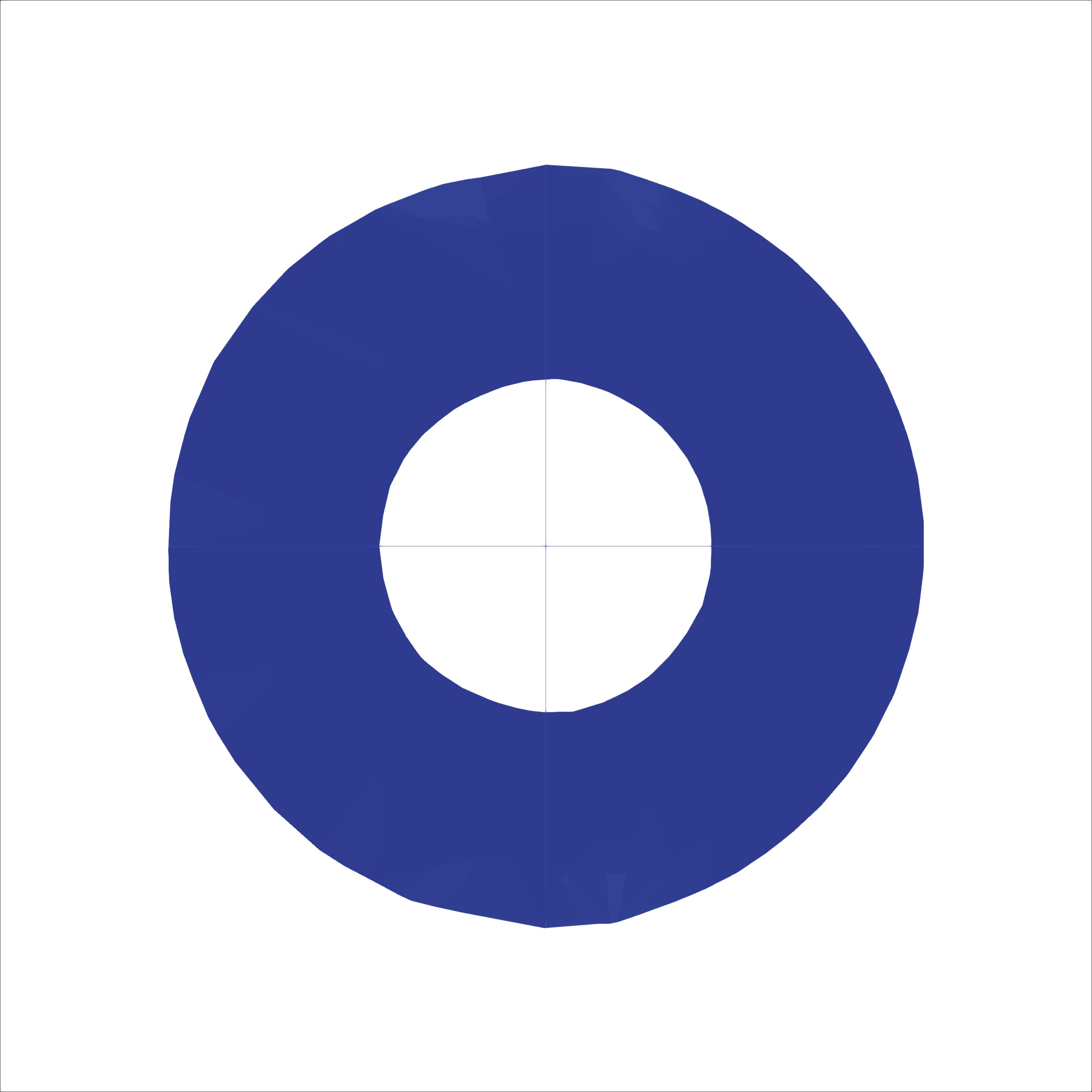}}\\[0.5em]
\resizebox{0.32\linewidth}{!}{\includegraphics{./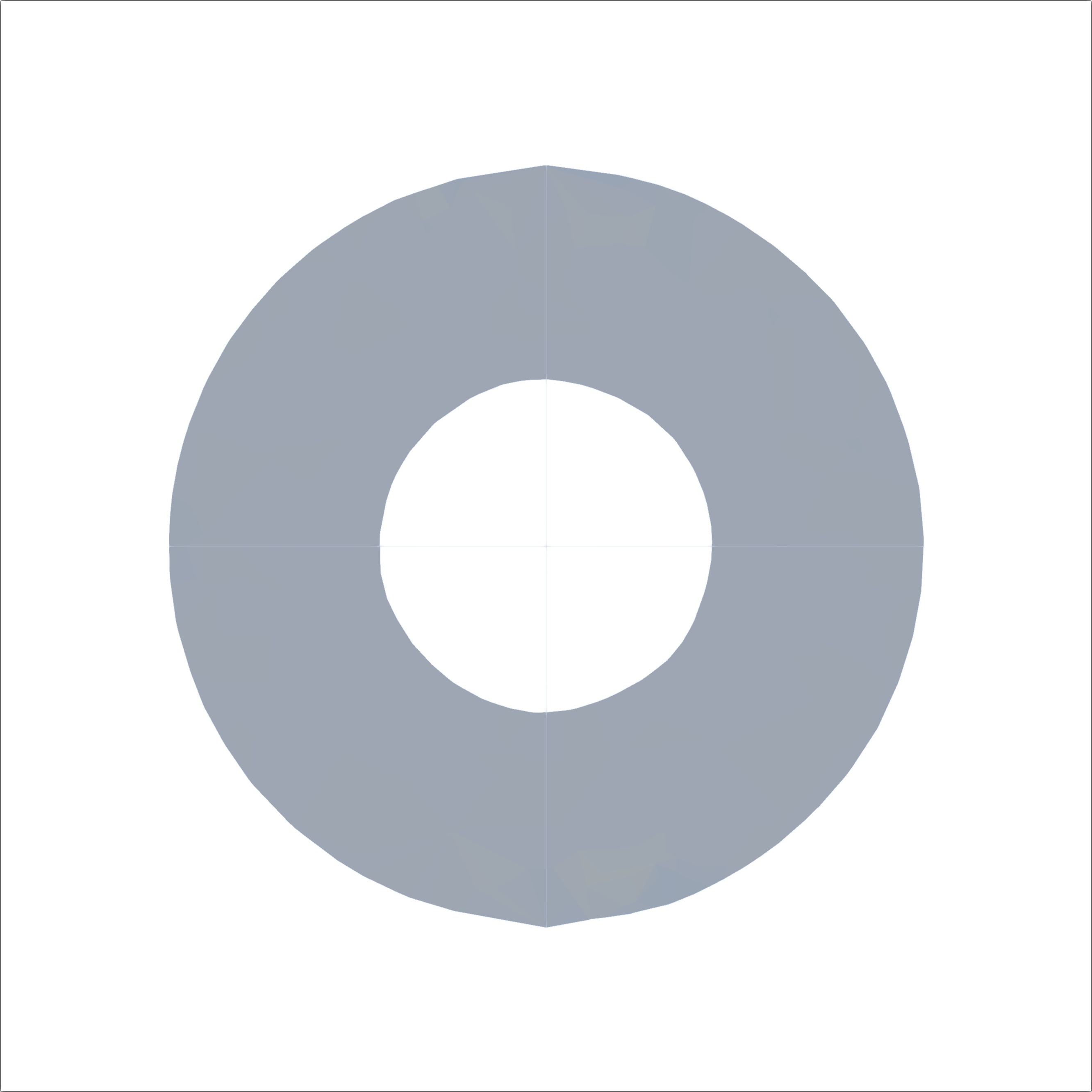}}
\resizebox{0.32\linewidth}{!}{\includegraphics{./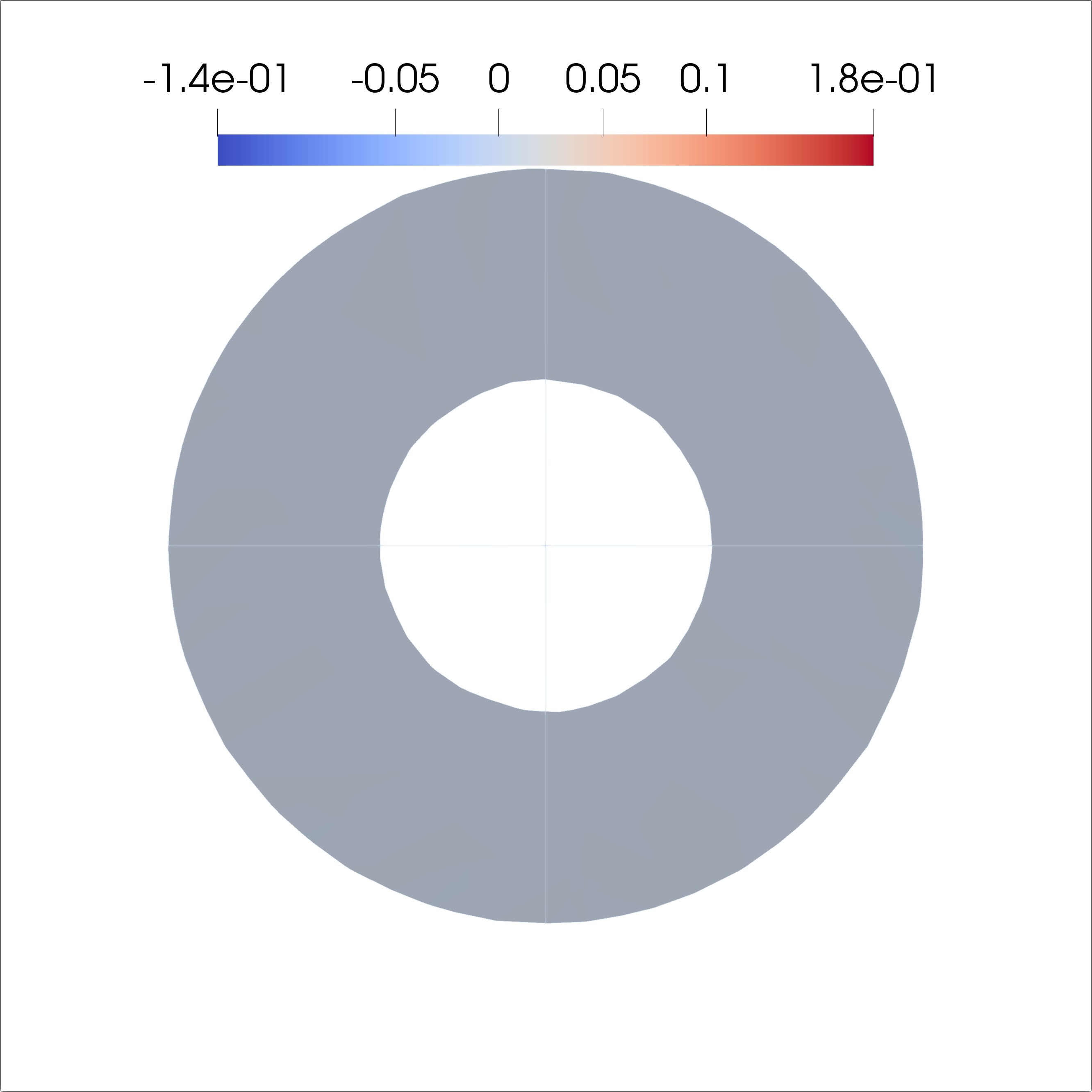}}
\resizebox{0.32\linewidth}{!}{\includegraphics{./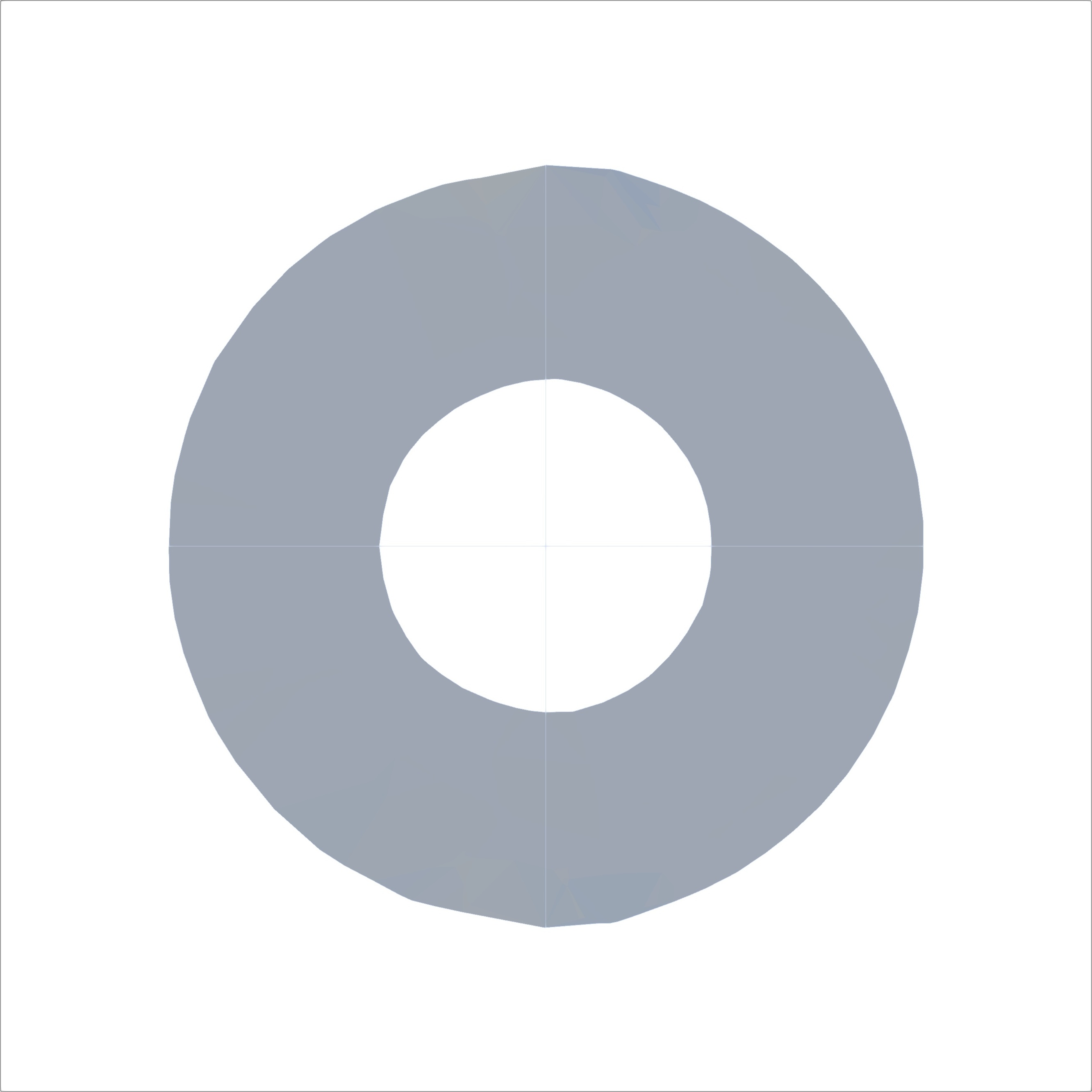}}
\caption{Stokes' flow field and pressure profiles (magnitude) at initial configuration (upper two top rows) and at the computed shape (lower two bottom rows) under coarse mesh viewed on $xz$ (leftmost column), $xy$ (middle column), and $yz$ (rightmost column) plane.}
\label{fig:figure6a}
\end{figure}
%
%
%
%
\begin{figure}[htp!]
\centering
\resizebox{0.32\linewidth}{!}{\includegraphics{./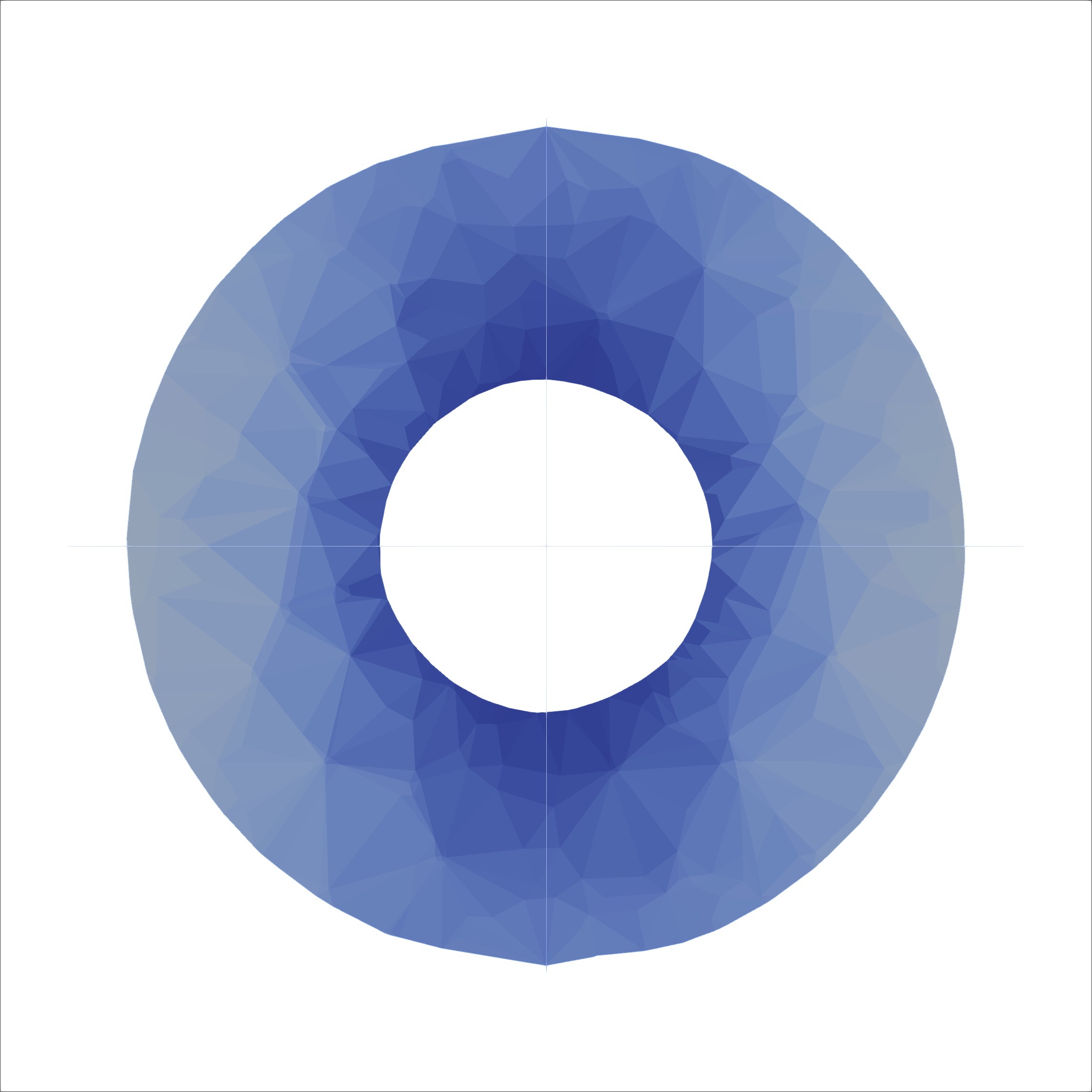}}
\resizebox{0.32\linewidth}{!}{\includegraphics{./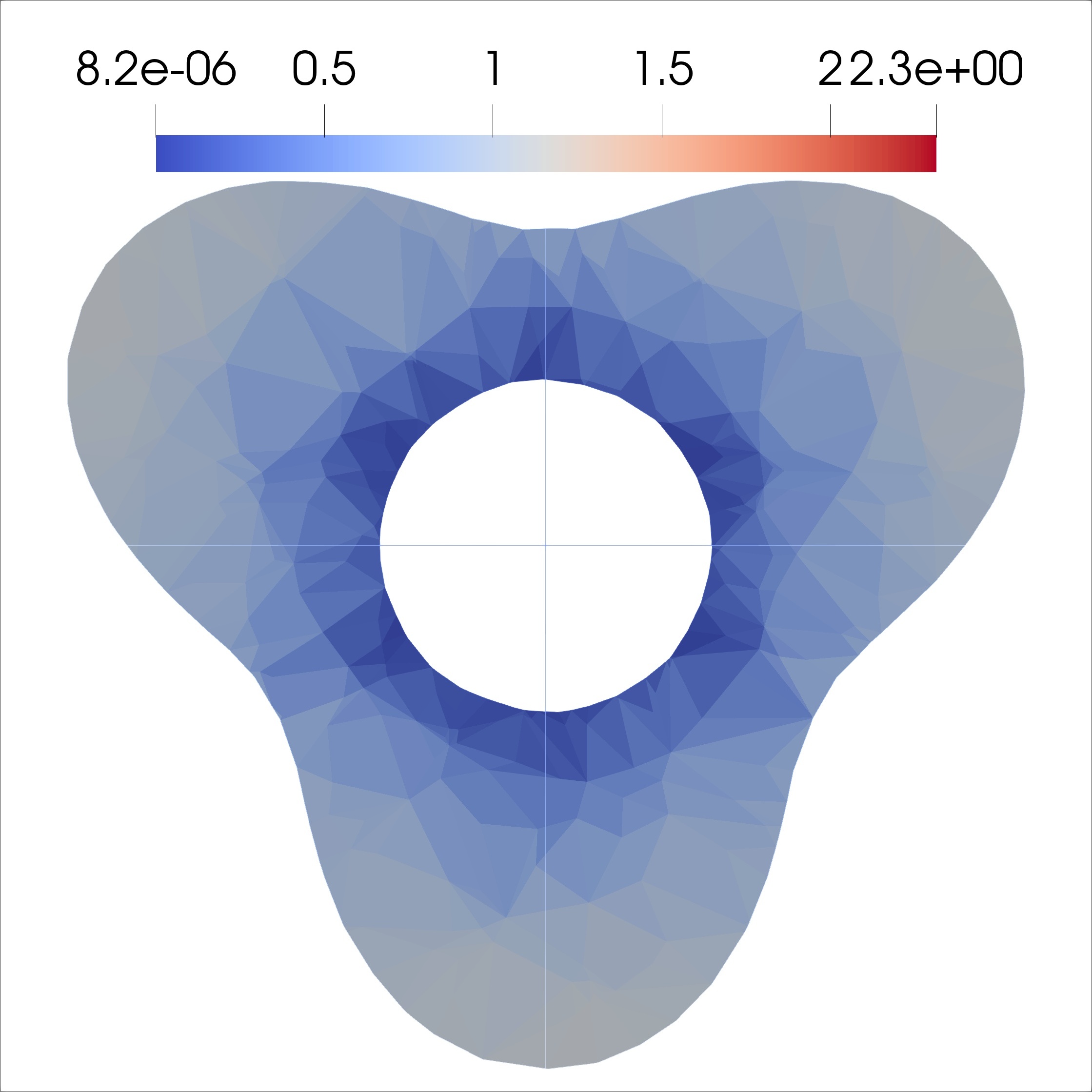}}
\resizebox{0.32\linewidth}{!}{\includegraphics{./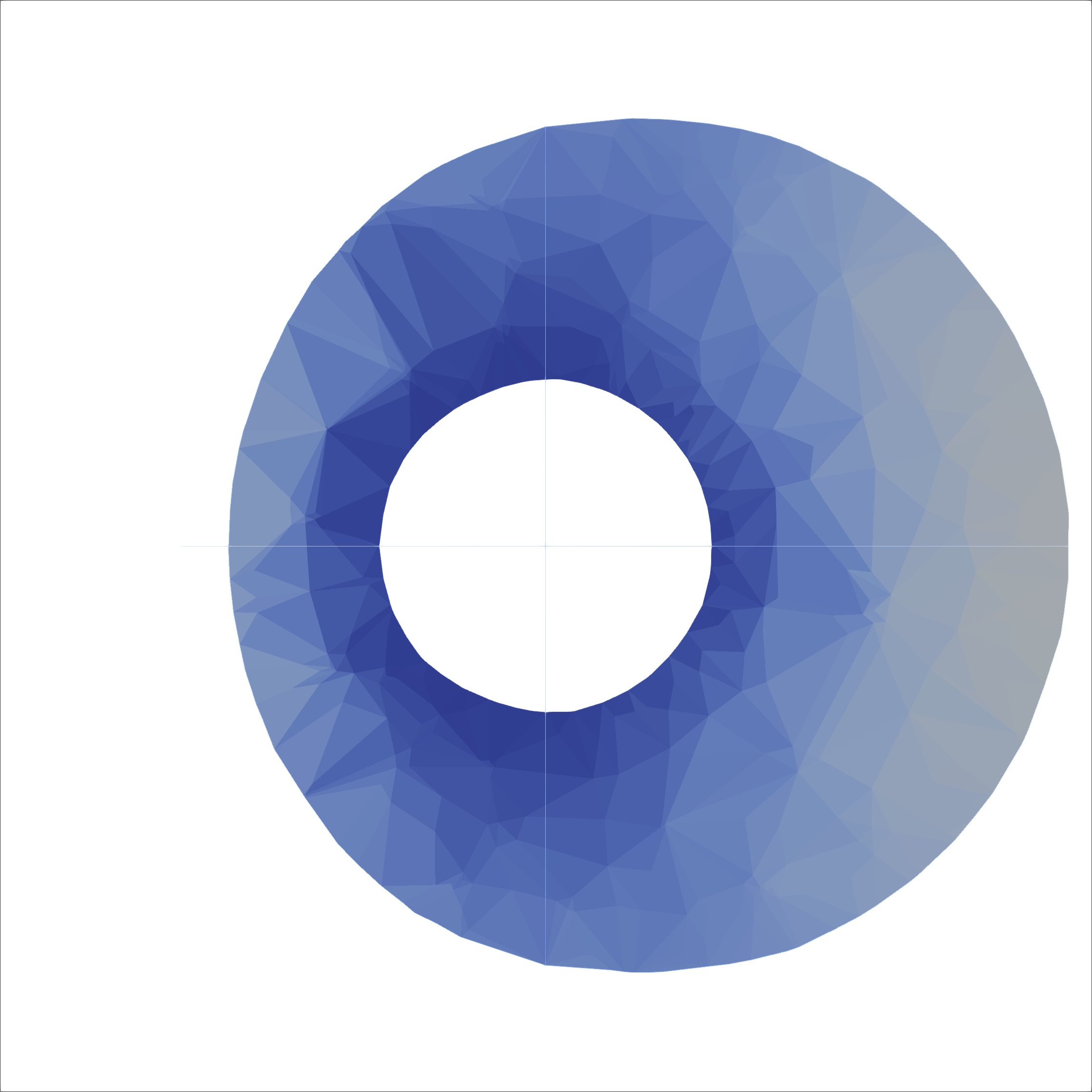}}\\[0.5em]
\resizebox{0.32\linewidth}{!}{\includegraphics{./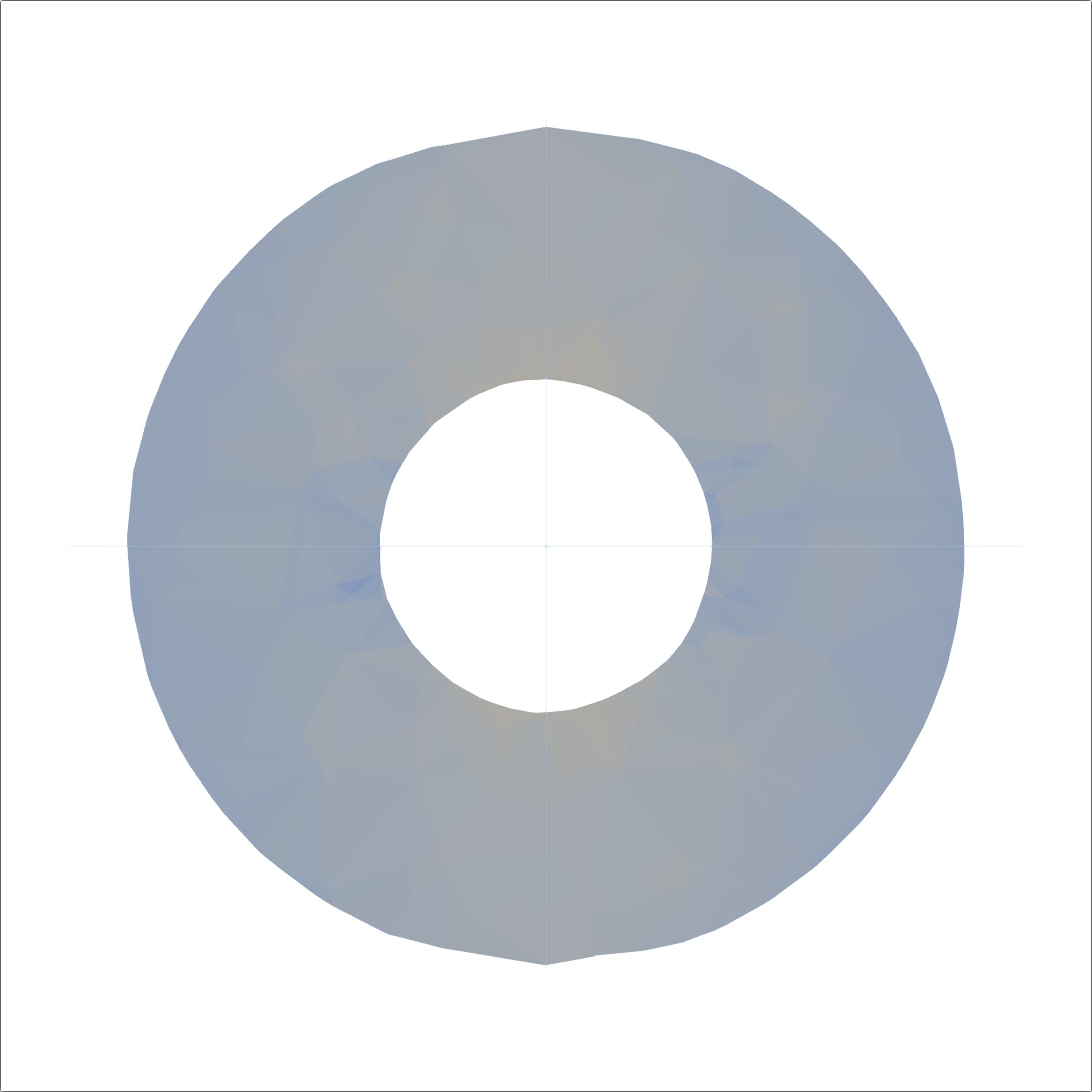}}
\resizebox{0.32\linewidth}{!}{\includegraphics{./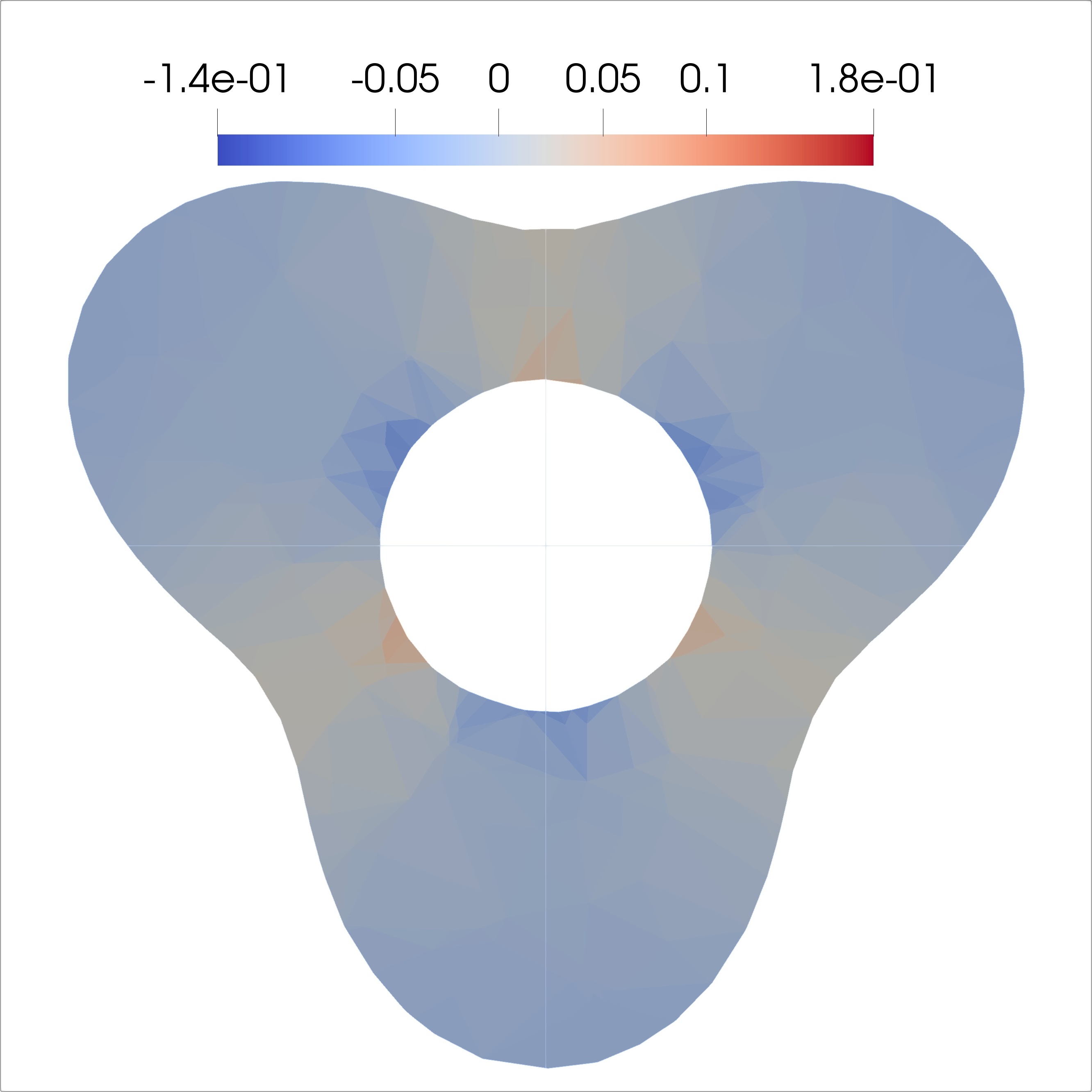}}
\resizebox{0.32\linewidth}{!}{\includegraphics{./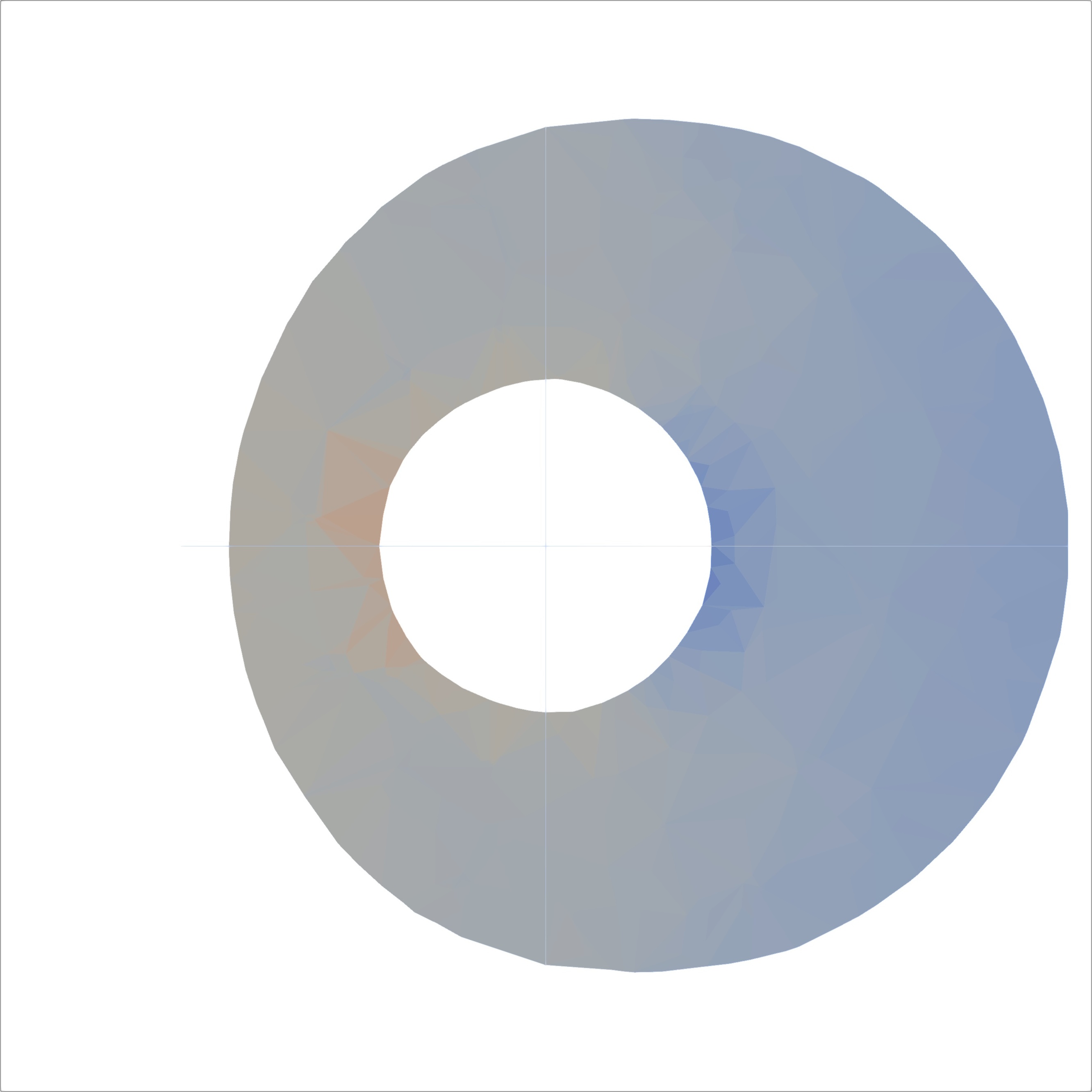}}\\[0.5em]
\resizebox{0.32\linewidth}{!}{\includegraphics{./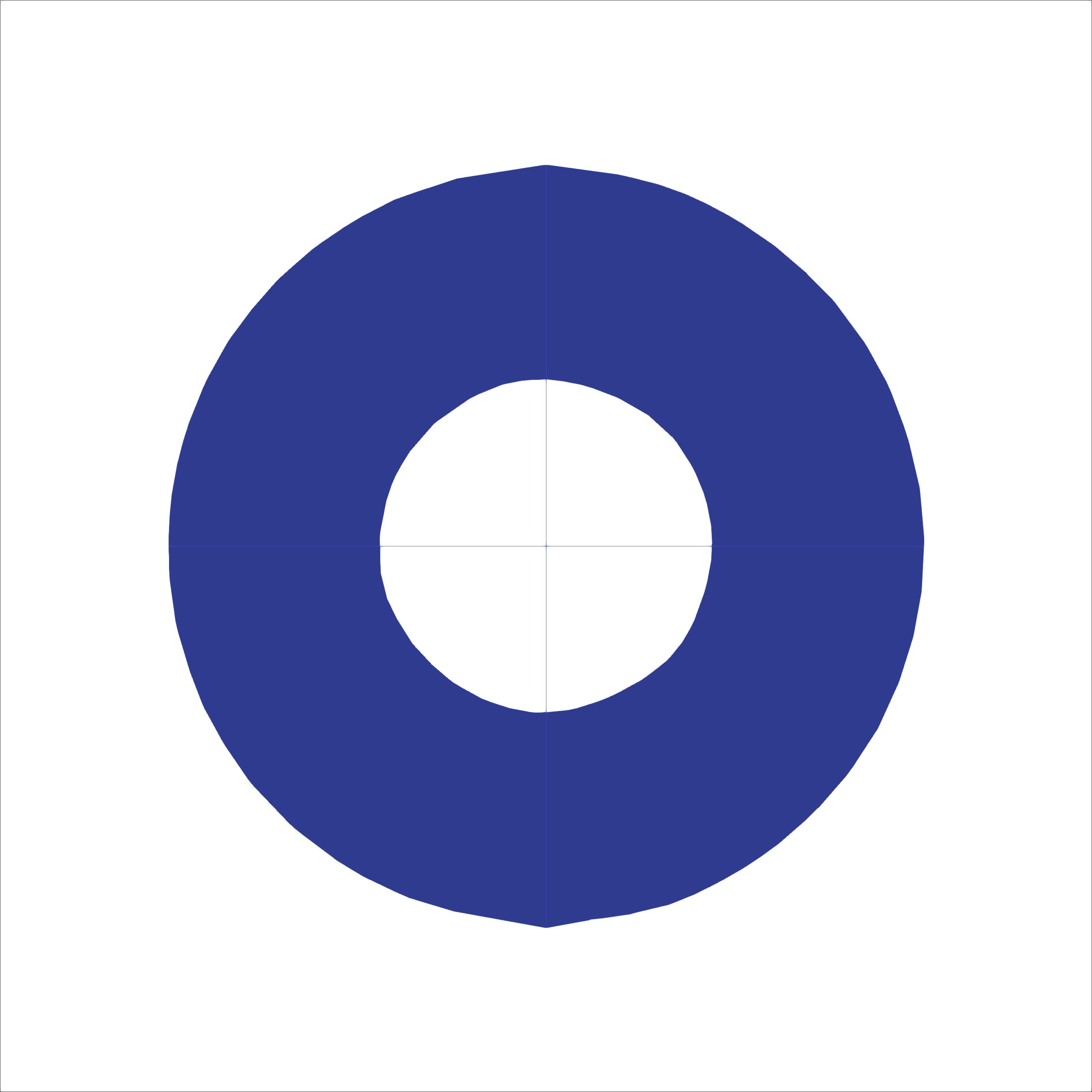}}
\resizebox{0.32\linewidth}{!}{\includegraphics{./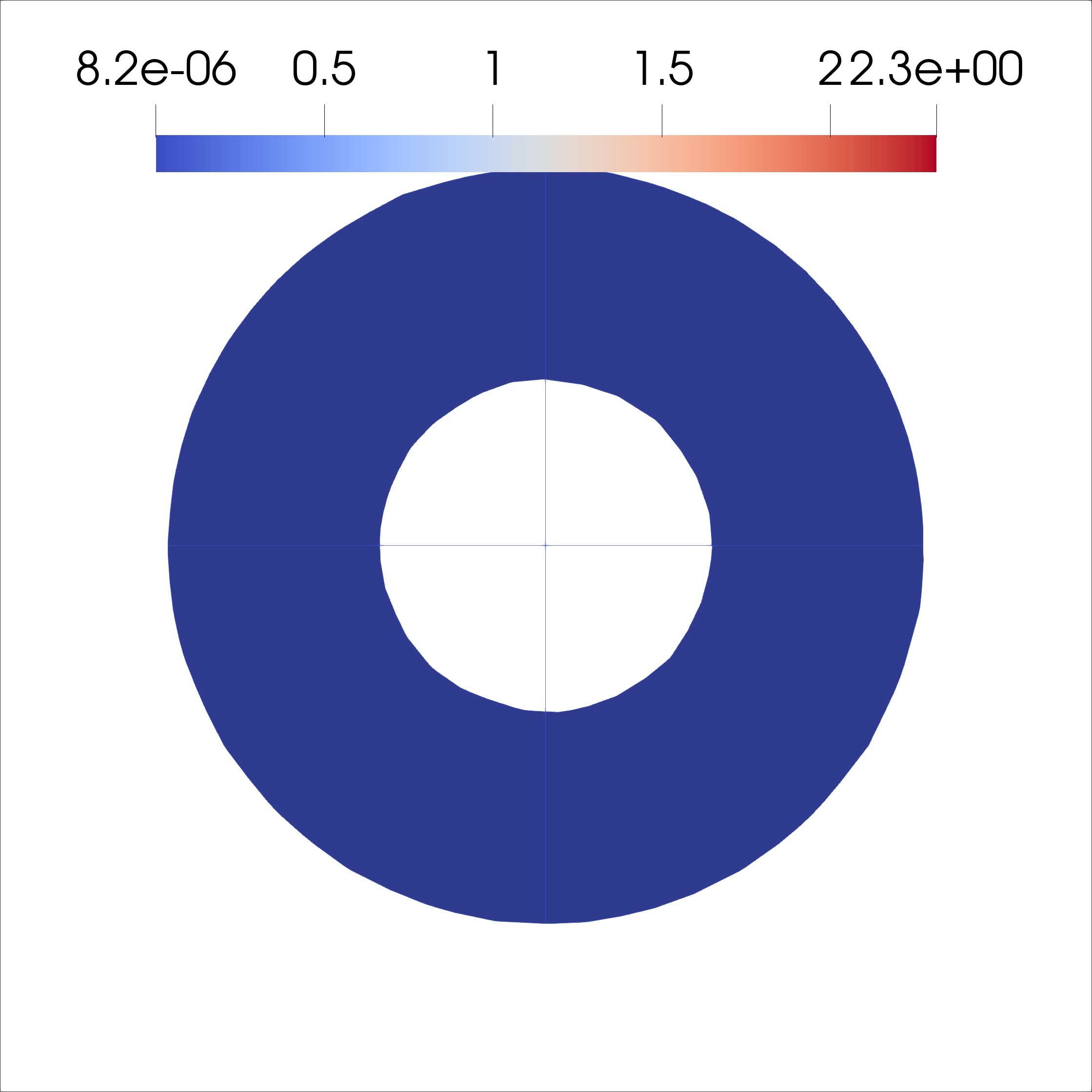}}
\resizebox{0.32\linewidth}{!}{\includegraphics{./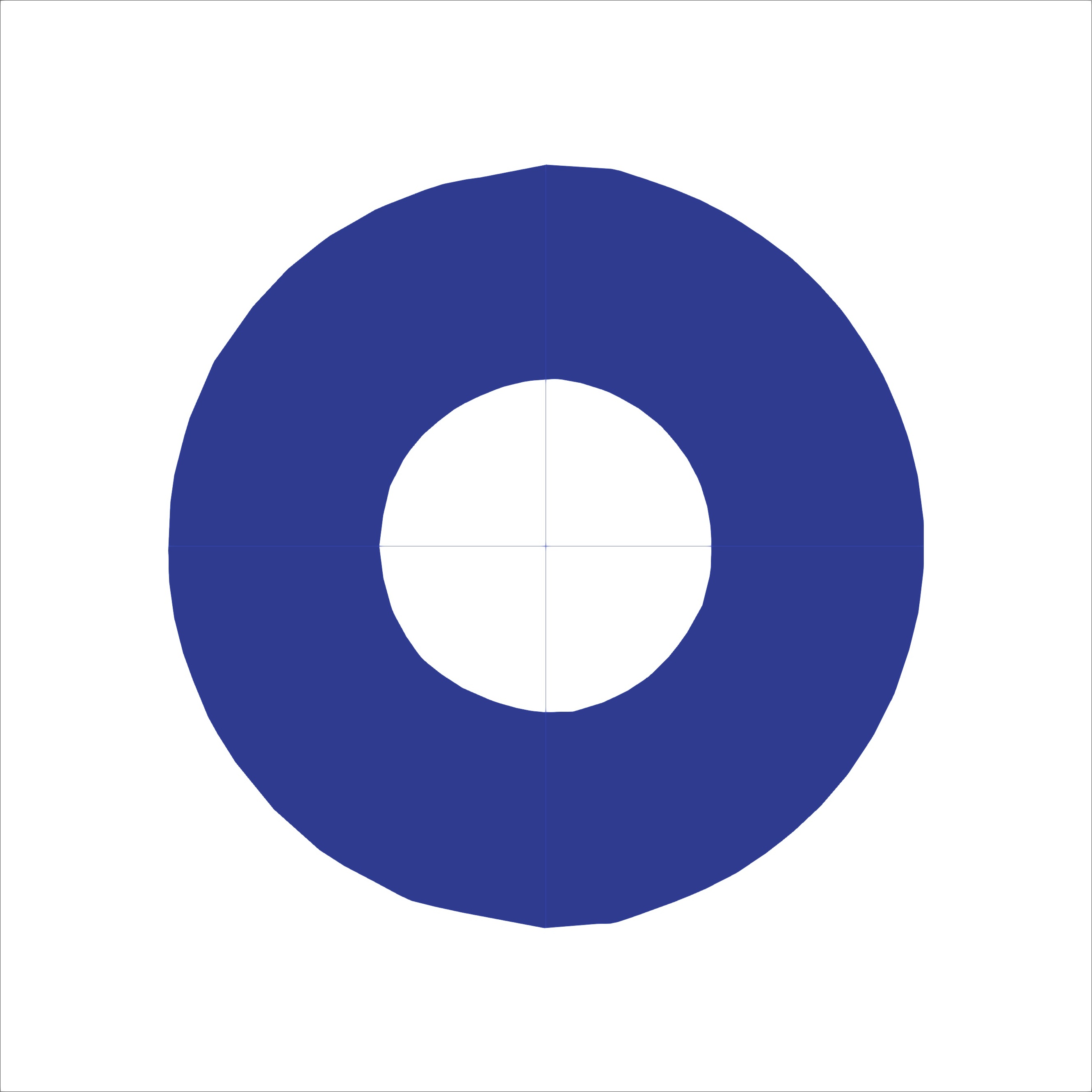}}\\[0.5em]
\resizebox{0.32\linewidth}{!}{\includegraphics{./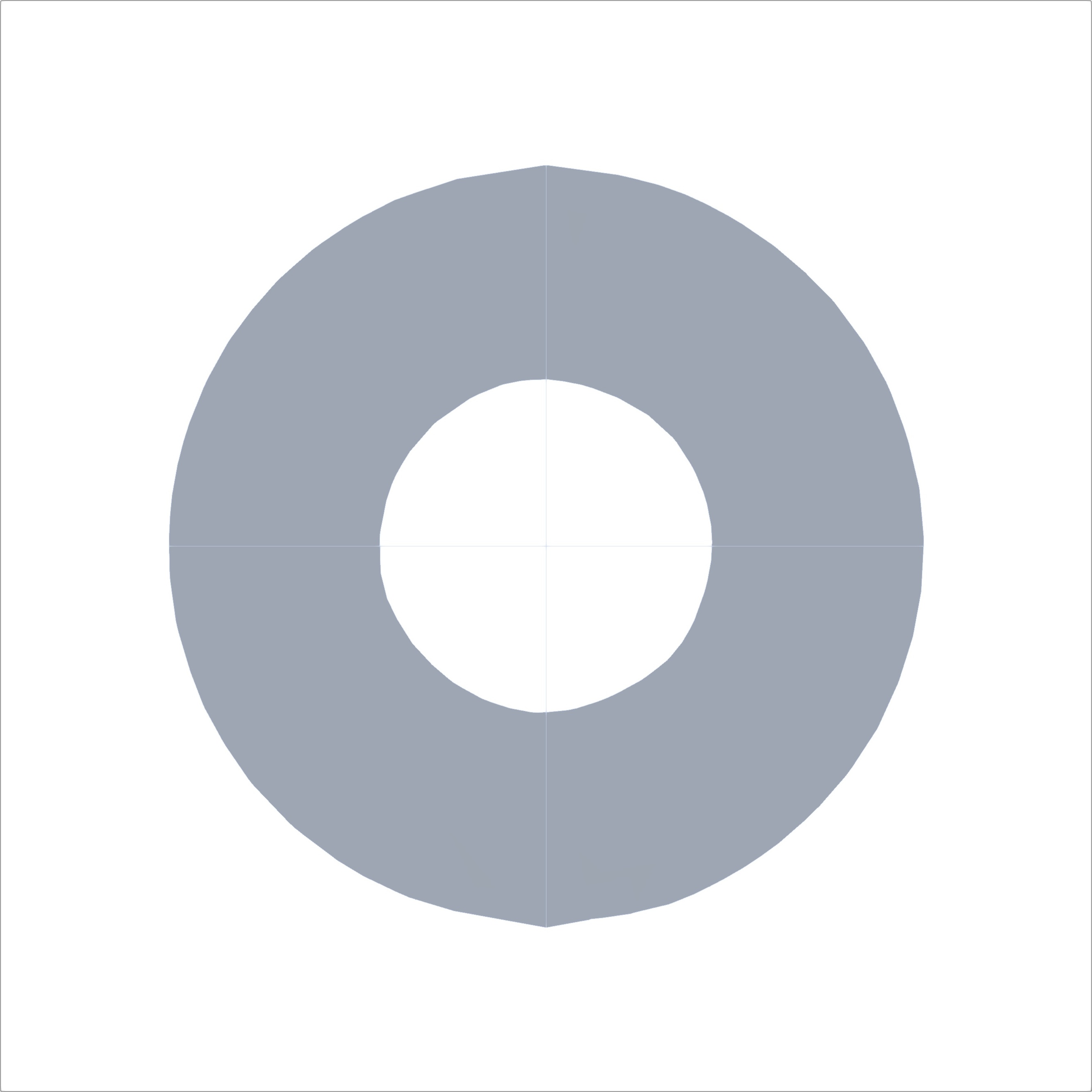}}
\resizebox{0.32\linewidth}{!}{\includegraphics{./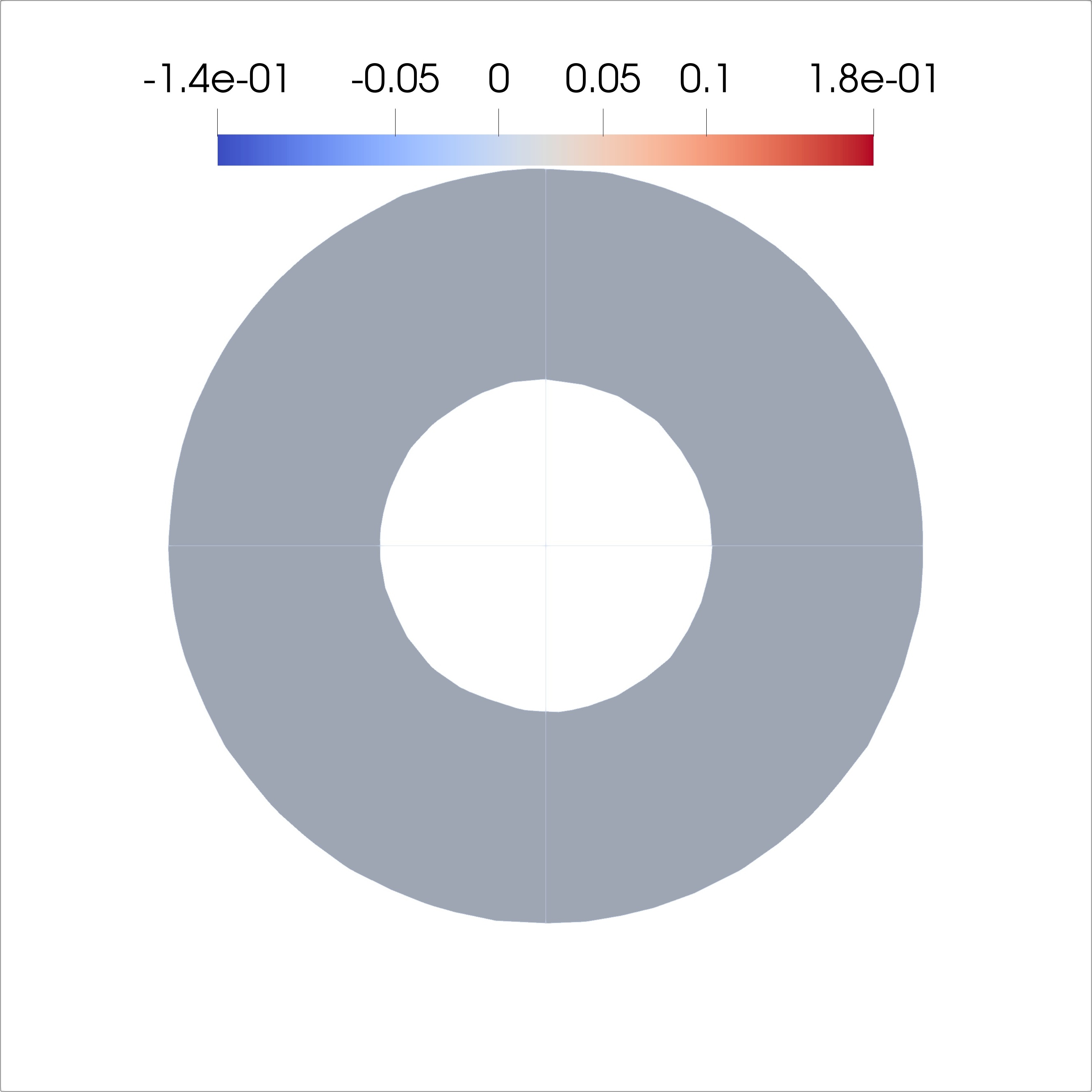}}
\resizebox{0.32\linewidth}{!}{\includegraphics{./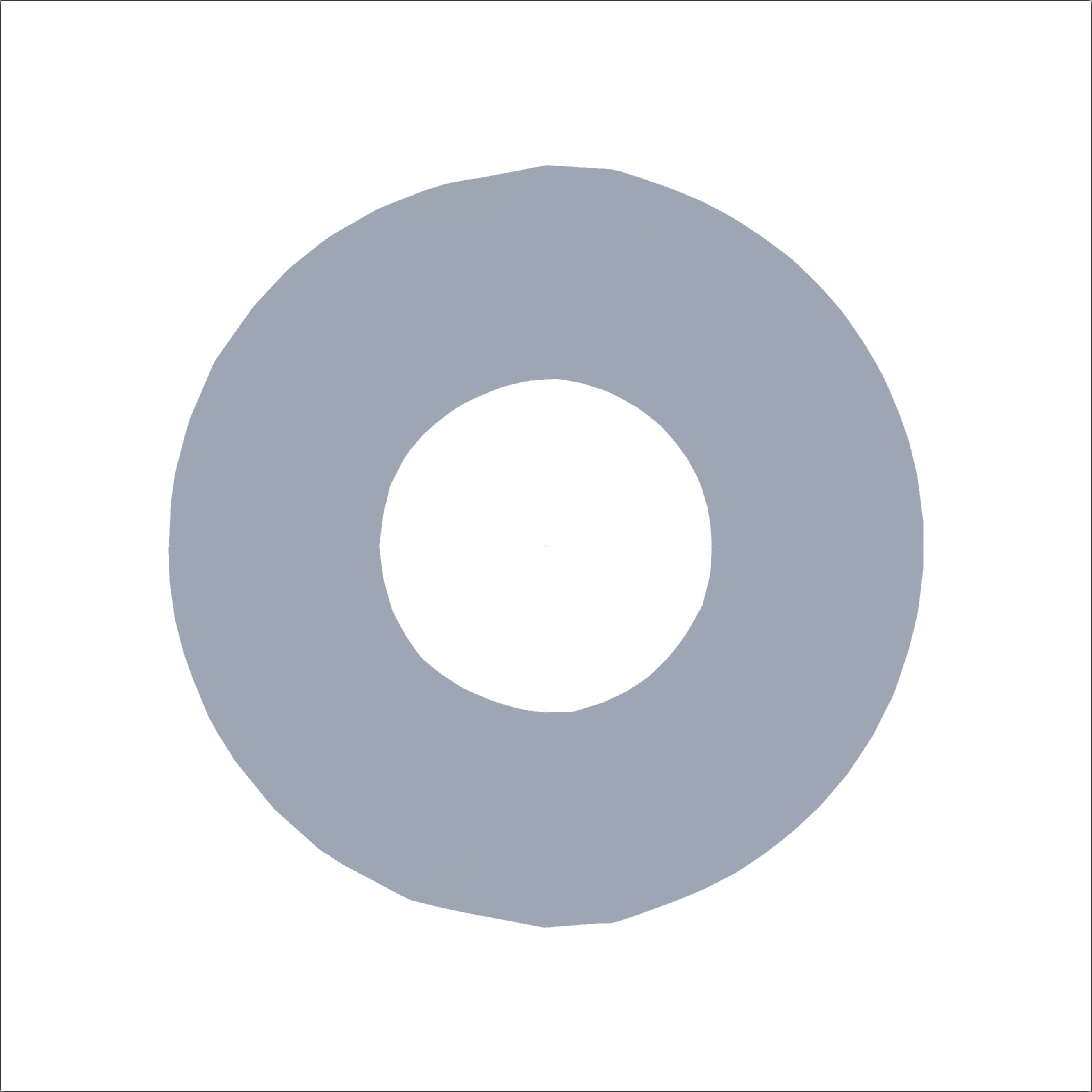}}
\caption{Adjoint's flow field and pressure profiles (magnitude) at initial configuration (upper two top rows) and at the computed shape (lower two bottom rows) under coarse mesh viewed on $xz$ (leftmost column), $xy$ (middle column), and $yz$ (rightmost column) plane.}
\label{fig:figure6b}
\end{figure}
\begin{figure}[htp!]
\centering
\resizebox{0.32\linewidth}{!}{\includegraphics{./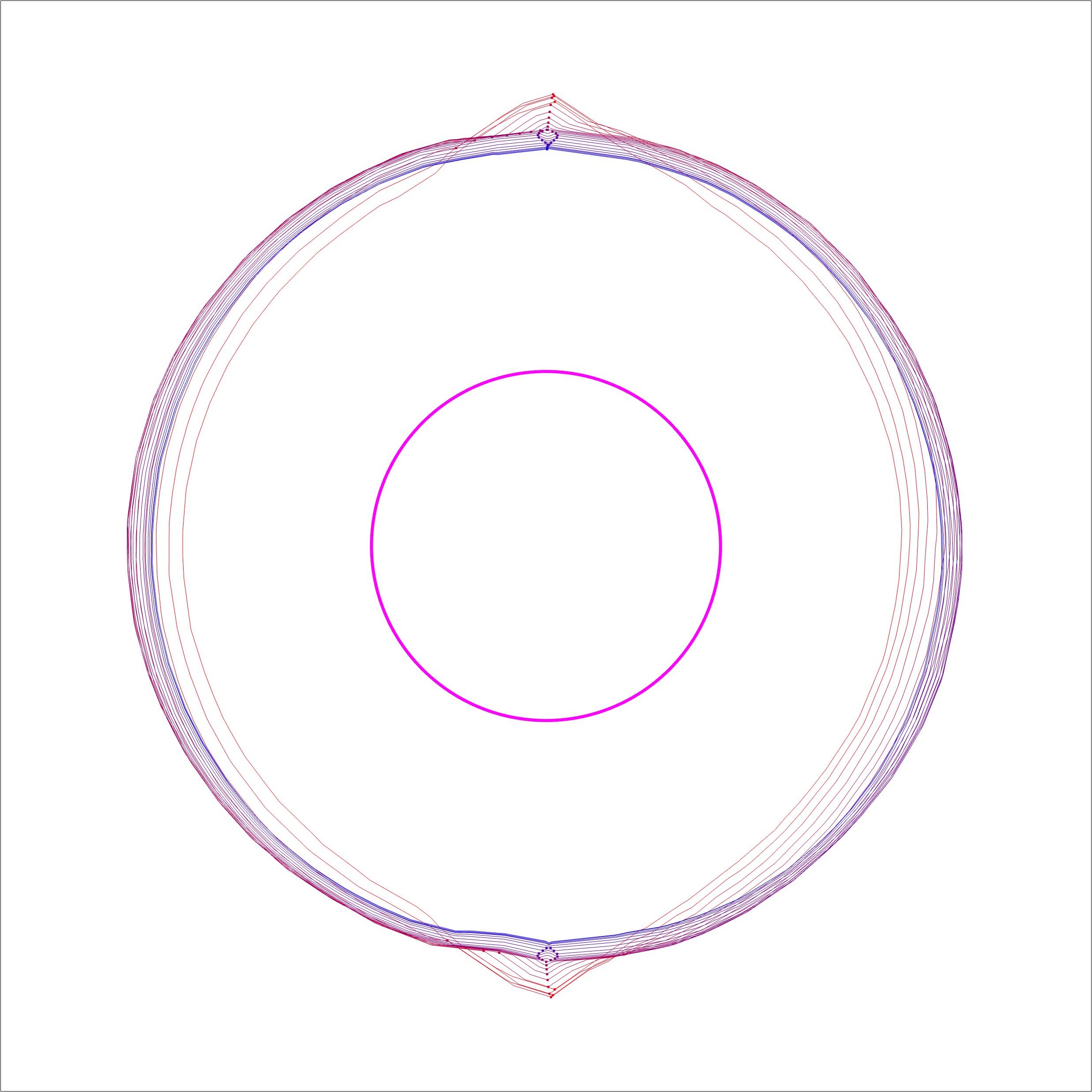}} \hfill
\resizebox{0.32\linewidth}{!}{\includegraphics{./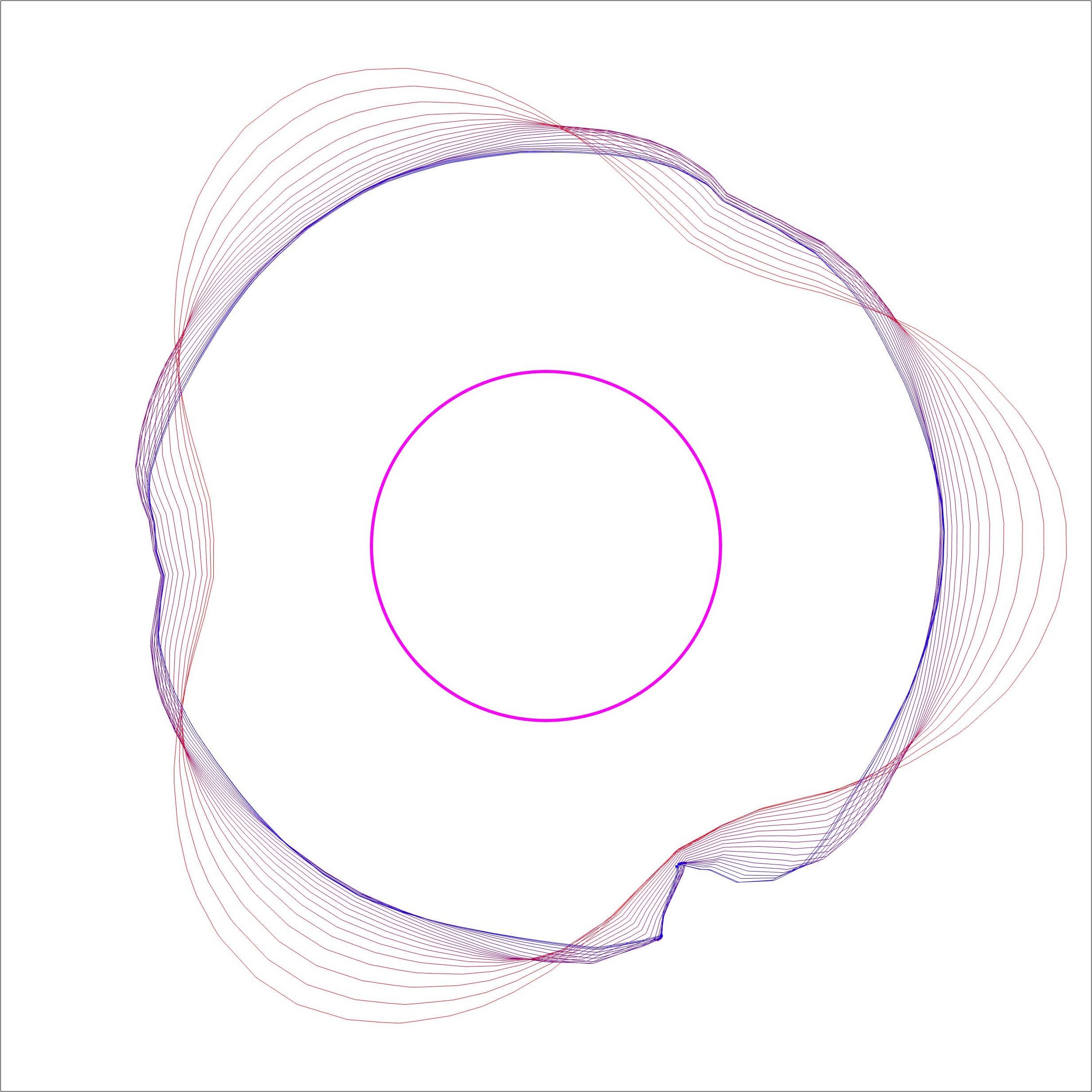}} \hfill
\resizebox{0.32\linewidth}{!}{\includegraphics{./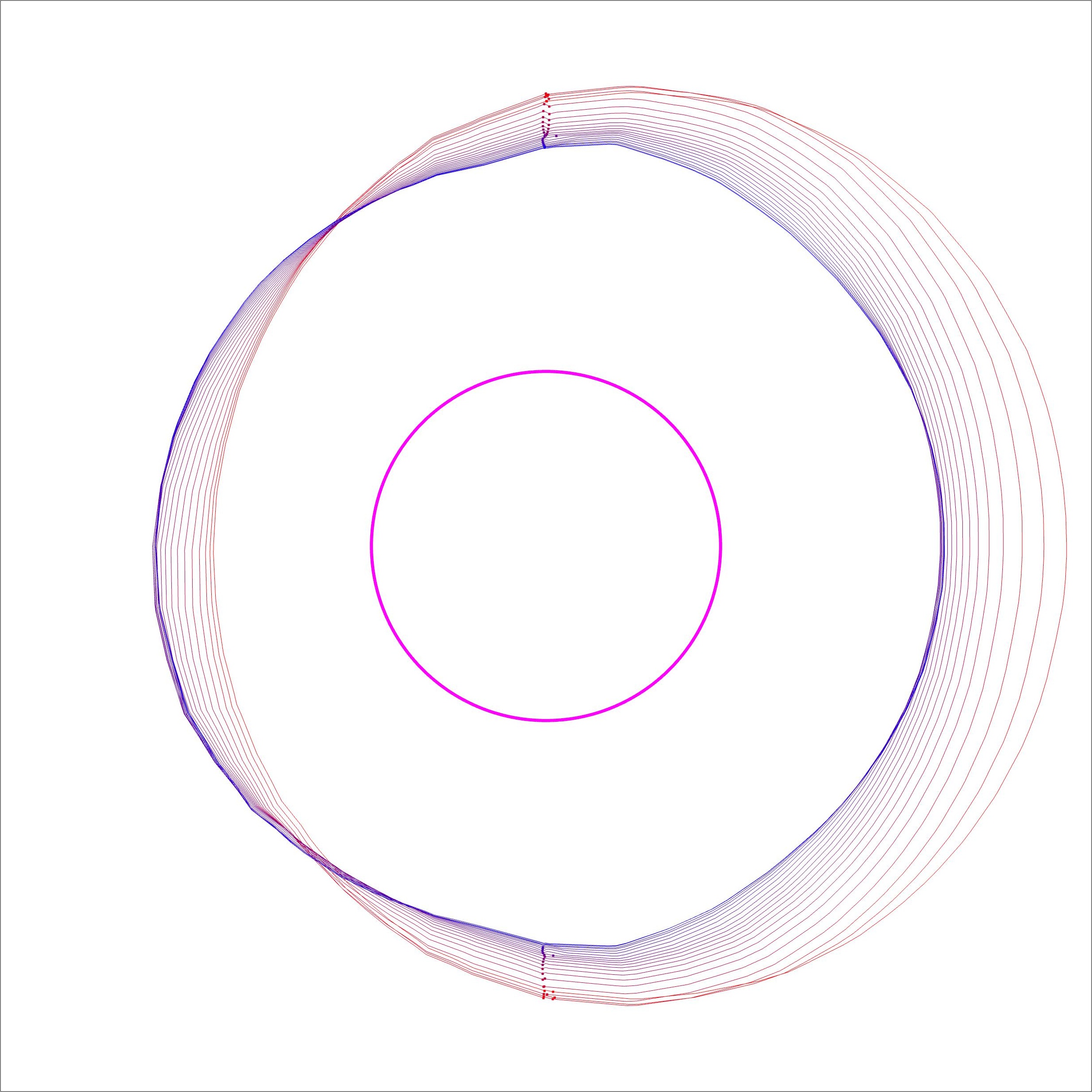}}  \\[0.5em]
\resizebox{0.32\linewidth}{!}{\includegraphics{./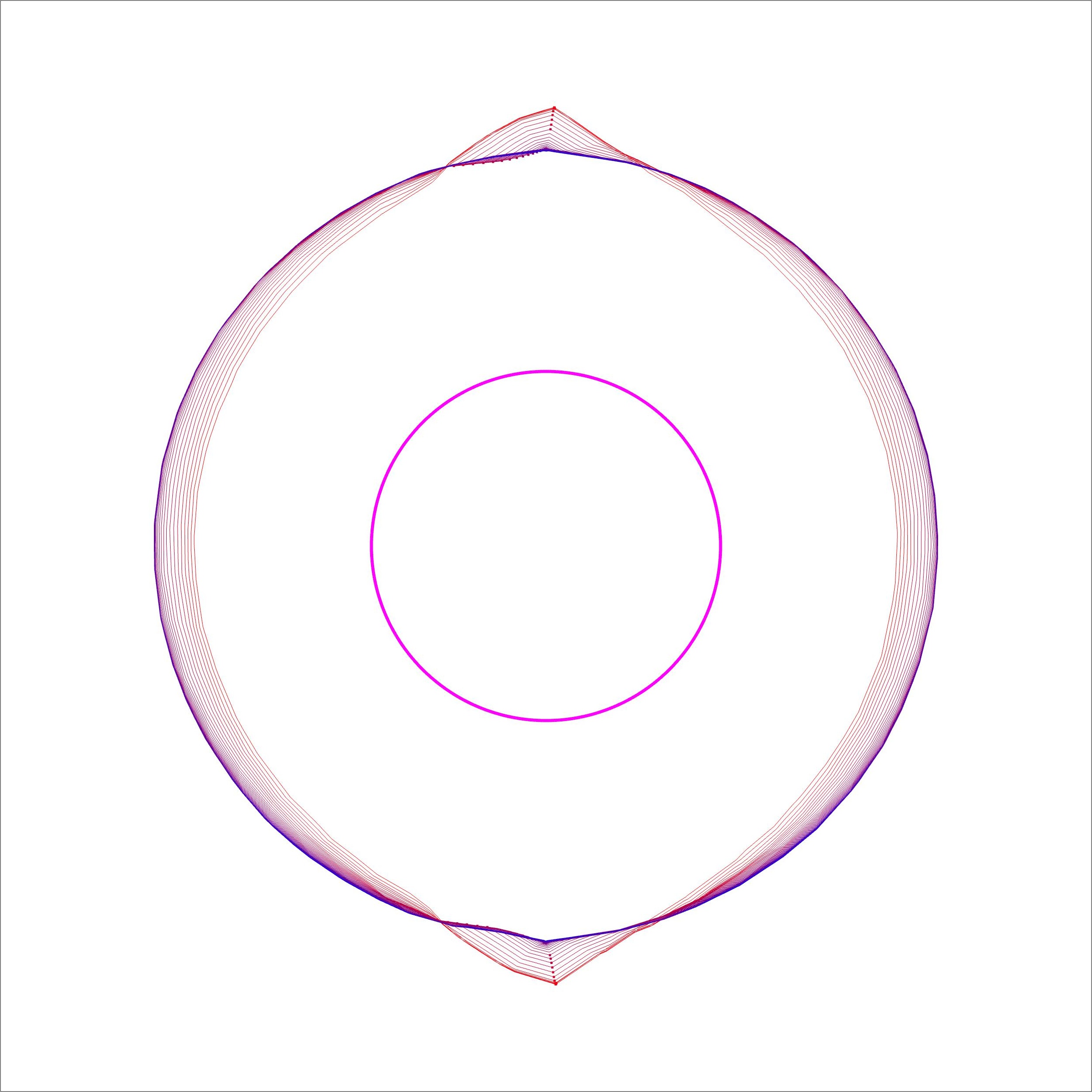}} \hfill
\resizebox{0.32\linewidth}{!}{\includegraphics{./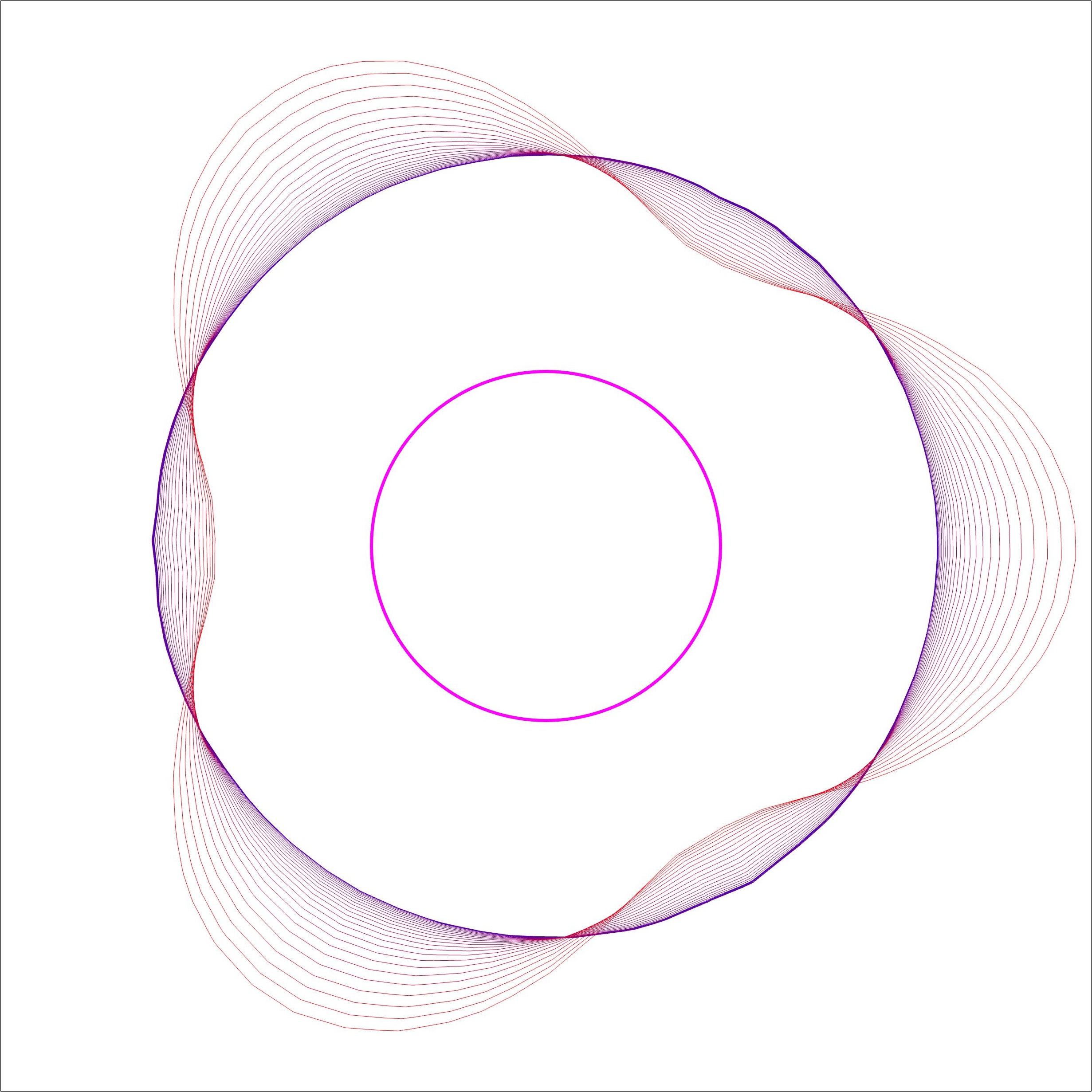}} \hfill
\resizebox{0.32\linewidth}{!}{\includegraphics{./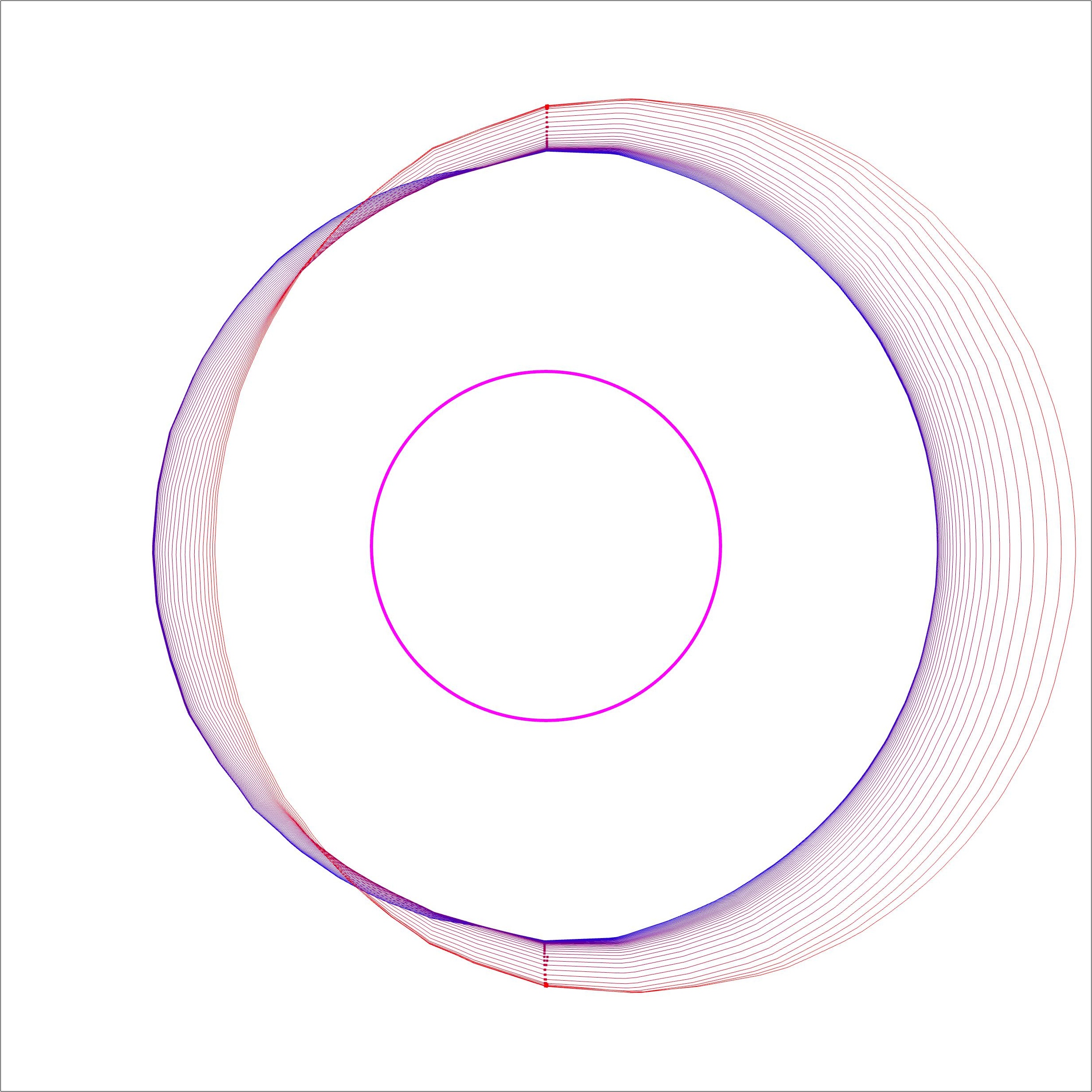}}
\caption{Shape histories of the free boundary computed using TD (upper plots) and CCBM (lower plots) with finer mesh viewed on the plane $xz$ (leftmost column), $yx$ (middle column), and $yz$ (rightmost column)}
\label{fig:figure7}
\end{figure}
%
%
%
%
\begin{figure}[htp!]
\centering
\resizebox{0.32\linewidth}{!}{\includegraphics{./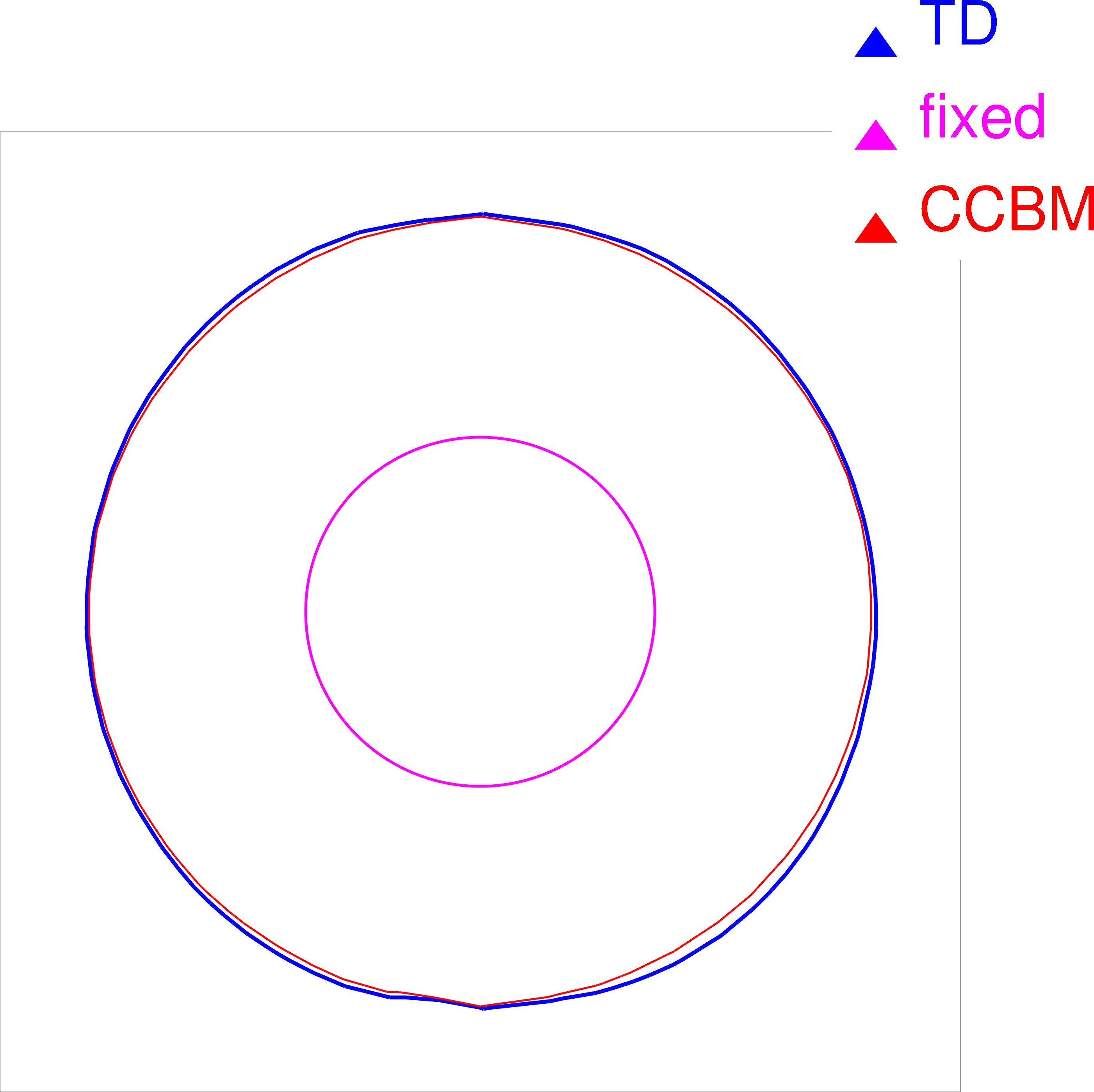}} \hfill
\resizebox{0.32\linewidth}{!}{\includegraphics{./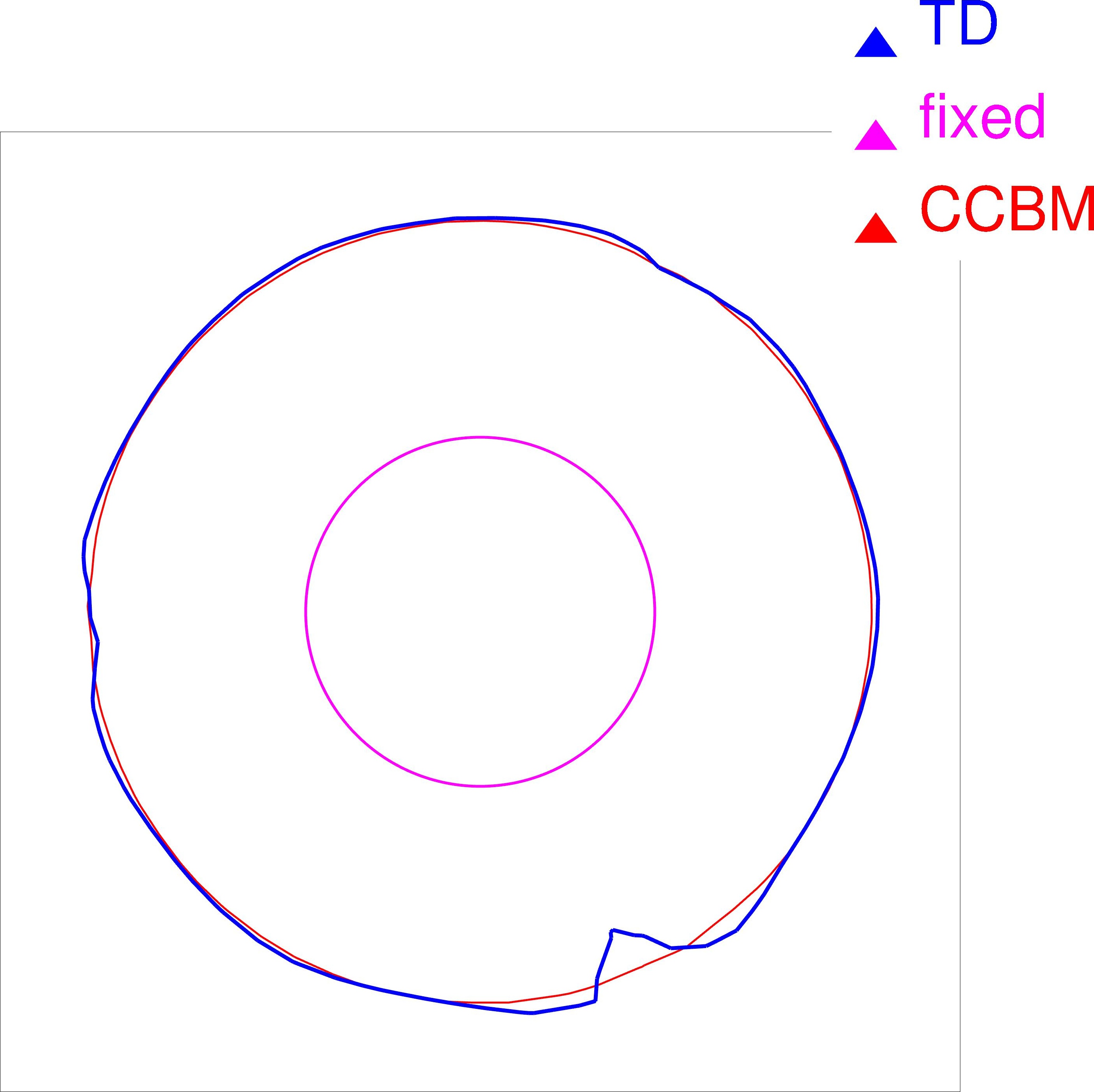}} \hfill
\resizebox{0.32\linewidth}{!}{\includegraphics{./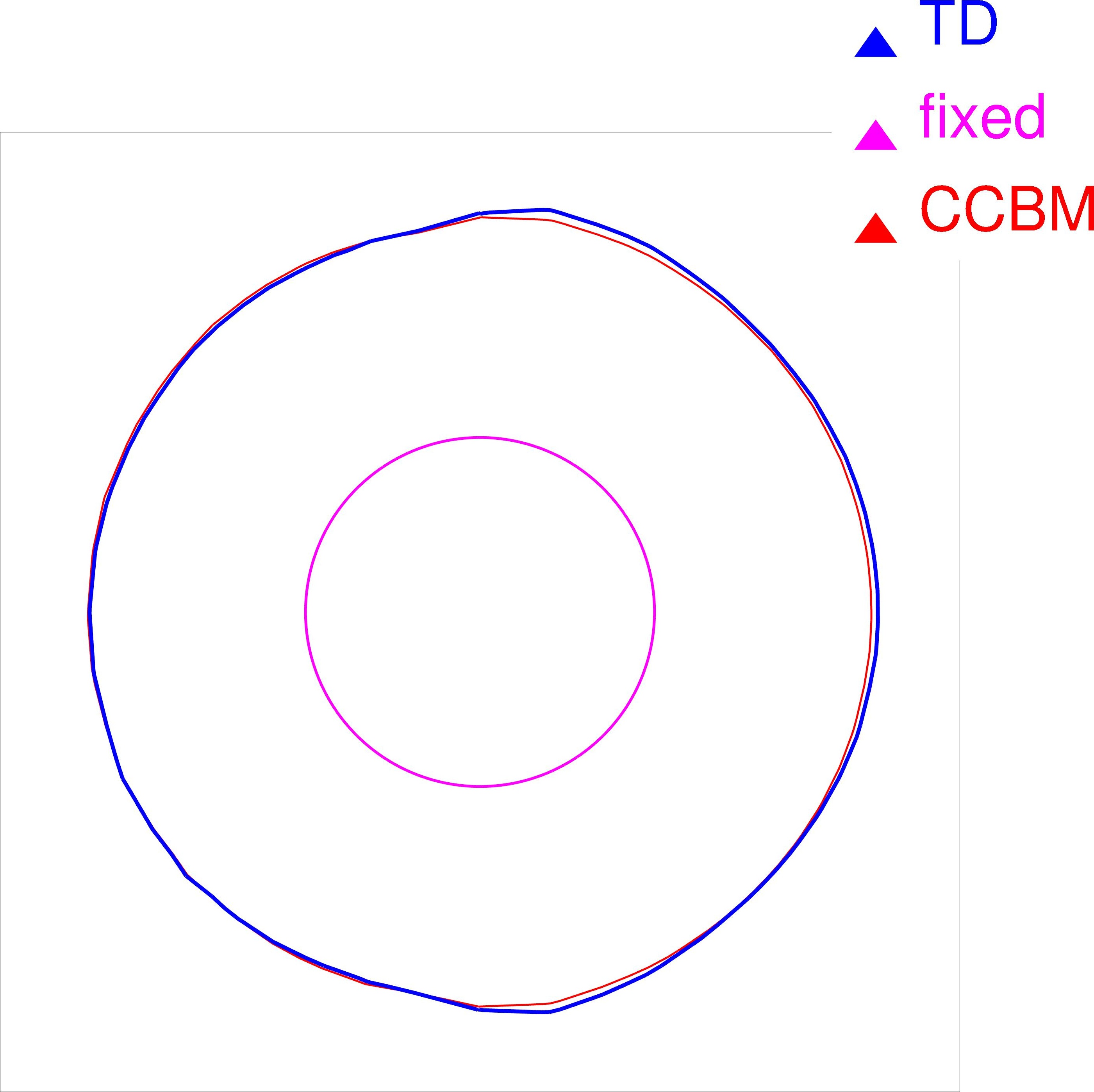}} \hfill
\caption{Cross comparison of computed shapes (case of finer meshes) viewed on the plane $xz$ (leftmost plot), $yx$ (middle plot), and $yz$ (rightmost plot)}
\label{fig:figure8}
\end{figure}
%
%
%
%
\begin{figure}[htp!]
\centering
\resizebox{0.45\linewidth}{!}{\includegraphics{./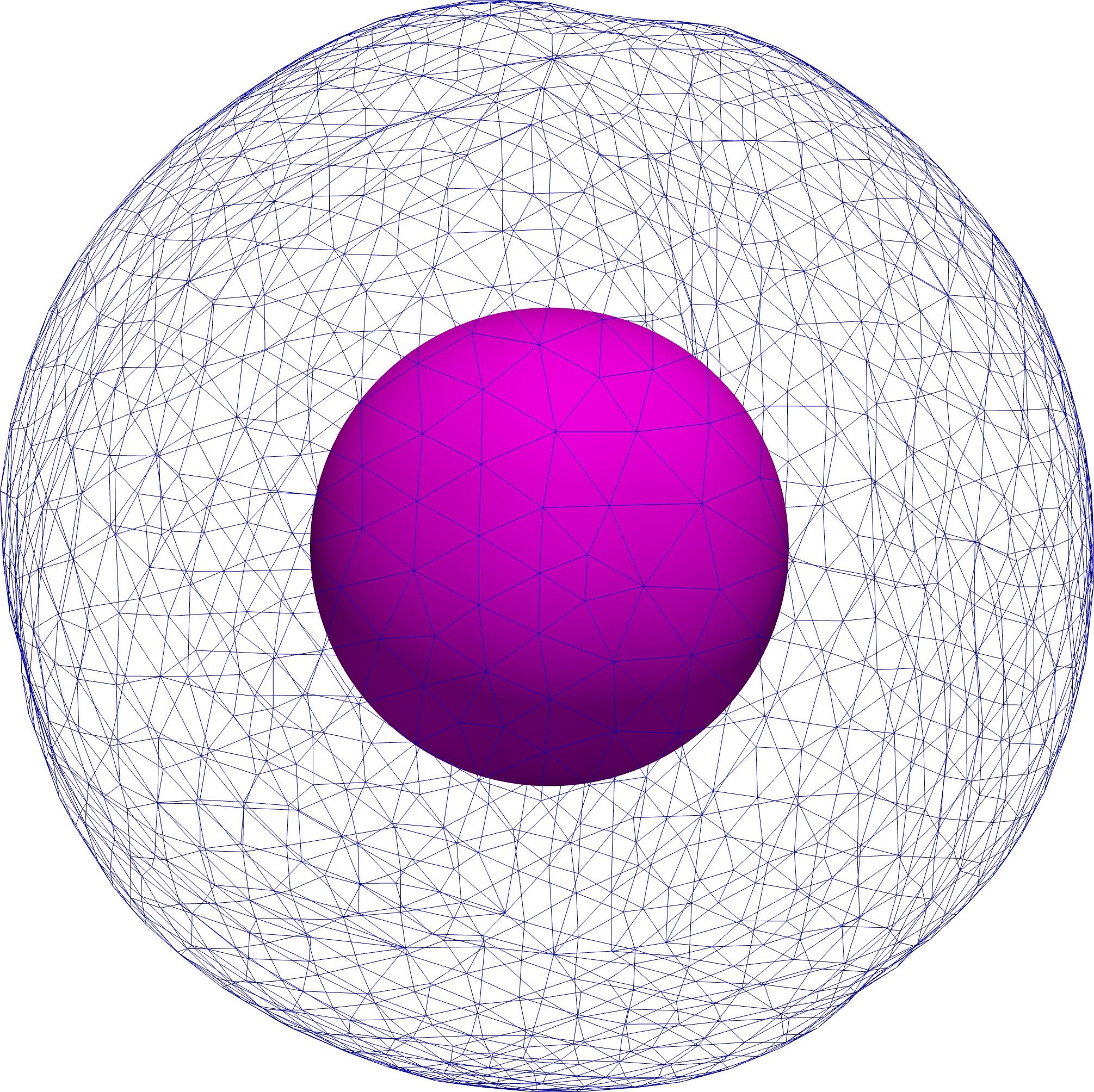}} \hfill
\resizebox{0.45\linewidth}{!}{\includegraphics{./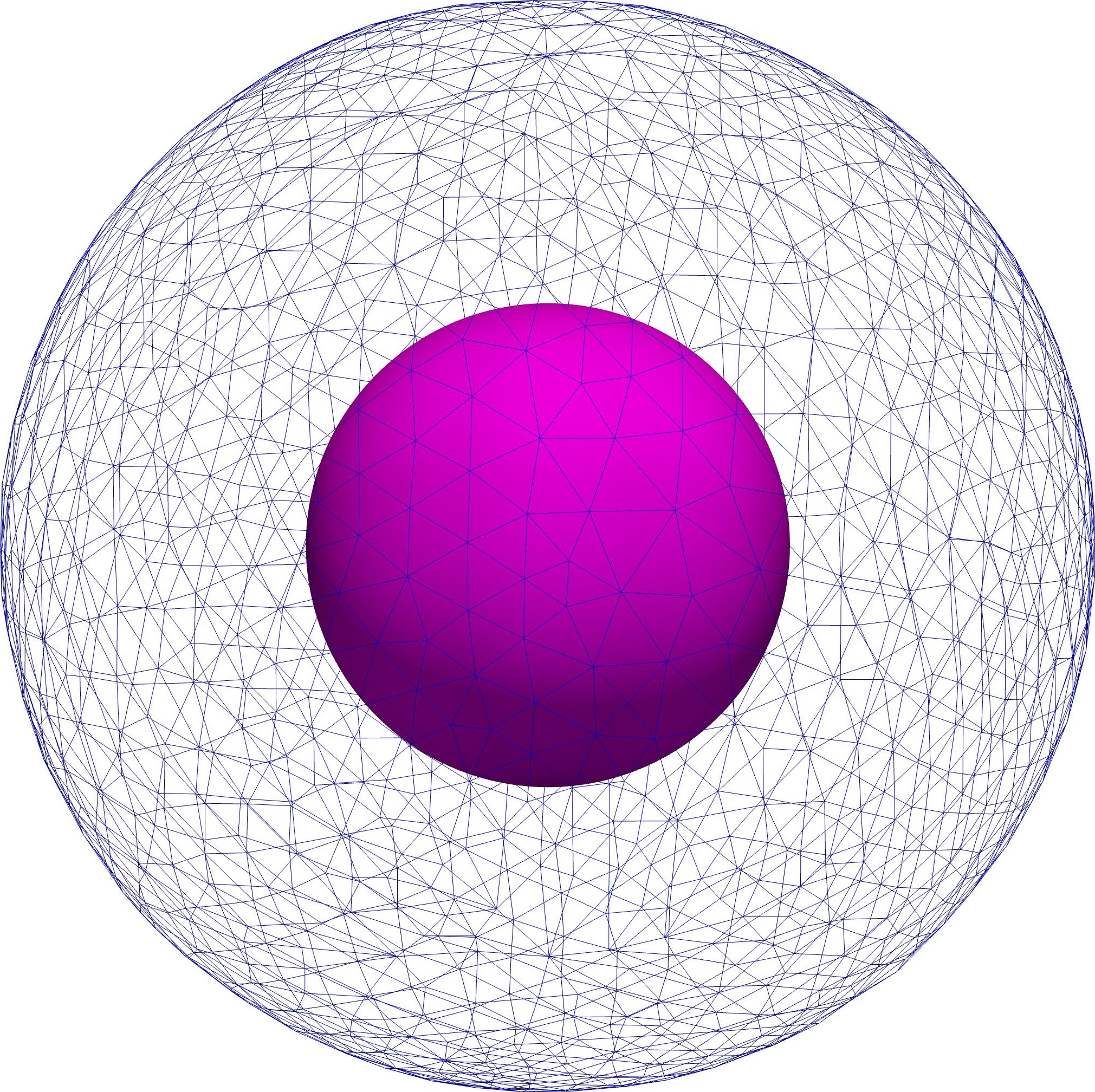}} 
\caption{Mesh profile of computed shapes (TD: left plot, CCBM: right plot) with finer mesh viewed on different planes}
\label{fig:figure9}
\end{figure}
%
%
%
%
\begin{figure}[htp!]
\centering
\resizebox{0.48\linewidth}{!}{\includegraphics{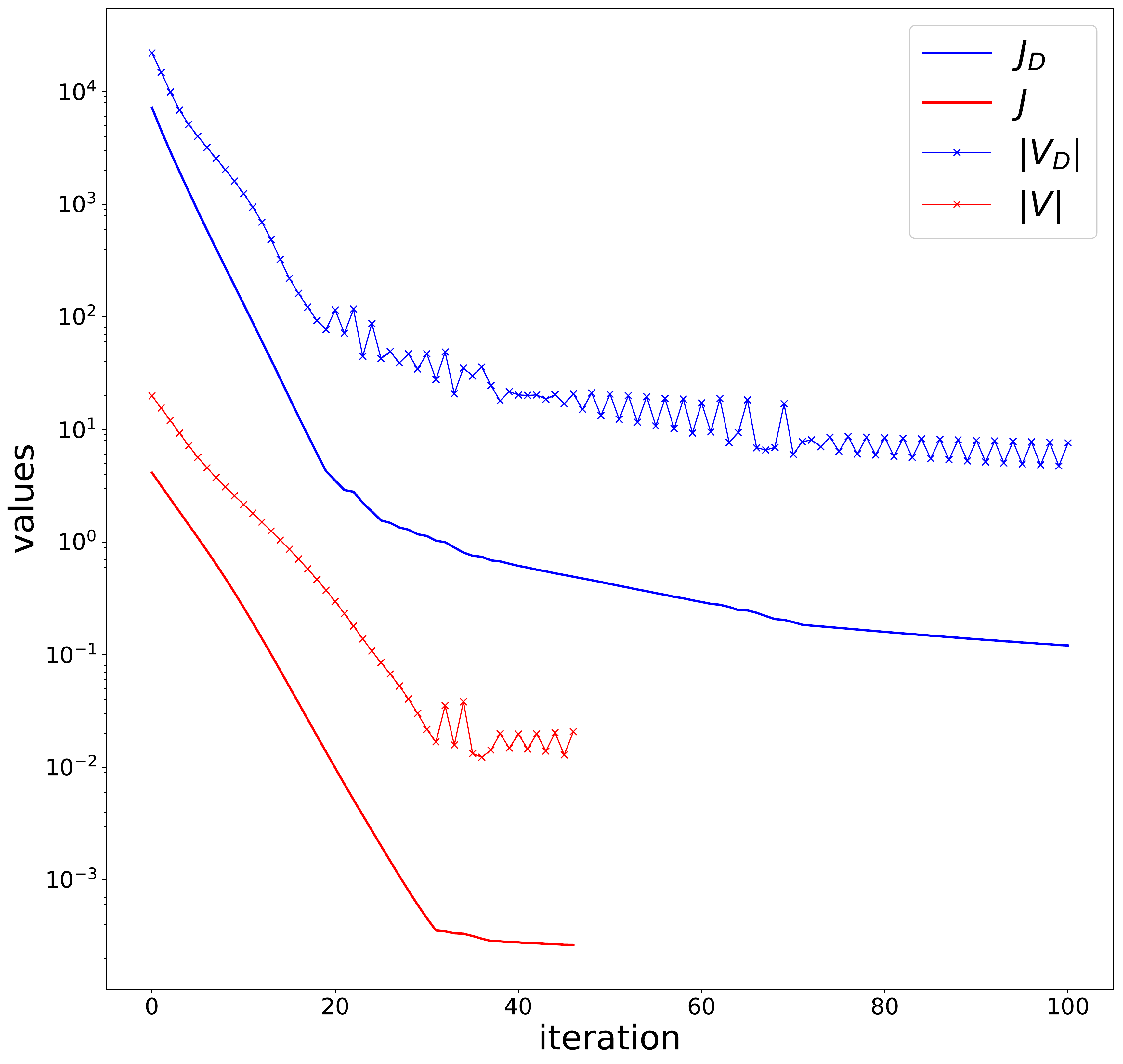}} \hfill
\resizebox{0.48\linewidth}{!}{\includegraphics{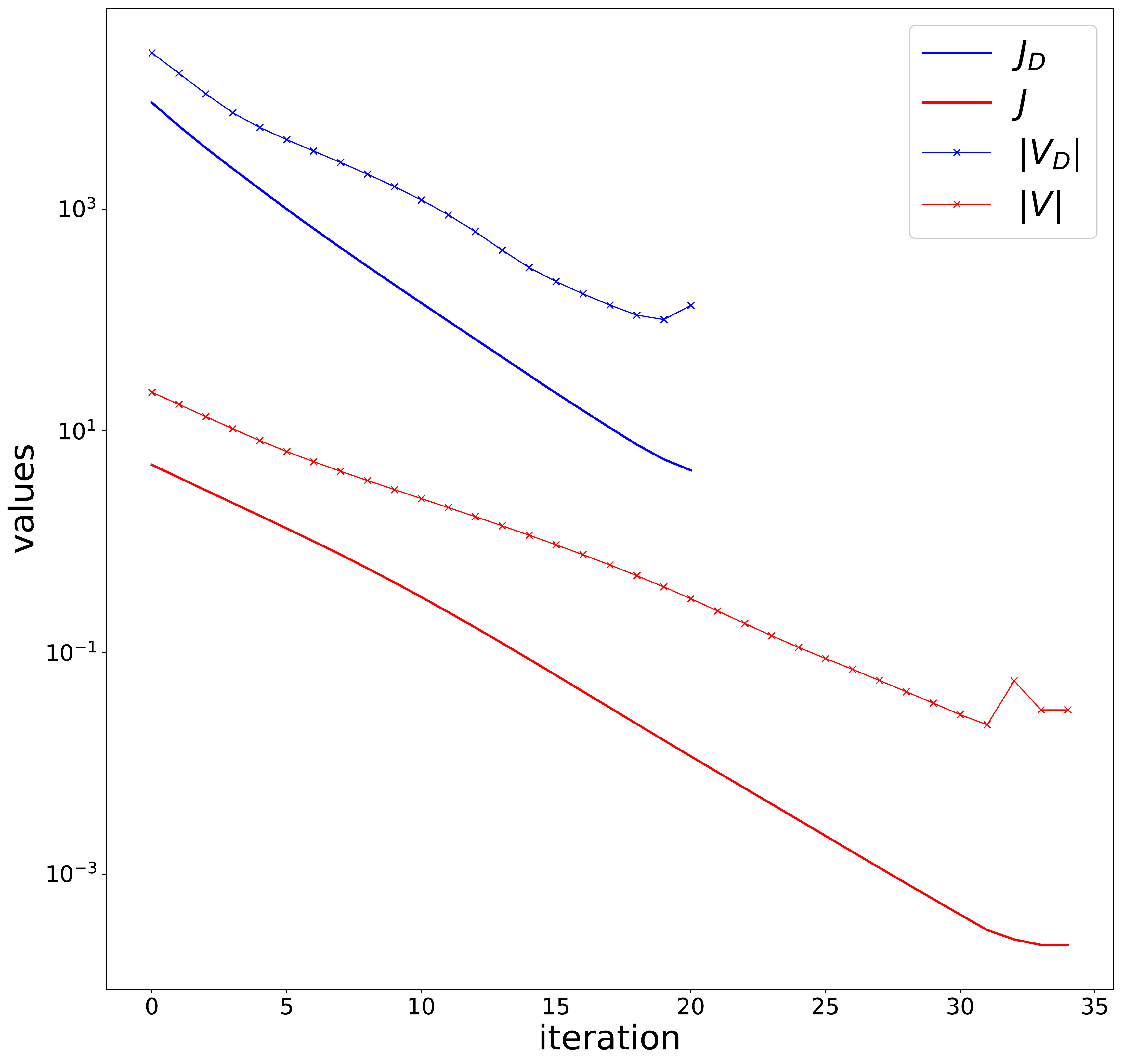}} 
\caption{Histories of cost and gradient norm values for coarse (left) and finer mesh (right)}
\label{fig:figure10}
\end{figure}
\section{Conclusions and Future Work}\label{sec:Conclusions_and_Future_Work}
In this investigation, we have developed a coupled complex boundary method in shape optimization framework to resolve a free surface problem involving the Stokes equation.
The shape gradient of the cost which was obtained naturally from the proposed formulation was delicately computed under a mild regularity assumption on the domain and without using the shape derivative of the states.
Using the shape gradient information, a Sobolev gradient-based descent scheme was formulated in order to solve the problem numerically via finite element method.
The realization and implementation of the method and scheme put forward in this article in carried out in two and three dimensions. 
Numerical results showed that the new method has some advantages over the classical approach of tracking the Dirichlet data in least-squares sense.
Moreover, it seems that the method is more accurate (in the sense that it achieves the expected optimal shape solution) compared to the classical boundary tracking method, as expected.

For future work, we propose to calculate and examine the shape Hessian of the cost functional to investigate the ill-posedness of the proposed shape optimization problem.
The said expression could then be used in a shape Newton method to numerically solve the minimization problem.
On the other hand, an application of CCBM in solving inverse obstacle problems under shape optimization settings will also be a subject of our next investigation.


%
%

\bibliographystyle{alpha} 
\bibliography{references}   

\appendix
\counterwithin{theorem}{subsection}
\section{Appendices}\label{appx} 
\renewcommand{\thesubsection}{\Alph{subsection}}
\renewcommand{\theequation}{\Alph{subsection}.\arabic{equation}}
\subsection{Lemmata proofs}\label{appxsec:proofs}
\subsubsection{Proof of Lemma \ref{lem:properties_of_the_Jacobian}}
\begin{proof}
	The desired results basically follows from the fact that $T_{t}$ is (at least) a $\mathcal{C}^{1,1}$ diffeomorphism.
	Meanwhile, we note in particular that the second statement of the lemma is already guaranteed by our assumption that $A_{t}$ is bounded as underlined in \eqref{eq:bounds_At_and_Bt}.
	Nevertheless, we briefly verify our claim as follows.
	Let $\varepsilon > 0$ be sufficiently small and $t \in \mathcal{I} = [0,\varepsilon]$.
	By reducing $\varepsilon$ if necessary, we can assume without lost of generality that $|t D\VV|_{\infty} < 1$ for $t \in \mathcal{I}$.
	This permits us to write $(DT_{t})^{-1}$ as a Neumann series:
	\[
		DT_{t}^{-1}(x) = (DT_{t})^{-1} \circ T_{t}^{-1} (x) = \sum_{j=0}^{\infty} (-t)^{j} (D\VV)^{j}(T_{t}^{-1}(x)) 
	\] 
	for each $x \in \overline{U}$.
	We can then estimate its norm as follows:
	\begin{equation}\label{eq:sup_norm_for_Jacobian_matrix}
		\abs{DT_{t}^{-1}(x)}_{\infty} 
			\leqslant \sum_{j=0}^{\infty} \abs{(-t)^{j}  (D\VV)^{j}(T_{t}^{-1}(x))}
			\leqslant \frac{1}{1 - \varepsilon \abs{D\VV}_{\infty}} =: C,
	\end{equation}
	which gives us a choice for $C>0$.
\end{proof}
\subsubsection{Proof of Lemma \ref{lem:convergence_of_functions}}\label{appxsubsec:proof_of_convergence_of_functions}
\begin{proof}
	The proof of (i) for the case $p=2$ can be found in \cite[p. 529]{DelfourZolesio2011} (see also the proof of \cite[Thm. 6.1, p. 567]{DelfourZolesio2011}).
	For the case $p>2$, the argumentation is similar.
	On the other hand, (ii) can be obtained from triangle inequality and the application of (i).
	Indeed, it is enough to show that 
	\[
		\lim_{t \to 0} \|\nabla(f \circ T_{t} - f)\|_{L^{p}(U)} = \lim_{t \to 0} \| (DT_{t})^{\top} ((\nabla f) \circ T_{t}) - \nabla f \|_{L^{p}(U)}.
	\]
	By the triangle inequality, we know that
	\begin{align*}
		&\| (DT_{t})^{\top} ((\nabla f) \circ T_{t}) - \nabla f \|_{L^{p}(U)}\\
			&\qquad\qquad \leqslant \| [ (DT_{t})^{\top} - id] ((\nabla f) \circ T_{t})  \|_{L^{p}(U)} + \| \nabla f \circ T_{t} - \nabla f \|_{L^{p}(U)}.
	\end{align*}
	For the latter summand, we have $\| \nabla f \circ T_{t} - \nabla f \|_{L^{p}(U)} \to 0$ as $t\to 0$ because of (i).
	Meanwhile, we recall that $DT_{0} = id$ and that we have $[t \mapsto A_t] \in \mathcal{C}(\mathcal{I},\mathcal{C}(\overline{U})^{d \times d})$. 
	So, in particular, the map $(DT_{t})^{\top} \to id$ holds in $\mathcal{C}(\overline{U})^{d \times d}$.
	Thus, the first summand also vanishes as $t\to 0$.
	For the proof of (iii), we may refer to \cite[Proof of Lemma 3.5]{IKP2006}.
	Finally, (iv) is a consequence of (iii) which is easily shown as follows
	\begin{align*}
		\frac{1}{t} \left( I_{t} f \circ T_{t} - f \right)
		= \frac{1}{t} \left( I_{t} - 1 \right) f^{t} + \frac{1}{t} \left( f^{t} - f \right)
		\ \ \longrightarrow \ \ f \operatorname{div} \VV  + Df \VV = \operatorname{div}(f \VV)
		\quad \text{as $t \to 0$}.
	\end{align*}
	This proves Lemma \ref{lem:convergence_of_functions}.
\end{proof}
\subsection{Computations of some identities}\label{appxsubsec:computations}
\subsubsection{Expansion of $I_{t}$}\label{appxsubsubsec:expansion}

We look at the expansion of the determinant $I_{t} := \operatorname{det}(DT_{t})$.
Note that the Jacobian of $T_{t}:= id + t \VV$, $\VV := (\theta_{1},\theta_{2},\ldots,\theta_{d})^{\top} \in \vect{\Theta}^{1}$ ($T_{t}:\Omega \to \mathbb{R}^{d}$, $d \in \mathbb{N} \setminus \{1\}$) given by $DT_{t} = (DT_{t})_{ij} =: (M_{ij})$ has entries of the form
\[
	M_{ij} = \delta_{ij} + t \frac{\partial V_{i}}{\partial x_{j}} =: \delta_{ij} + t m_{ij},
\]
where $\delta_{ij}$ denotes the Kronecker delta function.

Let $\mathcal{S}_{d}$ be the set of all permutations of $N_{d}:=\{1, \ldots, d\}$ and $\operatorname{sgn}$ be the signum of the permutation $\sigma$ of $N_{d}$ (i.e, it is equal to $+1$ or $-1$ according to whether the minimum number of transpositions (pairwise interchanges) necessary to achieve it starting from $N_{d}$ is even or odd.
Moreover, let $\mathcal{I}_{d}:=\{\sigma \in \mathcal{I}_{d} : \sigma(j) = j, j \in {N}'_{d} \subseteq N_{d} \}$ and $\iota$ be the identity permutation.
Then, by definition of determinant (see, e.g., \cite[Eq. (0.3.2.1), p. 29]{JohnsonHorn2013}), we have the following computations
\begin{align*}
	I_{t}
	&= \sum_{\sigma \in \mathcal{S}_{d}} \left( \operatorname{sgn} \sigma \prod_{i=1}^{d} M_{i\sigma(i)}\right)\\
	&= \sum_{\sigma = \iota } \prod_{i=1}^{d} \left( 1 + t \frac{\partial V_{i}}{\partial x_{i}}\right) 
		+ \sum_{\sigma \in \mathcal{I}_{d} \setminus \{\iota\} } \left( \operatorname{sgn} \sigma \prod_{i=1}^{d} M_{i\sigma(i)}\right)
		+ \sum_{\sigma \in \mathcal{S}_{d} \setminus \mathcal{I}_{d} } \left( \operatorname{sgn} \sigma \prod_{i=1}^{d} M_{i\sigma(i)}\right)\\
	&=: S_{1} + S_{2} + S_{3}.  
\end{align*}
Observe that we may write, for some function $\rho_{1}\in \mathcal{C}^{0}:=\mathcal{C}(\mathbb{R},\mathcal{C}^{0,1}(U))$, the first summand as $S_{1} = 1 + t \dive \VV + t^{2} \rho_{1}(t,\VV)$.
In addition, we can write the second sum as $t^{2} \rho_{2}(t,\VV)$, for some function $\rho_{2}\in\mathcal{C}^{0}$, since each term $S_{2}$ consists of at least two factors of $t m_{ij}$, $i \neq j$, $i,j \in N_{d}$, $d \in \mathbb{N} \setminus \{1\}$.
Meanwhile, all terms of $S_{3}$ have factors of the form $t m_{ij}$, $i\neq j$.
So, the sum $S_{3}$ can be expressed as $t^{d} \rho_{\ast}(\VV)$ which can be written as $t^{2} \rho_{3}(t,\VV)$, for some function $\rho_{3} \in \mathcal{C}^{0}$.
All together, we observe that $I_{t} = 1 + t \dive \VV + t^{2} \tilde{\rho}(t,\VV)$, for some function $\tilde{\rho} \in \mathcal{C}^{0}$.
\subsubsection{Derivative of $B_{t}|M_{t} \mathbf{n}|^{-2}$ }\label{appxsubsubsec:derivative}
Let us compute the derivative of $B_{t}|\Mt \nn|^{-2}$.
We recall that $B_{t} = I_{t} |\Mt \nn|$ and $\Mt = (DT_{t})^{-\top}$, and $B_{0} = 1$ and $|M_{0}\nn| = |id \nn| = |\nn| = 1$.
We also remember that $\frac{d}{dt}B_{t} \big\rvert_{t=0} = \dive \VV \big\rvert_{\Sigma} - ({D} \VV\nn)\cdot\nn$.

Let us consider two (column) vectors $\vect{a}:= (a_1, a_2, \ldots, a_{d})^{\top}, \vect{b}:= (b_1, b_2, \ldots, b_{d})^{\top} \in \mathbb{R}^{d}$.
Then, we have the following computations\footnote{Here, the expression $O(t)$ represents a generic remainder term.}
\begin{align*}
	\frac{d}{dt} \left( \abs{\vect{a} - t \vect{b} + O(t^{2})}^{2} \right) 
		&= \frac{d}{dt} \left( \sum_{i=1}^{d} (a_{i} - t b_{i} + O(t^{2}))^{2} \right) \\
		&= \sum_{i=1}^{d} 2 (a_{i} - t b_{i} + O(t^{2})) \cdot (- b_{i} + O(t))\\
		&= 2 \sum_{i=1}^{d} ( -a_{i} b_{i} + O(t))
		= 2 (-\vect{a} \cdot \vect{b} + O(t)). 
\end{align*} 

Now, considering the expansion (compare \eqref{eq:expansion_of_Mt})
\[
	\Mt \nn = (DT_{t})^{-\top} \nn = (id + t D\VV)^{-\top}\nn = [id - t (D\VV)^{\top} + \tilde{R}(t)]\nn,
\]
where $\tilde{R}(t)$ is a square matrix of size $d \times d$ and is of order $O(t^{2})$, and in view of \eqref{eq:regular_maps} -- knowing that $\Mt \nn$ is differentiable with respect to $t$ -- we obtain the following calculations based on the previous computation:
\[
	\frac{d}{dt} \abs{\Mt \nn}^{2} \Big|_{t=0}
		= \frac{d}{dt} \left( \abs{\nn - t (D\VV)^{\top} + \tilde{R}(t)}^{2} \right) \Big|_{t=0}
		= - 2 \nn \cdot [({D}\VV)^{\top}\nn] = - 2 ({D}\VV \nn) \cdot \nn. 	
\]
This implies that
\[
	\frac{d}{dt}\left( \frac{B_{t}}{\abs{\Mt \nn}^{2}} \right)\Big|_{t=0}
	= \frac{ \left( \dfrac{d}{dt} B_{t} \right) \abs{\Mt \nn}^{2} - B_{t} \left( \dfrac{d}{dt} \abs{\Mt \nn}^{2} \right)}{\abs{\Mt \nn}^{4}} \Big|_{t=0}
	= \dive \VV \big\rvert_{\Sigma} + ({D}\VV \nn) \cdot \nn.
\]
%
%
%
%
%
%
\subsection{Computation of the shape gradient of the cost via chain rule}
\label{appxsubsec:shape_derivatives_of_the_cost_via_chain_rule}
To validate the expression for the shape gradient, we give below the computation of the expression $g_{\Sigma}$ under a $\mathcal{C}^{2,1}$ regularity assumption on the domain, supposing in addition that $\ff \in H_{loc}^{1}(\mathbb{R}^{d})^{d}$, specifically, we assume $\ff \in H^{1}(U)^{d}$, where $U$ is a fixed convex bounded open set in $\mathbb{R}^{d}$ such that $U \supset \overline{\Omega}$.
We note that the given regularity guarantees the existence of the material and the shape derivative of the state and because of this, the shape gradient of the cost can easily be established using Hadamard's domain differentiation formula:
(see, e.g., \cite[Thm. 4.2, p. 483]{DelfourZolesio2011}), \cite[eq. (5.12), Thm. 5.2.2, p. 194]{HenrotPierre2018} or \cite[eq. (2.168), p. 113]{SokolowskiZolesio1992}):
\begin{equation}
\begin{aligned}
	\left. \left\{ \frac{{d}}{{d}t} \intOt{f(t,x)} \right\} \right|_{t=0}
		= \intO{\frac{\partial }{\partial t} f(0,x)}
			+ \intdO{ f(0,\sigma) \Vn} \label{eq:Hadamard_domain_formula}
\end{aligned}
\end{equation}
(and, of course, with the assumption that the perturbation of $\Omega$ preserves its regularity). 

The next result is a restatement of Proposition \ref{prop:the_shape_derivative_of_the_cost}, but with higher regularity assumption.
%
%
%
\begin{proposition}\label{prop:shape_derivative_with_sufficient_regularity_assumptions}
	Let $\Omega \in \mathcal{C}^{2,1}$ and $\VV \in \sfTheta^2$.
	Then, the shape derivative of $J$ at $\Omega$ along $\VV$ is given by $dJ(\Omega)[\VV] = \intS{g_{\Sigma}{\Vn}}$, where $g_{\Sigma}$ is the expression in \eqref{eq:shape_gradient}.
\end{proposition}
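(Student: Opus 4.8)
The plan is to rederive the shape derivative of $J$ under the stronger regularity assumptions $\Omega \in \mathcal{C}^{2,1}$ and $\VV \in \sfTheta^2$, this time exploiting the classical chain-rule machinery. Because the regularity here is higher than in Proposition \ref{prop:the_shape_derivative_of_the_cost}, the material and shape derivatives of the state $(\uu,p)$ now exist as genuine limits in the appropriate function spaces, so I can invoke Hadamard's domain differentiation formula \eqref{eq:Hadamard_domain_formula} directly rather than resorting to the rearrangement estimates of Lemma \ref{lem:holder_continuity}.

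First I would differentiate the cost $J(\Omega_t) = \frac12\intOt{(|\uu_{it}|^2 + |p_{it}|^2)}$ using \eqref{eq:Hadamard_domain_formula}. This produces a volume term involving the shape derivatives $\ui'$ and $\pim'$ of the imaginary parts of the state, plus the boundary term $\frac12\intS{(|\ui|^2+|\pim|^2)\Vn}$ coming directly from the moving domain. The second step is to characterize $\ui'$ and $\pim'$ as the solution of the \emph{shape-derivative} state problem, obtained by formally differentiating \eqref{eq:ccbm} in $t$; the boundary conditions on $\Sigma$ pick up extra curvature and tangential-divergence contributions through the standard relations for shape derivatives of boundary conditions (this is where the decomposition \eqref{eq:Laplace_Beltrami_operator_definition} and the tangential identities of Lemma \ref{lem:tangential_formulas} and Lemma \ref{lem:tangential_Greens_formula} enter). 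The third step is to eliminate $\ui'$ and $\pim'$ from the volume term by introducing the adjoint pair $(\vv,q)$ solving \eqref{eq:adjoint_system}: testing the adjoint weak form \eqref{eq:adjoint_system_weak_form} against the shape-derivative state and testing the shape-derivative state problem against $(\vv,q)$, then subtracting, converts $\intO{(\ui'\cdot\ui + \pim'\pim)}$ into boundary integrals on $\Sigma$ expressed through the data of the shape-derivative problem.

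The bulk of the calculation is then the tangential-calculus bookkeeping needed to reduce those boundary integrals to the claimed form. Here I would reuse the algebraic identities already established: Lemma \ref{lem:useful_identity} (and its consequence \eqref{eq:useful_identity_second_formula}), Lemma \ref{lem:divergence_expansion}, and Lemma \ref{lem:useful_equivalence}, together with the fact that $\dive\uu = 0$ and $\nabla_\Sigma \uu\,\nn = \vect{0}$. Applying the tangential Green's formula to move normal derivatives into tangential operators, and repeatedly using the boundary relation $-p\nn + \alpha\dn{\uu} + i(\uu\cdot\nn)\nn = \vect{0}$ on $\Sigma$ to cancel the $\dn{\overline{\vv}}\Vn$ contributions, should collapse everything onto the integrand $\Im\{\vect{B}[\Vn]\cdot\overline{\vv}\} + \frac12(|\ui|^2+|\pim|^2)\Vn$ stated in \eqref{eq:shape_gradient}. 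I expect the final answer to match \eqref{eq:shape_gradient} exactly, which is the desired consistency check against the rearrangement-method derivation.

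The main obstacle I anticipate is handling the boundary conditions of the shape-derivative problem correctly: the $i(\uu\cdot\nn)\nn$ coupling term on $\Sigma$ generates, upon shape differentiation, a mix of $\nabla_\Sigma\Vn$, $\dive_\Sigma(\Vn\uu)\nn$, and curvature terms $\kappa[\,\cdot\,]\Vn$ whose signs and tensor placement are easy to get wrong. Keeping careful track of which quantities are tangential versus normal, and verifying that the spurious terms ($J_2+H_2+K_{23}$, $K_{22}+J_3+H_3$, and the $K_6+K_{54}+K_{17}$ and $K_8+K_{18}$ groupings encountered in the proof of Proposition \ref{prop:the_shape_derivative_of_the_cost}) cancel in precisely the same pattern, is the delicate part. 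Since the computation mirrors the rearrangement proof step for step, I would organize it so that the cancellations appear in the same order, thereby confirming that both routes yield the identical shape gradient $g_\Sigma$.
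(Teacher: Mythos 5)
Your proposal follows essentially the same route as the paper's own proof: Hadamard's formula \eqref{eq:Hadamard_domain_formula} giving $\intO{(\ui\ui' + \pim\pim')} + \frac12\intS{(|\ui|^2+|\pim|^2)\Vn}$, characterization of the shape-derivative system \eqref{eq:ccbm_shape_derivative_of_the_states} with boundary data $\vect{B}[\Vn]$, and elimination of $(\uu',p')$ by pairing that system with the adjoint $(\vv,q)$ and taking imaginary parts, exactly as in the paper's Steps 1--4. The only inaccuracy is presentational: the cancellation groupings you cite ($K_6+K_{54}+K_{17}$, $J_2+H_2+K_{23}$, etc.) are artifacts of the rearrangement proof of Proposition \ref{prop:the_shape_derivative_of_the_cost} and do not recur here, since in the chain-rule route the tangential-calculus work is concentrated in deriving and simplifying $\vect{B}[\Vn]$ itself (via $\nn' = -\nabla_{\Sigma}\Vn$ from Lemma \ref{lem:shape_derivative_of_the_normal_vector}, Lemma \ref{lem:preparation2}, and the decomposition \eqref{eq:Laplace_Beltrami_operator_definition}), after which the adjoint elimination is immediate and requires no further cancellation.
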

Before we prove the above proposition, we briefly prepare the following lemmata which will be useful in the derivation of the shape derivative.
\begin{lemma}\label{lem:shape_derivative_of_the_normal_vector}
	Let $\nn$ be the outward unit normal to $\Sigma$ and $\VV \in \sfTheta^{2}$.
	Then, the shape derivative of $\nn$ denoted by $\nn'$ is given by
	\begin{equation}\label{eq:shape_derivative_of_n}
		\nn' = - \nabla_{\Sigma}\Vn.
	\end{equation}
\end{lemma}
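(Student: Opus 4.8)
The plan is to realize the moving boundary $\Sigma_{t} = T_{t}(\Sigma)$ as a level set and to differentiate the associated unit normal field at a fixed spatial point, using the \emph{oriented distance function} as the canonical extension of $\nn$. Concretely, I would let $b$ denote the signed distance function to $\Sigma$, so that in a tubular neighborhood of $\Sigma$ one has $\nn = \nabla b$ and $|\nabla b| \equiv 1$, with $b \in \mathcal{C}^{2}$ near $\Sigma$ because $\Omega \in \mathcal{C}^{2,1}$. Since $x \in \Sigma_{t}$ if and only if $T_{t}^{-1}(x) \in \Sigma$, the function $\phi_{t} := b \circ T_{t}^{-1}$ is a (time-dependent, $t$-independent at $t=0$) level-set representation of $\Sigma_{t}$, and the outward unit normal on $\Sigma_{t}$, extended to a neighborhood, is $\nn_{t} = \nabla \phi_{t} / |\nabla \phi_{t}|$. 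The shape derivative is then $\nn' = \partial_{t} \nn_{t}|_{t=0}$ taken at fixed $x \in \Sigma$.

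First I would compute $\phi' := \partial_{t} \phi_{t}|_{t=0}$. Because $T_{t} = id + t\VV$ gives $T_{t}^{-1} = id - t\VV + o(t)$, a first-order Taylor expansion yields $\phi_{t}(x) = b(x) - t\,\VV(x)\cdot\nabla b(x) + o(t)$, hence $\phi' = -\VV \cdot \nabla b$; restricting to $\Sigma$ and using $\nabla b = \nn$ there gives $\phi'|_{\Sigma} = -\VV\cdot\nn = -\Vn$. Next, differentiating the normalized field $\nabla\phi_{t}/|\nabla\phi_{t}|$ at $t=0$, and using that $\partial_{t}$ and $\nabla$ commute at a fixed point together with $\nn = \nabla b/|\nabla b|$, one obtains
\[
\nn' = \frac{1}{|\nabla b|}\Big(\nabla\phi' - (\nabla\phi'\cdot\nn)\,\nn\Big) = \frac{1}{|\nabla b|}\,\nabla_{\Sigma}\phi'.
\]
By Definition \ref{def:tangential_operators}, the bracketed quantity is exactly the tangential gradient $\nabla_{\Sigma}\phi'$, which depends only on the restriction $\phi'|_{\Sigma}$. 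Since $|\nabla b| = 1$ on $\Sigma$, this collapses to $\nn' = \nabla_{\Sigma}(\phi'|_{\Sigma}) = \nabla_{\Sigma}(-\Vn) = -\nabla_{\Sigma}\Vn$, which is the claim.

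The hard part, and the point I would emphasize, is that the answer is \emph{not} independent of how $\nn$ is extended off $\Sigma$: a generic level-set function $\phi$ produces the spurious tangential term $-\tfrac{\Vn}{|\nabla\phi|}\nabla_{\Sigma}|\nabla\phi|$, which vanishes precisely because the oriented distance function satisfies $|\nabla b|\equiv 1$ (equivalently $D^{2}b\,\nn = \tfrac12\nabla|\nabla b|^{2} = \vect{0}$). This is why the canonical extension must be fixed at the outset. As a cross-check I would also record the alternative route via the transported-normal formula $\nn_{t}\circ T_{t} = \Mt\nn/|\Mt\nn|$ already used in the proof of Lemma \ref{lem:transported_problem}: expanding $\Mt = (DT_{t})^{-\top} = id - t(D\VV)^{\top} + O(t^{2})$ gives the material derivative $\dot{\nn} = -(D\VV)^{\top}\nn + [(D\VV\nn)\cdot\nn]\,\nn$, and then the identity $\nn' = \dot{\nn} - D^{2}b\,\VV$ combined with $D^{2}b\,\nn=\vect{0}$ and the product-rule expansion $\nabla(\VV\cdot\nn) = (D\VV)^{\top}\nn + D^{2}b\,\VV$ reduces, after projecting onto the tangent space, to the same expression $\nn' = -\nabla_{\Sigma}\Vn$. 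Both derivations agree, confirming the formula \eqref{eq:shape_derivative_of_n}.
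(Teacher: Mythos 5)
Your proof is correct and follows essentially the same route as the paper's: the paper gives no argument of its own, deferring to part of the proof of Proposition 5.4.14 in \cite{HenrotPierre2018}, and that proof is precisely the oriented-distance-function computation you present (the canonical unitary extension $\nn=\nabla b$ with $|\nabla b|\equiv 1$, so that $\nn'$ collapses to the tangential gradient of $\phi'|_{\Sigma}=-\Vn$). Your remark that the formula is tied to this unitary extension, and your cross-check via the transport formula $\nn_{t}\circ T_{t}=\Mt\nn/|\Mt\nn|$ already used in Lemma \ref{lem:transported_problem}, are both accurate; the only cosmetic point is that in the cross-check no tangential projection is actually needed, since $\dot{\nn}-D^{2}b\,\VV=-\nabla_{\Sigma}\Vn$ holds as an exact identity (the normal components cancel on their own, both sides being tangent fields).
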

\begin{proof}
	See part of the proof of Proposition 5.4.14 in \cite[Chap. 5., Sec. 4.4, p. 222]{HenrotPierre2018}.
\end{proof}
%
%
\begin{lemma}
	Let $\Omega \in \sfTheta^{2}$ and $\nn$ be the outward unit normal to $\Sigma$.
	Then,
	\[
		D\tilde{\nn} \nn = (D\tilde{\nn})^{\top} \nn = (\nabla\tilde{\nn}) \nn = 0\quad \text{on} \ \Sigma,
	\]
	where $\tilde{\nn}$ is a unitary $\mathcal{C}^{1}$ extension of the vector field $\nn$ on $\Sigma$.
\end{lemma}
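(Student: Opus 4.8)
The plan is to exploit the fact that a unitary extension $\tilde{\nn}$ satisfies $|\tilde{\nn}|^2 = 1$ in a full neighborhood of $\Sigma$, not merely on $\Sigma$ itself, so that all first derivatives of the constraint vanish identically. First I would write the constraint as $\tilde{\nn} \cdot \tilde{\nn} = 1$ and differentiate it with respect to each spatial coordinate $x_k$. This yields, for each $k = 1, \ldots, d$,
\[
	\frac{\partial}{\partial x_k}(\tilde{\nn} \cdot \tilde{\nn}) = 2 \sum_{j=1}^d \tilde{n}_j \frac{\partial \tilde{n}_j}{\partial x_k} = 0,
\]
which is precisely the statement that $(\nabla \tilde{\nn})^{\top} \tilde{\nn} = \vect{0}$ in the neighborhood, where I use the paper's convention $(\nabla \tilde{\nn})_{jk} = \partial \tilde{n}_j / \partial x_k$. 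Restricting to $\Sigma$ and using $\tilde{\nn} = \nn$ there gives one of the three claimed identities.

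The key observation that ties the three expressions together is that for a gradient-type matrix the various products coincide once contracted against the normal. Concretely, recall the paper's relation that the Jacobian $D\tilde{\nn}$ is the transpose of the gradient $\nabla\tilde{\nn}$, i.e. $D\tilde{\nn} = (\nabla \tilde{\nn})^{\top}$, with $(D\tilde{\nn})_{jk} = \partial \tilde{n}_j/\partial x_k$. The differentiated constraint $\sum_j \tilde{n}_j \partial_{x_k} \tilde{n}_j = 0$ reads, in matrix form, as $(D\tilde{\nn})^{\top}\nn = \vect{0}$ on $\Sigma$ (summation on the first index of $D\tilde{\nn}$ against $\nn$). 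To obtain $D\tilde{\nn}\,\nn = \vect{0}$ as well, I would invoke the standard fact that a unitary extension can be chosen so that $\tilde{\nn}$ is curl-free near $\Sigma$ — equivalently $\tilde{\nn} = \nabla b$ for the signed distance function $b$, whence $D\tilde{\nn} = D^2 b$ is symmetric and the two products $D\tilde{\nn}\,\nn$ and $(D\tilde{\nn})^{\top}\nn$ agree. With symmetry in hand, all three expressions $D\tilde{\nn}\,\nn$, $(D\tilde{\nn})^{\top}\nn$, and $(\nabla\tilde{\nn})\nn$ reduce to the single vanishing quantity.

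I expect the only genuine subtlety to be justifying the symmetry of $D\tilde{\nn}$, since an arbitrary unitary extension of $\nn$ need not be a gradient and hence need not have symmetric Jacobian. The clean resolution is to fix the canonical extension via the oriented distance function $b_\Sigma$, which is $\mathcal{C}^{2}$ in a tubular neighborhood when $\Sigma$ is $\mathcal{C}^{2,1}$ (consistent with the standing assumption $\Omega \in \sfTheta^2$ in this lemma), and to set $\tilde{\nn} := \nabla b_\Sigma$; this extension is automatically unitary near $\Sigma$ and has symmetric Jacobian equal to the Hessian $D^2 b_\Sigma$. Since the quantity $D\tilde{\nn}\,\nn$ restricted to $\Sigma$ is in fact extension-independent (it depends only on the boundary values and first-order normal behavior of $\nn$), proving the identity for this particular extension suffices. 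I would therefore organize the proof as: (i) differentiate the unit-length constraint to get $(D\tilde{\nn})^{\top}\nn = \vect{0}$; (ii) record that the signed-distance extension makes $D\tilde{\nn}$ symmetric, so $D\tilde{\nn}\,\nn = (D\tilde{\nn})^{\top}\nn = \vect{0}$; and (iii) translate into gradient notation via $\nabla\tilde{\nn} = (D\tilde{\nn})^{\top}$ to conclude $(\nabla\tilde{\nn})\nn = \vect{0}$, completing all three equalities on $\Sigma$.
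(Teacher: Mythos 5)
Your step (i) is exactly the paper's own argument: differentiating $|\tilde{\nn}|^{2}=1$ in a neighborhood of $\Sigma$ gives $(\nabla\tilde{\nn})\nn=(D\tilde{\nn})^{\top}\nn=\vect{0}$ on $\Sigma$, and both you and the paper obtain these two identities the same way. Where you genuinely go beyond the paper is on the third quantity $D\tilde{\nn}\,\nn=\dn{\tilde{\nn}}$: the paper only records the scalar identity $\langle D\tilde{\nn}\,\nn,\nn\rangle=\langle\nn,(D\tilde{\nn})^{\top}\nn\rangle=0$ and then asserts the conclusion, whereas you correctly observe that unitarity alone does not control the full vector $D\tilde{\nn}\,\nn$, and that symmetry of the Jacobian is the missing ingredient, supplied by taking $\tilde{\nn}=\nabla b_{\Sigma}$ (so that $D\tilde{\nn}=D^{2}b_{\Sigma}$). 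On this point your write-up is more rigorous than the paper's proof.

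However, your transfer claim -- that $D\tilde{\nn}\,\nn$ restricted to $\Sigma$ is extension-independent, so that proving the identity for $\nabla b_{\Sigma}$ settles it for every unitary $\mathcal{C}^{1}$ extension -- is false, and your own parenthetical betrays the problem: the quantity depends on the ``first-order normal behavior'' of the extension, which is precisely what differs from one extension to another. Concretely, let $\Sigma$ be the unit circle in $\mathbb{R}^{2}$ and set $\tilde{\nn}(x)=R_{\theta(|x|)}\,(x/|x|)$, where $R_{\phi}$ is the rotation by angle $\phi$ and $\theta$ is smooth with $\theta(1)=0$, $\theta'(1)\neq 0$. This is a unitary $\mathcal{C}^{1}$ extension of $\nn$; it satisfies $(D\tilde{\nn})^{\top}\nn=\vect{0}$ and $\langle D\tilde{\nn}\,\nn,\nn\rangle=0$ (consistent with unitarity), yet $D\tilde{\nn}\,\nn=\theta'(1)\,\ttau\neq\vect{0}$ on $\Sigma$. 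So the first equality of the lemma actually fails for some unitary extensions, and the statement (together with its use in the two subsequent preparation lemmas, where $\dn{\tilde{\nn}}=\vect{0}$ is what is really invoked) must be read as referring to the canonical extension furnished by Henrot--Pierre, i.e.\ the gradient of the oriented distance function, whose Jacobian is symmetric. Your proof becomes correct -- and in fact repairs the loose ending of the paper's own proof -- once you delete the extension-independence sentence and simply fix $\tilde{\nn}:=\nabla b_{\Sigma}$ throughout.
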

\begin{proof}
	Because $\Omega$ is $\mathcal{C}^{2,1}$ regular, then by Proposition 5.4.8 of \cite[p. 218]{HenrotPierre2018} (see also \cite[Lem. 16.1, p. 390]{GilbargTrudinger1998}), there exists a $\mathcal{C}^{1}$ unitary extension $\tilde{\nn}:=(\tilde{n}_{1}, \ldots, \tilde{n}_{d})^{\top}$\footnote{The same is used in Lemma \ref{lem:preparation1} and Lemma \ref{lem:preparation2}.} of $\nn$.
	So, in an open neighborhood of $\Sigma$, we have $|\tilde{\nn}|^{2} = \langle \tilde{\nn}, \tilde{\nn} \rangle = 1$.
	Thus, for each $j = 1, \ldots, d$, we have the following computation
	\[
		\dfrac{\partial}{\partial x_{j}} (|\tilde{\nn}|^{2})
			= \dfrac{\partial}{\partial x_{j}} \left( \sum_{i=1}^{d} \tilde{n}_{i}^{2} \right) 
			= 2 \sum_{i=1}^{d} \tilde{n}_{i} \dfrac{\partial \tilde{n}_{i}}{\partial x_{j}} 
			= 0, 
	\]
	or equivalently,
	\begin{equation}\label{eq:sum_one}
		\nabla (|\tilde{\nn}|^{2})
			= \nabla \left( \sum_{i=1}^{d} \tilde{n}_{i}^{2} \right) 
			= 2 \sum_{i=1}^{d} \tilde{n}_{i} \nabla \tilde{n}_{i}
			= 2 \sum_{i=1}^{d} \tilde{n}_{i} \left( \sum_{j=1}^{d} \frac{\partial \tilde{n}_{i}}{\partial x_{j}} \vect{e}_{j} \right)
			= \vect{0},
	\end{equation}
	where $\vect{e}_{j} := (\overbrace{0,\ldots,0,1}^{j},0,\ldots,0)^{\top} \in \mathbb{R}^{d}$ is the $j$th vector of the canonical basis in $\mathbb{R}^{d}$.
	Now, on the other hand, we have
	\begin{equation}\label{eq:sum_two}
	\begin{aligned}
		(D\tilde{\nn})^{\top} \tilde{\nn} 
			= \begin{pmatrix}
				\dfrac{\partial \tilde{n}_{1}}{\partial x_{1}} & \ldots & \dfrac{\partial \tilde{n}_{d}}{\partial x_{1}} \\
				\vdots & \ddots & \vdots\\
				\dfrac{\partial \tilde{n}_{1}}{\partial x_{d}} & \ldots & \dfrac{\partial \tilde{n}_{d}}{\partial x_{d}} 
			 \end{pmatrix}
			 \begin{pmatrix}
			 	\tilde{n}_{1} \\ \vdots \\ \tilde{n}_{d}
			 \end{pmatrix}
			 =  	\begin{pmatrix}
			 		\displaystyle \sum_{j=1}^{d} \dfrac{\partial \tilde{n}_{j}}{\partial x_{1}} \tilde{n}_{j} \\ \vdots \\ \displaystyle \sum_{j=1}^{d} \dfrac{\partial \tilde{n}_{j}}{\partial x_{d}} \tilde{n}_{j}
			 	\end{pmatrix}
			 = \sum_{k=1}^{d} \left( \sum_{j=1}^{d}  \dfrac{\partial \tilde{n}_{j}}{\partial x_{k}} \tilde{n}_{j} \right) \vect{e}_{k}.
	\end{aligned}
	\end{equation}
	Combining \eqref{eq:sum_one} and \eqref{eq:sum_two}, we deduce that
	\[
		\langle (D\tilde{\nn}) \nn, \nn \rangle = \langle \nn, (D\tilde{\nn})^{\top} \nn \rangle = 0\quad \text{on} \ \Gamma,
	\]
	from which we infer the conclusion.
\end{proof}
\begin{remark}
	In light of the previous lemma, and recalling the definition of the tangential Jacobian matrix from Definition \ref{def:tangential_operators}, we see that for the $\mathcal{C}^{1}$ unitary extension $\tilde{\nn}$ of $\nn$, we clearly have the identity $D\tilde{\nn}\big|_{\Sigma} = D\nn = D_{\Sigma}\nn = (D_{\Sigma}\nn)^{\top}$ (refer to \cite[Chap. 9, Sec. 5.2, eq. (5.17) -- (5.19), p. 497]{DelfourZolesio2011}, for the last equation).
\end{remark}
\begin{lemma}\label{lem:preparation1}
	Let $\Omega \in \sfTheta^{2}$ and $\nn$ be the outward unit normal to $\Sigma$.
	Then, for the solution $\uu$ of \eqref{eq:ccbm}, we have
	\[
		\frac{\partial (\uu \cdot \nn)}{\partial \nn} = \frac{\partial \uu }{\partial \nn} \cdot \nn \qquad \text{on $\Sigma$}.
	\]
\end{lemma}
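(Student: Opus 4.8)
The plan is to exploit the defining property of a unitary extension of the normal field, namely that $\tilde{\nn}\cdot\tilde{\nn}=1$ holds in a neighbourhood of $\Sigma$, and to unpack the two sides of the claimed identity using the extension and the product rule for the normal derivative. First I would write $\uu\cdot\nn = \sum_{i=1}^{d} u_{i}\tilde{n}_{i}$ in a neighbourhood of $\Sigma$, where $\tilde{\nn}$ is the $\mathcal{C}^{1}$ unitary extension guaranteed (for $\Omega\in\mathcal{C}^{2,1}$) by the regularity results quoted just above. Then the directional derivative in the $\nn$-direction expands via the Leibniz rule as
\[
	\frac{\partial(\uu\cdot\nn)}{\partial\nn}
		= \frac{\partial\uu}{\partial\nn}\cdot\nn + \uu\cdot\frac{\partial\tilde{\nn}}{\partial\nn}
		= \frac{\partial\uu}{\partial\nn}\cdot\nn + \uu\cdot\big((D\tilde{\nn})\nn\big)
		\qquad\text{on }\Sigma.
\]
The entire identity therefore reduces to showing that the extra term $\uu\cdot\big((D\tilde{\nn})\nn\big)$ vanishes on $\Sigma$.

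The key step is then to invoke the immediately preceding lemma, which established that $(D\tilde{\nn})\nn = (D\tilde{\nn})^{\top}\nn = (\nabla\tilde{\nn})\nn = \vect{0}$ on $\Sigma$ for a $\mathcal{C}^{1}$ unitary extension $\tilde{\nn}$ of $\nn$. With $(D\tilde{\nn})\nn=\vect{0}$ in hand, the spurious term is identically zero regardless of $\uu$, and the asserted equality follows at once. I would be careful to record that the normal derivative acting on the vector $\uu$ is understood componentwise, $\partial_{\nn}\uu = (D\uu)\nn = \partial_{\nn}\uu$ in the paper's notation, so that $\partial_{\nn}\uu\cdot\nn$ is precisely $\dn{\uu}\cdot\nn$ as written in the statement.

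The only genuine subtlety — and the step I would flag as the main obstacle — is justifying that the normal derivative of the product $\uu\cdot\nn$ may legitimately be computed by extending $\nn$ to $\tilde{\nn}$ and differentiating the resulting function defined in a full neighbourhood of $\Sigma$. This requires that $\uu$ itself be smooth enough (here $\uu\in\HH^{2}(\Omega)^{d}$ by the higher-regularity remark following the adjoint system, together with $\Omega\in\mathcal{C}^{2,1}$) so that the trace of $\partial_{\nn}\uu$ on $\Sigma$ is well defined, and that the tangential dependence introduced by the extension does not contribute to the normal derivative. Since $\dn{}$ differentiates only in the $\nn$-direction and the cancelling term $(D\tilde{\nn})\nn$ is evaluated on $\Sigma$, the value of the identity is independent of the particular extension chosen; this independence is exactly what the preceding lemma secures. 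Once this is noted, the proof is a one-line application of the product rule followed by the normal-derivative annihilation property of the unitary extension, and nothing further is required.
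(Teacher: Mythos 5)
Your proof is correct and is essentially the paper's own argument: both expand $\partial_{\nn}(\uu\cdot\nn)$ by the product rule through the unitary extension $\tilde{\nn}$ and kill the extra term $\uu\cdot\bigl((D\tilde{\nn})\nn\bigr)$ using the preceding lemma's identity $(D\tilde{\nn})\nn=\vect{0}$ on $\Sigma$; the paper merely writes this componentwise, while you phrase it vectorially.
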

\begin{proof}
	First, we note that 
	$
		\dfrac{\partial n_{j}}{\partial \nn}
			 %
			 = \nn^{\top} \nabla n_{j}
			 = \displaystyle \sum_{k=1}^{d} n_{k} \dfrac{\partial n_{j}}{\partial x_{k}}.
	$
	Moreover, from the proof of the previous lemma, we know that
	\begin{equation}\label{eq:identity0}
	\begin{aligned}
		(D\tilde{\nn})\tilde{\nn} 
			 = \sum_{k=1}^{d} \left( \sum_{j=1}^{d}  \dfrac{\partial \tilde{n}_{k}}{\partial x_{j}} \tilde{n}_{j} \right) \vect{e}_{k}
			 = \vect{0}
	\end{aligned}
	\end{equation}
	from which it can be deduced that
	\[
		 \sum_{j=1}^{d} {n}_{j} \dfrac{\partial {n}_{k}}{\partial x_{j}} = 0 \quad \text{on $\Sigma$},
	\]
	for all $k=1,\ldots,d$.
	In addition, we have the following identity
	\begin{equation}\label{eq:identity_dundotn}
		\frac{\partial \uu }{\partial \nn} \cdot \nn
		= (D\uu) {\nn} \cdot \nn
			 = \sum_{k=1}^{d} \left( \sum_{j=1}^{d}  \dfrac{\partial u_{k}}{\partial x_{j}} {n}_{j} \right) \vect{e}_{k} \cdot \sum_{i=1}^{d} n_{i} \vect{e}_{i}
			 = \sum_{k=1}^{d} \left( \sum_{j=1}^{d}  \dfrac{\partial u_{k}}{\partial x_{j}} {n}_{j} \right) n_{k},
	\end{equation}
	where the last equation follows from the fact that $\vect{e}_{i} \cdot \vect{e}_{j} = \delta_{ij}$ and $\delta_{ij}$ is the Kronecker delta function.
	Thus, we have the following computations:
	\begin{align*}
		\frac{\partial (\uu \cdot \nn)}{\partial \nn}
			&= \frac{\partial}{\partial \nn} \left(\sum_{k=1}^{d} u_{k} n_{k} \right)
			= \sum_{k=1}^{d}  \left( \frac{\partial u_{k} }{\partial \nn} n_{k} + u_{k} \frac{\partial n_{k}}{\partial \nn} \right)
			= \sum_{k=1}^{d}  \left( \sum_{j=1}^{d} n_{j} \dfrac{\partial u_{k}}{\partial x_{j}} \right) n_{k}.
	\end{align*}
	The desired identity then follows by comparing the previous equation with \eqref{eq:identity_dundotn}.
\end{proof}
\begin{lemma}\label{lem:preparation2}
	Let $\Omega \in \sfTheta^{2}$ and $\nn$ be the outward unit normal to $\Sigma$.
	Then, for the solution $\uu$ of \eqref{eq:ccbm}, we have
	\[
		\frac{\partial }{\partial \nn} \left[(\uu \cdot \nn)\nn\right] = \left( \frac{\partial \uu }{\partial \nn} \cdot \nn \right) \nn \qquad \text{on $\Sigma$}.
	\]
\end{lemma}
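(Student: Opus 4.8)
The plan is to reduce the vector identity to the scalar identity already established in Lemma \ref{lem:preparation1}, by exploiting the two facts available on $\Sigma$: that $\tilde{\nn}$ is a unitary extension (so $(D\tilde{\nn})\nn = \vect{0}$ on $\Sigma$, from the preceding lemma) and that $\partial(\uu\cdot\nn)/\partial\nn = (\partial\uu/\partial\nn)\cdot\nn$. First I would expand the left-hand side using the product rule for the normal derivative applied to the product of the scalar field $\uu\cdot\nn$ and the vector field $\nn$, writing
\[
	\frac{\partial}{\partial\nn}\left[(\uu\cdot\nn)\nn\right]
		= \frac{\partial(\uu\cdot\nn)}{\partial\nn}\,\nn + (\uu\cdot\nn)\,\frac{\partial\nn}{\partial\nn}.
\]
The strategy then hinges on showing that the second summand vanishes on $\Sigma$.

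The key step is to recognize that $\partial\nn/\partial\nn = (D\tilde{\nn})\nn$ componentwise. Indeed, for each component,
\[
	\frac{\partial n_{k}}{\partial\nn} = \nn^{\top}\nabla n_{k} = \sum_{j=1}^{d} n_{j}\frac{\partial n_{k}}{\partial x_{j}},
\]
which is precisely the $k$th entry of $(D\tilde{\nn})\nn$. From the lemma immediately preceding Lemma \ref{lem:preparation1} (the unitary-extension lemma), we have $(D\tilde{\nn})\nn = \vect{0}$ on $\Sigma$; equivalently, the computation \eqref{eq:identity0} gives $\sum_{j=1}^{d} n_{j}\,\partial n_{k}/\partial x_{j} = 0$ on $\Sigma$ for each $k$. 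Hence $\partial\nn/\partial\nn = \vect{0}$ on $\Sigma$, so the term $(\uu\cdot\nn)\,\partial\nn/\partial\nn$ drops out entirely.

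What remains is the first summand $\dfrac{\partial(\uu\cdot\nn)}{\partial\nn}\,\nn$, and here I would simply invoke Lemma \ref{lem:preparation1}, which states $\partial(\uu\cdot\nn)/\partial\nn = (\partial\uu/\partial\nn)\cdot\nn$ on $\Sigma$. Substituting this gives
\[
	\frac{\partial}{\partial\nn}\left[(\uu\cdot\nn)\nn\right]
		= \left(\frac{\partial\uu}{\partial\nn}\cdot\nn\right)\nn \qquad\text{on }\Sigma,
\]
which is exactly the claimed identity. I do not anticipate a genuine obstacle here: the result is essentially a bookkeeping consequence of the two prepared facts, and the only point requiring mild care is the justification that the normal derivative obeys the product rule in the form used — this is legitimate because $\Omega\in\mathcal{C}^{2,1}$ guarantees a $\mathcal{C}^{1}$ unitary extension $\tilde{\nn}$ and $\uu$ is smooth enough (by the higher-regularity remark, $\uu\in\HH^{2}(\Omega)^{d}$ and indeed better) for all the pointwise derivatives on $\Sigma$ to be well defined.
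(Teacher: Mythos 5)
Your proposal is correct and takes essentially the same route as the paper's own proof: both amount to the product rule for $\partial_{\nn}$ applied to $(\uu\cdot\tilde{\nn})\tilde{\nn}$, with the term $(\uu\cdot\nn)\,\partial_{\nn}\nn$ killed by the unitary-extension fact $(D\tilde{\nn})\tilde{\nn}=\vect{0}$ on $\Sigma$ (equation \eqref{eq:identity0}) and the remaining term handled by Lemma \ref{lem:preparation1}. The only difference is packaging: the paper redoes the componentwise expansion from scratch (applying \eqref{eq:identity0} twice), whereas you invoke Lemma \ref{lem:preparation1} as a black box, which is tidier but mathematically identical.
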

\begin{proof}
	Let $\tilde{\nn}$ be the $\mathcal{C}^{1}$ extension of $\nn$ as before and denote $\vect{b} = a\tilde{\nn} := \left( \sum_{i=1}^{d} u_{i} \tilde{n}_{i} \right) \tilde{\nn}$ with $\vect{b}:=(b_{1}, \ldots, b_{d}) = (a\tilde{n}_{1},\ldots,a\tilde{n}_{d})$.
	Note that 
	\begin{equation}\label{eq:identity3}
		(D\vect{b})\tilde{\nn} = \sum_{k=1}^{d} \left( \sum_{j=1}^{d}  \dfrac{\partial b_{k}}{\partial x_{j}} \tilde{n}_{j} \right) \vect{e}_{k}.
	\end{equation}
	Moreover, we have the following computations
	\begin{align*}
		\frac{\partial b_{k}}{\partial x_{j}}
			= \frac{\partial (a\tilde{n}_{k})}{\partial x_{j}}
			%
			= \left( \sum_{i=1}^{d} \left(  \frac{\partial u_{i}}{\partial x_{j}} \tilde{n}_{i} +  u_{i} \frac{\partial \tilde{n}_{i} }{\partial x_{j}} \right) \right) \tilde{n}_{k}
				+ \left( \sum_{i=1}^{d} u_{i} \tilde{n}_{i} \right) \frac{\partial \tilde{n}_{k}}{\partial x_{j}}.
	\end{align*}
	Inserting the above expression to \eqref{eq:identity3} and using Lemma \ref{lem:preparation1} (see also \eqref{eq:identity0}), we get
	\begin{align*}
	(D\vect{b})\tilde{\nn} 
	&= \sum_{k=1}^{d} \left( \sum_{j=1}^{d}  \left[ \left( \sum_{i=1}^{d} \frac{\partial u_{i}}{\partial x_{j}} \tilde{n}_{i} \right) \tilde{n}_{k}
				\right] \tilde{n}_{j} \right) \vect{e}_{k}
			+ \sum_{k=1}^{d} \left( \sum_{j=1}^{d}  \left[ \left( \sum_{i=1}^{d} u_{i} \tilde{n}_{i} \right) \frac{\partial \tilde{n}_{k}}{\partial x_{j}} \right] \tilde{n}_{j} \right) \vect{e}_{k}\\	
	&= \sum_{k=1}^{d} \left( D\uu\tilde{\nn} \cdot \nn \right) \tilde{n}_{k} \vect{e}_{k}
			+ (\uu \cdot \nn) \sum_{k=1}^{d} \left( \sum_{j=1}^{d} \frac{\partial \tilde{n}_{k}}{\partial x_{j}} \tilde{n}_{j} \right) \vect{e}_{k}\\				%
	&= \left( D\uu\tilde{\nn} \cdot \tilde{\nn} \right) \tilde{\nn}.
	\end{align*}
	Thus, on $\Sigma$, we get
	\[
		\frac{\partial }{\partial \nn} \left[(\uu \cdot \nn)\nn\right] = \left( \frac{\partial \uu }{\partial \nn} \cdot \nn \right) \nn,
	\]
	as desired.
\end{proof}
\begin{lemma}\label{lem:tangential_times_normal_vector}
	Let $\Omega$ be a sufficiently smooth domain with boundary $\Gamma:=\partial \Omega$.
	Assume that $\bphi \in \vect{W}^{1,1}(\partial \Omega)$.
	Then, the following identity holds (see, e.g., \cite[Chap. 9, Sec. 5.4, eq. (5.20), p. 497]{DelfourZolesio2011}):
	\[
		\nabla_{\Gamma} \bphi \nn = \vect{0}.	
	\]
\end{lemma}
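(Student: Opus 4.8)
The plan is to obtain the identity by a short algebraic computation directly from the definition of the tangential vectorial gradient operator, using nothing about $\Omega$ beyond the fact that $\nn$ is a \emph{unit} normal field. First I would recall that, by definition, $\nabla_{\Gamma} \bphi = \nabla \bphi - (\nabla \bphi\,\nn) \otimes \nn$, so that multiplying on the right by $\nn$ gives
\[
	\nabla_{\Gamma} \bphi\,\nn = \nabla \bphi\,\nn - \big[(\nabla \bphi\,\nn) \otimes \nn\big]\nn.
\]

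The key step is to simplify the second term with the elementary rule $(\vect{a} \otimes \vect{b})\vect{c} = (\vect{b} \cdot \vect{c})\,\vect{a}$, valid for any $\vect{a},\vect{b},\vect{c} \in \mathbb{R}^{d}$ and consistent with the convention $\bphi \otimes \VV = \bphi\VV^{\top}$ adopted earlier in the paper. Applying it with $\vect{a} = \nabla \bphi\,\nn$ and $\vect{b} = \vect{c} = \nn$ yields $\big[(\nabla \bphi\,\nn) \otimes \nn\big]\nn = (\nn \cdot \nn)\,(\nabla \bphi\,\nn)$. Since $\nn$ is the outward unit normal, $\nn \cdot \nn = \abs{\nn}^{2} = 1$, so this term collapses to exactly $\nabla \bphi\,\nn$; it therefore cancels the first term and leaves $\nabla_{\Gamma} \bphi\,\nn = \vect{0}$, as claimed.

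I do not expect any genuine obstacle here, since the conclusion is a purely algebraic consequence of the definition together with $\abs{\nn} = 1$; this is precisely why the statement can be quoted rather than proved in detail. The only delicate point concerns the regularity hypothesis $\bphi \in \vect{W}^{1,1}(\partial \Omega)$, which makes the pointwise manipulation above merely formal. To render the argument fully rigorous I would first verify the identity for smooth $\bphi$---where the computation holds pointwise on $\Gamma$---and then extend it to $\vect{W}^{1,1}(\partial \Omega)$ by density, invoking the continuity of the map $\bphi \mapsto \nabla_{\Gamma} \bphi\,\nn$ from $\vect{W}^{1,1}(\partial \Omega)$ into $L^{1}(\partial \Omega)^{d}$.
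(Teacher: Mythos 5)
Your proof is correct and follows essentially the same route as the paper: both start from the definition $\nabla_{\Gamma}\bphi = \nabla\bphi - (\nabla\bphi\,\nn)\otimes\nn$ and show that $[(\nabla\bphi\,\nn)\otimes\nn]\nn = \nabla\bphi\,\nn$ using $|\nn|=1$, the only difference being that you invoke the algebraic rule $(\vect{a}\otimes\vect{b})\vect{c}=(\vect{b}\cdot\vect{c})\vect{a}$ directly, whereas the paper verifies the same fact by expanding in components with basis vectors. Your closing remark on density is a fine extra precaution, though not strictly needed, since the algebraic cancellation holds pointwise almost everywhere for the weak gradient of a $\vect{W}^{1,1}(\partial\Omega)$ function.
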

\begin{proof}
	Assume that $\bphi \in \vect{W}^{1,1}(\Gamma)$.
	By definition $\nabla_{\Gamma} \bphi := \nabla \bphi - (\nabla \bphi\nn) \otimes \nn$, so we have
	\[
		\nabla_{\Gamma} \bphi \nn = \left[ \nabla \bphi - (\nabla \bphi\nn) \otimes \nn \right] \nn
			= \nabla \bphi \nn - \left[ (\nabla \bphi\nn) \otimes \nn \right] \nn.
	\]
	Computing $\left[ (\nabla \bphi\nn) \otimes \nn \right] \nn$, we get
	\begin{align*}
		\left[ (\nabla \bphi\nn) \otimes \nn \right] \nn
		&= \left[ \left( \sum_{j=1}^{d} \left( \sum_{i=1}^{d}  \dfrac{\partial \varphi_{i}}{\partial x_{j}} n_{i} \right) \vect{e}_{j} \right) \otimes \sum_{k=1}^{d} n_{k} \vect{e}_{k} \right] \sum_{l=1}^{d} n_{l} \vect{e}_{l}\\
		%
		%
		&= \sum_{j=1}^{d} \left( \sum_{i=1}^{d}  \dfrac{\partial \varphi_{i}}{\partial x_{j}} n_{i} \right) \vect{e}_{j} \sum_{k=1}^{d} \sum_{l=1}^{d} n_{l} n_{k} \vect{e}_{k}^{\top} \vect{e}_{l} \\
		&= \sum_{j=1}^{d} \left( \sum_{i=1}^{d}  \dfrac{\partial \varphi_{i}}{\partial x_{j}} n_{i} \right) \vect{e}_{j} \sum_{k=1}^{d} \sum_{l=1}^{d} n_{l} n_{k} \delta_{kl} \\			%
		&= \sum_{j=1}^{d} \left( \sum_{i=1}^{d}  \dfrac{\partial \varphi_{i}}{\partial x_{j}} n_{i} \right) \vect{e}_{j}\\
		&= \nabla \bphi\nn,
	\end{align*}
	from which the desired result clearly follows.
\end{proof}
%
%
%
\begin{proof}[Proof of Proposition \ref{prop:shape_derivative_with_sufficient_regularity_assumptions}]
	Let us assume that $\Omega$ is of class $\mathcal{C}^{2,1}$ and $\VV \in \sfTheta^{2}$.
	Using classical regularity theory, we have $\ur$, $\ui \in H^{3}(\Omega)^{d}$ and $p_{r}$, $\pim \in H^{2}(\Omega)$.
	Because we have sufficient regularity for $\uu$, $\Omega$, and $\VV$, then we can apply formula \eqref{eq:Hadamard_domain_formula} to obtain -- noting that $\VV= \vect{0}$ on $\Gamma$ -- the derivative
	\begin{equation}
	\label{eq:computed_first_derivative_via_Hadamard_formula}
		{{d}}J(\Omega)[\VV]  = \intO{\left( \ui \ui'  + \pim \pim' \right) } + \frac12 \intS{ \left( |\ui|^2 + |\pim|^2 \right) \Vn }=:\mathbb{I}_1+\mathbb{I}_2.
	\end{equation}

Hereafter, we proceed in four steps:
\begin{description}
	\item[\textnormal{\textit{Step 1.}}] We establish the strong form of the shape derivatives $\uu'$ and $p'$ which is characterized by the complex PDE system \eqref{eq:ccbm_shape_derivative_of_the_states}.
	\item[\textnormal{\textit{Step 2.}}] We prove the differentiability of $J(\Omega)$ in the direction of $(\delta\tilde{\uu},\delta\tilde{p}) \in X \times Q$.
	\item[\textnormal{\textit{Step 3.}}] We justify the structure of the adjoint system \eqref{eq:adjoint_system}.
	\item[\textnormal{\textit{Step 4.}}] We obtain the expression for the shape gradient via the adjoint method.
\end{description}
\medskip
\textit{Step 1.} We recall the variational equation \eqref{eq:ccbm_weak_form}. Because $\Omega$, $\uu$, $p$, and $\VV$ are regular enough, then the solution $(\uu, p) \in X \times Q$ is shape differentiable and we can differentiate \eqref{eq:ccbm_weak_form} (formally) to get
\begin{equation}\label{eq:weak_problem_derivative}
\begin{aligned}
	& \intO{\alpha \nabla {\uu'} : \nabla {\cbpsi}} + i \intS{({\uu'} \cdot \nn)({\cbpsi} \cdot \nn)} - \intO{ p' ( \nabla \cdot \cbpsi) } - \intO{ \bar{\lambda} ( \nabla \cdot {\uu'}) } \\
		&\qquad = - \intS{\alpha ( \nabla {\uu} : \nabla {\cbpsi} - p ( \nabla \cdot \cbpsi) ) \Vn}\\
		&\qquad\qquad		
		 - i \intS{({\uu} \cdot \nn')({\cbpsi} \cdot \nn)} - i \intS{({\uu} \cdot \nn)({\cbpsi} \cdot \nn')}\\
		&\qquad\qquad - i \intS{\left[ \frac{\partial}{\partial \nn}\left( ({\uu} \cdot \nn) \nn \right)  + \kappa \left( {\uu} \cdot \nn) \nn \right) \right] \cdot \cbpsi \Vn} + \intS{\ff \cdot \cbpsi \Vn },
\end{aligned}
\end{equation}
where the shape derivative $\nn'$ of the normal vector $\nn$ is given by \eqref{eq:shape_derivative_of_n}.

From the previous equation we can derive a boundary value problem for $(\uu',p')$.
Namely, choosing $\bpsi \in \vect{\mathcal{C}}^{\infty}_{0}(\Omega)^{d}$ and $\lambda \in \vect{\mathcal{C}}^{\infty}_{0}(\Omega)$\footnote{For clarity, we mention here that there is slight abuse of notations. Specifically, $\bpsi \in \vect{\mathcal{C}}^{\infty}_{0}(\Omega)^{d}$ and $\lambda \in \vect{\mathcal{C}}^{\infty}_{0}(\Omega)$ means that $\bpsi_{r}, \bpsi_{i} \in \mathcal{C}^{\infty}_{0}(\Omega)^{d}$ and $\lambda_{r}, \lambda_{i} \in \mathcal{C}^{\infty}_{0}(\Omega)$ where $\mathcal{C}^{\infty}_{0}(\Omega)^{d}$ and $\mathcal{C}^{\infty}_{0}(\Omega)$ denotes the usual space of infinitely differentiable (vector-valued and scalar-valued) functions, respectively.} reveals that (via integration by parts)
\begin{equation}\label{eq:first_equations}
- \alpha \Delta \uu' + \nabla p' = \vect{0}		\quad\text{in $\Omega$}\qquad \text{and} \qquad 
	\nabla \cdot \uu'	= \vect{0} 		\quad\text{in $\Omega$},
\end{equation}
which hold in the distributional sense.
That is, we have obtained the first equation above from
\[
\langle - \alpha \Delta \uu' + \nabla p', \cbpsi \rangle_{[\vect{\mathcal{C}}^{\infty}_{0}(\Omega)^{d}]', \vect{\mathcal{C}}^{\infty}_{0}(\Omega)^{d}}  = 
 \intO{ \left( \alpha \nabla {\uu'} : \nabla {\cbpsi} - p' ( \nabla \cdot \cbpsi) \right) } = 0,
\]
and the second one from the argument, i.e., we check that $\nabla \cdot \uu' = \vect{0} $ in $[\vect{\mathcal{C}}^{\infty}_{0}(\Omega)^{d}]'$.

Meanwhile, because $\VV \in \sfTheta^{2}$, i.e., $\VV$ vanishes on $\Gamma$, the boundary condition on $\Gamma$ satisfied by $\uu'$ easily follows.
That is, we have $\uu' = \vect{0}$ on $\Gamma$. 

Now, for the succeeding arguments, we underline here the fact that $\Omega \in \mathcal{C}^{2,1}$, and since we have the $\HH^{3}(\Omega)^{d} \times H^{2}(\Omega)$ regularity for $(\uu,p)$, we know that $\nabla \uu' \in \vect{L}^{2}(\Omega)^{d}$ and $\nabla p' \in \vect{L}^{2}(\Omega)^{d}$.

We next exhibit the boundary condition on $\Sigma$.
We choose\footnote{In fact, here, we choose a test function $\bpsi \in \HH^{2}(\Omega)^{d}$, and because $\Omega \in \mathcal{C}^{2,1}$, it follows that -- by Stein's extension theorem (see, e.g., \cite[Thm. 5.24, p. 154]{AdamsFournier2003}) -- we can construct an extension of $\bpsi$ in $\HH^{2}(\mathbb{R}^{d})^{d}$ (still denoted by $\bpsi$).} $\bphi \in \vect{\mathcal{C}}^{\infty}(\Sigma)^{d}$ and $\mu \in \vect{\mathcal{C}}^{\infty}(\Sigma)$.
Accordingly, we can find an extension $\bpsi \in \vect{\mathcal{C}}^{\infty}(\Omega)^{d}$ and $\lambda \in \vect{\mathcal{C}}^{\infty}(\Omega)$ such that $\bpsi \big|_{\Sigma} = \bphi$ and $\lambda \big|_{\Sigma} = \mu$, and $\dn{\bpsi} \big|_{\Sigma} = \vect{0}$ and $\dn{\lambda} \big|_{\Sigma} = 0$.
Applying integration by parts in the left hand side of \eqref{eq:weak_problem_derivative} and using  \eqref{eq:first_equations}, we get
\begin{align*}
	&\intS{ \left[ \alpha \dn{\uu'} + i ({\uu'} \cdot \nn) \nn - p'\nn \right] \cdot \cbpsi } \\
	&\qquad\qquad\qquad= - \intS{ \left[ \alpha \nabla {\uu} : \nabla {\cbpsi} - p ( \nabla \cdot \cbpsi) \right] \Vn}
		 + \intS{ \vect{B}_{1}[\Vn] \cdot {\cbpsi} },
\end{align*}
where
\[
	\vect{B}_{1}[\Vn] := -i [ ({\uu} \cdot \nn') \nn + ({\uu} \cdot \nn) \nn' ] - i \left[ \dn{\left( ({\uu} \cdot \nn) \nn \right)}  + \kappa ( {\uu} \cdot \nn )\nn \right] \Vn + \ff \Vn.
\]
At this juncture, let us recall the definition of tangential gradient and divergence from subsection \ref{subsec:some_identities_from_tangential_shape_calculus}, to deduce the identity
\[
	\intS{ p \nabla \cdot \cbpsi \Vn } 
	= \intS{ \left[ p \dive_{\Sigma} \cbpsi + \dn{\cbpsi} \cdot (p\nn)\right]  \Vn },
\]
and make use of the formulas (cf. equation in remark found in \cite[p. 87]{SokolowskiZolesio1992})
\begin{align*}
	(\dn{\uu} \otimes \nn) : (\dn{\cbpsi} \otimes \nn) 
	&= \dn{\uu} \cdot \dn{\cbpsi};\\
	\nabla \uu : (\dn{\cbpsi} \otimes \nn) 
	&= \dn{\uu} \cdot \dn{\cbpsi}
	= \nabla \cbpsi : (\dn{\uu} \otimes \nn),
\end{align*}
to obtain
\[
	\nabla \uu : \nabla \cbpsi = \nabla_{\Sigma} \uu : \nabla_{\Sigma} \cbpsi + \dn{\uu} \cdot \dn{\cbpsi}.
\]
Moreover, using the tangential Green's formula (see Lemma \ref{lem:tangential_formulas}) and applying integration by parts on the boundary $\Sigma$, we can write
\begin{align}
	\intS{ p \Vn \dive_{\Sigma} \cbpsi } 
		&= \intS{\left[ \kappa p \Vn \nn - \nabla_{\Sigma} (p\Vn)  \right] \cdot \cbpsi}
		=: \intS{ \vect{B}_{2}[\Vn] \cdot \cbpsi },\label{eq:p_divergence}\\
	- \intS{\alpha ( \nabla_{\Sigma} {\uu} : \nabla_{\Sigma} {\cbpsi} ) \Vn}
		&= \intS{ \cbpsi \cdot \left\{ \dive_{\Sigma}{[\alpha (\nabla_{\Sigma} \uu) \Vn]} - [\alpha \kappa  (\nabla_{\Sigma} \uu) \Vn \nn] \right\} }\nonumber\\
		&= \intS{ \cbpsi \cdot \left\{ \dive_{\Sigma}{[\alpha (\nabla_{\Sigma} \uu) \Vn]} \right\} }
		=: \intS{ \cbpsi \cdot \vect{B}_{3}[\Vn] }\nonumber,
\end{align}
where the last equality is due to the fact that $\nabla_{\Sigma} \uu\nn = \vect{0}$, see Lemma \ref{lem:tangential_times_normal_vector}.
Using these identities, together with the fact that $- \alpha \dn{\uu} + p\nn	=  i (\uu \cdot \nn)\nn$ and $\dn{\bpsi} = \vect{0}$ on $\Sigma$, we get
\begin{align*}
	& \intS{ \left( \alpha \dn{\uu'} + i ({\uu'} \cdot \nn) \nn - p'\nn \right) \cdot \cbpsi } \\
		&\qquad = - \intS{\alpha ( \nabla_{\Sigma} {\uu} : \nabla_{\Sigma} {\cbpsi} - p ( \nabla_{\Sigma} \cdot \cbpsi) ) \Vn}
		 + i \intS{ (\uu \cdot \nn) \nn \Vn \cdot \dn{\cbpsi}} + \intS{ \vect{B}_{1} \cdot {\cbpsi} }\\
	%
	%
	&\qquad= \intS{ \vect{B}[\Vn] \cdot {\cbpsi} },	
\end{align*}
where
\begin{align*}
	\vect{B}[\Vn]	&:= \vect{B}_{1}[\Vn] + \vect{B}_{2}[\Vn] + \vect{B}_{3}[\Vn]\\
	&\ = \left\{ -i [ ({\uu} \cdot \nn') \nn + ({\uu} \cdot \nn) \nn' ] - i \left[ \dn{\left( ({\uu} \cdot \nn) \nn \right)}  + \kappa ( {\uu} \cdot \nn) \nn \right] \Vn + \ff \Vn \right\} \\
	&\qquad + \left\{ \kappa p \Vn \nn - \nabla_{\Sigma} (p\Vn) \right\} 
		+ \left\{  \dive_{\Sigma}{[\alpha (\nabla_{\Sigma} \uu) \Vn]} \right\}\\
	&\ = \ff \Vn - \nabla_{\Sigma} (p\Vn) + \dive_{\Sigma}{[\alpha (\nabla_{\Sigma} \uu) \Vn]}
		+ i \left[ \left({\uu} \cdot \nabla_{\Sigma}\Vn \right) \nn + ({\uu} \cdot \nn) \nabla_{\Sigma}\Vn \right] \\
	&\qquad - \left[ i\left( \dn{\uu} \cdot \nn \right)\nn + i \kappa ( {\uu} \cdot \nn ) \nn - \kappa p \nn \right] \Vn,		
\end{align*}
where the latter equation follows from Lemma \ref{lem:preparation2} and Lemma \ref{lem:shape_derivative_of_the_normal_vector}.

At this juncture, we comment that $\vect{B}[\Vn] \in \vect{H}^{1/2}(\Sigma)^{d}$.
Indeed, since $\Omega$ is of class $\mathcal{C}^{2,1}$, the (outward) unit normal vector $\nn$ is $\mathcal{C}^{1,1}(N^{\varepsilon})$ regular and $\kappa \in \mathcal{C}^{0,1}(N^{\varepsilon}) \subset W^{1,\infty}(N^{\varepsilon}) \subset H^{1}(N^{\varepsilon}) \subset H^{1/2}(\partial\Omega)$, where $N^{\varepsilon}$ is a small neighborhood of $\partial\Omega$ (see, e.g., \cite[Sec. 7.8]{DelfourZolesio2011}).
Therefore, (using the density in $\vect{L}^{2}(\Omega)^{d}$ of the traces on $\Sigma$ of functions in $\HH^{2}(\Omega)^{d}$) we get
\begin{equation*}
\begin{aligned}
	 \alpha \dn{\uu'} + i ({\uu'} \cdot \nn) \nn - p'\nn
	 	= \vect{B}[\Vn],
		\qquad \text{on $\Sigma$}.
\end{aligned}
\end{equation*}

In the next several lines, we simplify the expression $\vect{B}[\Vn]$.
To do this, we recall some identities from Lemma \ref{lem:tangential_formulas} and the definitions of some tangential operators, specially the Laplace-Beltrami operator and its decomposition (see \eqref{eq:Laplace_Beltrami_operator_definition}).
Also, we note that, in $\Omega$, we have $-\alpha \Delta \uu + \nabla p = \ff$.
So, we get the following equations on $\Sigma$:
\begin{align*}
	\nabla_{\Sigma}(p\Vn) 
		&= \Vn \nabla_{\Sigma} p + p\nabla_{\Sigma}(\Vn),\\
	\dive_{\Sigma} (\alpha \nabla_{\Sigma} \uu \Vn)
			&= \alpha \Vn \dive_{\Sigma} (\nabla_{\Sigma} \uu) + \alpha\nabla_{\Sigma} \uu \nabla_{\Sigma}\Vn\\
			&=  \Vn (\alpha \Delta_{\Sigma} \uu) + \alpha\nabla \uu \nabla_{\Sigma}\Vn\\
			&=  - \Vn (\ff + \kappa \dn{\uu} + \dnn{\uu} - \nabla p) + \alpha\nabla \uu \nabla_{\Sigma}\Vn.
\end{align*}
Because $p\nn - \alpha \dn{\uu}	 = i (\uu \cdot \nn)\nn$ on $\Sigma$, the latter equation further implies that 
\begin{align*}
	\kappa p\nn \Vn + \left[ \ff \Vn + \dive_{\Sigma} (\alpha \nabla_{\Sigma} \uu \Vn) \right] 
		&= \kappa p\nn \Vn  + \left[ - \Vn (\kappa \dn{\uu} + \dnn{\uu} - \nabla p) + \alpha\nabla \uu \nabla_{\Sigma}\Vn \right]\\
		&= i \kappa (\uu \cdot \nn)\nn \Vn - \Vn (\dnn{\uu} - \nabla p) + \alpha\nabla \uu \nabla_{\Sigma}\Vn.
\end{align*}
These computations, together with Lemma \ref{lem:preparation2} and the identities
\[
	\nabla p - \nabla_{\Sigma} p = \dn{p}\nn
	\quad\text{and}\quad
	\nabla_{\Sigma} \Vn \cdot \uu - \Vn ( D\uu\nn\cdot\nn )
	=\!\footnote{Since $\dive \uu = 0$, then by the definition of the tangential divergence of a vector function, see Definition \ref{def:tangential_operators}, we actually have $ \dive_{\Sigma} \uu = - ( D\uu\nn\cdot\nn )$ on $\Sigma$.} \nabla_{\Sigma} \Vn \cdot \uu + \Vn \dive_{\Sigma} \uu 
	= \dive_{\Sigma} (\Vn \uu),
\]
on $\Sigma$, lead us to conclude that the expression $\vect{B}[\Vn]$ can be written equivalently in the following ways:
\begin{equation}\label{eq:shape_derivative_boundary_condition}
\begin{aligned}
	\vect{B}[\Vn] 
	%
	%
	%
	&= \ff \Vn - \nabla_{\Sigma} (p\Vn) + \dive_{\Sigma}{[\alpha (\nabla_{\Sigma} \uu) \Vn]} + i \dive_{\Sigma} (\Vn \uu)\nn
		+ i ({\uu} \cdot \nn) \nabla_{\Sigma}\Vn \\
	&\qquad - \kappa \left[ - p\nn + i ( {\uu} \cdot \nn ) \nn \right] \Vn \\	
	%
	%
	%
%
%
%
%
%
%
%
%
%
		&= \left[ \alpha\nabla \uu + ({\uu} \cdot \nn - p) id \right] \nabla_{\Sigma}\Vn
			- \left[ - \dn{p} \nn + \dnn{\uu} + i {(\dn{\uu} \cdot \nn) \nn} \right] \Vn\\
		&\qquad -i \left({\uu} \cdot \nabla_{\Sigma}\Vn \right) \nn.
\end{aligned}
\end{equation}
%
%
In summary, the shape derivative $(\uu',p')$ of $(\uu,p)$ which solves equation \eqref{eq:ccbm} is given by
\begin{equation}
\label{eq:ccbm_shape_derivative_of_the_states}
\left\{\arraycolsep=1.4pt\def\arraystretch{1.1}
\begin{array}{rcll}
	- \alpha \Delta \uu' + \nabla p'	&=& \vect{0}			&\quad\text{in $\Omega$},\\
	\nabla \cdot \uu'				&=& \vect{0} 		&\quad\text{in $\Omega$},\\
	\uu'	 					&=& \vect{0}			&\quad\text{on $\Gamma$},\\
	-p'\nn + \alpha \dn{\uu'} + i (\uu' \cdot \nn)\nn			&=& \vect{B}[\Vn]		&\quad\text{on $\Sigma$},
\end{array}
\right.
\end{equation}
where $\vect{B}[\Vn]$ is given by \eqref{eq:shape_derivative_boundary_condition}.

\medskip 

\textit{Step 2.} Because we have sufficient regularity on the unknown variables, the directional derivatives (see, e.g., \cite[Chap. 2, Sec. 2]{DelfourZolesio2011}) $J'(\Omega)\delta\tilde{\uu}$ and $J'(\Omega)\delta\tilde{p}$ exist and are easily computed as
	\begin{align*}
		J'(\Omega)\delta\tilde{\uu} 
		&=  \frac{d}{d \epsilon} \left. \left( \frac12 \intO{\left( |\ui + \epsilon \delta\tilde{\uu}|^2 + |\pim|^2 \right)} \right)\right|_{\epsilon = 0}
		= \intO{\ui \delta\tilde{\uu}},\\
		J'(\Omega)\delta\tilde{p} 
		&= \frac{d}{d \epsilon} \left. \left( \frac12 \intO{\left( |\ui|^2 + |\pim + \epsilon \delta\tilde{p} |^2 \right)} \right)\right|_{\epsilon = 0}
		= \intO{\pim \delta\tilde{p}}.
	\end{align*}

\medskip 
\textit{Step 3.} Now, to derive and justify the structure of the adjoint system \eqref{eq:adjoint_system}, we let $\vect{\bar{x}}:= (\uu,p) \in \mathcal{X}:=X \times Q$, $\vect{\xi}:=(\bpsi,\mu) \in \mathcal{X}$, $\vect{\bar{y}}:=(\vv,q) \in \mathcal{X}$, and define the operator $\mathcal{E}_{\vect{\bar{x}}}({\vect{\bar{x}}},\Omega)\vect{\xi} \in \mathscr{L}(X,X^{\ast})$ where (cf. \eqref{eq:forms_for_the_adjoint_problems})
\[
	\langle \mathcal{E}_{\vect{\bar{x}}}({\vect{\bar{x}}},\Omega)\vect{\xi} , \vect{\bar{y}}\rangle_{\mathcal{X}^{\ast}, \mathcal{X}}
	:= \tilde{\aaa}({\bpsi},{\vv}) + b(\vv,\mu) + b(\bpsi,q).
\]
The above operator is bijective if and only if for every $\bphi \in X^{\ast}$ and $\lambda \in Q^{\ast}$, there exists a unique solution $\vect{\bar{y}}:=(\vv,q) \in \mathcal{X}$ to the equation (cf. equation \eqref{eq:adjoint_system_weak_form})
\[
	\langle \mathcal{E}_{\vect{\bar{x}}}({\vect{\bar{x}}},\Omega)\vect{\xi} , \vect{\bar{y}}\rangle_{\mathcal{X}^{\ast}, \mathcal{X}}
		= (\bphi, \bpsi) + (\mu, \lambda),
\]
for all $\vect{\xi} \in \mathcal{X}$.
Similar to what we have noted in Remark \ref{rem:well_posedness_adjoint_problem}, the existence of a unique solution to the above equation -- with $\bphi = \ui \in \Vgamma$ and $\lambda = \pim \in Q$ -- can be established using similar arguments issued for the well-posedness of the state problem \eqref{eq:ccbm} (see subsection \ref{subsec:well-posedness_of_state_problem}).
So, we omit the details of the proof.

Since $(\uu,p) \in (\Vgamma \times Q) \cap (\HH^{3}(\Omega)^{d} \times H^{2}(\Omega))$, then by the previous step together with Remark \ref{rem:well_posedness_adjoint_problem}, there exists a unique adjoint state $(\vv,q) \in (\Vgamma \times Q) \cap (\HH^{3}(\Omega)^{d} \times H^{2}(\Omega))$ such that the variational equation \eqref{eq:adjoint_system_weak_form} holds.
By standard arguments as in \textit{Step 1} -- applying integration by parts and/or Green's formula --  we recover \eqref{eq:adjoint_system}.

\medskip 
\textit{Step 4.} Now, we are ready to express the shape derivative of $J$ computed in \eqref{eq:computed_first_derivative_via_Hadamard_formula} via the adjoint method -- eliminating the shape derivative of the states $\uu'$ and $p'$ appearing in $\mathbb{I}_{1}$.
To do this, let us first consider the following weak formulation of \eqref{eq:ccbm_shape_derivative_of_the_states}: find $({\uu'},p') \in \Vgamma \times Q$ such that
    \begin{equation}\label{eq:ccbm_weak_form_shape_derivative}
    \left\{\arraycolsep=1.4pt\def\arraystretch{1.1}
    \begin{array}{rcll}
    	\aaa({\uu'},{\bphi}) + b(\bphi,p')	&=& \displaystyle \intS{\vect{B}[\Vn] \cdot \cbphi},		&\quad\forall {\bphi}\in {\Vgamma},\\ [0.5em]
    				b(\uu',\lambda)	&=& 0,			&\quad\forall \lambda \in {Q}.\\ 
    \end{array}
    \right.
    \end{equation}
Following similar arguments carried out in subsection \ref{subsec:well-posedness_of_state_problem}, the existence of (unique) weak solution to the above problem is a consequence of the complex version of the Lax-Milgram lemma \cite[Thm. 1, p. 376]{DautrayLionsv21998} (see also \cite[Lem. 2.1.51, p. 40]{SauterSchwab2011}).

Now, by taking $(\bphi, \lambda) = (\vv,q) \in \Vgamma \times Q$, we get
    \[
    \left\{\arraycolsep=1.4pt\def\arraystretch{1.1}
    \begin{array}{rcl}
    	\aaa({\uu'},\vv) + b(\vv,p')	&=& \displaystyle \intS{\vect{B}[\Vn] \cdot \overline{\vv}},\\ [0.5em]
    				b(\uu',q)	&=& 0.\\ 
    \end{array}
    \right.
    \]
On the other hand, let us take $(\bpsi, \mu) = (\uu',p') \in \Vgamma \times Q$ in the variational equation \eqref{eq:adjoint_system_weak_form} of the adjoint system \eqref{eq:adjoint_system}.
This lead us to
        \[
        \left\{\arraycolsep=1.4pt\def\arraystretch{1.1}
        \begin{array}{rcll}
        	\tilde{\aaa}({\vv},\uu') + b(\uu',q)	&=& \tilde{F}(\uu'),\\ 
        				b(\vv,p')	&=& (p', \pim).
        \end{array}
        \right.
        \]
Taking the complex conjugate of both sides of the equations above, and then combining it with the previous two, yields the following identity
\[
	\intO{\left( \ui \uu'  + \pim p' \right) } = \intS{\vect{B}[\Vn] \cdot \overline{\vv}}.
\]
Comparing the imaginary parts of both sides of the above equation gives us
\[
	\mathbb{I}_{1} = \intO{\left( \ui \ui'  + \pim \pim' \right) } 
	= \Im\left\{ \intO{\left( \ui \uu'  + \pim p' \right) } \right\} 
	= \Im\left\{ \intS{\vect{B}[\Vn] \cdot \overline{\vv}} \right\}.
\]
In conclusion, the shape derivative of $J$ at $\Omega$ in the direction of the vector filed $\VV \in \sfTheta^{2}$ is given by
	\[
		{{d}}J(\Omega)[\VV]  = \Im\left\{ \intS{\vect{B}[\Vn] \cdot \overline{\vv}} \right\} + \frac12 \intS{ \left( |\ui|^2 + |\pim|^2 \right) \Vn },
	\]
as desired.	
\end{proof}
\end{document}